\def\th@plain{%
  \thm@notefont{}
  \itshape 
}
\def\th@definition{%
  \thm@notefont{}
  \normalfont 
}
\newtheorem{theorem}{Theorem}
\newtheorem{corollary}{Corollary}
\newtheorem{lemma}{Lemma}
\theoremstyle{definition}
\newtheorem{definition}{Definition}
\theoremstyle{remark}
\newtheorem{remark}{Remark}
\def\NN{{\mathbb N}}
\def\ZZ{{\mathbb Z}}
\def\RR{{\mathbb R}}
\title{$d$-dimensional extension of a penalization method for Neumann or Robin boundary conditions: a boundary layer approach and numerical experiments\\(extended version)}
\author{Bouchra Bensiali\thanks{Centrale Casablanca, Complex Systems and Interactions Research Center, Ville Verte, 27182 Bouskoura, Morocco 
  (\href{mailto:bouchra.bensiali@gmail.com}{bouchra.bensiali@gmail.com}).}\\ (Boundary) \and Jacques Liandrat\thanks{Aix-Marseille Univ., CNRS, I2M, UMR7373, Centrale Méditerranée, 13451 Marseille, France 
  (\href{mailto:jacques.liandrat@centrale-med.fr}{jacques.liandrat@centrale-med.fr}).}\\ (Layer)}
\date{\today}
\begin{document}

\maketitle

\begin{abstract}
This paper studies the $d$-dimensional extension of a fictitious domain penalization technique that we previously proposed for Neumann or Robin boundary conditions. We apply Droniou's approach for non-coercive linear elliptic problems to obtain the existence and uniqueness of the solution of the penalized problem, and we derive a boundary layer approach to establish the convergence of the penalization method. The developed boundary layer approach is adapted from the one used for Dirichlet boundary conditions, but in contrast to the latter where coercivity enables a straightforward estimate of the remainders, we reduce the convergence of the penalization method to the existence of suitable supersolutions of a dual problem. These supersolutions are then constructed as approximate solutions of the dual problem using an additional formal boundary layer approach. The proposed approach results in an advection-dominated problem, requiring the use of appropriate numerical methods suitable for singular perturbation problems. Numerical experiments, using upwind finite differences, validate both the convergence rate and the boundary layer thickness, illuminating the theoretical results.
    
\end{abstract}

\tableofcontents

\bigskip
\bigskip
{\bf \large Notations}

\begin{table}[h]
\begin{tabular}{l p{1cm} p{10cm}}
\textbf{Spaces} & &\\
\hline
$C^k(\Omega)$, $k\in\NN$ & & set of functions, defined on $\Omega$, with continuous partial derivatives up to order $k$\\
$C^{k,\alpha}(\Omega)$, $k\in\NN$, $\alpha\in[0,1]$ & &  set of functions in $C^k(\Omega)$ whose partial derivatives of order $k$ are $\alpha$-Hölder continuous, or Lipschitz continuous for $\alpha=1$. In the case $\alpha=0$, $C^{k,0}(\Omega)=C^k(\Omega)$\\
$H^s(\Omega)$, $H^s(\partial\Omega)$ & & Sobolev spaces on $\Omega$, $\partial\Omega$ \\
 & & \\
\textbf{Geometry, see Figure~\ref{fig:domains}}  & & \\
\hline
$\mathcal U$ & & initial domain with boundary $\partial\mathcal U$\\
$\Omega$ & & fictitious domain embedding $\overline{\mathcal U}$, with boundary $\partial\Omega$\\
$\omega$ & & complementary domain $\omega=\Omega\setminus\overline{\mathcal U}$\\
$\tilde{n}$ & & outward unit normal vector on $\partial\mathcal{U}$\\
$\nu_\Omega$ & & outward unit normal vector on $\partial{\Omega}$\\
$n$ & & lifting of $\tilde{n}$ inside $\omega$\\
$\varphi$ & & distance to the boundary $\partial\Omega$\\
$\psi$ & & we assume $n=\nabla\psi$ in $\omega$\\
$\omega_1$ & & subdomain of $\omega$ such that $\varphi \le \eta$ for some constant $\eta>0$\\
& & \\
\end{tabular}
\end{table}

\newpage
\section{Introduction}

Volume penalization methods are numerical analysis methods that have been widely developed during the last 30 years in order to impose boundary conditions to the solution of a given partial differential equation. Generally (see Figure 1), an initial problem raised on an open set $\mathcal U$ with a boundary condition on the  boundary $\partial \mathcal U$ is replaced by another problem (called the penalized problem)  raised on a larger domain $\Omega$ including $\overline{\mathcal U}$. Hopefully, the new problem raised on $\Omega$ is easier to approximate numerically and one can recover from its numerical resolution an approximation of the initial problem. The motivations for doing so are usually one of the following: $\mathcal U$ can be a time varying domain that is difficult to approximate at each time step of a discretization; $\mathcal U$ can be a complex domain and a calculation on a simpler domain is desirable. Generally,   on one hand, the penalized problem involves a penalization term depending on a penalization parameter $\varepsilon$ and on the other hand, it appears immediately that extensions of various terms of the initial equation should be performed on the space $\omega$, complementing $\overline{ \mathcal U}$ in $\Omega$. These questions are the first ones to be answered when one define a penalization method. Moreover,  the mathematical questions connected to this approach deals with the solvability of the penalized problem and the convergence of its solution toward the solution of the initial problem in a sense to be precised, when the penalization parameter $\varepsilon$ goes to zero.

Reviews of different volume penalization methods , with applications in  fluid mechanics are available in (\cite{Pe02,MI05,Sch15}). 
A large amount of works deal with Dirichlet boundary conditions (\cite{ABF99,BookBoyerFabrie,angot1999analysis},...); fewer results concern Neumann or Robin boundary conditions in multidimension (\cite{RAB07,thirumalaisamy2022handling,sakurai2019volume},...).

This paper is devoted to the mathematical analysis of a volume penalization methos for Robin/Neumann boundary conditions in multidimension. Formally, this approach has been initially developped and analyzed for the univariate situation in (\cite{Bensiali2014} and used for numerical applications in \cite{Sch15}). Specific approach for the analysis in multidimension is necessary since explicit resolutions possible in the univariate case are no more available.

We will focalize on the following elliptic problem: 

\begin{equation}\label{eq:reactiondiffusion}
    \begin{dcases}
    \text{Find $u\in H^1(\mathcal{U})$ such that}\\
    -\Delta u+u=f & \text{in $\mathcal{U}$}\\
    \frac{\partial u}{\partial\tilde{n}}+\alpha u=\tilde{g} & \text{on $\partial\mathcal U$}, 
    \end{dcases}
\end{equation}
where $\mathcal{U}$ is a bounded Lipschitz domain of $\RR^d$, $f\in L^2(\mathcal U)$, $\tilde{g}\in H^{1/2}(\partial\mathcal U)$, $\alpha\ge 0$ and $\tilde{n}$ is the outward unit normal vector on $\partial\mathcal U$.
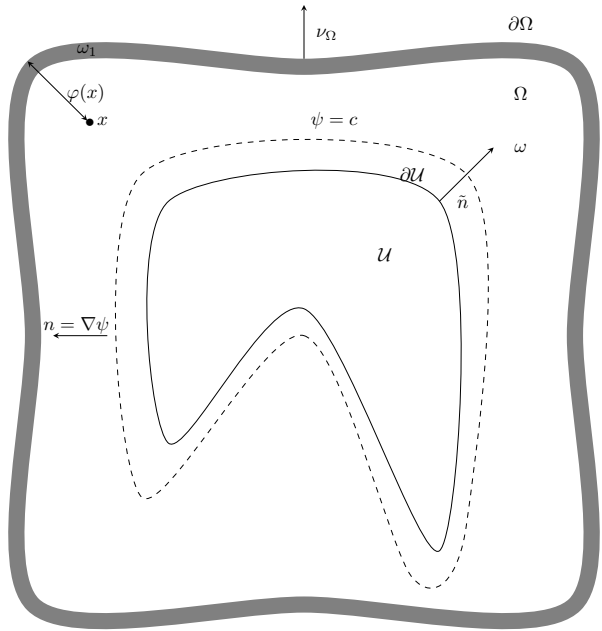
\begin{figure}[htb]
\centering
\resizebox{0.5\textwidth}{!}{
\begin{tikzpicture}

\draw [gray] [line  width=0.3cm] plot [smooth cycle] coordinates {(0,0) (0,5) (0,10) (5,10) (10,10)(10,5) (10,0) (5,0)};
\draw [black] plot [smooth cycle] coordinates {(2.5,3) (5,5.5) (7.5,1) (7.5,7.5) (2.5,7.5)};
\draw [dashed]  plot [smooth cycle] coordinates {(2.,2.) (5,5) (7,.5)  (8,1.5) (8,8) (2,8)};

\draw [-stealth] (5,10.15) --(5,11.15);
\node [right] at (5.1,10.6)  {$\nu_\Omega$};
\draw [-stealth] (7.5,7.5) --(8.5,8.5);
\node [right] at (7.7, 7.5)  {$\tilde{n}$};
\draw [-stealth] (1.37,5) --(.37,5);
\node  at (.8, 5.2)  {$n=\nabla \psi$};
\draw [stealth-stealth] (-.1,10.1) --(1,9);
\node [right] at (.5, 9.5)  {$\varphi(x)$};
\node [right] at (1.06,9) {$x$};
\node at (1.05,8.95) {\textbullet};
\node [right] at (5, 9)  {$\psi= c$};

\node at (9,9.5) {$\Omega$};
\node at (6.5,6.5) {$\mathcal{U}$};
\node at (7,8) {$ \partial \mathcal{U}$};
\node at (9,10.8) {$ \partial \Omega$};
\node at (1,10.3) {$\omega_1$};
\node at (9,8.5) {$\omega$};

\end{tikzpicture}
}
\caption{Domains and notations}
\label{fig:domains}
\end{figure}

The penalized problem, obtained generalizing the formulation of (\cite{Bensiali2014}), where $\varepsilon>0$ is called the penalization parameter, reads:

\begin{equation}\label{eq:reactiondiffusionpenalized}
    \begin{dcases}
    \text{Find $u_\varepsilon\in H^1_0(\Omega)$ such that}\\
    -\Delta u_\varepsilon+u_\varepsilon+\frac{\chi}\varepsilon\bigl(\nabla u_\varepsilon\cdot n+\alpha u_\varepsilon - g\bigr)= (1-\chi)f & \text{in $\Omega$},
    \end{dcases}
\end{equation}
where $\Omega$ is a bounded Lipschitz domain of $\RR^d$ ($\Omega \supset \overline{\mathcal U}$), $\chi$ is the characteristic function of the extended domain $\omega:=\Omega\setminus \overline{\mathcal U}$, $g$ is a lifting of $\tilde{g}$ such that $g\in H^1(\omega)$ and $n$ is a lifting of $\tilde{n}$ such that $n\in H^1(\omega)$ (we will assume for instance $\tilde{n}\in H^{1/2}(\partial\mathcal U)$). Moreover, we assume that there exists a function $\psi$ such that $\partial \mathcal U=\{x\in \RR^d/\psi(x)=0\}$ and $\tilde{n}=\nabla \psi$. On Figure \ref{fig:domains} one can see $\varphi(x)$ the distance to the boundary $\partial \Omega$ and a domain $\omega_1$ (in grey) defined as a subdomain of $\Omega$ such that for some $\delta>0$, to be precised later, $\forall x \in \omega_1, \varphi(x) \leq \delta$.

Our analysis uses two main ingredients: A result of existence and uniqueness in the framework of elliptic PDE suffering from a lack of coercivity (\cite{droniou2002}, \cite{dronioupota2002}) and a boundary layer approach to prove convergence of the penalization solution  following  \cite{BookBoyerFabrie} when the penalization parameter goes to zero.

The paper is therefore organized as follows: We present in section \ref{sec:problemform} the problem we are interested in and we recall useful results and mathematical tools. Section \ref{sec:existenceuniqueness} is devoted to the establishment of existence an uniqueness of the solution $u_\varepsilon$ of the penalized problem, while section \ref{sec:convergenceBL} provides the convergence towards the solution $u$ of the initial problem. Numerical experiments are reported in section \ref{sec:numerics} before concluding remarks. We have gathered into appendices some proofs, specific results in the univariate and spherical symmetric cases, and technical developments.

To give a taste of the penalization method, Figure~\ref{fig:resultsellipse} shows the results obtained using FreeFEM software for an ellipse $\mathcal U$ defined by $\psi(x,y)=\frac{x^2}{a^2}+\frac{x^2}{b^2}-1\le 0$, with $a=2$, $b=1$, that we embed inside a rectangle ${\Omega={]-}(a+e),a+e{[}\times{]-}(b+e),b+e{[}}$, with $e=0.5$. The outward normal vector on $\partial\mathcal U$ satisfies $\tilde{n}=\frac{\nabla\psi}{|\nabla\psi|}$ on $\partial\mathcal{U}$ and we extend it to $\omega$ by $n=\frac{\nabla \psi}{|\nabla\psi|}$ (note that another choice of $\psi$ as the distance $d$  to the ellipse would have yielded a normalized vector $n=\nabla d$ at each point of $\omega$, but the analytic expression of the distance to the ellipse is not known, and we believe that our approach would still work for $n$ of the form $\frac{\nabla \psi}{|\nabla\psi|}$ instead of $\nabla\psi$). Figure~\ref{fig:vectorstreamplotellipse} shows the extended vector $n$ along with the corresponding integral curves. This example allows to introduce the different parts that we will encounter in this paper:
\begin{itemize}
    \item In the initial domain $\mathcal{U}$, the solution of the penalized problem converges, when $\varepsilon\to 0$, towards the solution of the initial problem, while in the complementary domain $\omega$, far from the boundary, the solution of the penalized problem converges towards the solution of
the advection-reaction equation
    \begin{equation}
    \begin{dcases}
    \nabla W\cdot n+\alpha W=g & \text{in $\omega$}\\
    W=u & \text{on $\partial\mathcal U$}.
    \end{dcases}
    \end{equation}
In the considered case (Figure~\ref{fig:resultsellipse}), since $\alpha=0$ and $g=0$, this means that $W$ is constant along the integral curves of $n$, which is indeed the case in Figure~\ref{fig:resultsellipse}, for $\varepsilon$ small enough. The convergence analysis is carried out in Section~\ref{sec:convergenceBL}.
    \item A boundary layer is present in the neighborhood of $\partial\Omega$, in order to satisfy $u_\varepsilon\in H^1_0(\Omega)$ (Sections~\ref{sec:convergenceBL} and~\ref{sec:numerics}).
    \item The use of standard finite elements can produce stability issues if $h$ is not small enough with respect to $\varepsilon$, thus the use of suitable numerical methods, e.g. upwind finite differences, in Section~\ref{sec:numerics}.
\end{itemize}


\begin{figure}[h!]
	\centering
	\centering
	\begin{subfigure}[b]{0.52\textwidth}
		\centering
		\includegraphics[width=0.65\textwidth]{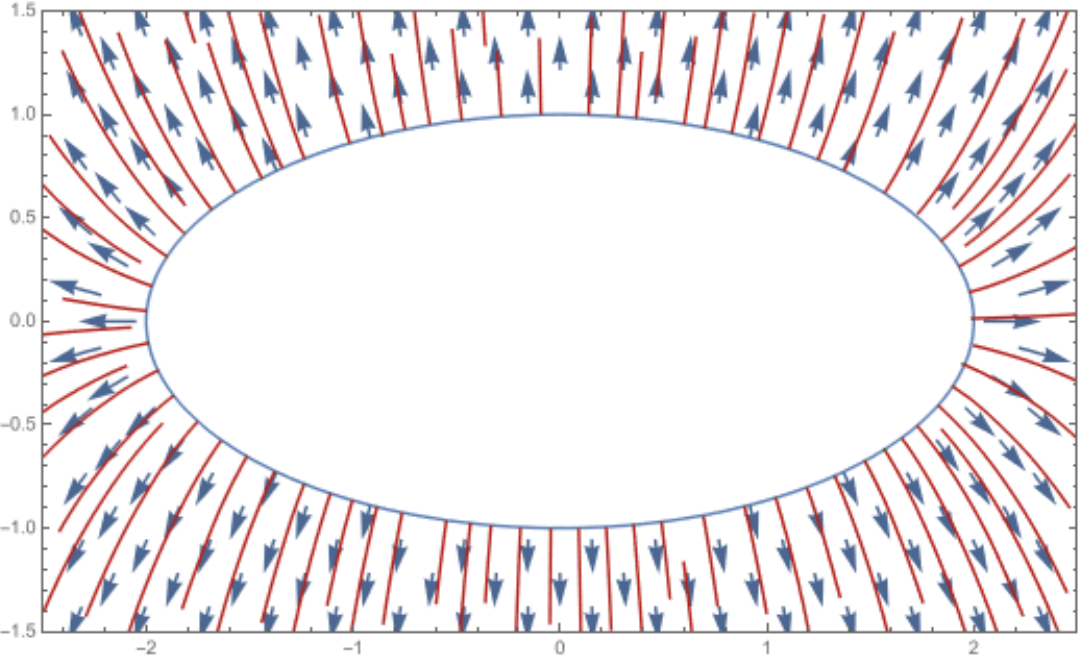}
		\vspace{1cm}
		\caption{Vector and stream plots of $n$}
		\label{fig:vectorstreamplotellipse}
		
	\end{subfigure}
	\hfill
	\begin{subfigure}[b]{0.46\textwidth}
		\centering
		\includegraphics[width=\textwidth]{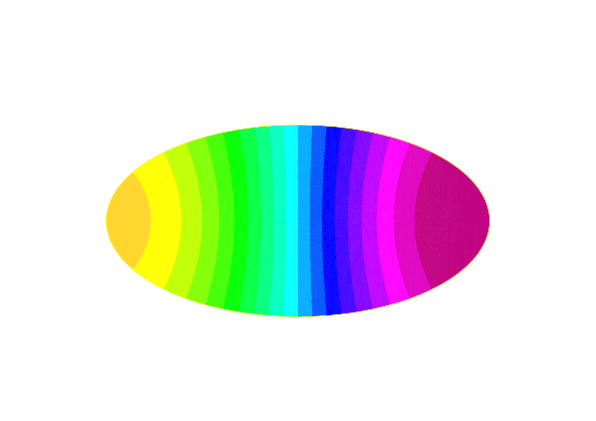}
		\caption{Numerical solution of the initial problem}
		\label{fig:exactsolellipse}
	\end{subfigure}\\
	\begin{subfigure}[b]{0.48\textwidth}
		\centering
		\includegraphics[width=\textwidth]{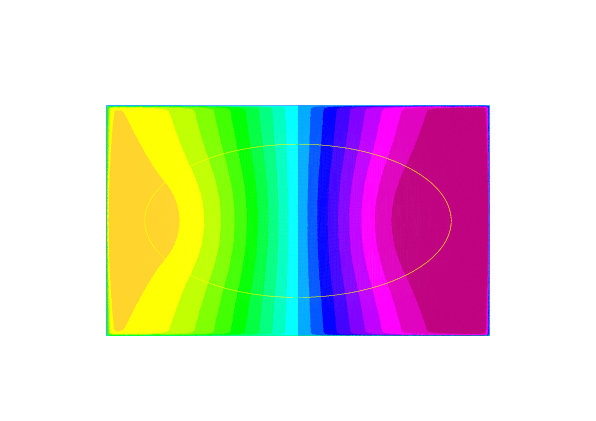}
		\caption{Numerical solution of the penalized problem (fitted mesh)}
		\label{fig:numsolellipse}
	\end{subfigure}
	\hfill
	\begin{subfigure}[b]{0.48\textwidth}
		\centering
		\includegraphics[width=\textwidth]{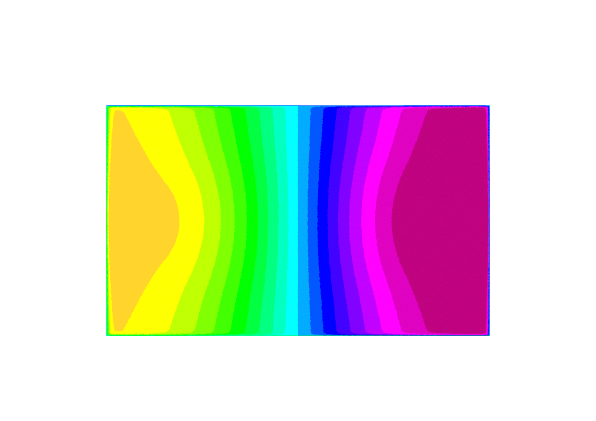}
		\caption{Numerical solution of the penalized problem (unfitted mesh)}
	\end{subfigure}
	\caption{Comparison between the numerical solution of the penalized problem~\eqref{eq:reactiondiffusionpenalized} and the numerical solution of the initial problem~\eqref{eq:reactiondiffusion} for $\alpha = 0$, $f(x,y)=\cos(x)\sin(y)$ and $\tilde{g}(x,y)=0$. We took the penalization parameter $\varepsilon=10^{-2}$ and the mesh size $h =0.04$ (fitted mesh) and $h=0.02$ (unfitted mesh).}
	\label{fig:resultsellipse}
\end{figure}

\section{Problem analysis and useful tools}\label{sec:problemform}

In this section, we present some analysis of the problem, as well as a recall of results and useful tools that will be used in the next sections, especially to establish the convergence of the penalization method. Some of these results are proved if no reference is given.

\subsection{Reaction-diffusion equation with Neumann or Robin boundary conditions; some analysis}

We first start with existence, uniqueness and stability analysis of the initial problem \eqref{eq:reactiondiffusion}.

\subsubsection{Existence, uniqueness and stability}

\begin{theorem}[Well-posedness]\label{th:wellposedness} (see for instance~\cite{E10})
If $\mathcal{U}$ is a connected, bounded, Lipschitz domain of $\RR^d$, if  $f\in L^2(\mathcal U)$, $\tilde{g}\in H^{1/2}(\partial\mathcal U)$ and $\alpha\ge 0$, then, there exists a unique solution $u\in H^1(\mathcal U)$ of~\eqref{eq:reactiondiffusion}, and it satisfies, for some $C>0$ depending only on $\mathcal U$ and $\alpha$,
\begin{equation}
\|u\|_{H^1}\le C(\|f\|_{L^2}+\|\tilde{g}\|_{H^{1/2}}).
\end{equation}

\end{theorem}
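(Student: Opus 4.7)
The plan is to obtain the result via a direct application of the Lax--Milgram theorem to a natural weak formulation of problem~\eqref{eq:reactiondiffusion} on the Hilbert space $H^1(\mathcal U)$. The Robin boundary condition is of the third kind, so it will be absorbed into the bilinear form through a boundary integral, rather than being imposed in the function space.

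First, I would derive the weak formulation. Multiplying $-\Delta u+u=f$ by a test function $v\in H^1(\mathcal U)$, integrating over $\mathcal U$ and using Green's formula, the normal-derivative term on $\partial\mathcal U$ is replaced using $\partial u/\partial\tilde n = \tilde g - \alpha u$. This yields: find $u\in H^1(\mathcal U)$ such that
\begin{equation*}
a(u,v):=\int_{\mathcal U}\nabla u\cdot\nabla v\,dx+\int_{\mathcal U} uv\,dx+\alpha\int_{\partial\mathcal U} uv\,d\sigma=\int_{\mathcal U} fv\,dx+\int_{\partial\mathcal U}\tilde g\, v\,d\sigma=:L(v)
\end{equation*}
for every $v\in H^1(\mathcal U)$. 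The equivalence between~\eqref{eq:reactiondiffusion} and this formulation is standard and relies on the regularity of $\partial\mathcal U$ (Lipschitz) so that the trace operator $\gamma:H^1(\mathcal U)\to H^{1/2}(\partial\mathcal U)$ is continuous and surjective.

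Next, I would verify the hypotheses of Lax--Milgram. Continuity of $a$ follows from Cauchy--Schwarz on the volume terms and from the continuity of the trace operator combined with the continuous embedding $H^{1/2}(\partial\mathcal U)\hookrightarrow L^2(\partial\mathcal U)$ to control the boundary term. Coercivity is the key point and is where the hypothesis $\alpha\ge 0$ is used: since the boundary term is non-negative,
\begin{equation*}
a(u,u)=\|\nabla u\|_{L^2}^2+\|u\|_{L^2}^2+\alpha\|u\|_{L^2(\partial\mathcal U)}^2\ge\|u\|_{H^1(\mathcal U)}^2,
\end{equation*}
so the $+u$ zero-order term makes coercivity automatic on the whole $H^1(\mathcal U)$, with no Poincaré-type inequality needed (in particular, connectedness of $\mathcal U$ is not truly required for coercivity). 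Continuity of $L$ follows from Cauchy--Schwarz for the volume integral, and for the boundary integral from the estimate $|\int_{\partial\mathcal U}\tilde g\,v\,d\sigma|\le\|\tilde g\|_{H^{-1/2}(\partial\mathcal U)}\|\gamma v\|_{H^{1/2}(\partial\mathcal U)}\le C\|\tilde g\|_{H^{1/2}(\partial\mathcal U)}\|v\|_{H^1(\mathcal U)}$, using $H^{1/2}\hookrightarrow H^{-1/2}$ on $\partial\mathcal U$. Lax--Milgram then delivers existence and uniqueness.

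Finally, the $H^1$-estimate is obtained by testing with $v=u$: coercivity gives $\|u\|_{H^1}^2\le a(u,u)=L(u)$, and bounding $L(u)$ as above produces $\|u\|_{H^1}^2\le C(\|f\|_{L^2}+\|\tilde g\|_{H^{1/2}})\|u\|_{H^1}$, whence the claim. No step is really difficult here; the only mildly delicate point is handling the boundary contributions through the trace operator, which is why the Lipschitz regularity of $\partial\mathcal U$ is invoked. Since the result is standard, I would otherwise simply refer to~\cite{E10}.
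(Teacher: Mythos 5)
Your proof is correct, and it is the standard Lax--Milgram argument that the cited reference (and essentially any textbook treatment) would use; the paper itself does not supply a proof and simply points to a reference. Your observation that connectedness of $\mathcal U$ is not actually needed for coercivity (because the zero-order term $\int u^2$ already gives the full $H^1$ norm) is also correct.
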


\subsubsection{Elliptic regularity}

We say that $\Omega$ is a domain of class $C^{k,\alpha}$ if its boundary $\partial\Omega$ is locally the graph of a function of class $C^{k,\alpha}$ such that $\Omega$ is situated on one single side of this graph.

\begin{theorem}[Elliptic regularity for Neumann or Robin boundary conditions]\label{th:ellipticregularity} 
Let $\mathcal U$ and $\alpha$ be as in Theorem~\ref{th:wellposedness}. If for $k\ge 0$, $\mathcal U$ is of class $C^{k+1,1}$, $f\in H^{k}(\mathcal U)$ and $\tilde{g}\in H^{k+1/2}(\partial\mathcal U)$, then the solution $u$ of~\eqref{eq:reactiondiffusion} satisfies $u\in H^{k+2}(\mathcal U)$.

\end{theorem}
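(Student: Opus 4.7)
The plan is to establish the result by induction on $k$, using Nirenberg's difference quotient technique together with localization and a flattening of the boundary, which is the standard machinery for elliptic regularity up to the boundary. The base case $k=0$ carries the bulk of the analytic work; the inductive step is essentially a bookkeeping argument obtained by differentiating the PDE and boundary condition.

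For the base case, I would split the analysis into interior and boundary regularity. Interior regularity is classical: picking any open set $\mathcal{V}\Subset\mathcal{U}$, a cutoff $\zeta\in C^\infty_c(\mathcal{U})$ equal to $1$ on $\mathcal{V}$, and using the difference quotient $D_i^h u := (u(\cdot+he_i)-u)/h$ as a test function against $\zeta^2 D_i^h u$ in the weak formulation $\int(\nabla u\cdot\nabla v + uv) = \int fv$, one obtains a uniform $H^1$ bound on $D_i^h u$ on $\mathcal{V}$, hence $\partial_i u\in H^1_{\mathrm{loc}}(\mathcal{U})$ and $u\in H^2_{\mathrm{loc}}(\mathcal{U})$.

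The delicate step is boundary regularity, which I would do in local charts. Using the $C^{1,1}$ (hence $C^{k+1,1}$) regularity of $\partial\mathcal U$, for each boundary point $x_0$ I would pick a diffeomorphism $\Phi$ of class $C^{k+1,1}$ flattening $\partial\mathcal U$ onto a piece of the hyperplane $\{y_d=0\}$ and mapping a neighborhood of $x_0$ in $\mathcal U$ onto a half-ball $B^+$. Pulling back, $\tilde u = u\circ\Phi^{-1}$ solves a second-order elliptic equation with $C^{k,1}$ coefficients and a Robin-type condition on $\{y_d=0\}$. Now difference quotients $D_i^h$ in the tangential directions $i=1,\dots,d-1$ are legitimate test functions (multiplied by a cutoff supported in the half-ball) and, as in the interior case, they yield tangential $H^1$-control, so $\partial_i\tilde u\in H^1$ locally for $i<d$. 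Finally, $\partial_d^2\tilde u$ is recovered algebraically by solving the PDE itself for that term (the coefficient in front is the ellipticity constant, which is nonzero); the right-hand side involves $f$, $\tilde u$ and only tangential second derivatives, all already controlled. Gluing the local estimates by a partition of unity and combining with interior regularity gives $u\in H^2(\mathcal U)$ for $k=0$.

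For the induction step, assuming the result for $k-1$, I would differentiate the equation and the boundary condition tangentially: a tangential derivative $\partial_\tau u$ satisfies an elliptic problem of the same form with right-hand side involving $\partial_\tau f \in H^{k-1}$ and a boundary datum involving $\partial_\tau \tilde g$ and $\partial_\tau\alpha \cdot u$, which lie in $H^{k-1/2}(\partial\mathcal U)$ thanks to the $C^{k+1,1}$ regularity of $\partial\mathcal U$ (which guarantees that the tangent vectors are $C^{k,1}$). The induction hypothesis yields $\partial_\tau u\in H^{k+1}$, and normal derivatives are again recovered from the equation. The main obstacle is precisely the boundary treatment: keeping track of the commutators between tangential differentiation and the diffeomorphism $\Phi$, and verifying that the transformed Robin condition remains of the admissible form with data of the required Sobolev regularity — this is where the hypothesis $\mathcal U\in C^{k+1,1}$ (rather than merely $C^{k+1}$) is crucial, as it provides the Lipschitz bound on the highest-order derivatives of the change of coordinates.
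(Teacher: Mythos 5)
Your proposal is correct in outline, but it takes a genuinely different — and far more laborious — route than the paper. The paper never touches difference quotients or boundary flattening: it quotes elliptic regularity for the pure \emph{Laplace--Neumann} problem (Theorem~\ref{th:ellipticregularityNeumann}) as a black box and bootstraps. The trick is to rewrite~\eqref{eq:reactiondiffusion} as a Neumann problem for $-\Delta u_*$ by moving the zeroth-order terms into the data: $f_v = f - u$ and $f_b = \tilde g - \alpha u$. Since the Robin solution $u$ is already known to lie in $H^1(\mathcal U)$, one immediately has $f_v \in L^2(\mathcal U)$ and, via the trace theorem, $f_b \in H^{1/2}(\partial\mathcal U)$; the compatibility condition $\int f_v + \langle f_b, 1\rangle = 0$ holds automatically because the problem already possesses a solution. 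Applying the Neumann result once gives $u \in H^2$, and the inductive step simply feeds the improved regularity of $u$ back into $f_v$ and $f_b$. Your approach rebuilds the machinery those citations encapsulate — tangential difference quotients in boundary charts, algebraic recovery of the normal second derivative from the PDE, commutator bookkeeping in the inductive step. It is self-contained where the paper leans on a reference, which is a genuine benefit if one does not want to invoke the Neumann result; the cost is that the details you wave at (admissibility of difference quotients as test functions against the boundary integral $\alpha \int_{\partial\mathcal U} u\,v$, the commutators between tangential fields and the normal derivative under a $C^{k+1,1}$ chart) are exactly the ones that make the from-scratch proof delicate, and the paper's bootstrap neatly sidesteps all of them. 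One small remark: $\alpha$ is a scalar constant here, so the $\partial_\tau\alpha\cdot u$ term you mention in the inductive step vanishes.
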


\begin{proof}
We use the elliptic regularity property for the Laplace-Neumann problem (\cite{BookBoyerFabrie}, \cite{E10}) and a bootstrapping argument. Indeed, the following theorem holds for Neumann boundary conditions.

\begin{theorem}[Elliptic regularity for the Laplace-Neumann problem]\label{th:ellipticregularityNeumann}    (\cite{BookBoyerFabrie}, \cite{E10})
Let $\mathcal U$ be a connected, bounded, Lipschitz domain of $\RR^d$. Let $f_v\in L^2(\mathcal U)$ and $f_b\in H^{-1/2}(\partial\mathcal U)$ satisfying the compatibility condition
\begin{equation}\label{eq:compatibilitycondition}
    \int_{\mathcal U} f_v+\langle f_b,1\rangle_{H^{-1/2},H^{1/2}}=0.
\end{equation}
There exists a unique solution $u\in H^1(\mathcal U)\cap L^2_0(\mathcal U)=\{v\in H^1(\mathcal U), \ \int_{\mathcal U} v =0\}$ of
\begin{equation}
\begin{dcases}
    -\Delta u=f_v & \text{in $\mathcal U$}\\
    \frac{\partial u}{\partial\nu}=f_b & \text{on $\partial\mathcal U$},
\end{dcases}
\end{equation}
and it satisfies, for some $C>0$ depending only on $\mathcal U$,
\begin{equation}
    \|u\|_{H^1}\le C (\|f_v\|_{L^2}+\|f_b\|_{H^{-1/2}}).
\end{equation}
Moreover, if for $k\ge 0$, $\mathcal U$ is of class $C^{k+1,1}$, $f_v\in H^k$ and $f_b\in H^{k+1/2}(\partial\mathcal U)$ then $u\in H^{k+2}(\mathcal U)$ and we have
\begin{equation*}
    \|u\|_{H^{k+2}}\le C (\|f_v\|_{H^k}+\|f_b\|_{H^{k+1/2}}).
\end{equation*}
\end{theorem}

We use a proof by induction on $k\ge0$.

\begin{itemize}
    \item We first prove the result for $k=0$: We assume $\mathcal U$ of class $C^{1,1}$, $f\in H^0(\mathcal U)$ and $\tilde{g}\in H^{1/2}(\partial\Omega)$. The solution $u\in H^1(\mathcal{U})$ of~\eqref{eq:reactiondiffusion} is solution to the following Laplace-Neumann problem
    \begin{equation}\label{eq:neumannproblem}
    \begin{dcases}
    -\Delta u_*=f-u=f_v & \text{in $\mathcal U$}\\
    \frac{\partial u_*}{\partial\nu}=\tilde{g}-\alpha u=f_b & \text{on $\partial\mathcal U$}.
    \end{dcases}
    \end{equation}
    Since the previous problem admits a solution, the compatibility condition~\eqref{eq:compatibilitycondition} is necessarily satisfied, and using Theorem~\ref{th:ellipticregularityNeumann}, there exists a unique solution $u_*\in H^1(\mathcal U)$ such that $\int_{\mathcal U} u_*=0$, and $u=u_*+c$ where ${c=\frac1{\mu(\mathcal U)}\int_{\mathcal U} u}$ is a constant. Using the elliptic regularity for the Neumann-Laplace problem, since $f_v\in H^0(\mathcal U)$ and $f_b\in H^{1/2}(\partial\mathcal U)$, we obtain $u_*\in H^2(\mathcal U)$, thus $u\in H^2(\mathcal U)$.

    \item We assume the proposition true for a given $k\ge 0$ and prove it holds for $k+1$: We assume $\mathcal U$ of class $C^{k+2,1}$, $f\in H^{k+1}(\mathcal U)$ and $\tilde{g}\in H^{k+1+1/2}(\partial\Omega)$. By the induction hypothesis, the solution $u$ of~\eqref{eq:reactiondiffusion} satisfies $u\in H^{k+2}(\mathcal{U})$. Moreover, $u$ is solution to the Laplace-Neumann problem~\eqref{eq:neumannproblem}, therefore using again the elliptic regularity for the Laplace-Neumann problem (Theorem~\ref{th:ellipticregularityNeumann}), we obtain $u\in H^{k+3}(\mathcal U)$, since $f_v\in H^{k+1}(\mathcal U)$ and $f_b\in H^{k+1+1/2}(\partial\mathcal U)$. 

    \item The proposition thus holds for all $k\ge 0$, by induction.
\end{itemize}
\end{proof}

\subsubsection{Maximum principle}

\begin{theorem}[Maximum principle for Neumann or Robin boundary conditions]\label{th:maximumprinciple}
Let $\mathcal U$, $f$, $\tilde{g}$ and $\alpha$ be as in Theorem~\ref{th:wellposedness}.  If $f\ge 0$ and $\tilde{g}\ge 0$ almost everywhere, then the solution to~\eqref{eq:reactiondiffusion} satisfies $u\ge 0$ almost everywhere in $\mathcal U$.
\end{theorem}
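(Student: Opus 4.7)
The plan is to use the standard energy argument: test the weak formulation of \eqref{eq:reactiondiffusion} against the negative part $u^- := \max(-u,0)$ and show it must vanish. The key tool is Stampacchia's theorem, which guarantees that $u^- \in H^1(\mathcal U)$ whenever $u \in H^1(\mathcal U)$, that $\nabla u^- = -\mathbf{1}_{\{u<0\}} \nabla u$ (so $\nabla u \cdot \nabla u^- = -|\nabla u^-|^2$ a.e.), and that the trace commutes with taking the negative part, i.e. $(u^-)|_{\partial\mathcal U} = (u|_{\partial\mathcal U})^-$.

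First I would write the weak form of \eqref{eq:reactiondiffusion}: for every $v \in H^1(\mathcal U)$,
\begin{equation*}
\int_{\mathcal U} \nabla u \cdot \nabla v \,dx + \int_{\mathcal U} u\, v\,dx + \alpha \int_{\partial \mathcal U} u\, v \, d\sigma \;=\; \int_{\mathcal U} f\, v \,dx + \int_{\partial \mathcal U} \tilde g\, v\, d\sigma,
\end{equation*}
obtained from the Green formula, which is valid since $f \in L^2(\mathcal U)$, $\tilde g \in H^{1/2}(\partial \mathcal U) \subset L^2(\partial \mathcal U)$, and traces of $H^1$ functions lie in $H^{1/2}(\partial \mathcal U)$.

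Next I would take the test function $v = u^- \in H^1(\mathcal U)$. Using the Stampacchia identities and the fact that $u \cdot u^- = -(u^-)^2$ pointwise (both in $\mathcal U$ and on $\partial \mathcal U$ via traces), the left-hand side becomes
\begin{equation*}
- \int_{\mathcal U} |\nabla u^-|^2 \,dx \;-\; \int_{\mathcal U} (u^-)^2\, dx \;-\; \alpha \int_{\partial \mathcal U} (u^-)^2 \,d\sigma,
\end{equation*}
while the right-hand side is
\begin{equation*}
\int_{\mathcal U} f\, u^- \,dx + \int_{\partial \mathcal U} \tilde g\, u^- \, d\sigma \;\ge\; 0
\end{equation*}
by the hypotheses $f\ge 0$, $\tilde g \ge 0$ and the nonnegativity of $u^-$.

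Combining these two, and using $\alpha \ge 0$, yields
\begin{equation*}
\|u^-\|_{H^1(\mathcal U)}^2 = \int_{\mathcal U} |\nabla u^-|^2\, dx + \int_{\mathcal U} (u^-)^2\, dx \;\le\; 0,
\end{equation*}
forcing $u^- \equiv 0$ in $\mathcal U$, i.e.\ $u \ge 0$ a.e., as claimed. The only real subtlety is the invocation of Stampacchia's truncation result together with the compatibility of traces with the positive/negative part decomposition; once these are in place the computation is routine, and the argument treats the Neumann case $\alpha = 0$ and the Robin case $\alpha > 0$ uniformly.
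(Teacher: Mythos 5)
Your proof is correct and follows exactly the paper's own argument: test the weak formulation against $u^- = \max(-u,0)$, use the truncation identities to reduce the left-hand side to $-\|u^-\|_{H^1}^2 - \alpha\int_{\partial\mathcal U}(u^-)^2$, observe the right-hand side is nonnegative, and conclude $u^-\equiv 0$. The only difference is that you explicitly name Stampacchia's truncation theorem and the trace compatibility $(u^-)|_{\partial\mathcal U} = (u|_{\partial\mathcal U})^-$, which the paper uses without citation; this is a welcome clarification rather than a new approach.
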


\begin{proof}
The proof is identical to the one for the Laplace-Dirichlet problem \cite{AE23}, \cite{BookBrezis}. We use the weak formulation of~\eqref{eq:reactiondiffusion} with the test function $v=u^-=\max(-u,0) \in H^1(\mathcal U)$. We obtain
\begin{align*}
    \int_{\mathcal U} \nabla u\cdot\nabla u^-+\int_{\mathcal U} u u^-+\alpha\int_{\partial\mathcal U} u u^-&=\int_{\mathcal U} f u^-+\int_{\partial\mathcal U} \tilde{g} u^- \\
    -\int_{\mathcal U} \mathds{1}_{u<0} |\nabla u|^2-\int_{\mathcal U} \mathds{1}_{u<0} |u|^2-\alpha\int_{\partial\mathcal U} \mathds{1}_{u<0} |u|^2&=\int_{\mathcal U} f u^-+\int_{\partial\mathcal U} \tilde{g} u^-\\
    -\int_{\mathcal U} |\nabla u^-|^2-\int_{\mathcal U} |u^-|^2-\alpha\int_{\partial\mathcal U} |u^-|^2&=\int_{\mathcal U} f u^-+\int_{\partial\mathcal U} \tilde{g} u^-\\
    -\|u^-\|^2_{H^1}-\alpha\int_{\partial\mathcal U} |u^-|^2&=\int_{\mathcal U} f u^-+\int_{\partial\mathcal U} \tilde{g} u^-.
\end{align*}
The right hand side is positive by the positivity of $f$ and $\tilde{g}$, while the first hand side is negative. Thus all the terms vanish, in particular $\|u^-\|_{H^1}=0$, thus $u^-=0$, that is $u\ge 0$ almost everywhere in $\mathcal U$.
\end{proof}

\begin{corollary}\label{cor:maximumprinciplebis}
Let $\mathcal U$, $f$, $\tilde{g}$ and $\alpha$ be as in Theorem~\ref{th:wellposedness}. If $f\ge c>0$ (where $c$ is a constant) and $\tilde{g}\ge \alpha c\ge 0$ almost everywhere, then the solution to~\eqref{eq:reactiondiffusion} satisfies $u\ge c>0$ almost everywhere in $\mathcal U$.
\end{corollary}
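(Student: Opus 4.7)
The plan is to reduce the statement to Theorem~\ref{th:maximumprinciple} by translating the unknown. Specifically, I would set $v := u - c$, which lies in $H^1(\mathcal U)$ since $c$ is a constant, and compute the PDE and boundary condition it satisfies.

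First, since $-\Delta c = 0$ and $\alpha c$ is constant, plugging $v = u - c$ into the equation and boundary condition in \eqref{eq:reactiondiffusion} gives
\begin{equation*}
-\Delta v + v = f - c \quad \text{in } \mathcal U, \qquad \frac{\partial v}{\partial \tilde n} + \alpha v = \tilde g - \alpha c \quad \text{on } \partial \mathcal U.
\end{equation*}
So $v$ is the (unique, by Theorem~\ref{th:wellposedness}) solution of a problem of the same form~\eqref{eq:reactiondiffusion}, with data $f_v := f - c$ and $\tilde g_v := \tilde g - \alpha c$. The hypotheses $f \ge c$ and $\tilde g \ge \alpha c$ translate exactly into $f_v \ge 0$ and $\tilde g_v \ge 0$ almost everywhere, and these data still satisfy the regularity requirements ($f_v \in L^2(\mathcal U)$, $\tilde g_v \in H^{1/2}(\partial \mathcal U)$ since we only subtract a constant).

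Then I would apply Theorem~\ref{th:maximumprinciple} to $v$, obtaining $v \ge 0$ a.e.\ in $\mathcal U$, which is exactly $u \ge c$ a.e.\ in $\mathcal U$. Since $c > 0$, we also have the strict inequality $u \ge c > 0$ stated in the corollary.

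There is really no obstacle here: the whole argument is a one-line shift reducing to the previously established maximum principle. The only minor thing worth double-checking is that the compatibility of data is preserved under the shift (in particular that $\tilde g - \alpha c \in H^{1/2}(\partial \mathcal U)$), which is immediate since $H^{1/2}(\partial \mathcal U)$ contains constants whenever $\partial \mathcal U$ is bounded and Lipschitz.
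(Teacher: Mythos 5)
Your proof is correct and follows essentially the same route as the paper: shift by the constant $c$, observe that $v=u-c$ solves the same type of reaction-diffusion problem with data $f-c\ge 0$ and $\tilde g-\alpha c\ge 0$, and apply Theorem~\ref{th:maximumprinciple}. The extra remark about the data remaining in $L^2$ and $H^{1/2}$ after subtracting a constant is a harmless, correct aside not spelled out in the paper.
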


\begin{proof}
    $U=u-c$ is solution to the following reaction-diffusion problem
    \begin{equation}\label{eq:maxprinicple}
    \begin{dcases}
    \text{Find $U\in H^1(\mathcal{U})$ such that}\\
    -\Delta U+U=f-c & \text{in $\mathcal{U}$}\\
    \frac{\partial U}{\partial\tilde{n}}+\alpha U=\tilde{g}-\alpha c & \text{on $\partial\mathcal U$}, 
    \end{dcases}
\end{equation}
and the conclusion $U\ge 0$ a.e. follows using Theorem~\ref{th:maximumprinciple}. 
\end{proof}

\subsection{Mathematical tools}

\subsubsection{The space $H_{\mathrm{div}}$}

We recall the definition of the space $H_{\mathrm{div}}(\Omega)$ and some of its properties that will be crucial for our study. The proofs can be found in \cite{BookBoyerFabrie}.

\begin{definition}
Let $\Omega$ be a Lipschitz bounded domain of $\RR^d$. We introduce the space
$$H_\mathrm{div}(\Omega)=\{u\in (L^2(\Omega))^d, \ \mathrm{div}\,u\in L^2(\Omega)\},$$
equipped with the graph norm $u\mapsto (\|u\|_{L^2}^2+\|\mathrm{div}\,u\|_{L^2}^2)^{1/2}$.
\end{definition}

\begin{theorem}\label{th:ippHdiv}
    There exists a continuous trace operator $\gamma_\nu$ from $H_\mathrm{div}(\Omega)$ into $H^{-1/2}(\partial\Omega)$ such that $\gamma_\nu(u)=u\cdot \nu$ for any $u\in(C^\infty_c(\overline{\Omega}))^d$, with $\nu$ the outward normal to $\Omega$. Moreover the following Stokes (integration by parts) formula holds, for any $u\in H_\mathrm{div}(\Omega)$ and any $w\in H^1(\Omega)$,
    \begin{equation}\label{eq:ippHdiv}
    \int_\Omega u\cdot \nabla w +\int_\Omega w\, \mathrm{div}\, u=\langle \gamma_\nu (u),\gamma_0(w) \rangle_{H^{-1/2},H^{1/2}},
    \end{equation}
    where $\gamma_0$ stands for the classical trace operator from $H^1(\Omega)$ to $H^{1/2}(\partial\Omega)$.
\end{theorem}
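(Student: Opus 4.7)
The plan is the standard construction of the normal trace on $H_{\mathrm{div}}$, splitting into: (i) definition of $\gamma_\nu$ by duality via the Stokes formula, (ii) proof that this definition is consistent with the pointwise one for smooth vector fields, and (iii) extension to all of $H_{\mathrm{div}}(\Omega)$. I would assume three classical facts as already established in the references cited: density of $(C^\infty(\overline{\Omega}))^d$ in $H_{\mathrm{div}}(\Omega)$ for the graph norm, surjectivity of the classical trace operator $\gamma_0 : H^1(\Omega) \to H^{1/2}(\partial\Omega)$ with a continuous right inverse (lifting) $R : H^{1/2}(\partial\Omega) \to H^1(\Omega)$, and the classical divergence theorem for $u, w$ smooth up to the boundary.

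\textbf{Step 1: Defining $\gamma_\nu(u)$ as an element of $H^{-1/2}(\partial\Omega)$.} For $u \in H_{\mathrm{div}}(\Omega)$ fixed, I would introduce the linear form
\[
L_u : H^{1/2}(\partial\Omega) \longrightarrow \RR, \qquad L_u(\phi) := \int_\Omega u \cdot \nabla (R\phi) + \int_\Omega (R\phi)\, \mathrm{div}\,u,
\]
where $R\phi \in H^1(\Omega)$ is any continuous lifting of $\phi$. By Cauchy--Schwarz,
\[
|L_u(\phi)| \le \|u\|_{L^2} \|\nabla(R\phi)\|_{L^2} + \|\mathrm{div}\,u\|_{L^2}\|R\phi\|_{L^2} \le \sqrt{2}\, \|u\|_{H_{\mathrm{div}}} \|R\phi\|_{H^1} \le C\,\|u\|_{H_{\mathrm{div}}}\|\phi\|_{H^{1/2}}.
\]
The key sub-step is independence from the choice of lifting: if $R_1\phi$ and $R_2\phi$ both lift $\phi$, their difference $v$ lies in $H^1_0(\Omega)$, so by density of $C^\infty_c(\Omega)$ in $H^1_0(\Omega)$ and the obvious Stokes identity $\int_\Omega u\cdot\nabla v + \int_\Omega v\,\mathrm{div}\,u = 0$ for $v \in C^\infty_c(\Omega)$ (no boundary term), the value of $L_u(\phi)$ does not depend on the lifting. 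Thus $L_u \in H^{-1/2}(\partial\Omega)$ with $\|L_u\|_{H^{-1/2}} \le C\|u\|_{H_{\mathrm{div}}}$, and we set $\gamma_\nu(u) := L_u$. The map $u \mapsto \gamma_\nu(u)$ is then linear and continuous from $H_{\mathrm{div}}(\Omega)$ into $H^{-1/2}(\partial\Omega)$, and the Stokes formula~\eqref{eq:ippHdiv} holds \emph{by construction} for every $u \in H_{\mathrm{div}}(\Omega)$ and every $w \in H^1(\Omega)$ (taking $\phi = \gamma_0(w)$ and using that $w$ is itself a lifting of $\gamma_0(w)$).

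\textbf{Step 2: Agreement with $u \cdot \nu$ on smooth fields.} For $u \in (C^\infty_c(\overline{\Omega}))^d$ and any $w \in C^\infty(\overline{\Omega})$, the classical divergence theorem gives
\[
\int_\Omega u \cdot \nabla w + \int_\Omega w\,\mathrm{div}\,u \;=\; \int_{\partial\Omega} (u\cdot \nu)\, w\,\mathrm{d}\sigma \;=\; \langle u\cdot\nu,\, \gamma_0(w)\rangle_{H^{-1/2},H^{1/2}},
\]
where $u\cdot\nu \in L^2(\partial\Omega) \hookrightarrow H^{-1/2}(\partial\Omega)$ and the duality bracket coincides with the surface integral. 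By definition of $L_u$, the left-hand side equals $\langle \gamma_\nu(u), \gamma_0(w)\rangle$. Since $\gamma_0$ is surjective onto $H^{1/2}(\partial\Omega)$ with dense image already reached on smooth $w$, comparing the two identities yields $\gamma_\nu(u) = u\cdot\nu$ in $H^{-1/2}(\partial\Omega)$.

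\textbf{Step 3: Conclusion.} Combining the continuous extension from Step 1 with the identification from Step 2 delivers the claimed operator. The density of $(C^\infty(\overline{\Omega}))^d$ in $H_{\mathrm{div}}(\Omega)$ is not strictly required for the above, but it provides an alternative proof: one defines $\gamma_\nu$ first on smooth fields, shows the estimate $|\int_{\partial\Omega}(u\cdot\nu)w| \le C\|u\|_{H_{\mathrm{div}}}\|w\|_{H^1}$ by taking the infimum over liftings of $\gamma_0(w)$, and extends by density. The only real difficulty is the independence-of-lifting argument in Step 1, which is itself routine once one knows the Stokes identity holds without boundary contribution for test functions in $H^1_0(\Omega)$; everything else reduces to Cauchy--Schwarz and the continuity of the trace and lifting maps.
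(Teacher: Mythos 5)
Your proof is correct and is the standard duality construction of the normal trace; the paper itself gives no proof and simply cites Boyer--Fabrie for this result, and your argument matches the textbook approach that reference presumably contains. The only detail worth flagging is that in Step 2 you invoke surjectivity of $\gamma_0$ together with the density of smooth traces to conclude $\gamma_\nu(u)=u\cdot\nu$; this is fine, but it is slightly cleaner to note directly that $\gamma_0(C^\infty(\overline{\Omega}))$ is dense in $H^{1/2}(\partial\Omega)$ so the two bounded functionals $\gamma_\nu(u)$ and $u\cdot\nu$ agree on a dense subspace and hence coincide.
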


\subsubsection{Sobolev embeddings}\label{sec:sobolevembeddings}

The Sobolev inclusions that will be useful for our study concern inclusions between $H^k$ and $C^m$ spaces \cite{BookBrezis}.

\begin{theorem}[Sobolev embeddings]
    Let $\Omega$ be a Lipschitz open set of $\RR^d$, and $s\in\NN$ such that $2s>d$. Let $k\in\NN$ such that $s-k>d/2$. Then, the following continuous inclusion holds
    \begin{equation*}
        H^s(\Omega)\hookrightarrow C^k(\overline{\Omega}).
    \end{equation*}
\end{theorem}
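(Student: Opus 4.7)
The plan is a three-step reduction: reduce to $\RR^d$ via extension, prove the embedding on $\RR^d$ by Fourier analysis, and then upgrade from $k=0$ to general $k$ by differentiation. Note first that the two numerical assumptions collapse: since $k\ge 0$, $s-k>d/2$ already implies $2s>d$, so the real hypothesis to exploit is $s-k>d/2$.

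First, I would invoke a Stein-type extension theorem for Lipschitz domains to obtain a bounded linear operator $E: H^s(\Omega)\to H^s(\RR^d)$ with $(Eu)|_\Omega = u$ and $\|Eu\|_{H^s(\RR^d)}\le C_\Omega \|u\|_{H^s(\Omega)}$. This moves the whole problem onto $\RR^d$, where Fourier analysis is available, at the mild cost of producing the constant $C_\Omega$ which will ultimately appear in the continuity constant of the embedding. Once the embedding $H^s(\RR^d)\hookrightarrow C^k_b(\RR^d)$ is established, restricting $Eu$ to $\overline{\Omega}$ gives the desired element of $C^k(\overline{\Omega})$.

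Second, I would handle the scalar case $k=0$ on $\RR^d$ (under the sharper condition $s>d/2$) by a Cauchy--Schwarz argument on the Fourier side. For $v\in H^s(\RR^d)$,
\begin{equation*}
\int_{\RR^d} |\hat v(\xi)|\,d\xi \;=\; \int_{\RR^d}(1+|\xi|^2)^{s/2}|\hat v(\xi)|\cdot(1+|\xi|^2)^{-s/2}\,d\xi \;\le\; \|v\|_{H^s(\RR^d)}\,\Bigl(\int_{\RR^d}(1+|\xi|^2)^{-s}\,d\xi\Bigr)^{1/2},
\end{equation*}
and the last integral converges precisely when $2s>d$. Therefore $\hat v\in L^1(\RR^d)$, so by Fourier inversion and the Riemann--Lebesgue lemma, $v$ coincides almost everywhere with a function in $C^0_b(\RR^d)$, with $\|v\|_{L^\infty}\le C\|v\|_{H^s(\RR^d)}$.

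Third, I would bootstrap to general $k$: for any multi-index $\alpha$ with $|\alpha|\le k$, differentiation is a bounded operator $H^s(\RR^d)\to H^{s-|\alpha|}(\RR^d)$, and $s-|\alpha|\ge s-k>d/2$. Applying the $k=0$ case to $\partial^\alpha(Eu)$ shows every such derivative is continuous and bounded, hence $Eu\in C^k_b(\RR^d)$ with $\|Eu\|_{C^k}\le C\|Eu\|_{H^s(\RR^d)}$. Composing all three steps yields the continuous inclusion $H^s(\Omega)\hookrightarrow C^k(\overline{\Omega})$. The only genuinely non-trivial ingredient is the existence of a bounded extension operator on a Lipschitz domain, which I would simply quote from Stein (or Brezis, as cited in the paper); the remainder is a direct Fourier-side computation.
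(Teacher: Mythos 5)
Your proof is correct. Note that the paper itself gives no proof of this statement: it is quoted directly from the cited reference (Brezis), so there is no internal argument to compare against. Your route — Stein extension to $\RR^d$ for a Lipschitz domain, the Cauchy--Schwarz estimate $\|\hat v\|_{L^1}\le \|v\|_{H^s}\bigl(\int(1+|\xi|^2)^{-s}d\xi\bigr)^{1/2}$ with convergence exactly when $2s>d$, and then bootstrapping through $\partial^\alpha\colon H^s\to H^{s-|\alpha|}$ — is the standard textbook argument and is sound; your remark that $s-k>d/2$ already implies $2s>d$ is also accurate. The only step you gloss over is the passage from ``each distributional derivative $\partial^\alpha(Eu)$, $|\alpha|\le k$, has a continuous bounded representative'' to ``$Eu\in C^k_b(\RR^d)$ with classical derivatives equal to the distributional ones''; this needs a short mollification argument (smooth approximations whose derivatives up to order $k$ converge uniformly, by the $k=0$ bound applied to the differences), after which restriction to $\overline{\Omega}$ gives the stated inclusion with constant $C\,C_\Omega$. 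With that sentence added, the argument is complete.
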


\begin{corollary}
    Let $\Omega$ be a Lipschitz open set of $\RR^d$. Then the following Sobolev embedding holds, for all $k\in\NN$,
        \begin{equation*}
        H^{k+\lfloor \frac{d}2\rfloor+1}(\Omega)\hookrightarrow C^k(\overline{\Omega}).
    \end{equation*}
    For $d=2$ or $3$, one has
    \begin{equation*}
        H^{k+2}(\Omega)\hookrightarrow C^k(\overline{\Omega}).
    \end{equation*}
\end{corollary}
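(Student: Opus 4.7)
The plan is to obtain both embeddings as direct consequences of the preceding Sobolev embedding theorem by choosing an appropriate integer exponent $s$. Specifically, I would set $s := k + \lfloor d/2 \rfloor + 1$ and verify that both hypotheses of the theorem, namely $2s > d$ and $s - k > d/2$, are satisfied; applying the theorem then yields $H^{k+\lfloor d/2\rfloor+1}(\Omega) \hookrightarrow C^k(\overline{\Omega})$.

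The verification of the hypotheses is the only real content. For the second hypothesis, I would split cases on the parity of $d$: if $d$ is even, then $d/2 = \lfloor d/2 \rfloor$ is an integer and $s - k = \lfloor d/2 \rfloor + 1 > d/2$; if $d$ is odd, then $\lfloor d/2 \rfloor = (d-1)/2$ and $s - k = (d-1)/2 + 1 = (d+1)/2 > d/2$. In both cases the strict inequality $s - k > d/2$ holds. For the first hypothesis, $2s = 2k + 2\lfloor d/2 \rfloor + 2 \ge d + 2 > d$ regardless of parity and for every $k \in \NN$.

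The second embedding, for $d = 2$ or $d = 3$, is then an immediate substitution: in both cases $\lfloor d/2 \rfloor = 1$, so $k + \lfloor d/2 \rfloor + 1 = k + 2$, which yields $H^{k+2}(\Omega) \hookrightarrow C^k(\overline{\Omega})$.

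There is no real obstacle here; the statement is a packaging of the previous theorem into a form directly usable in the sequel, and the only thing to be careful about is handling the floor function correctly when $d$ is odd (so that the strict inequality $s-k>d/2$ is preserved and one does not accidentally lose a factor by rounding the wrong way).
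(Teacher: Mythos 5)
Your proof is correct and is the natural (essentially forced) verification; the paper itself states this corollary without proof, as an immediate consequence of the preceding theorem. One small slip: your claim that $2s = 2k + 2\lfloor d/2\rfloor + 2 \ge d+2$ "regardless of parity" fails for odd $d$, where $2\lfloor d/2\rfloor + 2 = d+1$, so for $k=0$ one only gets $2s \ge d+1$; the needed inequality $2s>d$ still holds, so the conclusion is unaffected. You could also simply note that $2s>d$ is automatic from $s-k>d/2$ together with $k\ge 0$, which avoids the parity check altogether.
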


\subsubsection{Boundary regularity and distance to the boundary}\label{sec:distanceboundary}

For any $x\in\Omega$, we define $\delta(x)$  the signed distance from $x$ to the boundary, which is defined by
\begin{equation}
    \delta(x)=
    \begin{dcases}
        d(x,\partial\Omega) & \text{for $x\in\overline{\Omega}$}\\
        -d(x,\partial\Omega) & \text{for $x\notin\Omega$}.
    \end{dcases}
\end{equation}
$\delta$ is Lipschitz continuous on $\RR^d$. In addition, the regularity of $\Omega$ translates on the regularity of $\delta$ \cite{BookBoyerFabrie}.

\begin{theorem}[Regularity of the signed distance function]\label{th:distanceregularity}
    If $\Omega$ is a domain of $\RR^d$ of class $C^{k+1,1}$, $k\ge 0$, with compact boundary, then there exists $\gamma>0$ such that $\delta$ is $C^{k+1,1}$ in
    \begin{equation*}
        \mathcal{O}_\gamma=\{x\in\Omega, \ \delta(x)<\gamma\}.
    \end{equation*}
\end{theorem}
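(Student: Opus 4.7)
The plan is to establish the regularity of $\delta$ locally via a tubular neighborhood construction around $\partial\Omega$, then globalize using compactness. The key identity to exploit is $\nabla\delta(x)=\nu(\pi(x))$, where $\pi(x)$ is the projection of $x$ onto $\partial\Omega$; this relation is what gains us the extra derivative from the natural $C^{k,1}$ regularity produced by the inverse function theorem.

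First, I would work locally. Fix $x_0\in\partial\Omega$. Because $\Omega$ is of class $C^{k+1,1}$, there exists a neighborhood $V$ of $x_0$ and a local chart in which $\partial\Omega\cap V$ is the graph of a function $\phi\in C^{k+1,1}$ and $\Omega$ lies on one side. The inward unit normal along this graph is expressed through $\nabla\phi$ and is therefore of class $C^{k,1}$ on $\partial\Omega\cap V$. Let $\nu(y)$ denote this unit normal at a point $y\in\partial\Omega\cap V$, and consider the map
\begin{equation*}
\Psi:(y,t)\longmapsto y-t\,\nu(y),
\end{equation*}
defined on $(\partial\Omega\cap V)\times(-\tau,\tau)$ for some small $\tau>0$. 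Then $\Psi$ is $C^{k,1}$, and its Jacobian at $t=0$ equals the identity in the normal direction and the identity on the tangent space, hence is invertible. The $C^{k,1}$ version of the inverse function theorem therefore provides a tubular neighborhood $W$ of $x_0$ in $\RR^d$ and a $C^{k,1}$ inverse $\Psi^{-1}(x)=(\pi(x),\sigma(x))$.

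Next, I would identify $\sigma$ with $\delta$ near the boundary. On the $\Omega$-side, by the definition of $\Psi$, $\pi(x)$ is the point of $\partial\Omega$ nearest to $x$ for $x$ in the tubular neighborhood (and uniqueness of the nearest point holds there for $\tau$ small enough, by a standard argument using the invertibility of $D\Psi$), and $\sigma(x)=|x-\pi(x)|=\delta(x)$. Consequently $\pi\in C^{k,1}(W)$ and $\delta\in C^{k,1}(W\cap\Omega)$. At this stage one has the fundamental identity
\begin{equation*}
\nabla\delta(x)=\nu\bigl(\pi(x)\bigr)\quad\text{for }x\in W\cap\Omega,
\end{equation*}
which follows either by differentiating $x=\pi(x)+\delta(x)\nu(\pi(x))$ and using orthogonality, or from the eikonal relation $|\nabla\delta|=1$ combined with the fact that the nearest point is attained along the normal. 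Since $\pi\in C^{k,1}$ and $\nu\in C^{k,1}$ on $\partial\Omega$, the composition $\nu\circ\pi$ is $C^{k,1}$ on $W$; hence $\nabla\delta$ is $C^{k,1}$, which is precisely saying $\delta\in C^{k+1,1}(W\cap\Omega)$. This extra-derivative step is the crux of the proof.

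Finally, I would globalize. Since $\partial\Omega$ is compact, it can be covered by finitely many such neighborhoods $W_1,\dots,W_N$, each associated with a local tubular thickness $\gamma_i>0$ on which $\delta$ is $C^{k+1,1}$. Setting $\gamma=\min_i\gamma_i>0$, the set $\mathcal{O}_\gamma=\{x\in\Omega,\ \delta(x)<\gamma\}$ is covered by these neighborhoods, and one obtains $\delta\in C^{k+1,1}(\mathcal{O}_\gamma)$. The main obstacle I expect is the passage from $C^{k,1}$ to $C^{k+1,1}$ regularity; handling it cleanly requires insisting on the identity $\nabla\delta=\nu\circ\pi$ rather than trying to read the regularity off $\sigma$ directly (which only gives $C^{k,1}$). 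A secondary technical point is verifying that a uniform $\gamma>0$ can be chosen, which follows from compactness of $\partial\Omega$ together with the $C^1$ lower bound on the injectivity radius of $\Psi$.
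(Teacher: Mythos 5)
The paper itself does not prove this statement (it is quoted from the literature, with a reference to Boyer--Fabrie), and your route --- the normal map $\Psi(y,t)=y-t\nu(y)$, inversion to get the projection $\pi$ and the signed distance, and the identity $\nabla\delta=\nu\circ\pi$ to gain the extra derivative, followed by a compactness argument for a uniform $\gamma$ --- is exactly the standard argument behind the cited result. For $k\ge 1$ your proof is essentially complete and correct; the orientation/sign bookkeeping ($\nu$ inward versus outward, the sign of $\sigma$ on the $\Omega$-side) is harmless.

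There is, however, a genuine gap at the endpoint case $k=0$, which the theorem includes and which the paper actually uses (the lack-of-coercivity lemma invokes it for a $C^{1,1}$ domain). When $\Omega$ is only $C^{1,1}$, the normal field $\nu$ is merely Lipschitz, so $\Psi$ is merely Lipschitz: it need not be differentiable in $y$ at every point, ``the Jacobian at $t=0$'' is not defined classically, and there is no $C^{0,1}$ inverse function theorem of the form you appeal to. To cover $k=0$ you must replace that step, either by Clarke's inverse function theorem for Lipschitz maps (invertibility of the generalized Jacobian), or --- more in line with the classical proofs --- by the uniform two-sided ball condition satisfied by compact $C^{1,1}$ boundaries: positive reach gives single-valuedness and Lipschitz continuity of the nearest-point projection $\pi$ in a tube of uniform thickness, after which your key identity $\nabla\delta=\nu\circ\pi$ (with $\nu\circ\pi$ Lipschitz) yields $\delta\in C^{1,1}$ there. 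With that substitute for the local inversion step, your argument covers all $k\ge 0$ and matches the standard proof.
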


\subsubsection{Solution to advection-reaction equation via the method of characteristics}

In our proof of the convergence of the penalization method by a boundary layer approach (Section~\ref{sec:convergenceBL}), we will encounter, as expected formally, first order partial differential equations of advection-reaction of the following type:

\begin{equation}\label{eq:advectionreaction}
    \begin{dcases}
    \text{Find $W\in C^1(\overline{\omega})$ such that}\\
    \nabla W\cdot n+\alpha W=g & \text{in $\omega$}\\
    W=V & \text{on $\partial\mathcal U$},
    \end{dcases}
\end{equation}
where the domains $\mathcal U$ and $\omega$ are as in Figure~\ref{fig:domains}.

This type of first order steady state partial differential equations was studied for instance in~\cite{lions2006perturbations,bardos1970problemes,ern2021finite}. 
Here,  we choose  to make suitable assumptions that allow to tackle this kind of equations using the method of characteristics.

We  therefore  suppose that there exists  a function $\psi \in C^1$  such that $n=\nabla\psi$ in $\omega$, $\psi=0$ on $\partial\mathcal{U}$, $\psi>0$ in $\omega$ and $|\nabla\psi|\ge \lambda>0$ in $\omega$. We also assume $n \cdot \nu_\Omega>0$ on $\partial\Omega$ where $\nu_\Omega$ is the outward unit normal vector to $\partial\Omega$ where $\Omega=\mathcal{U}\cup \partial\mathcal U \cup \omega$.  Under these assumptions, we can establish the following theorem.

\begin{theorem}[Hyperbolic regularity]\label{th:hyperbolicregularitybis}
Let $\mathcal U$ be a bounded domain of class $C^k$ for $k\ge 1$. If $\psi\in C^{k+1}(\overline{\omega})$, $\beta\in C^k(\overline{\omega})$, $g\in C^{k}(\overline{\omega})$ and $V\in C^k(\partial\mathcal U)$, then there exists a unique solution $W$ of
\begin{equation}\label{eq:advectionreactionbis}
    \begin{dcases}
    \text{Find $W\in C^1(\overline{\omega})$ such that}\\
    \nabla W\cdot n+\beta(x) W=g & \text{in $\omega$}\\
    W=V & \text{on $\partial\mathcal U$},
    \end{dcases}
\end{equation}
and $W\in C^k(\overline{\omega})$.
\end{theorem}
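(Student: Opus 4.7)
The plan is to prove the theorem by the method of characteristics, using the assumptions on $\psi$ to show that the integral curves of $n=\nabla\psi$ foliate $\overline{\omega}$, with each curve starting on $\partial\mathcal U$ (where the data $V$ lives) and exiting through $\partial\Omega$ in finite time. First I would introduce the flow
\begin{equation*}
\dot X(t)=n(X(t))=\nabla\psi(X(t)),\qquad X(0)=x_0\in\partial\mathcal U,
\end{equation*}
and denote it $\Phi(x_0,t)=X(t;x_0)$. Since $\psi\in C^{k+1}(\overline{\omega})$ gives $n\in C^k(\overline{\omega})$, standard ODE theory yields $\Phi\in C^k$ in both arguments.

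Next I would establish the geometric foliation. Along any integral curve,
\begin{equation*}
\frac{d}{dt}\psi(X(t))=|\nabla\psi(X(t))|^2\ge\lambda^2>0,
\end{equation*}
so $\psi$ is strictly increasing along characteristics; combined with $\psi=0$ on $\partial\mathcal U$ and $\psi>0$ in $\omega$, the forward flow moves into $\omega$. Since $\overline{\omega}$ is compact and $\psi$ is continuous hence bounded, each characteristic reaches $\partial\omega$ in a finite exit time $T(x_0)$; the hypothesis $n\cdot\nu_\Omega>0$ on $\partial\Omega$ forces this exit to occur transversally through $\partial\Omega$. For surjectivity, I would run the flow backwards from any $x\in\omega$: $\psi$ then decreases at rate at least $\lambda^2$, the backward trajectory cannot cross $\partial\Omega$ since there $-n\cdot\nu_\Omega<0$ points into $\omega$, and therefore it must reach $\{\psi=0\}=\partial\mathcal U$ in finite time. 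Injectivity follows from uniqueness of ODE solutions applied backward to $\partial\mathcal U$. The transversality at $\partial\mathcal U$, namely $n\cdot\tilde n=\nabla\psi\cdot\tilde n>0$, makes the Jacobian of $\Phi$ nondegenerate at $t=0$ (and hence everywhere by the flow property), so the inverse function theorem gives that $\Phi$ is a $C^k$ diffeomorphism from $\{(x_0,t):x_0\in\partial\mathcal U,\ 0\le t\le T(x_0)\}$ onto $\overline{\omega}$, with $T$ itself $C^k$ by implicit differentiation of the equation defining $\partial\Omega$ along $\Phi(x_0,\cdot)$.

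Once the diffeomorphism is set up, the PDE is converted into a scalar linear ODE along each characteristic: writing $w(x_0,t):=W(\Phi(x_0,t))$, the equation $\nabla W\cdot n+\beta W=g$ becomes
\begin{equation*}
\partial_t w(x_0,t)+\beta(\Phi(x_0,t))\,w(x_0,t)=g(\Phi(x_0,t)),\qquad w(x_0,0)=V(x_0),
\end{equation*}
which has the explicit solution
\begin{equation*}
w(x_0,t)=e^{-B(x_0,t)}\left(V(x_0)+\int_0^t g(\Phi(x_0,s))\,e^{B(x_0,s)}\,ds\right),\quad B(x_0,t)=\int_0^t\beta(\Phi(x_0,s))\,ds.
\end{equation*}
Uniqueness is immediate from uniqueness of the Cauchy problem for this ODE along each characteristic. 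Setting $W(x)=w(\Phi^{-1}(x))$ then defines the solution, and $W\in C^k(\overline{\omega})$ follows from the chain rule applied to the composition of $C^k$ maps, using the regularity of $\Phi$, $\Phi^{-1}$, $\beta$, $g$ and $V$.

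The main obstacle I expect is the global foliation argument, specifically showing that $\Phi$ is a genuine $C^k$ diffeomorphism \emph{up to the boundary} of its parameter domain, including the regularity of the exit time $T$ at points where $\partial\Omega$ may be only $C^k$ and the nondegeneracy of $D\Phi$ everywhere along the trajectories. The pointwise solution formula and the algebraic manipulation along characteristics are routine; the subtlety lies in globalizing the local picture and handling uniformity across $\partial\mathcal U$ (which is compact, so uniform lower bounds on $|\nabla\psi|$ and on the transversality constants ultimately suffice).
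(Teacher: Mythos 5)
Your proposal is correct and follows the same overall strategy as the paper's proof: characteristics of $n=\nabla\psi$ emanating from $\partial\mathcal U$, monotonicity of $\psi$ along the flow giving finite exit times and a fibration of $\overline{\omega}$, reduction of the PDE to a linear ODE with an explicit integrating-factor formula, and $C^k$ regularity of $W$ by composing the $C^k$ flow with its $C^k$ inverse. Where you genuinely diverge is in the globalization of the inverse: you propagate nondegeneracy of the Jacobian from $t=0$ to all $t$ ``by the flow property'' (i.e.\ the factorization $D\Phi(\xi,t)=D_z Z(\gamma(\xi),t)\,[\gamma'(\xi)\,|\,n(\gamma(\xi))]$, so the determinant never vanishes), which, combined with global injectivity and the closed-set inverse function theorem, gives $\Phi^{-1}\in C^k$ on $\overline{\omega}$ in one stroke. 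The paper instead only checks the Jacobian on the noncharacteristic curve $\partial\mathcal U$ and then performs a step-by-step advancing argument along level sets $\psi=c$ (with an infimum/contradiction argument to exhaust $\overline{\omega}$); your route is shorter, the paper's avoids any statement about $D\Phi$ away from the initial curve. Two cautions. First, your claim that the exit time $T$ is $C^k$ ``by implicit differentiation of the equation defining $\partial\Omega$'' uses smoothness of $\partial\Omega$ that is not among the hypotheses ($\Omega$ is only Lipschitz with $n\cdot\nu_\Omega>0$; only $\mathcal U$ and $\psi$ are smooth); the paper deliberately never needs $T_\xi$ to be regular, since it writes $W(x)=\bigl(V(\gamma(H_1(x)))+\int_0^{H_2(x)}g(G(H_1(x),s))e^{B}\,ds\bigr)e^{-B(H(x))}$ and only needs $H=G^{-1}\in C^k(\overline{\omega})$. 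Your final formula $W=w\circ\Phi^{-1}$ likewise does not actually require regularity of $T$, so this step should simply be dropped rather than repaired. Second, the joint $C^k$ dependence of $(x_0,t)\mapsto\int_0^t g(\Phi(x_0,s))e^{B(x_0,s)}\,ds$ that you dismiss as ``chain rule'' is exactly what the paper proves as a separate induction lemma (regularity of $K(x,y,t)=\int_0^t h(x,y,s)\,ds$); it is routine, but it deserves a sentence, since it is the last ingredient turning the characteristic formula into $W\in C^k(\overline{\omega})$.
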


The proof is reported in Appendix~\ref{sec-append-Charac}.

\section{Existence and uniqueness}\label{sec:existenceuniqueness}

In this section, we study the existence and uniqueness of the solution of the penalized problem~\eqref{eq:reactiondiffusionpenalized}.

\subsection{Weak formulation}

The weak formulation of the penalized problem~\eqref{eq:reactiondiffusionpenalized} reads
\begin{equation}\label{eq:penalizedproblemweak}
    \begin{dcases}
    \text{Find  $u_\varepsilon\in H^1_0(\Omega)$ such that $\forall \varphi\in H^1_0(\Omega)$}\\
    \int_\Omega (\nabla u_\varepsilon\cdot\nabla \varphi+u_\varepsilon\varphi)+\frac{1}\varepsilon \int_{\omega}(\nabla u_\varepsilon\cdot n\, \varphi+\alpha u_\varepsilon\varphi)=\int_\mathcal U f\varphi +\frac1\varepsilon\int_{\omega}g\varphi,
    \end{dcases}
\end{equation}
where we recall $f\in L^2(\mathcal U)$, $g\in H^1(\omega)$ and $n\in H^1(\omega)$. We will see, in Subsection~\ref{subsec:existenceuniqueness} proving the existence and uniqueness, that these assumptions, particularly on $n$, make the weak formulation well defined in dimensions $2$ or $3$. We will add the assumption $n\in L^d(\omega)$ in dimension $d\ge 4$ in order that the term $\int_{\omega} \nabla u_\varepsilon\cdot n\, \varphi$ be defined, see~\cite{dronioupota2002}.

\subsection{Lack of coercivity}

\begin{lemma}[Lack of coercivity]\label{lem:lackofcoercivity}
     Let $\mathcal U$ and $\Omega$ be bounded Lipschitz domains of $\RR^d$ (with $\Omega \supset \overline{\mathcal U}$), $\chi$ the characteristic function of the extended domain $\omega:=\Omega\setminus \overline{\mathcal U}$, $f\in L^2(\mathcal U)$, $\alpha\ge 0$, $g\in H^1(\omega)$ and $n\in H^1(\omega)$ (for $d\in\{2,3\}$) or $n\in L^d(\omega)$ for $d\ge4$. The bilinear form associated with the penalized problem~\eqref{eq:penalizedproblemweak} is not always coercive for $\varepsilon$ small enough.
\end{lemma}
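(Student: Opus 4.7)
First I would unfold $a_\varepsilon(u,u)$ on a smooth test function $u\in C^\infty_c(\Omega)\subset H^1_0(\Omega)$ and integrate by parts the advective contribution. Using $\nabla u\cdot n\,u=\tfrac12\nabla(u^2)\cdot n$ together with Theorem~\ref{th:ippHdiv} applied to $n\in H^1(\omega)\hookrightarrow H_{\mathrm{div}}(\omega)$ and $u^2\in H^1(\omega)$, I obtain
\[
\int_\omega \nabla u\cdot n\,u\,dx=-\tfrac12\int_\omega u^2\,\mathrm{div}\,n\,dx+\tfrac12\int_{\partial\omega}u^2\,(n\cdot\nu_\omega)\,dS.
\]
The boundary $\partial\omega$ splits as $\partial\Omega\cup\partial\mathcal U$: the contribution on $\partial\Omega$ vanishes because $u\in H^1_0(\Omega)$, and on $\partial\mathcal U$ the outward unit normal of $\omega$ is $-\tilde n$ while $n|_{\partial\mathcal U}=\tilde n$ with $|\tilde n|=1$, so $n\cdot\nu_\omega=-1$. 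This yields the identity
\[
a_\varepsilon(u,u)=\|u\|_{H^1(\Omega)}^2+\frac{1}{\varepsilon}\Bigl(\alpha\int_\omega u^2-\tfrac12\int_\omega u^2\,\mathrm{div}\,n-\tfrac12\int_{\partial\mathcal U} u^2\,dS\Bigr),
\]
in which the boundary integral over $\partial\mathcal U$ enters with the \textbf{wrong} sign and scales like $1/\varepsilon$.

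The plan is then to exhibit a single $u\in C^\infty_c(\Omega)$ making the bracketed quantity strictly negative. Pick $\rho\in C^\infty(\mathbb R)$ with $\rho(0)=1$ and $\rho\equiv 0$ outside $[0,1]$, and set $u_\eta(x)=\rho(d(x,\partial\mathcal U)/\eta)$. For $\eta<\mathrm{dist}(\partial\mathcal U,\partial\Omega)$, $u_\eta\in C^\infty_c(\Omega)\subset H^1_0(\Omega)$, $u_\eta|_{\partial\mathcal U}\equiv 1$, and $\|\nabla u_\eta\|_{L^\infty}=O(1/\eta)$. A tubular-neighbourhood change of variables gives
\[
\int_\omega u_\eta^2\,dx=O(\eta),\qquad \Bigl|\int_\omega u_\eta^2\,\mathrm{div}\,n\,dx\Bigr|\le \|\mathrm{div}\,n\|_{L^\infty}\cdot O(\eta),\qquad \int_{\partial\mathcal U}u_\eta^2\,dS=|\partial\mathcal U|>0,
\]
while $\|u_\eta\|_{H^1(\Omega)}^2=O(1/\eta)$. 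Fix $\eta$ small enough that the two $O(\eta)$ terms are absorbed into $\tfrac14|\partial\mathcal U|$, so that the bracket is bounded above by $-\tfrac14|\partial\mathcal U|$. Then $a_\varepsilon(u_\eta,u_\eta)\le O(1/\eta)-|\partial\mathcal U|/(4\varepsilon)$, which is strictly negative for every $\varepsilon$ smaller than some $\varepsilon_0(\eta)$. This rules out any inequality of the form $a_\varepsilon(u,u)\ge C\|u\|_{H^1}^2$ with $C>0$, and proves non-coercivity.

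The main subtlety I expect is making the integration by parts rigorous at the declared regularity of $n$. For $d\in\{2,3\}$ we have $n\in H^1(\omega)\subset H_{\mathrm{div}}(\omega)$ and $u_\eta^2\in H^1(\omega)$, so Theorem~\ref{th:ippHdiv} applies verbatim. For $d\ge 4$, where only $n\in L^d(\omega)$ is assumed, $\mathrm{div}\,n$ need not be square integrable; this can be circumvented either by restricting the counterexample to admissible data with $\mathrm{div}\,n\in L^\infty(\omega)$, which is permitted since the lemma merely asserts that coercivity \emph{can} fail, or by a density argument approximating $n$ in $L^d$ by a smooth vector field and passing to the limit. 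In every case the source of non-coercivity is structural, namely the unfavourable contribution $-\tfrac{1}{2\varepsilon}\int_{\partial\mathcal U}u^2\,dS$ arising purely from $n=\tilde n$ on $\partial\mathcal U$, and is independent of the ambient integrability assumed on $n$.
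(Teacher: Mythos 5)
Your proof is correct, and it reaches the conclusion by a genuinely different route. Both proofs start from the same integration by parts, isolating the unfavorable $-\tfrac{1}{2\varepsilon}\int_{\partial\mathcal U}u^2\,dS$ term, and both make the extra regularity assumptions ($n\in C^1(\overline\omega)$ or $\mathrm{div}\,n\in L^\infty(\omega)$, plus regularity of the boundary) needed to justify that step; since the lemma only claims that coercivity is \emph{not always} present, this is legitimate in both cases. After that the two arguments diverge. The paper reduces non-coercivity to the \emph{failure of a trace inequality}: it shows that there is no constant $C$ with $\|\gamma_0(u)\|_{L^2(\partial\omega)}\le C\|u\|_{L^2(\omega)}$ on $H^1_0(\Omega)$, by constructing a sequence $f_j$ patched together from two harmonic extensions and a cusp profile $1/(1+jd(x,\partial\mathcal U))$, and then invoking elliptic regularity, the weak maximum principle, and dominated convergence to show $\|f_j\|_{L^2(\omega)}\to 0$ while the trace stays equal to $\mu(\partial\mathcal U)$. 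You instead exhibit a \emph{single} fixed cutoff $u_\eta=\rho(d(\cdot,\partial\mathcal U)/\eta)$, estimate the bracket directly by a tubular-neighbourhood volume count, fix $\eta$ so the bracket is $\le -\tfrac14|\partial\mathcal U|$, and then let $\varepsilon\to 0$ so that $a_\varepsilon(u_\eta,u_\eta)\le O(1/\eta)-|\partial\mathcal U|/(4\varepsilon)<0$. Your construction is more elementary and more self-contained: it avoids elliptic regularity, the maximum principle, and the dominated-convergence step entirely, and it makes the negativity of $a_\varepsilon$ fully explicit rather than deriving it from the failure of an auxiliary functional inequality. The paper's route, in exchange, records a structural fact (the discontinuity of the trace from $(H^1(\omega),\|\cdot\|_{L^2})$ to $L^2(\partial\omega)$) that it explicitly frames as the abstract source of non-coercivity. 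One small imprecision in your write-up: $u_\eta$ is not generally in $C^\infty_c(\Omega)$, because $d(\cdot,\partial\mathcal U)$ has a corner on $\partial\mathcal U$ and is only Lipschitz past the cut locus; but $u_\eta\in W^{1,\infty}\cap H^1_0(\Omega)$, which is all the argument needs, and indeed you correctly invoke the $H_{\mathrm{div}}$ Stokes formula of Theorem~\ref{th:ippHdiv} rather than classical Green's theorem, so the integration by parts is justified at that regularity.
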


\begin{proof}
The bilinear form associated with the penalized problem reads:
\begin{equation}
    a(u,v)=\int_\Omega (\nabla u \cdot\nabla v+u v)+\frac{1}\varepsilon \int_{\omega}(\nabla u \cdot n\, u+\alpha u v)
\end{equation}
for $u,v\in H^1_0(\Omega)$. Thus
\begin{equation}
    a(u,u)=\int_\Omega |\nabla u |^2+\int_\Omega u^2+\frac{1}\varepsilon \int_{\omega}\nabla u \cdot n\, u+\frac{\alpha}\varepsilon \int_{\omega} u^2.
\end{equation}
Using Green's theorem (when it is valid, for instance if $n\in C^1(\overline{\omega})$), for all $\varphi\in H^1_0(\Omega)$,
\begin{equation}\label{eq:green}
    \int_{\omega} \nabla u_\varepsilon\cdot n\, \varphi=-\int_{\omega}( u_\varepsilon\nabla\varphi\cdot n+u_\varepsilon \,\mathrm{div}(n)\,\varphi)- \int_{\partial\mathcal U} u_\varepsilon \varphi
\end{equation}
thus
\begin{equation}
    \int_{\omega}\nabla u \cdot n\, u=-\int_{\omega}u\,  \nabla u \cdot n -\int_{\omega} u^2 \mathrm{div}(n) - \int_{\partial\mathcal U} u^2
\end{equation}
then
\begin{align*}
    a(u,u)&=\int_\Omega |\nabla u |^2+\int_\Omega u^2-\frac{1}{2\varepsilon} \int_{\omega}u^2 \mathrm{div}(n) -\frac{1}{2\varepsilon} \int_{\partial\mathcal U} u^2+\frac{\alpha}\varepsilon \int_{\omega} u^2\\
    &=\int_\Omega |\nabla u |^2+\int_\Omega u^2+\frac{1}{2\varepsilon} \biggl[\int_{\omega} \bigl(2\alpha-\mathrm{div}(n)\bigr) u^2  -\int_{\partial\mathcal U} u^2\biggr]\\
    &=\|u\|_{H^1}^2+\frac{1}{2\varepsilon} \biggl[\int_{\omega} \bigl(2\alpha-\mathrm{div}(n)\bigr) u^2  - \int_{\partial\mathcal                
                                                           U} u^2\biggr].
\end{align*}
We are going to show that the dominant term (in front of $\frac1\varepsilon$) can be negative for a given $u\in H^1_0(\Omega)$, which will contradict the coercivity of the bilinear form $a$.

Assuming $\mathrm{div}(n)\in L^\infty(\omega)$ (which holds if $n\in C^1(\overline{\omega})$), one has
\begin{align*}
    \int_{\omega} \bigl(2\alpha-\mathrm{div}(n)\bigr) u^2-\int_{\partial\mathcal U} u^2&\le C \|u\|^2_{L^2(\omega)}- \int_{\partial\mathcal U} u^2\\
    &\le C \|u\|^2_{L^2(\omega)}- \|\gamma_0(u)\|^2_{L^2(\partial\omega)},
\end{align*}
and the result of non-coercivity is related to the non-continuity of the trace operator from $(H^1(\omega),\|\cdot\|_{L^2(\omega)})$  to $(L^2(\partial\omega),\|\cdot\|_{L^2(\partial\omega)})$. Indeed, let us show the following in our case (domains $\mathcal U$, $\omega$ and $\Omega$ as in Fig.~\ref{fig:domains}):
    \begin{align*}
        \forall C>0, \exists u\in H^1_0(\Omega),  \ \|\gamma_0(u)\|_{L^2(\partial\omega)} > C \|u\|_{L^2(\omega)}.
    \end{align*}

Let us assume $\omega$ of class $C^{1,1}$ then using Theorem~\ref{th:distanceregularity}, there exists a neighborhood of $\partial\omega$ in which the distance to the boundary is $C^{1,1}$. Let us define the following sequence of functions, for $j$ big enough:
\begin{equation}
    f_j(x)=\begin{dcases}
    \text{ solution to}\begin{dcases}
            -\Delta v=0 & \text{in $\mathcal U$} \\
            v=1 & \text{on $\partial\mathcal U$}
        \end{dcases} &\text{if } x\in\mathcal U \\
        \frac{1}{1+j d(x,\partial\mathcal U)} & \text{if } x\in \omega_j=\Bigl\{x\in\omega, \ d(x,\partial\mathcal U) < \frac1{\sqrt{j}}\Bigr\}\\
        \text{ solution to}\begin{dcases}
            -\Delta v=0 & \text{in $\omega\setminus\overline{\omega_j}$} \\
            v=0 & \text{on $\partial\Omega$}\\
            v=\frac1{1+\sqrt{j}} &  \text{on $\partial\omega_j$}
        \end{dcases} &\text{otherwise.}
    \end{dcases} 
\end{equation}
Clearly, $f_j\in H^1_0(\Omega)$, since ${f_j}_{|\mathcal U}\in H^1(\mathcal U)$, ${f_j}_{|\omega\setminus\overline\omega_j}\in H^1(\omega\setminus\overline\omega_j)$, and ${f_j}_{|\omega_j}\in H^1(\omega_j)$ (since $d(.,\partial\mathcal U) \in C^{1,1}(\omega_j)$ and Lipschitz continuous thus $f_j$ and its first order partial derivatives are bounded, allowing to show $f_j$ and $\frac{\partial f_j}{\partial x_i}$  in $L^2(\omega_j)$), and we have by construction continuity of the traces through the interfaces $\partial\mathcal U$ and $\partial\omega_j$. In addition, we have
\begin{equation}
\begin{dcases}
    f_j(x)\mathds{1}_{\omega_j}(x) \to 0 \quad \text{everywhere in  $\omega$}\\
    \forall j\in\NN, 0\le f_j(x)\mathds{1}_{\omega_j}(x) \le 1\quad \text{everywhere in  $\omega$}
\end{dcases}
\end{equation}
thus, by Lebesgue's dominated convergence theorem, we get
\begin{equation}
    \|f_j\|^2_{L^2(\omega_j)} \to 0.
\end{equation}

In $\omega\setminus\overline{\omega_j}$, if we assume $\omega$ of class $C^{m+2,1}$ for $m\in\NN$, so that $\omega\setminus\overline{\omega_j}$ is also of class $C^{m+2,1}$ then $f_j\in H^{m+2}(\omega\setminus\overline{\omega_j})$ by elliptic regularity and $f_j\in C^{2}(\overline{\omega\setminus\overline{\omega_j}})$ if $m>\frac{d}2$. In this case, we have using the weak maximum principle~\cite{evans2022partial}, $0\le f_j(x)\mathds{1}_{\omega\setminus\overline{\omega_j}}(x)\le \frac1{1+\sqrt{j}}$, hence
\begin{equation}
\begin{dcases}
    f_j(x)\mathds{1}_{\omega\setminus\overline{\omega_j}}(x) \to 0 \quad \text{everywhere in  $\omega$}\\
    \forall j\in\NN, 0\le f_j(x)\mathds{1}_{\omega\setminus\overline{\omega_j}}(x) \le 1\quad \text{everywhere in $\omega$}
\end{dcases}
\end{equation}
thus, again by Lebesgue's dominated convergence theorem, we get
\begin{equation}
\|f_j\|^2_{L^2(\omega\setminus\overline{\omega_j})} \to 0.
\end{equation}

If
    \begin{align*}
        \exists C>0, \forall u\in H^1_0(\Omega),  \ \|\gamma_0(u)\|_{L^2(\partial\omega)} \le C \|u\|_{L^2(\omega)},
    \end{align*}
then we would have for $u=f_j$
\begin{align}
    \mu(\partial\mathcal U)=\|\gamma_0(f_j)\|^2_{L^2(\partial\omega)} \le C \|f_j\|^2_{L^2(\omega)}\to0
\end{align}
which is in contradiction with $\mathcal U$ bounded of class $C^{m+2,1}$.

In conclusion, we exhibited special cases for $\mathcal U$, $\Omega$ and $n$ (regularity assumptions) for which the bilinear form associated to the penalized problem is not coercive for $\varepsilon$ small enough.
\end{proof}

\subsection{Existence and uniqueness using Droniou's approach}\label{subsec:existenceuniqueness}

Due to the lack of coercivity, we cannot use the Lax-Milgram theorem in order to prove the existence and uniqueness of the solution of the penalized problem. Instead, we are going to see that our penalized problem~\eqref{eq:penalizedproblemweak} falls within the framework of a theorem already established by J. Droniou~\cite{dronioupota2002} for non-coercive elliptic linear problems. The proof relies on a dual problem, that we will also encounter in our study of convergence of the penalization method. The dual problem (where the convection terms are in a conservative form) has the following form:
\begin{equation}\label{eq:dualpenalizedproblemweak}
    \begin{dcases}
    \text{Find  $r_\varepsilon\in H^1_0(\Omega)$ such that $\forall \varphi\in H^1_0(\Omega)$}\\
    \int_\Omega (\nabla r_\varepsilon\cdot\nabla \varphi+r_\varepsilon\varphi)+\frac{1}\varepsilon \int_{\omega}( r_\varepsilon\, n\cdot\nabla\varphi+\alpha r_\varepsilon\varphi)=\langle L,\varphi\rangle,
    \end{dcases}
\end{equation}
where $L\in H^{-1}(\Omega)$ and we recall $\alpha\ge 0$.

\begin{theorem}[Existence and uniqueness of the solution of the penalized problem and of the solution of the dual problem]\label{th:existenceuniquenessdroniou}
    Let $\mathcal U$ and $\Omega$ be bounded Lipschitz domains of $\RR^d$ (with $\Omega \supset \overline{\mathcal U}$), $\chi$ be the characteristic function of the extended domain $\omega:=\Omega\setminus \overline{\mathcal U}$, $f\in L^2(\mathcal U)$, $\alpha\ge 0$, $g\in H^1(\omega)$ and $n\in H^1(\omega)$. For all $\varepsilon>0$, there exists a unique solution $u_\varepsilon$ to the penalized problem~\eqref{eq:penalizedproblemweak} and a unique solution $r_\varepsilon$ to the dual problem~\eqref{eq:dualpenalizedproblemweak}, in dimension $d=2$ or $3$. If one can choose the lifting $n\in L^d(\omega)$ then the result holds also in any dimension $d\ge 4$.
\end{theorem}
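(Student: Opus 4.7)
The plan is to recast the weak formulation~\eqref{eq:penalizedproblemweak} as a linear non-coercive elliptic Dirichlet problem of the form
\begin{equation*}
-\Delta u_\varepsilon + b\cdot\nabla u_\varepsilon + c\, u_\varepsilon = F \quad \text{in } \Omega,\qquad u_\varepsilon\in H^1_0(\Omega),
\end{equation*}
with $b = \chi n/\varepsilon$, $c = 1 + \chi\alpha/\varepsilon \ge 1$, and $F = (1-\chi)f + \chi g/\varepsilon \in L^2(\Omega)\hookrightarrow H^{-1}(\Omega)$, and then to apply directly the existence and uniqueness result of Droniou~\cite{dronioupota2002}, which is tailored to linear elliptic equations of this form whose convective coefficient belongs only to $L^d(\Omega)$.

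The first verification concerns the integrability and sign of the coefficients. The zero-order coefficient $c$ lies in $L^\infty(\Omega)$ and is bounded below by $1>0$; this provides the sign condition on the zero-order term that is needed to compensate for the non-coercive convection. The convective coefficient $b$ is supported in $\omega$ and must belong to $L^d(\Omega)$. For $d\in\{2,3\}$, the Sobolev embedding $H^1(\omega)\hookrightarrow L^d(\omega)$ on the Lipschitz domain $\omega$ delivers $n\in L^d(\omega)$ from the assumption $n\in H^1(\omega)$; for $d\ge 4$ this integrability is precisely the extra hypothesis of the statement. Finally, $F\in L^2(\Omega)\subset H^{-1}(\Omega)$ supplies a continuous linear form on $H^1_0(\Omega)$.

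The second step is to check that the bilinear forms appearing in~\eqref{eq:penalizedproblemweak} and~\eqref{eq:dualpenalizedproblemweak} are continuous on $H^1_0(\Omega)\times H^1_0(\Omega)$. The only delicate term is $\int_\omega \nabla u_\varepsilon\cdot n\,\varphi$ (and its dual $\int_\omega r_\varepsilon\, n\cdot\nabla\varphi$), which is handled by Hölder's inequality combined with the Sobolev embedding $H^1_0(\Omega)\hookrightarrow L^{2d/(d-2)}(\Omega)$ for $d\ge 3$ (replaced by $H^1_0(\Omega)\hookrightarrow L^q(\Omega)$ for any $q<\infty$ when $d=2$), yielding
\begin{equation*}
\Bigl|\int_\omega \nabla u_\varepsilon\cdot n\,\varphi\Bigr|\le \|\nabla u_\varepsilon\|_{L^2(\Omega)}\|n\|_{L^d(\omega)}\|\varphi\|_{L^{2d/(d-2)}(\Omega)}\le C\,\|n\|_{L^d(\omega)}\|u_\varepsilon\|_{H^1_0}\|\varphi\|_{H^1_0}.
\end{equation*}
The same estimate applies verbatim to the dual problem by swapping the roles of $u_\varepsilon$ and $\varphi$.

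The third step is simply the invocation of Droniou's theorem, which has the attractive feature of producing simultaneously existence and uniqueness for the primal problem (non-conservative convection $b\cdot\nabla u_\varepsilon$) and for the formally dual problem (conservative convection $\mathrm{div}(r_\varepsilon b)$, which matches~\eqref{eq:dualpenalizedproblemweak} after a formal integration by parts), under exactly the hypotheses we just checked. The main obstacle is not mathematical depth but a careful translation of the assumptions of~\cite{dronioupota2002} into our setting: matching the sign conventions, the exponents of integrability of $b$ and $c$, and handling separately the dimensions $d=2,3$ (where $n\in L^d(\omega)$ is automatic from $n\in H^1(\omega)$) and $d\ge 4$ (where the $L^d$ integrability must be assumed, since the critical Sobolev exponent $2d/(d-2)\le d$ fails to give it for free). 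Once this dictionary is set, the conclusion is immediate from Droniou's theorem.
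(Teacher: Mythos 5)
Your proposal follows essentially the same route as the paper: both recast~\eqref{eq:penalizedproblemweak} and~\eqref{eq:dualpenalizedproblemweak} as a non-coercive linear elliptic pair (non-conservative and conservative convection) and verify Droniou's hypotheses, with the dimension split $d\in\{2,3\}$ versus $d\ge 4$ governed by the Sobolev embedding of $H^1(\omega)$ into $L^d$. One small imprecision: for $d=2$ Droniou's hypothesis on the convective coefficient requires $n\in L^{N_*}(\omega)$ for some $N_*>2$, so citing $H^1(\omega)\hookrightarrow L^2(\omega)$ is insufficient as stated; you need (and in fact have) $H^1(\omega)\hookrightarrow L^q(\omega)$ for every finite $q$ in dimension $2$, which is exactly what the paper uses (taking $L^3$) and what you implicitly rely on in your continuity estimate.
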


\begin{proof}
The penalized problem and its dual form fall within the framework of a theorem already established by J. Droniou for non-coercive linear elliptic problems. Indeed, our penalized problem~\eqref{eq:penalizedproblemweak} is of the following form (Eq.~(4) of \cite{dronioupota2002}), where the convection terms are in  a non-conservative form:
\begin{equation}\label{eq:droniouproblemweak}
    \begin{dcases}
    \text{Find $v\in H^1_{\Gamma_d}(\Omega)$ such that $\forall \varphi\in H^1_{\Gamma_d}(\Omega)$}\\
    \int_\Omega A^T\nabla v\cdot\nabla \varphi+ \int_{\Omega}\varphi {\bf v}\cdot \nabla v+\int_\Omega bv\varphi+\int_{\Gamma_f}\lambda v \varphi\,d\sigma=\langle L,\varphi \rangle_{(H^1_{\Gamma_d}(\Omega))',H^1_{\Gamma_d}(\Omega)},
    \end{dcases}
\end{equation}
while the dual problem~\eqref{eq:dualpenalizedproblemweak} is of the following form (Eq.~(3) of \cite{dronioupota2002}), where the convection terms are in  a conservative form:
\begin{equation}\label{eq:droniouproblemweakdual}
    \begin{dcases}
    \text{Find $u\in H^1_{\Gamma_d}(\Omega)$ such that $\forall \varphi\in H^1_{\Gamma_d}(\Omega)$}\\
    \int_\Omega A\nabla u\cdot\nabla \varphi+ \int_{\Omega}u {\bf v}\cdot \nabla \varphi+\int_\Omega bu\varphi+\int_{\Gamma_f}\lambda u \varphi\,d\sigma=\langle L,\varphi \rangle_{(H^1_{\Gamma_d}(\Omega))',H^1_{\Gamma_d}(\Omega)},
    \end{dcases}
\end{equation}

[Theorem 2.1 (Droniou's article~\cite{dronioupota2002})] thus gives the existence and uniqueness of the solutions $v$ and $u$ under some assumptions ((8)--(14) in \cite{dronioupota2002}) that are verified by our penalized problem and its dual form as detailed in Appendix~\ref{sec:annexDroniouChecking}.
\end{proof}

\section{Convergence analysis}\label{sec:convergenceBL}

The convergence analysis of a problem of type~\eqref{eq:reactiondiffusion} in the univariate case is performed in~\cite{Bensiali2014}. It used explicit solutions that cannot be reached in general. 
The present section is devoted to a convergence analysis in multidimension using at different stages a boundary layer framework. This allows to obtain another proof for the one-dimensional case (\cite{Bensiali2014}) as well as a proof in the spherical symmetric case, using a boundary layer approach and explicit supersolutions of a dual problem. The special cases are reported in Appendix~\ref{sec:annexspecialcases}.

For sake of clarity we rewrite the initial problem~\eqref{eq:reactiondiffusion}
\begin{equation}\label{eq:Nd}
    \begin{dcases}
    -\Delta u+u=f \quad \text{in $\mathcal{U}$}\\
    \frac{\partial u}{\partial\tilde{n}}+\alpha u=\tilde{g} \quad \text{on $\partial\mathcal{U}$},
    \end{dcases}
\end{equation}
and the corresponding penalized problem~\eqref{eq:reactiondiffusionpenalized}
\begin{equation}\label{eq:penalizedNd}
    \begin{dcases}
    -\Delta u_\varepsilon+u_\varepsilon+\frac\chi\varepsilon (\nabla u_\varepsilon\cdot n+\alpha u_\varepsilon -g)=(1-\chi)f \quad \text{in $\Omega$}\\
    u_\varepsilon=0 \quad \text{on $\partial \Omega$}.
    \end{dcases}
\end{equation}

From now on, and in order to develop the boundary layer approach, we assume sufficient regularity on the data $\tilde{g}$ and  $f$ and on the domains $\mathcal{U}$ and $\omega$. We also make additional assumptions on the extension $n$. We  fix the dimension $d=2$ and postpone the general case to Section~\ref{sec:Generalisation}.

\clearpage
\vskip.5cm

{\bf Regularity hypotheses in dimension $d=2$:}

$\Omega$ and $\mathcal U$ are supposed to be  of class $C^{k+8,1}$ in dimension $2$ for some $k\ge 2$. Moreover, we suppose that $f\in H^{k+7}(\mathcal{U})$ and $\tilde{g}\in H^{k+7+\frac12}(\partial\mathcal U)$. We assume $n=\nabla\psi$ with $\psi\in C^{k+7}(\overline{\omega})$, $\psi=0$ on $\partial\mathcal{U}$, $\psi>0$ in $\omega$, $|\nabla\psi|\ge \lambda>0$ in $\omega$, and $n \cdot \nu_\Omega\ge \lambda_0>0$ on $\partial\Omega$, where $\nu_\Omega$ is the outward normal vector to $\Omega$. 
\vskip.5cm

The penalized problem~\eqref{eq:penalizedNd} is equivalent to the following system
    \begin{subnumcases}{}
    -\Delta w_\varepsilon+w_\varepsilon+\frac1\varepsilon (\nabla w_\varepsilon\cdot n+\alpha w_\varepsilon -g)=0 & \text{in $\omega$}\label{eq:wNd}\\ 
    -\Delta v_\varepsilon+v_\varepsilon=f & \text{in $\mathcal{U}$}\label{eq:vNd}\\
    w_\varepsilon=v_\varepsilon & \text{on $\partial\mathcal{U}$}\label{eq:transmission1Nd}\\
    \frac{\partial w_\varepsilon}{\partial\nu}=\frac{\partial v_\varepsilon}{\partial\nu} & \text{on $\partial\mathcal{U}$}\label{eq:transmission2Nd}\\
    w_\varepsilon=0 & \text{on $\partial\Omega$,}\label{eq:dirichletNd}
    \end{subnumcases}
where we distinguish the solution $w_\varepsilon$ inside the obstacle $\omega$ from the solution $v_\varepsilon$ in the fluid domain $\mathcal{U}$. The previous system follows from the fact that from its weak formulation, \eqref{eq:penalizedNd} is satisfied in a distributional sense and then almost everywhere since $u_\varepsilon\in H^1(\Omega)$ and given the regularity of the different data, thus the continuity of the trace $\gamma_0(u_\varepsilon)$ along $\partial\mathcal U$. We deduce that $\mathrm{div}(\nabla u_\varepsilon)=\Delta u_\varepsilon\in L^2(\Omega)$ hence $\nabla u_\varepsilon\in H_\mathrm{div}(\Omega)$, thus the continuity of $\gamma_\nu(\nabla u_\varepsilon)$ along $\partial\mathcal U$ thanks to Theorem~\ref{th:ippHdiv}.

We consider the following ansatz for the solution in terms of asymptotic expansions:
\begin{equation}\label{eq:penalizedNdansatz}
    \begin{dcases}
    v_\varepsilon(x)=V^0(x)+\varepsilon V^1(x)+\varepsilon^2 V^2(x)+\ldots,\\
    w_\varepsilon(x)=W^0\Bigl(x,\frac{\varphi(x)}\varepsilon\Bigr)+\varepsilon W^1\Bigl(x,\frac{\varphi(x)}\varepsilon\Bigr)+\varepsilon^2 W^2\Bigl(x,\frac{\varphi(x)}\varepsilon\Bigr)+\ldots
    \end{dcases}
\end{equation}
where $\varphi(x)=d(x,\partial\Omega)$ is the distance to the boundary $\partial\Omega$ (see Section~\ref{sec:distanceboundary}), and the profile terms inside the obstacle have the form
\begin{equation}\label{eq:Ndboundarylayer}
    \begin{dcases}
    W^i(x,z)=\overline{W}^i(x)+\theta(x)\widetilde{W}^i(x,z)\\
    \text{where} \ \forall i,k\ge0, \ \partial_z^k \widetilde{W}^i \xrightarrow[z \to +\infty]{} 0, \quad \forall x\in\omega\\
    \text{and $\theta\in C^\infty(\bar{\omega})$  such that $\theta\equiv 1$ in a neighborhood $\omega_0$ of $\partial\Omega$ and $\mathrm{supp}(\theta)\subset \omega_1$,}
    \end{dcases}
\end{equation}
where $\omega_1=\{x\in\omega, \varphi(x) < \delta\}$ and $\delta$ is such that the function $\varphi$ is sufficiently smooth. From Theorem~\ref{th:distanceregularity}, there exists indeed $\delta$ such that $\varphi\in C^{k+8,1}(\omega_1)$ if $\Omega\in C^{k+8,1}$. 
 This ansatz (if it is suitable) reflects the presence of a localized boundary layer near the boundary $\partial\Omega$ and its absence at the interface $\partial\mathcal{U}$ between the fluid and the obstacle.

 We denote $\Gamma_-=\{x\in\partial\Omega, \ c(x)=n\cdot (-\nabla\varphi)\le 0\}$, and we assumed that $c(x)\ge \lambda_0>0$ on $\partial\Omega$ so that $\Gamma_-=\emptyset$. This will simplify the calculations since it will hold a boundary layer everywhere on $\partial\Omega$. By this assumption and using the regularity of $n$ and $\nabla\varphi$, this implies (using Heine-Cantor theorem) that $-c(x)=n\cdot\nabla\varphi\le -\frac{\lambda_0}2<0$ in a neighborhood of $\partial\Omega$. We can choose $\delta$ such that  $\omega_1$, the support of $\theta$, is included in this neighborhood.

Then, we have the following result:

\subsection{Statement of the main result}

\begin{theorem}[Convergence of the penalization method]\label{th:convergence}

Assuming the regularity hypotheses quoted above, if $u$ resp. $u_\varepsilon$ are the solutions to the initial problem~\eqref{eq:Nd} resp. the penalized problem~\eqref{eq:penalizedNd}, then, the penalization method convergences in the sense that there exists a constant $C$ independent of $\varepsilon$ such that for sufficiently small $\varepsilon>0$,
$$\|u_\varepsilon-u\|_{H^1(\mathcal U)}\le C\varepsilon.$$
Moreover, there exists a function $\overline{W}^0$ and a neighborhood of $\partial\Omega$, $\mathcal{V}(\partial\Omega)$, such that for sufficiently small
$\varepsilon>0$,
$$\|u_\varepsilon-\overline{W}^0\|_{H^1(\omega\setminus\mathcal{V}(\partial\Omega))}\le C\varepsilon.$$
\end{theorem}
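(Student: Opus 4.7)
I would inject the ansatz \eqref{eq:penalizedNdansatz}--\eqref{eq:Ndboundarylayer} into the transmission system \eqref{eq:wNd}--\eqref{eq:dirichletNd}. Using the chain rule $\Delta(W(x,\varphi/\varepsilon)) = \Delta_x W + \varepsilon^{-1}\bigl(2\nabla\varphi\cdot\nabla_x\partial_z W + \Delta\varphi\,\partial_z W\bigr) + \varepsilon^{-2}|\nabla\varphi|^2\partial_z^2 W$ and $\nabla(W(x,\varphi/\varepsilon))\cdot n = n\cdot\nabla_x W + \varepsilon^{-1}(n\cdot\nabla\varphi)\partial_z W$, I would collect powers of $\varepsilon$. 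Splitting $W^i = \overline{W}^i(x) + \theta(x)\widetilde{W}^i(x,z)$ separates the cascade into: inside $\mathcal U$, elliptic problems $-\Delta V^i + V^i = f^i$; in $\omega$, first-order transport equations $\nabla\overline{W}^i\cdot n + \alpha\overline{W}^i = g^i$ for the regular profiles; and second-order ODEs in $z$ for the boundary-layer profiles, the leading one being
\[
-|\nabla\varphi|^2 \partial_z^2 \widetilde{W}^0 + (n\cdot\nabla\varphi)\,\partial_z \widetilde{W}^0 = 0.
\]

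\textbf{Resolution of the cascade.} Since $n\cdot\nabla\varphi\le -\lambda_0/2 < 0$ in $\omega_1$, the decay condition $\partial_z^k\widetilde{W}^i\to 0$ as $z\to+\infty$ selects a unique exponentially decreasing solution of each ODE, with free constant fixed by the Dirichlet condition $\overline{W}^i|_{\partial\Omega} + \widetilde{W}^i(\cdot,0) = 0$. The transmission conditions \eqref{eq:transmission1Nd}--\eqref{eq:transmission2Nd} couple the two sides: at order $\varepsilon^0$, value continuity gives $\overline{W}^0|_{\partial\mathcal U} = V^0|_{\partial\mathcal U}$, and plugging the transport equation for $\overline{W}^0$ into the normal-derivative jump recovers exactly the Robin boundary condition $\partial V^0/\partial\tilde n + \alpha V^0 = \tilde g$, hence $V^0 = u$. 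Theorem~\ref{th:hyperbolicregularitybis} then yields $\overline{W}^0$ by the method of characteristics, and each higher order gives Robin data for $V^{i+1}$ which is solved by Theorem~\ref{th:wellposedness}, with regularity tracked by Theorem~\ref{th:ellipticregularity}. The high $C^{k+8,1}$ budget is precisely calibrated so that enough terms can be constructed and have enough derivatives for the next step.

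\textbf{Residual and duality.} Truncate the ansatz at order $N$ to form $u^N_\varepsilon\in H^1_0(\Omega)$ and let $R_\varepsilon := u_\varepsilon - u^N_\varepsilon$. By construction $R_\varepsilon$ solves a penalized equation of type \eqref{eq:penalizedproblemweak} with right-hand side of the form $\varepsilon^N \Phi_\varepsilon$, where $\Phi_\varepsilon$ is uniformly bounded in $L^2(\Omega)$ (boundary-layer corrections contribute only exponentially small terms through the cutoff $\theta$). Because of Lemma~\ref{lem:lackofcoercivity} a direct energy estimate fails; instead I would test against the solution $r_\varepsilon^L$ of the dual problem \eqref{eq:dualpenalizedproblemweak} driven by an arbitrary $L\in H^{-1}(\Omega)$, which gives $\langle L, R_\varepsilon\rangle = \varepsilon^N \int_\Omega \Phi_\varepsilon\,r_\varepsilon^L$. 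The desired $O(\varepsilon)$ bound on $\|R_\varepsilon\|_{H^1}$ then reduces to a uniform-in-$\varepsilon$ bound on $r_\varepsilon^L$ in a norm dual to that in which $\Phi_\varepsilon$ lives.

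\textbf{Main obstacle: uniform control of the dual solution via supersolutions.} The hard step, and the true reason the analysis differs from the Dirichlet case, is obtaining this uniform bound. My plan, following the strategy announced in the abstract, is to construct an explicit supersolution $r^\sharp_\varepsilon$ of the dual problem with $r^\sharp_\varepsilon \ge |r_\varepsilon^L|$ and $\|r^\sharp_\varepsilon\|$ uniformly bounded, and then invoke the positivity given by Corollary~\ref{cor:maximumprinciplebis}. Since the dual problem carries the convection in conservative form, its characteristic direction is $-n$ and its boundary layer now localises at $\partial\mathcal U$ rather than $\partial\Omega$. I would therefore build $r^\sharp_\varepsilon$ by a \emph{second} formal boundary-layer ansatz for the dual problem, whose outer part satisfies a backward transport equation (solvable by Theorem~\ref{th:hyperbolicregularitybis} applied to $-n$) and whose inner part decays exponentially away from $\partial\mathcal U$. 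The delicate technical point—and what I expect to dominate the effort—is ensuring that the truncated dual ansatz produces a residual of the \emph{correct sign} uniformly in $\varepsilon$, so that the maximum principle genuinely upgrades it into a supersolution rather than merely an approximate solution. Once this is done, combining the two steps gives $\|R_\varepsilon\|_{H^1(\mathcal U)} + \|R_\varepsilon\|_{H^1(\omega\setminus\mathcal V(\partial\Omega))} = O(\varepsilon^N)$, and since by construction $u^N_\varepsilon - u = O(\varepsilon)$ on $\mathcal U$ and $u^N_\varepsilon - \overline{W}^0 = O(\varepsilon)$ on $\omega\setminus\mathcal V(\partial\Omega)$, both announced estimates follow.
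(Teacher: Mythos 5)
Your first two steps reproduce the paper's construction of the profiles, but the ``Residual and duality'' step departs from the paper and, as written, does not close. The paper does not run an Aubin--Nitsche duality argument and never needs a uniform-in-$\varepsilon$ bound on $r_\varepsilon^L$ over all $L\in H^{-1}(\Omega)$. Instead, writing $u_\varepsilon=(V_{\mathrm{app}},W_{\mathrm{app}})+\varepsilon(v_\varepsilon^r,w_\varepsilon^r)$, it multiplies the remainder equations by $v_\varepsilon^r p_\varepsilon$ in $\mathcal U$ and by $w_\varepsilon^r q_\varepsilon$ in $\omega$ --- by the \emph{remainder itself} times a weight --- and integrates by parts. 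The transmission conditions~\eqref{eq:transmissionpqNd} on $(p_\varepsilon,q_\varepsilon)$ cancel the dangerous interface term, and the supersolution inequalities in~\eqref{eq:pqNd}/\eqref{eq:condpqNd} make the left-hand side a weighted $H^1$ energy; the two-scale $L^\infty$ bounds on $q_\varepsilon$ are matched against the finer residual estimates~\eqref{eq:rhsestimatefiner1Nd}--\eqref{eq:rhsestimatefiner2Nd}. Only a single, explicitly constructed pair $(p_\varepsilon,q_\varepsilon)$ is needed, built by the second boundary-layer ansatz localised at $\partial\mathcal U$ (Appendix~\ref{sec:annexBLdual}).

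Your variant would instead require $\sup_{\|L\|_{H^{-1}}\le 1}\|r_\varepsilon^L\|$ to be bounded independently of $\varepsilon$ in a norm dual to that in which $\Phi_\varepsilon$ lives, a uniform stability estimate for the penalized dual operator that the paper never establishes and that is far from obvious given the $\varepsilon^{-1}$ advection. The proposed fallback --- dominating $|r_\varepsilon^L|$ pointwise by one supersolution $r^\sharp_\varepsilon$ via Corollary~\ref{cor:maximumprinciplebis} --- does not apply: that corollary is a maximum principle for the primal Robin reaction--diffusion problem~\eqref{eq:reactiondiffusion}, not for the dual penalized operator with homogeneous Dirichlet data on $\partial\Omega$ and $\varepsilon^{-1}$ conservative advection, and for a sign-changing distribution $L\in H^{-1}$ there is no meaningful $|L|$ to compare the data against, so a single $r^\sharp_\varepsilon$ cannot majorise the whole family $r_\varepsilon^L$. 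You have the right object in hand (a dual supersolution built by a boundary layer at $\partial\mathcal U$, whose outer transport profile is obtained from Theorem~\ref{th:hyperbolicregularitybis}), but its correct role is as a test weight in a direct energy estimate on the remainder, not as a majorant of dual solutions in a duality pairing.
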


 The previous theorem follows from the following theorem describing more precisely the boundary layer.

 \begin{theorem}[Description of the boundary layer]\label{th:convergenceBL}

Assuming the regularity hypotheses quoted above,
 there exists three functions $V^0$, $V^1$ and $V^2$ defined in $\mathcal{U}$, and three functions $W^0$, $W^1$ and $W^2$ defined in $\mathcal{\omega}\times\RR^+$, which are sufficiently smooth, such that
$$
u_\varepsilon(x)=
\begin{dcases}
V^0(x)+\varepsilon V^1(x)+\varepsilon^2 V^2(x)+\varepsilon v_\varepsilon^r(x)& \text{for $x\in\mathcal U$}\\
W^0\Bigl(x,\frac{\varphi(x)}\varepsilon\Bigr)+\varepsilon W^1\Bigl(x,\frac{\varphi(x)}\varepsilon\Bigr)+\varepsilon^2 W^2\Bigl(x,\frac{\varphi(x)}\varepsilon\Bigr)+\varepsilon w_\varepsilon^r(x)& \text{for $x\in\mathcal \omega$}
\end{dcases}
$$
where $v_\varepsilon^r$ and $w_\varepsilon^r$ are bounded independently of $\varepsilon$ in $H^1$.
\end{theorem}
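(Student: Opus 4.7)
The plan is to verify that the ansatz (\ref{eq:penalizedNdansatz})--(\ref{eq:Ndboundarylayer}) produces a consistent cascade of sub-problems whose solutions are smooth enough, and then to bound the resulting residuals by a duality argument.

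\textbf{Step 1 (Cascade of equations).} I would substitute the ansatz into the system (\ref{eq:wNd})--(\ref{eq:dirichletNd}) and match powers of $\varepsilon$. In $\mathcal U$ this yields directly $-\Delta V^0+V^0=f$, $-\Delta V^1+V^1=0$ and $-\Delta V^2+V^2=0$. In $\omega$, a routine chain-rule computation on $W^i(x,\varphi(x)/\varepsilon)$, using that $|\nabla\varphi|=1$ and $\nabla\varphi\cdot n=-c(x)\le -\lambda_0/2<0$ on $\mathrm{supp}(\theta)$, splits the $\omega$-equation into (i) an outer cascade $\nabla\overline W^0\cdot n+\alpha\overline W^0=g$ and, for $i\ge 1$, $\nabla\overline W^i\cdot n+\alpha\overline W^i=\Delta\overline W^{i-1}-\overline W^{i-1}$; and (ii) an inner cascade of linear ODEs in $z$ of the form $\partial_z^2\widetilde W^i+c(x)\,\partial_z\widetilde W^i=G^i$ with $G^0=0$ and $G^i$ built from lower-order profiles and their $x$-derivatives.

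\textbf{Step 2 (Transmission and outer problems).} Since $\theta\equiv 0$ in a neighborhood of $\partial\mathcal U$, the transmission conditions (\ref{eq:transmission1Nd})--(\ref{eq:transmission2Nd}) reduce on $\partial\mathcal U$ to $V^i=\overline W^i$ and $\partial_{\tilde n}V^i=\nabla\overline W^i\cdot n$. Combined with the outer equations from Step~1, this produces at each order a Robin boundary condition for $V^i$: for $V^0$, $\partial_{\tilde n}V^0+\alpha V^0=\tilde g$, so $V^0=u$ by uniqueness for (\ref{eq:Nd}); for $V^1$, a homogeneous interior equation with $\partial_{\tilde n}V^1+\alpha V^1=-f$; and for $V^2$, a fully homogeneous Robin problem (since $\Delta V^1=V^1$), hence $V^2\equiv 0$. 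Theorems~\ref{th:wellposedness} and~\ref{th:ellipticregularity} give the required Sobolev regularity of each $V^i$, after which Theorem~\ref{th:hyperbolicregularitybis} determines $\overline W^i$ in $\omega$ with trace $V^i$ on $\partial\mathcal U$.

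\textbf{Step 3 (Boundary-layer profiles).} The leading ODE solves as $\widetilde W^0(x,z)=-\overline W^0(x)\,e^{-c(x)z}$, where $\overline W^0$ is smoothly extended from $\partial\Omega$ into $\omega_1$; the decay condition in (\ref{eq:Ndboundarylayer}) and the Dirichlet condition (\ref{eq:dirichletNd}) fix the two constants of integration. Iterating, each $\widetilde W^i$ is the unique exponentially decaying solution of a second-order linear ODE in $z$ with $x$-dependent coefficients, forced by expressions involving $\widetilde W^j$ and $\overline W^j$ for $j<i$, and subject to $\widetilde W^i(x,0)=-\overline W^i(x)$ on $\partial\Omega$. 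The uniform positivity $c\ge \lambda_0/2$ guarantees exponential decay of $\widetilde W^i$ and of all its $z$-derivatives, matching the structure demanded in (\ref{eq:Ndboundarylayer}); regularity in $x$ propagates from Step~2, which explains the regularity demands on $f$, $\tilde g$ and the domains.

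\textbf{Step 4 (Remainder estimate).} Setting $v_\varepsilon^r$ and $w_\varepsilon^r$ by subtracting the truncated ansatz from $u_\varepsilon$ and dividing by $\varepsilon$, a direct computation shows that the pair $(v_\varepsilon^r,w_\varepsilon^r)$ solves the penalized system (\ref{eq:penalizedNd}) with a right-hand side that, thanks to the cancellations of Step~1, is $O(1)$ in the relevant norms rather than $O(1/\varepsilon)$; the residual contributions coming from the cut-off $\theta$ are exponentially small in $1/\varepsilon$ (by the decay of $\widetilde W^i$), and the mismatch of the ansatz with the Dirichlet condition on $\partial\Omega$ is killed order by order by the construction of Step~3. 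The main obstacle is then to pass from this $O(1)$ residual bound to an $\varepsilon$-uniform bound on $\|v_\varepsilon^r\|_{H^1(\mathcal U)}+\|w_\varepsilon^r\|_{H^1(\omega)}$: because of Lemma~\ref{lem:lackofcoercivity}, no direct energy estimate works. Following the strategy signalled in the abstract, I would pair the residual equation with the dual problem (\ref{eq:dualpenalizedproblemweak}) (whose solvability is guaranteed by Theorem~\ref{th:existenceuniquenessdroniou}) and close the estimate by exhibiting an $\varepsilon$-uniform $L^\infty$ supersolution of that dual problem, itself built by a second, formal boundary-layer expansion analogous to Steps~1--3 (and made explicit in the univariate and spherically symmetric cases of Appendix~\ref{sec:annexspecialcases}). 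This dual supersolution construction is the main technical hurdle of the proof.
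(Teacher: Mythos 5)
Your Steps 1--3 follow the paper's construction quite closely; the cascade of outer/inner equations, the transmission conditions, and the exponentially decaying inner profiles $\widetilde W^i$ are all in agreement with what the paper does. However, there are two concrete issues, one minor and one substantive.

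\textbf{Minor error in Step 2.} You claim that the Robin data for $V^1$ is $-f$ and that $V^2\equiv 0$. Neither is correct. The Robin boundary condition for $V^1$ is $\frac{\partial V^1}{\partial\nu}+\alpha V^1=\Delta\overline W^0-\overline W^0$ on $\partial\mathcal U$, where $\Delta\overline W^0$ is the Laplacian of $\overline W^0$ computed in $\overline\omega$, with $\overline W^0$ the solution of the first-order transport equation $\nabla\overline W^0\cdot n+\alpha\overline W^0=g$. While the trace of $\overline W^0$ on $\partial\mathcal U$ equals the trace of $V^0=u$, the trace of $\Delta\overline W^0$ from the $\omega$ side does not equal the trace of $\Delta V^0=V^0-f$ from the $\mathcal U$ side; the two functions obey different PDEs on either side of the interface. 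By the same reasoning, $V^2$ does not vanish: its Robin data is $\Delta\overline W^1-\overline W^1$ (computed in $\omega$), not $\Delta V^1-V^1=0$. This does not affect the skeleton of the argument, but it changes the explicit formulas, and explains why the paper keeps $V^1$, $V^2$, $\overline W^1$, $\overline W^2$ as genuine (nonzero) auxiliary profiles.

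\textbf{The real gap is Step 4.} You correctly identify the obstacle (lack of coercivity, Lemma~\ref{lem:lackofcoercivity}) and the general strategy (use of a dual problem and supersolutions), and you correctly flag this as the main technical hurdle. But this is precisely where the substance of the proof lies, and your description misrepresents it in two ways. First, the paper does not ``pair the residual equation with the dual problem.'' It performs a \emph{weighted energy estimate}: it multiplies the remainder equations by $v^r_\varepsilon\,p_\varepsilon$ and $w^r_\varepsilon\,q_\varepsilon$ and integrates by parts, choosing $p_\varepsilon$, $q_\varepsilon$ so that the $O(1/\varepsilon)$ interface terms on $\partial\mathcal U$ (which arise with the wrong sign and would destroy the estimate) cancel exactly, via the transmission conditions $p_\varepsilon=q_\varepsilon$ and $\frac{\partial p_\varepsilon}{\partial\nu}=\frac{\partial q_\varepsilon}{\partial\nu}+\frac{q_\varepsilon}{\varepsilon}$ on $\partial\mathcal U$. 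The requirements on $p_\varepsilon$, $q_\varepsilon$ (positivity, supersolution inequalities, and the anisotropic $L^\infty$ bounds $\|q_\varepsilon\|_{L^\infty(\omega_1)}\le C$ but $\|q_\varepsilon\|_{L^\infty(\omega\setminus\omega_1)}\le C/\varepsilon$) are what ultimately yield the $O(1)$ bound on $\|v^r_\varepsilon\|_{H^1}+\|w^r_\varepsilon\|_{H^1}$; note these weights are not $\varepsilon$-uniformly bounded in $L^\infty$ globally, contrary to what you suggest. Second, the dual boundary layer used to build $p_\varepsilon, q_\varepsilon$ is located near $\partial\mathcal U$ (in the variable $\psi(x)/\varepsilon$), not near $\partial\Omega$; it is therefore not ``analogous to Steps 1--3'' but structurally dual to them. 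Finally, the crucial positivity of $\overline Q^0$ along every characteristic has to be enforced through the method of characteristics and a uniform bound on the travel time $T_\xi$, again a nontrivial argument you leave unaddressed. Until this construction is carried out, the remainder estimate, and hence the theorem, is not proved.
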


The proof of these two theorems is the subject of the following subsections.

\subsection{Determination of the profiles}

For a function $(x,z) \in \omega \times \RR^+ \mapsto W(x,z)$, we denote the derivatives with respect to the space variable $x$ as $\nabla W$, $\mathrm{div}W$, $\Delta W,\ldots$ and the derivatives with respect to the boundary layer variable $z$ as $W_z$, $W_{zz},\ldots$. Thus the function $x\mapsto w(x)=W\bigl(x,\frac{\varphi(x)}\varepsilon\bigr)$ satisfies
\begin{equation}\label{eq:derivativesNd}
    \begin{dcases}
    \nabla w=\nabla W+\frac{1}\varepsilon W_z\nabla\varphi\\ 
    \Delta w=\Delta W+\frac{2}\varepsilon  \nabla W_z\cdot\nabla\varphi+\frac1\epsilon \Delta\varphi W_z+\frac{1}{\varepsilon^2} |\nabla\varphi|^2W_{zz},
    \end{dcases}
\end{equation}
where the derivatives of $w$ are evaluated at $x$ while the derivatives of $W$ are evaluated at $\bigl(x,\frac{\varphi(x)}{\varepsilon}\bigr)$. Given the regularity assumptions on $\Omega$, we note that on $\partial\Omega$, we have $\nabla\varphi = \mu = -\nu_\Omega$, where $\mu$ is  the inward unit normal of $\Omega$. We also have $|\nabla\varphi|=1$ in $\omega_1$, see~\cite{BookBoyerFabrie}.

In the following, we introduce formally the expressions~\eqref{eq:penalizedNdansatz} in the system~\eqref{eq:wNd}--\eqref{eq:dirichletNd} and identify the terms corresponding to each power of $\varepsilon$. To simplify the formal calculations, we assume in the following $\theta(x)=1$ everywhere in~\eqref{eq:Ndboundarylayer} and an exponential decrease of $\partial_z^k \widetilde{W}^i(x,z)$ with respect to the variable $z$. Also, we assume $|\nabla\varphi|=1$ everywhere since the boundary layer terms will be localised near the boundary. All this will be made rigorous afterwards. The goals here is to construct formally the asymptotic expansion of the solution.

\subsubsection{Asymptotic expansion of Eq.~\eqref{eq:wNd} inside the obstacle $\omega$}

\begin{itemize}
    \item Order $\varepsilon^{-2}$: identifying the terms corresponding to the power $\varepsilon^{-2}$ leads to 
    $$-W^0_{zz}+W^0_z (\nabla\varphi\cdot n)=0.$$
    Using the decomposition~\eqref{eq:Ndboundarylayer} one obtains
    \begin{equation}\label{eq:Wtilde0Nd}
        -\widetilde{W}_{zz}^0+\widetilde{W}_z^0(\nabla\varphi\cdot n)=0 \quad \text{in $\omega\times\RR^+$}.
    \end{equation}
    
    \item Order $\varepsilon^{-1}$:
    $$-W^1_{zz}-\Delta\varphi W^0_z-2 \nabla W^0_z\cdot\nabla\varphi+\nabla W^0\cdot n+W^1_z \nabla\varphi\cdot n+\alpha W^0 -g=0,$$
    which,  using hypothesis~\eqref{eq:Ndboundarylayer} and taking the limit when $z\to +\infty$, leads to
    \begin{equation}\label{eq:Wbar0Nd}
        \nabla \overline{W}^0\cdot n+\alpha\overline{W}^0-g=0 \quad \text{in $\omega$}
    \end{equation}
    By difference, we obtain
        \begin{equation}\label{eq:Wtilde1Nd}
        -\widetilde{W}_{zz}^1-\Delta\varphi\widetilde{W}_{z}^0 -2 \nabla \widetilde{W}^0_z\cdot\nabla\varphi+\widetilde{W}^0\cdot n+\widetilde{W}_z^1 \nabla\varphi\cdot n+\alpha \widetilde{W}^0=0 \quad \text{in $\omega\times\RR^+$}.
    \end{equation}
    \item Order $\varepsilon^{0}$:
    $$-W^2_{zz}-\Delta\varphi W^1_{z} -2 \nabla W^1_z\cdot \nabla\varphi-\Delta W^0+W^0+\nabla W^1\cdot n+W^2_z \nabla\varphi\cdot n+\alpha W^1=0,$$
    and again using the same reasoning as above,  using hypothesis~\eqref{eq:Ndboundarylayer} and $z\to +\infty$, we obtain
    \begin{equation}\label{eq:Wbar1Nd}
        -\Delta\overline{W}^0+\overline{W}^0+\nabla\overline{W}^1\cdot n+\alpha \overline{W}^1=0 \quad \text{in $\omega$},
    \end{equation}
    and by difference,
        \begin{equation}\label{eq:Wtilde2Nd}
        -\widetilde{W}_{zz}^2-\Delta\varphi \widetilde{W}_{z}^1-2 \nabla\widetilde{W}^1_z\cdot\nabla\varphi-\Delta\widetilde{W}^0+\widetilde{W}^0+\nabla\widetilde{W}^1\cdot n+\widetilde{W}^2_z \nabla\varphi\cdot n+\alpha \widetilde{W}^1=0 \quad \text{in $\omega\times\RR^+$}.
    \end{equation}
    \item Order $\varepsilon$: from
    $$-W^3_{zz}-\Delta\varphi W^2_{z}-2 \nabla W^2_z\cdot\nabla\varphi-\Delta W^1+W^1+\nabla W^2\cdot n+W^3_z \nabla\varphi\cdot n+\alpha W^2=0,$$
    we deduce once again as before,
    \begin{equation}\label{eq:Wbar2Nd}
        -\Delta\overline{W}^1+\overline{W}^1+\nabla\overline{W}^2\cdot n+\alpha \overline{W}^2=0 \quad \text{in $\omega$},
    \end{equation}
    and by difference,
        \begin{equation}\label{eq:Wtilde3Nd}
        -\widetilde{W}_{zz}^3-\Delta\varphi \widetilde{W}_{z}^2-2 \nabla\widetilde{W}^2_z\cdot\nabla\varphi-\Delta\widetilde{W}^1+\widetilde{W}^1+\nabla\widetilde{W}^2\cdot n+\widetilde{W}^3_z \nabla\varphi\cdot n+\alpha \widetilde{W}^2=0 \quad \text{in $\omega\times\RR^+$}.
    \end{equation}
\end{itemize}

\subsubsection{Asymptotic expansion of Eq.~\eqref{eq:vNd} inside the fluid domain $\mathcal{U}$}

\begin{itemize}
    \item Order $\varepsilon^{0}$:
    \begin{equation}\label{eq:V0Nd}
        -\Delta V^0+V^0=f \quad \text{in $\mathcal{U}$}.
    \end{equation}
    \item Order $\varepsilon^{1}$:
    \begin{equation}\label{eq:V1Nd}
        -\Delta V^1+V^1=0 \quad \text{in $\mathcal{U}$}.
    \end{equation}
    \item Order $\varepsilon^{2}$:
    \begin{equation}\label{eq:V2Nd}
        -\Delta V^2+V^2=0 \quad \text{in $\mathcal{U}$}.
    \end{equation}
\end{itemize}

\subsubsection{Asymptotic expansion of Eq.~\eqref{eq:transmission1Nd} and~\eqref{eq:transmission2Nd} (transmission conditions on $\partial\mathcal{U}$)}

We obtain simply, recalling the exponential decrease of the boundary layer terms
\begin{equation}\label{eq:expansiontransmission1Nd}
    V^0=\overline{W}^0, \ V^1=\overline{W}^1, \ V^2=\overline{W}^2 \quad \text{on $\partial\mathcal{U}$}.
\end{equation}
and
\begin{equation}\label{eq:expansiontransmission2Nd}
    \frac{\partial V^0}{\partial\nu}=\frac{\partial\overline{W}^0}{\partial\nu}, \ \frac{\partial V^1}{\partial\nu}=\frac{\partial\overline{W}^1}{\partial\nu}, \ \frac{\partial V^2}{\partial\nu}=\frac{\partial \overline{W}^2}{\partial\nu} \quad \text{on $\partial\mathcal{U}$}.
\end{equation}

\subsubsection{Asymptotic expansion of Eq.~\eqref{eq:dirichletNd} (Dirichlet boundary conditions on $\partial\Omega$)}

\begin{equation}\label{eq:expansiondirichletNd}
\begin{dcases}
    \widetilde{W}^0(x,0)+\overline{W}^0(x)=0 &\\
    \widetilde{W}^1(x,0)+\overline{W}^1(x)=0 &  \text{on $\partial\Omega\times\{z=0\}$}\\
    \widetilde{W}^2(x,0)+\overline{W}^2(x)=0. &
    \end{dcases}
\end{equation}

\subsection{Formal resolution of the profile equations}

\subsubsection{Determination of $V^0$ and $W^0$}
From~\eqref{eq:V0Nd}, \eqref{eq:Wbar0Nd}, \eqref{eq:expansiontransmission1Nd}, \eqref{eq:expansiontransmission2Nd} and~\eqref{eq:expansiondirichletNd}, we obtain, recalling $n=\nu$ and $g=\tilde{g}$ on $\partial\mathcal{U}$ , that $V^0$ is solution to
\begin{equation}\label{eq:solV0Nd}
    \begin{dcases}
        -\Delta V^0+V^0=f & \text{in $\mathcal{U}$}\\
        \frac{\partial V^0}{\partial\nu}+\alpha V^0=\frac{\partial\overline{W}^0}{\partial\nu}+\alpha \overline{W}^0=\tilde{g} & \text{on $\partial\mathcal{U}$},
    \end{dcases}
\end{equation}
i.e. $V^0=u$ the solution of the initial problem~\eqref{eq:Nd}, and that $\overline{W}^0$ is solution to
\begin{equation}\label{eq:solWbar0Nd}
    \begin{dcases}
        \nabla\overline{W}^0\cdot n+\alpha \overline{W}^0=g & \text{in $\omega$}\\
        \overline{W}^0=V^0 & \text{on $\partial\mathcal{U}$}\\
        \overline{W}^0=0 & \text{on $\Gamma_-$}.
    \end{dcases}
\end{equation}

We recall that in our case $\Gamma_-=\emptyset$. The well-posedness of this equation will be discussed later.

From~\eqref{eq:Wtilde0Nd}, the assumption $-c(x)=n\cdot\nabla\varphi<0$ in $\mathcal{V}(\partial\Omega)$ and the fact that $\widetilde{W}^0$ tends to $0$ as $z\to+\infty$ , we obtain
\begin{equation*}
\widetilde{W}^0(x,z)=w^0(x) e^{-c(x)z} \quad \text{in $\omega\times \RR^+$},
\end{equation*}
where $w^0$ is the value of $\widetilde{W}^0$ at $z=0$. Using~\eqref{eq:expansiondirichletNd}, we have $\widetilde{W}^0(x,0)=-\overline{W}^0(x)$ on $\partial\Omega$. We extend this boundary condition to all $\omega$ and choose $\widetilde{W}^0(x,0)=-\overline{W}^0(0)$ in $\omega$. This implies $w^0(x)=-\overline{W}^0(x)$ and thus
\begin{equation}\label{eq:solWtilde0Nd}
\widetilde{W}^0(x,z)=-\overline{W}^0(x) e^{-c(x)z} \quad \text{in $\omega\times \RR^+$}.
\end{equation}

\subsubsection{Determination of $V^1$ and $W^1$}
Similarly, from~\eqref{eq:V1Nd}, \eqref{eq:Wbar1Nd}, \eqref{eq:expansiontransmission1Nd}, and~\eqref{eq:expansiontransmission2Nd}, we obtain that $V^1$ is solution to
\begin{equation}\label{eq:solV1Nd}
    \begin{dcases}
        -\Delta V^1+V^1=0 & \text{in $\mathcal{U}$}\\
        \frac{\partial V^1}{\partial\nu}+\alpha V^1=\frac{\partial\overline{W}^1}{\partial\nu}+\alpha \overline{W}^1=\Delta \overline{W}^0-\overline{W}^0 & \text{on $\partial\mathcal{U}$},
    \end{dcases}
\end{equation}
and that $\overline{W}^1$ is solution to
\begin{equation}\label{eq:solWbar1Nd}
    \begin{dcases}
        \nabla\overline{W}^1\cdot n+\alpha \overline{W}^1=\Delta \overline{W}^0-\overline{W}^0 & \text{in $\omega$}\\
        \overline{W}^1=V^1 & \text{on $\partial\mathcal{U}$}.
    \end{dcases}
\end{equation}
From~\eqref{eq:Wtilde1Nd}, we have
\begin{equation*}
    -\widetilde{W}_{zz}^1-c(x)\widetilde{W}_z^1=\Delta\varphi \widetilde{W}_{z}^0+2 \nabla\widetilde{W}^0_z\cdot\nabla\varphi-\nabla\widetilde{W}^0\cdot n-\alpha \widetilde{W}^0=F(x)e^{-c(x)z}\quad \text{in $\omega\times\RR^+$},
\end{equation*}
using the obtained expression of $\widetilde{W}^0$~\eqref{eq:solWtilde0Nd}. The solution which tends to $0$ as $z\to+\infty$ is given by
\begin{equation*}
    \widetilde{W}^1(x,z)=w^1(x) e^{-c(x)z}+\frac{F(x)}{c(x)} z e^{-c(x)z} \quad \text{in $\omega\times\RR^+$},
\end{equation*}
where $w^1$ is the value of $\widetilde{W}^1$ at $z=0$, to be determined.
Again, using~\eqref{eq:expansiondirichletNd}, we have $\widetilde{W}^1(x,0)=-\overline{W}^1(x)$ on $\partial\Omega$. We extend this boundary condition to all $\omega$ by setting $\widetilde{W}^1(x,0)=-\overline{W}^1(0)$ in $\omega$. This choice leads to $w^1(x)=-\overline{W}^1(x)$ and thus
\begin{equation}\label{eq:solWtilde1Nd}
\widetilde{W}^1(x,z)=-\overline{W}^1(x) e^{-c(x)z}+\frac{F(x)}{c(x)} z\, e^{-c(x)z} \quad \text{in $\omega\times \RR^+$}.
\end{equation}
We recall that $c(x)>0$ in $\mathcal{V}(\partial\Omega)$.

\subsubsection{Determination of $V^2$ and $W^2$}
Using the same reasoning as before, from~\eqref{eq:V2Nd}, \eqref{eq:Wbar2Nd}, \eqref{eq:expansiontransmission1Nd} and~\eqref{eq:expansiontransmission2Nd}, we obtain that $V^2$ satisfies
\begin{equation}\label{eq:solV2Nd}
    \begin{dcases}
        -\Delta V^2+V^2=0 & \text{in $\mathcal{U}$}\\
        \frac{\partial V^2}{\partial\nu}+\alpha V^2=\frac{\partial\overline{W}^2}{\partial\nu}+\alpha \overline{W}^2=\Delta\overline{W}^1-\overline{W}^1 & \text{on $\partial\mathcal{U}$},
    \end{dcases}
\end{equation}
and that $\overline{W}^2$ satisfies
\begin{equation}\label{eq:solWbar2Nd}
    \begin{dcases}
        \nabla\overline{W}^2\cdot n+\alpha \overline{W}^2=\Delta\overline{W}^1-\overline{W}^1 & \text{in $\omega$}\\
        \overline{W}^2=V^2 & \text{on $\partial\mathcal{U}$}.
    \end{dcases}
\end{equation}
From~\eqref{eq:Wtilde2Nd}, we have
\begin{align*}
    -\widetilde{W}_{zz}^2-c(x)\widetilde{W}^2_z&=\Delta\varphi\widetilde{W}^1_z +2 \nabla\widetilde{W}^1_z\cdot\nabla\varphi+\Delta\widetilde{W}^0-\widetilde{W}^0-\nabla\widetilde{W}^1\cdot n-\alpha \widetilde{W}^1 \quad \text{in $\omega\times\RR^+$}\\
    &=G(x)\, e^{-c(x)z} + H(x)\, z\, e^{-c(x)z}
\end{align*}
using the obtained expressions of $\widetilde{W}^0$ \eqref{eq:solWtilde0Nd} and $\widetilde{W}^1$ \eqref{eq:solWtilde1Nd}. The solution which tends to $0$ as $z\to+\infty$ is given by
\begin{equation*}
    \widetilde{W}^2(x,z)=w^2(x) e^{-c(x)z}+\biggl(\frac{G(x)}{c(x)}+\frac{H(x)}{c^2(x)}\biggr) z\, e^{-c(x)z} + \frac{H(x)}{2c(x)} z^2\, e^{-c(x)z} \quad \text{in $\omega\times\RR^+$},
\end{equation*}
where $w^2$ is the value of $\widetilde{W}^2$ at $z=0$, to be determined.
Once again, using~\eqref{eq:expansiondirichletNd}, we have $\widetilde{W}^2(x,0)=-\overline{W}^2(x)$ on $\partial\Omega$. We extend this boundary condition to all $\omega$ by setting $\widetilde{W}^2(x,0)=-\overline{W}^2(0)$ in $\omega$. This choice leads to $w^2(x)=-\overline{W}^2(x)$ and thus
\begin{align}\label{eq:solWtilde2Nd}
\widetilde{W}^2(x,z)=-\overline{W}^2(x) e^{-c(x)z}+\biggl(\frac{G(x)}{c(x)}+\frac{H(x)}{c^2(x)}\biggr) z\, e^{-c(x)z} + \frac{H(x)}{2c(x)} z^2\, e^{-c(x)z} \quad \text{in $\omega\times \RR^+$}.
\end{align}

\subsection{Well-posedness of the profile equations}\label{sec:wellposednessNd}


In this section, we show that the formal equations obtained in the previous section are indeed well-posed with suitable regularity.

\begin{itemize}
\item {Regularity of $V^0$:}\\
Since $V_0=u$ solution of the initial problem~\eqref{eq:Nd}, we obtain from the elliptic regularity~\ref{th:ellipticregularity} using the asumptions on $\mathcal U$ of class $C^{k+8,1}$, $f\in H^{k+7}(\mathcal U)$, $\tilde{g}\in H^{k+7+1/2}(\partial\mathcal U)$ and Sobolev injections in dimension 2 that $V^0\in H^{k+9}(\mathcal U)\hookrightarrow C^{k+7}(\overline{\mathcal{U}})$.

\item {Regularity of $\overline{W}^0$:}\\
$\overline{W}^0$ is solution to~\eqref{eq:solWbar0Nd}, which is an advection-reaction equation. The existence, uniqueness and regularity result for this type of equations is given by Theorem~\ref{th:hyperbolicregularitybis}. Since $\widetilde{g}\in H^{k+7+1/2}(\partial \mathcal U)$, we can lift by $g\in H^{k+8}(\omega)\hookrightarrow C^{k+6}(\overline{\mathcal{\omega}})$. Together with $V^0\in C^{k+7}(\partial\mathcal U)$ and the asumptions on $\psi\in C^{k+7}(\overline{\omega})$, we obtain $\overline{W}^0\in C^{k+6}(\overline{\omega})\hookrightarrow H^{k+6}(\omega)$.

\item {Regularity of $\widetilde{W}^0$:}\\
We recall that we found~\eqref{eq:solWtilde0Nd}
\begin{equation*}
\widetilde{W}^0(x,z)=-\overline{W}^0(x) e^{-c(x)z} \quad \text{in $\omega_1\times \RR^+$},
\end{equation*}
where $c(x)=n(x)\cdot(-\nabla\varphi(x))>0$ in $\omega_1$, such that the regularity established for $\overline{W}^0$ and the regularity of $n=\nabla\psi\in C^{k+6}(\overline{\omega})$ and $\nabla\varphi\in C^{k+7}(\overline{\omega_1})$ show that $\widetilde{W}^0\in C^{k+6}(\overline{\omega_1}\times \RR^+)$ and $C^\infty(\RR^+)$ in the $z$ variable.

\item{Regularity of $V^1$:}\\
$V^1$ is given by~\eqref{eq:solV1Nd} which is a reaction-diffusion equation. Using again the elliptic regularity theorem ~\ref{th:ellipticregularity} and $\Delta \overline{W}^0-\overline{W}^0\in C^{k+4}(\overline{\omega})\hookrightarrow H^{k+4}(\overline{\omega})$, thus $\Delta \overline{W}^0-\overline{W}^0\in H^{k+3+1/2}(\partial\mathcal U)$, we obtain $V^1\in H^{k+5}(\mathcal U)\hookrightarrow C^{k+3}(\overline{\mathcal U})$.

\item {Regularity of $\overline{W}^1$:}\\
$\overline{W}^1$ is solution to~\eqref{eq:solWbar1Nd}. Since $\Delta \overline{W}^0-\overline{W}^0\in C^{k+4}(\overline{\omega})$ and $V^1\in C^{k+3}(\partial\mathcal U)$, we obtain similarly $\overline{W}^1\in C^{k+3}(\overline{\omega})\hookrightarrow H^{k+3}(\omega)$.

\item{Regularity of $\widetilde{W}^1$:}\\
$\widetilde{W}^1$ is given by~\eqref{eq:solWtilde1Nd} in $\omega_1\times\RR^+$, so that the regularity established for $\overline{W}^1\in C^{k+3}(\overline{\omega})$ and $\widetilde{W}^0\in C^{k+6}(\overline{\omega_1}\times\RR^+)$, and the regularity of $n=\nabla\psi\in C^{k+6}(\overline{\omega})$ and $\varphi\in C^{k+8}(\overline{\omega_1})$ show that $\widetilde{W}^1\in C^{k+3}(\overline{\omega_1}\times \RR^+)$ and $C^\infty(\RR^+)$ in the $z$ variable.

\item{Regularity of $V^2$:}\\
$V^2$ is given by~\eqref{eq:solV2Nd}. Again, using the elliptic regularity~\ref{th:ellipticregularity} and $\Delta \overline{W}^1-\overline{W}^1\in C^{k+1}(\overline{\omega})\hookrightarrow H^{k+2}(\overline{\omega})$, thus $\Delta \overline{W}^1-\overline{W}^1\in H^{k+1/2}(\partial\mathcal U)$, we obtain $V^2\in H^{k+2}(\mathcal U)\hookrightarrow C^{k}(\overline{\mathcal U})$.

\item {Regularity of $\overline{W}^2$:}\\
$\overline{W}^2$ is solution to~\eqref{eq:solWbar2Nd}. Since $\Delta \overline{W}^1-\overline{W}^1\in C^{k+1}(\overline{\omega})$ and $V^1\in C^{k}(\partial\mathcal U)$, we obtain similarly $\overline{W}^2\in C^{k}(\overline{\omega})\hookrightarrow H^{k}(\omega)$.

\item{Regularity of $\widetilde{W}^2$:}\\
$\widetilde{W}^2$ is given by~\eqref{eq:solWtilde2Nd} in $\omega_1\times\RR^+$, so that the regularity established for $\overline{W}^2\in C^{k}(\overline{\omega})$, $\widetilde{W}^1\in C^{k+3}(\overline{\omega_1}\times\RR^+)$ and $\widetilde{W}^0\in C^{k+6}(\overline{\omega_1}\times\RR^+)$, and the regularity of $n=\nabla\psi\in C^{k+6}(\overline{\omega})$ and $\varphi\in C^{k+8}(\overline{\omega_1})$ show that $\widetilde{W}^2\in C^{k}(\overline{\omega_1}\times \RR^+)$ and $C^\infty(\RR^+)$ in the $z$ variable.

\end{itemize}

\subsection{Convergence of the asymptotic expansion}\label{sec:convergenceNd}


We search for a solution of the penalized problem~\eqref{eq:penalizedNd} in the following form
\begin{equation}\label{eq:expansionNd}
    \begin{dcases}
    w_\varepsilon(x)=\theta(x)\widetilde{W}^0\Bigl(x,\frac{\varphi(x)}\varepsilon\Bigr)+\varepsilon \theta(x)\widetilde{W}^1\Bigl(x,\frac{\varphi(x)}\varepsilon\Bigr)+\varepsilon^2\theta(x)\widetilde{W}^2\Bigl(x,\frac{\varphi(x)}\varepsilon\Bigr)&\\
    \qquad \qquad + \overline{W}^0(x)+\varepsilon \overline{W}^1(x)+\varepsilon^2 \overline{W}^2(x)+ \varepsilon w_\varepsilon^r(x) & \text{in $\omega$}\\
    v_\varepsilon(x)={V}^0(x)+\varepsilon {V}^1(x)+\varepsilon^2 {V}^2(x)+ \varepsilon v_\varepsilon^r(x) & \text{in $\mathcal{U}$},
    \end{dcases}
\end{equation}
were $\widetilde{W}^i$, $\overline{W}^i$ and ${V}^i$ are the profile terms constructed previously~\eqref{eq:solV0Nd}--\eqref{eq:solWtilde2Nd}, and $w_\varepsilon^r$ and $v_\varepsilon^r$ are the remainder terms that we will estimate in the following.

We use the following notations:
\begin{equation}\label{eq:approximatedsolutionNd}
\begin{dcases}
    W_\mathrm{app}=\theta\widetilde{w}^0+\varepsilon \theta\widetilde{w}^1+\varepsilon^2\theta\widetilde{w}^2+\overline{W}^0+\varepsilon \overline{W}^1+\varepsilon^2 \overline{W}^2\\
    V_\mathrm{app}=V^0+\varepsilon V^1+\varepsilon^2 V^2,
\end{dcases}
\end{equation}
where $\widetilde{w}^i(x)=\widetilde{W}^i\bigl(x,\frac{\varphi(x)}\varepsilon\bigr)$, so that
\begin{equation*}
\begin{dcases}
w_\varepsilon= W_\mathrm{app}+\varepsilon w_\varepsilon^r & \text{in $\omega$}\\
v_\varepsilon=V_\mathrm{app}+\varepsilon v_\varepsilon^r& \text{in $\mathcal{U}$}.
\end{dcases}
\end{equation*}

The aim of the following subsections is to show that the remainders $w_\varepsilon^r$ and $v_\varepsilon^r$ are bounded in $H^1$ independently of $\varepsilon$, from which we will conclude that
\begin{equation}\label{eq:convergencefluidNd}
\|v_\varepsilon-V^0\|_{H^1(\mathcal{U})}=O(\varepsilon),
\end{equation}
that is the convergence of the solution of the penalized problem~\eqref{eq:penalizedNd} towards the solution of the initial problem~\eqref{eq:Nd} inside the fluid domain.
On the other hand, we will obtain that
\begin{equation}\label{eq:boundarylayerobstacleNd}
\|w_\varepsilon-W_\mathrm{app}\|_{H^1(\omega)}=O(\varepsilon),
\end{equation}
that is the presence of a boundary layer at the boundary $\partial\Omega$. In particular far from the boundary $\partial\Omega$ we have
\begin{equation}\label{eq:convergeneobstacleNd}
\|w_\varepsilon-\overline{W}^0\|_{H^1(\omega\setminus\mathcal{V}(\partial\Omega))}=O(\varepsilon),
\end{equation}
and $\overline{W}^0$ is the limit solution in the complementary domain.

\subsubsection{Equations of the remainders}
Using the equations satisfied by $w_\varepsilon$ and $v_\varepsilon$, the remainders satisfy the following system
    \begin{subnumcases}{}
    -\Delta w^r_\varepsilon+w^r_\varepsilon+\frac1\varepsilon (\nabla w^r_\varepsilon\cdot n+\alpha w^r_\varepsilon)= R^\varepsilon_\mathrm{obst}  & \text{in $\omega$}\label{eq:wremainderNd}\\ 
    -\Delta v^r_\varepsilon+v^r_\varepsilon= R^\varepsilon_\mathrm{flu} & \text{in $\mathcal{U}$}\label{eq:vremainderNd}\\
    w^r_\varepsilon=v^r_\varepsilon & \text{on $\partial\mathcal{U}$}\label{eq:transmission1remainderNd}\\
    \frac{\partial w^r_\varepsilon}{\partial \nu}=\frac{\partial v^r_\varepsilon}{\partial\nu} & \text{on $\partial\mathcal{U}$}\label{eq:transmission2remainderNd}\\
    w^r_\varepsilon=R^\varepsilon_\mathrm{boundary}& \text{ on $\partial\Omega$}\label{eq:dirichletremainderNd}
    \end{subnumcases}
where
\begin{align*}
    R^\varepsilon_\mathrm{obst}&=\frac1\varepsilon \Delta W_\mathrm{app}-\frac1\varepsilon W_\mathrm{app}-\frac1{\varepsilon^2} \nabla W_\mathrm{app}\cdot n-\frac\alpha{\varepsilon^2} W_\mathrm{app}+\frac{g}{\varepsilon^2}\\
    &=\frac1\varepsilon \Delta W_\mathrm{app}-\frac1{\varepsilon^2} \nabla W_\mathrm{app}\cdot n-\Bigl(\frac1\varepsilon+\frac\alpha{\varepsilon^2}\Bigr) W_\mathrm{app}+\frac{g}{\varepsilon^2}\\
    R^\varepsilon_\mathrm{flu}&=\frac1\varepsilon (f+\Delta V_\mathrm{app}-V_\mathrm{app})\\
    &=\frac1\varepsilon (f+\Delta {V^0}+\varepsilon \Delta {V^1}+\varepsilon^2 \Delta {V^2}-V^0-\varepsilon V^1- \varepsilon^2 V^2)=0\\
    R^\varepsilon_\mathrm{boundary}&=-\frac1\varepsilon W_\mathrm{app}\\
    &=-\frac1\varepsilon (\widetilde{W}^0(x,0)+\varepsilon\widetilde{W}^1(x,0)+\varepsilon^2 \widetilde{W}^2(x,0) + \overline{W}^0(x)+\varepsilon \overline{W}^1(x)+\varepsilon^2 \overline{W}^2(x) )=0.
\end{align*}

To estimate the remainders, we need to estimate the right-hand sides of the equations, namely $R^\varepsilon_\mathrm{obst}$.

Using~\eqref{eq:approximatedsolutionNd}, one has 
\begin{align*}
    \nabla W_\mathrm{app}&=\nabla\theta\,\widetilde{w}^0+\theta\,\nabla{\widetilde{w}^0}+\varepsilon \nabla\theta\,{\widetilde{w}}^1+\varepsilon \theta\,\nabla{\widetilde{w}^1}+\varepsilon^2\nabla\theta\,\widetilde{w}^2+\varepsilon^2\theta\,\nabla{\widetilde{w}^2}+\nabla{\overline{W}^0}+\varepsilon \nabla{\overline{W}^1}+\varepsilon^2 \nabla{\overline{W}^2}\\
        \Delta W_\mathrm{app}&=\Delta\theta\,\widetilde{w}^0+2\nabla\theta\cdot\nabla{\widetilde{w}^0}+\theta\,\Delta{\widetilde{w}^0}+\varepsilon \Delta\theta\,\widetilde{w}^1+2\varepsilon \nabla\theta\cdot\nabla{\widetilde{w}^1}+\varepsilon \theta\,\Delta{\widetilde{w}^1}+\varepsilon^2\Delta\theta\,\widetilde{w}^2+2\varepsilon^2\nabla\theta\cdot\nabla{\widetilde{w}^2}\\
        &\quad +\varepsilon^2\theta\,\Delta{\widetilde{w}^2}+\Delta{\overline{W}^0}+\varepsilon \Delta{\overline{W}^1}+\varepsilon^2 \Delta{\overline{W}^2}.
\end{align*}
Using~\eqref{eq:derivativesNd} and the fact that $|\nabla\varphi|=1$ inside the support of $\theta$, we obtain
\begin{align*}
    R^\varepsilon_\mathrm{obst}&=\frac1\varepsilon \Bigl(\Delta\theta\,\widetilde{W}^0+2\nabla\theta\cdot\nabla \widetilde{W}^0+\frac{2}\varepsilon\nabla\theta\cdot\nabla\varphi\,\widetilde{W}^0_z\\
    &\quad+\theta\,\Delta\widetilde{W}^0 + \frac{2\theta}{\varepsilon}\nabla\widetilde{W}^0_{z}\cdot\nabla\varphi+\frac{\theta}{\varepsilon}\Delta\varphi\,\widetilde{W}^0_{z}+\frac{\theta}{\varepsilon^2}\widetilde{W}^0_{zz}\\
    &\quad+\varepsilon \Delta\theta\,\widetilde{W}^1+2\varepsilon \nabla\theta\cdot\nabla \widetilde{W}^1+2\varepsilon \frac{1}\varepsilon \nabla\theta\cdot\nabla\varphi\,\widetilde{W}^1_z\\
    &\quad+\varepsilon \theta\,\Delta\widetilde{W}^1 + \varepsilon \frac{2\theta}{\varepsilon}\nabla\widetilde{W}^1_{z}\cdot\nabla\varphi+\varepsilon\frac{\theta}{\varepsilon}\Delta\varphi\,\widetilde{W}^1_{z}+\varepsilon\frac{\theta}{\varepsilon^2}\widetilde{W}^1_{zz}\\
    &\quad+\varepsilon^2 \Delta\theta\,\widetilde{W}^2+2\varepsilon^2\nabla\theta\cdot\nabla \widetilde{W}^2+2\varepsilon^2\frac{1}\varepsilon\nabla\theta\cdot\nabla\varphi\,\widetilde{W}^2_z\\
    &\quad+\varepsilon^2 \theta\,\Delta\widetilde{W}^2 + \varepsilon^2 \frac{2\theta}{\varepsilon}\nabla\widetilde{W}^2_{z}\cdot\nabla\varphi+\varepsilon^2\frac{\theta}{\varepsilon}\Delta\varphi\,\widetilde{W}^2_{z}+\varepsilon^2\frac{\theta}{\varepsilon^2}\widetilde{W}^2_{zz}\\
    &\quad +\Delta{\overline{W}^0}+\varepsilon \Delta{\overline{W}^1}+\varepsilon^2 \Delta{\overline{W}^2}\Bigr)\\
    &\quad-\frac1{\varepsilon^2} \Bigl(\nabla\theta\cdot n\,\widetilde{W}^0+\theta\, \nabla\widetilde{W}^0\cdot n+\frac{\theta}\varepsilon\nabla\varphi\cdot n\,\widetilde{W}^0_z\\
    &\quad+\varepsilon\nabla\theta\cdot n\,\widetilde{W}^1+\varepsilon\theta\, \nabla\widetilde{W}^1\cdot n+\varepsilon\frac{\theta}\varepsilon\nabla\varphi\cdot n\,\widetilde{W}^1_z\\
    &\quad +\varepsilon^2\nabla\theta\cdot n\,\widetilde{W}^2+\varepsilon^2\theta\, \nabla\widetilde{W}^2\cdot n+\varepsilon^2\frac{\theta}\varepsilon\nabla\varphi\cdot n\,\widetilde{W}^2_z\\
    &\quad+\nabla\overline{W}^0\cdot n+\varepsilon \nabla\overline{W}^1\cdot n+\varepsilon^2 \nabla\overline{W}^2\cdot n\Bigr)\\
    &\quad-\Bigl(\frac1\varepsilon+\frac\alpha{\varepsilon^2}\Bigr) (\theta\,\widetilde{W}^0+\varepsilon \theta\,\widetilde{W}^1+\varepsilon^2\theta\,\widetilde{W}^2+\overline{W}^0+\varepsilon \overline{W}^1+\varepsilon^2 \overline{W}^2)+\frac{g}{\varepsilon^2}\\
    &=\frac1\varepsilon \Delta\theta\,\widetilde{W}^0+\frac2\varepsilon\nabla\theta\cdot\nabla \widetilde{W}^0+\frac{2}{\varepsilon^2}\nabla\theta\cdot\nabla\varphi\,\widetilde{W}^0_z\\
    &\quad+\frac\theta\varepsilon\,\Delta\widetilde{W}^0 + \frac{2\theta}{\varepsilon^2}\nabla\widetilde{W}^0_{z}\cdot\nabla\varphi+\frac{\theta}{\varepsilon^2}\Delta\varphi\,\widetilde{W}^0_{z}+\frac{\theta}{\varepsilon^3}\widetilde{W}^0_{zz}\\
    &\quad+\Delta\theta\,\widetilde{W}^1+2\nabla\theta\cdot\nabla \widetilde{W}^1+\frac{2}\varepsilon \nabla\theta\cdot\nabla\varphi\,\widetilde{W}^1_z\\
    &\quad+ \theta\,\Delta\widetilde{W}^1 + \frac{2\theta}{\varepsilon}\nabla\widetilde{W}^1_{z}\cdot\nabla\varphi+\frac{\theta}{\varepsilon}\Delta\varphi\,\widetilde{W}^1_{z}+\frac{\theta}{\varepsilon^2}\widetilde{W}^1_{zz}\\
    &\quad+\varepsilon\Delta\theta\,\widetilde{W}^2+2\varepsilon\nabla\theta\cdot\nabla \widetilde{W}^2+2\nabla\theta\cdot\nabla\varphi\,\widetilde{W}^2_z\\
    &\quad+\varepsilon \theta\,\Delta\widetilde{W}^2 +  2\theta\,\nabla\widetilde{W}^2_{z}\cdot\nabla\varphi+\theta\,\Delta\varphi\,\widetilde{W}^2_{z}+\frac{\theta}{\varepsilon}\widetilde{W}^2_{zz}\\
    &\quad +\frac1\varepsilon\Delta{\overline{W}^0}+ \Delta{\overline{W}^1}+\varepsilon \Delta{\overline{W}^2}\\
    &\quad-\frac1{\varepsilon^2} \nabla\theta\cdot n\,\widetilde{W}^0-\frac\theta{\varepsilon^2}\, \nabla\widetilde{W}^0\cdot n-\frac{\theta}{\varepsilon^3}\nabla\varphi\cdot n\,\widetilde{W}^0_z\\
    &\quad-\frac1\varepsilon\nabla\theta\cdot n\,\widetilde{W}^1-\frac\theta\varepsilon\, \nabla\widetilde{W}^1\cdot n-\frac{\theta}{\varepsilon^2}\nabla\varphi\cdot n\,\widetilde{W}^1_z\\
    &\quad -\nabla\theta\cdot n\,\widetilde{W}^2-\theta\, \nabla\widetilde{W}^2\cdot n-\frac{\theta}\varepsilon\nabla\varphi\cdot n\,\widetilde{W}^2_z\\
    &\quad-\frac1{\varepsilon^2}\nabla\overline{W}^0\cdot n-\frac1\varepsilon \nabla\overline{W}^1\cdot n-\nabla\overline{W}^2\cdot n\\
    &\quad -\frac\theta\varepsilon \widetilde{W}^0- \theta\widetilde{W}^1-\varepsilon\theta\widetilde{W}^2-\frac1\varepsilon\overline{W}^0- \overline{W}^1-\varepsilon \overline{W}^2\\ 
    &\quad-\frac{\alpha\theta}{\varepsilon^2} \widetilde{W}^0
    - \frac{\alpha\theta}\varepsilon\widetilde{W}^1-\alpha\theta\widetilde{W}^2-\frac\alpha{\varepsilon^2}\overline{W}^0-\frac\alpha{\varepsilon} \overline{W}^1-\alpha \overline{W}^2+\frac{g}{\varepsilon^2},
\end{align*}
where as usual $\widetilde{W}^i$, $\nabla\widetilde{W}^i$ and $\Delta\widetilde{W}^i$ are taken at $(x,\frac{\varphi(x)}{\varepsilon})$.

Using the equations satisfied by the profile terms $\widetilde{W}^i$ and $\overline{W}^i$, different terms simplify in the previous expression and it remains
\begin{align*}
    R^\varepsilon_\mathrm{obst}&=\frac1\varepsilon \Delta\theta\,\widetilde{W}^0+\frac2\varepsilon\nabla\theta\cdot\nabla \widetilde{W}^0+\frac{2}{\varepsilon^2}\nabla\theta\cdot\nabla\varphi\,\widetilde{W}^0_z\\
    &\quad+\Delta\theta\,\widetilde{W}^1+2\nabla\theta\cdot\nabla \widetilde{W}^1+\frac{2}\varepsilon \nabla\theta\cdot\nabla\varphi\,\widetilde{W}^1_z\\
    &\quad+ \theta\,\Delta\widetilde{W}^1+\varepsilon\Delta\theta\,\widetilde{W}^2+2\varepsilon\nabla\theta\cdot\nabla \widetilde{W}^2+2\nabla\theta\cdot\nabla\varphi\,\widetilde{W}^2_z\\
    &\quad+\varepsilon \theta\,\Delta\widetilde{W}^2 +  2\theta\,\nabla\widetilde{W}^2_{z}\cdot\nabla\varphi+\theta\,\Delta\varphi\,\widetilde{W}^2_{z}+\varepsilon \Delta{\overline{W}^2}\\
    &\quad-\frac1{\varepsilon^2} \nabla\theta\cdot n\,\widetilde{W}^0-\frac1\varepsilon\nabla\theta\cdot n\,\widetilde{W}^1-\nabla\theta\cdot n\,\widetilde{W}^2-\theta\, \nabla\widetilde{W}^2\cdot n\\    
    &\quad - \theta\,\widetilde{W}^1-\varepsilon\theta\,\widetilde{W}^2-\varepsilon \overline{W}^2-\alpha\theta\,\widetilde{W}^2.
\end{align*}

From the well-posedness section~\ref{sec:wellposednessNd}, since we assume $k\ge 2$ in Theorem~\ref{th:convergence}, all the terms of $R^\varepsilon_\mathrm{obst}$ are at least $C^0(\overline{\omega})$.

Since $\widetilde{W}^i(x,z)$ are of the form $\sum_{j=0}^2 A_j^i(x) z^j e^{-c(x)z}$ where $A_j^i(x)$ and $c(x)$ are sufficiently regular, and $z^j e^{-c(x)z}$ are bounded by $\bigl(\frac{j}{c(x)}\bigr)^je^{-j}$ with respect to $z$ in $\RR^+$ for $j\in\NN$ and $c(x)\ge \frac{\lambda_0}2>0$ in $\omega_1$, $\widetilde{W}^i\bigl(x,\frac{\varphi(x)}\varepsilon\bigr)$ are bounded in $\omega_1$ (which is a neighborhood of $\partial\Omega$) independently of $\varepsilon$, thus we have $\forall x\in\omega$

\begin{align*}
    \biggl|- \theta(x)\widetilde{W}^1\biggl(x,\frac{\varphi(x)}\varepsilon\biggr)-\alpha\theta(x)\widetilde{W}^2\biggl(x,\frac{\varphi(x)}\varepsilon\biggr)\biggr|
     \le C.
\end{align*}

Given the form of $\widetilde{W}^i(x,z)$, we observe that $\nabla\widetilde{W}^i(x,z)$, $\widetilde{W}^i_z(x,z)$, $\nabla\widetilde{W}^i(x,z)$ and $\Delta\widetilde{W}^i_z(x,z)$ are also of the form $\sum_{j=0}^2 B_j^i(x) z^j e^{-c(x)z}$ with suitable regularity, thus $\forall x\in\omega$
\begin{align*}
    &\biggl|\theta(x)\Delta\widetilde{W}^1\biggl(x,\frac{\varphi(x)}\varepsilon\biggr)+2\theta(x)\,\nabla\widetilde{W}^2_{z}\biggl(x,\frac{\varphi(x)}\varepsilon\biggr)\cdot\nabla\varphi(x)\\
    &\quad +\theta(x)\,\Delta\varphi(x)\,\widetilde{W}^2_{z}\biggl(x,\frac{\varphi(x)}\varepsilon\biggr)-\theta(x)\, \nabla\widetilde{W}^2\biggl(x,\frac{\varphi(x)}\varepsilon\biggr)\cdot n(x)\biggr|
     \le C,
\end{align*}
which estimates cannot be refined to $C\varepsilon$, and
\begin{align*}
    \biggl|\varepsilon\theta(x)\Delta\widetilde{W}^2\biggl(x,\frac{\varphi(x)}\varepsilon\biggr)-\varepsilon\theta(x)\widetilde{W}^2\biggl(x,\frac{\varphi(x)}\varepsilon\biggr)\biggr|
     \le C\varepsilon.
\end{align*}
Also,
\begin{align*}
    \Bigl|\varepsilon {\Delta\overline{W}^2}(x)-\varepsilon \overline{W}^2(x)\Bigr|
     \le C\varepsilon.
\end{align*}
The remaining terms are of the form $\frac{1}{\varepsilon^k} \Delta\theta(x)\,\widetilde{W}^i\bigl(x,\frac{\varphi(x)}\varepsilon\bigr)$ or $\frac{1}{\varepsilon^k} \nabla\theta(x)\cdot\nabla\widetilde{W}^i\bigl(x,\frac{\varphi(x)}\varepsilon\bigr)$ or $\frac{1}{\varepsilon^k} \nabla\theta(x)\cdot\nabla\varphi(x) \,\widetilde{W}^i_z\bigl(x,\frac{\varphi(x)}\varepsilon\bigr)$ or $\frac{1}{\varepsilon^k} \nabla\theta(x)\cdot n(x) \,\widetilde{W}^i\bigl(x,\frac{\varphi(x)}\varepsilon\bigr)$ for $k\in\ZZ$.  The terms under study are thus of the from 
$$\frac{1}{\varepsilon^k}\Delta\theta(x)\sum_{j=0}^2 A_j^i(x) \Bigl(\frac{\varphi(x)}\varepsilon\Bigr)^j e^{-c(x)\frac{\varphi(x)}\varepsilon}$$ or 
$$\frac{1}{\varepsilon^k} \nabla\theta(x)\cdot \sum_{j=0}^2 B_j^i(x) \Bigl(\frac{\varphi(x)}\varepsilon\Bigr)^j e^{-c(x)\frac{\varphi(x)}\varepsilon}$$ 
or
$$\frac{1}{\varepsilon^k} \nabla\theta(x)\cdot\nabla\varphi(x) \sum_{j=0}^2 B_j^i(x) \Bigl(\frac{\varphi(x)}\varepsilon\Bigr)^j e^{-c(x)\frac{\varphi(x)}\varepsilon}$$ 
or
$$\frac{1}{\varepsilon^k}\nabla\theta(x)\cdot n(x)\sum_{j=0}^2 A_j^i(x) \Bigl(\frac{\varphi(x)}\varepsilon\Bigr)^j e^{-c(x)\frac{\varphi(x)}\varepsilon}$$ 

Since $\theta\equiv 1$ in a neighborhood of $\partial\Omega$, these terms are zero if $\varphi(x)\le \delta_0$ for a well-chosen $\delta_0>0$. They are also zero outside of the $\omega_1$, the support of $\theta$. The $j^{th}$ terms are thus bounded by
\begin{equation*}
    C \frac{1}{\varepsilon^{j+k}} e^{-\frac{\lambda_0}2\frac{\delta_0}\varepsilon}\le C\varepsilon,
\end{equation*}
since for all $\ell\in\ZZ$, $\varepsilon^\ell e^{-\frac{\lambda_0}2\frac{\delta_0}\varepsilon}\to 0$ as $\varepsilon\to 0^+$.

In conclusion, we have the following estimate
\begin{equation}\label{eq:rhsestimateNd}
    \|R^\varepsilon_\mathrm{obst}\|_{L^2(\omega)}\le C.
\end{equation}

In the following, we will need a finer estimate: since $R^\varepsilon_\mathrm{obst}$ reduces to $\varepsilon \Delta{\overline{W}^2}-\varepsilon \overline{W}^2$ outside of the support of $\theta$, we have
\begin{align}
    \|R^\varepsilon_\mathrm{obst}\|_{L^2(\omega_1)}&\le C\label{eq:rhsestimatefiner1Nd}\\
    \|R^\varepsilon_\mathrm{obst}\|_{L^2(\omega\setminus\omega_1)}&\le C\varepsilon\label{eq:rhsestimatefiner2Nd},
\end{align}
where we recall $\mathrm{supp}(\theta)\subset \omega_1$.

\subsubsection{Estimate of the remainders}
The last step is to estimate the remainders using some energy estimates. 
Unfortunately, multiplying by the remainders and integrating by parts yields an interface term on $\partial\mathcal U$ of the form $\frac1\varepsilon (w^r_\varepsilon)^2$ (with the wrong sign) that is not easy to control. Inspired by the approaches used in boundary layer methods in the case of advection-diffusion problems~\cite{lions2006perturbations, gie2018singular, dalibard2018mathematical} where we multiply by test functions of the form $w^r_\varepsilon(x) e^{\pm x}$, we will rather multiply by weighted remainders as test functions where the weights have to be determined in order to get rid of the interface terms (between the fluid and the obstacle). More precisely, we multiply Eq.~\eqref{eq:vremainderNd} by $v^r_\varepsilon p_\varepsilon$ and we integrate over $\mathcal{U}$, this yields
\begin{align}
    -\int_\mathcal{U} \Delta{v^r_\varepsilon} {v^r_\varepsilon} p_\varepsilon+ \int_\mathcal{U} {v^r_\varepsilon}^2 p_\varepsilon=0\notag\\
    \int_\mathcal{U} \nabla{v^r_\varepsilon}\cdot \nabla({v^r_\varepsilon} p_\varepsilon)-\int_{\partial\mathcal{U}} \frac{\partial{v^r_\varepsilon}}{\partial\nu} {v^r_\varepsilon} p_\varepsilon+ \int_\mathcal{U} {v^r_\varepsilon}^2 p_\varepsilon=0\notag\\
    \int_\mathcal{U} \nabla{v^r_\varepsilon}\cdot\nabla {v^r_\varepsilon} p_\varepsilon+\int_\mathcal{U} \nabla v^r_\varepsilon\cdot \nabla p_\varepsilon {v^r_\varepsilon} -\int_{\partial\mathcal U} \frac{\partial v^r_\varepsilon}{\partial\nu} {v^r_\varepsilon} p_\varepsilon+ \int_{\mathcal U} {v^r_\varepsilon}^2 p_\varepsilon=0\notag\\
    \int_{\mathcal U} {(\nabla{v^r_\varepsilon})}^2 p_\varepsilon+\int_{\mathcal U} \nabla\Bigl(\frac{{v^r_\varepsilon}^2}{2}\Bigr)\cdot \nabla p_\varepsilon-\int_{\partial\mathcal U} \frac{\partial v^r_\varepsilon}{\partial\nu} {v^r_\varepsilon} p_\varepsilon+ \int_{\mathcal U} {v^r_\varepsilon}^2 p_\varepsilon=0\notag\\
     \int_{\mathcal U} {(\nabla{v^r_\varepsilon})}^2 p_\varepsilon-\int_{\mathcal U} \frac{{v^r_\varepsilon}^2}{2} \Delta p_\varepsilon+\int_{\partial\mathcal U}\frac{\partial p_\varepsilon}{\partial \nu}\frac{{v^r_\varepsilon}^2}{2} -\int_{\partial\mathcal U}\frac{\partial v^r_\varepsilon}{\partial \nu}{v^r_\varepsilon p_\varepsilon}+ \int_{\mathcal U} {v^r_\varepsilon}^2 p_\varepsilon=0\notag\\
    \int_{\mathcal U} {(\nabla{v^r_\varepsilon})}^2 p_\varepsilon + \int_{\mathcal U} \Bigl(p_\varepsilon-\frac{\Delta p_\varepsilon}{2} \Bigr){v^r_\varepsilon}^2+\int_{\partial\mathcal U}\frac{\partial p_\varepsilon}{\partial \nu}\frac{{v^r_\varepsilon}^2}{2} -\int_{\partial\mathcal U}\frac{\partial v^r_\varepsilon}{\partial \nu}{v^r_\varepsilon p_\varepsilon}=0.\label{eq:E1Nd}
\end{align}

\begin{remark}\label{rem:ipp1}
The previous integrations by parts are legitimate if $p_\varepsilon$ is regular enough, say $p_\varepsilon\in C^2(\overline{\mathcal U})$. Indeed,  $v_\varepsilon^r\in H^1(\mathcal U)$ thus $v_\varepsilon^r p_\varepsilon\in H^1(\mathcal{U})$, in addition $\Delta v_\varepsilon^r\in L^2(\mathcal U)$, hence $\nabla v_\varepsilon^r\in H_\mathrm{div}(\mathcal U)$ thus the second line is guaranteed by the Stokes formula (see Theorem~\ref{th:ippHdiv}). On the other hand, the following equality holds
$$
\int_\mathcal{U} \nabla v^r_\varepsilon\cdot \nabla p_\varepsilon {v^r_\varepsilon}=-\int_{\mathcal U} \frac{{v^r_\varepsilon}^2}{2} \Delta p_\varepsilon+\int_{\partial\mathcal U}\frac{\partial p_\varepsilon}{\partial \nu}\frac{{v^r_\varepsilon}^2}{2}.
$$
This can be shown by establishing the result first for $v_\varepsilon^r\in C^1(\mathcal U) \cap C(\overline{\mathcal U})$ as we did in the previous formal integrations by parts, and then extending it using the density of $C^1(\mathcal U) \cap C(\overline{\mathcal U})$ in $H^1(\mathcal U)$~\cite{adams2003sobolev}.
Indeed, we can use the continuity of the following bilinear form to pass to the limit.

\begin{equation*}
\begin{aligned}
H^1(\mathcal U)\times H^1(\mathcal U) & \longrightarrow  \mathbb{R} \\
(u,v) & \longmapsto \int_\mathcal{U} \nabla u\cdot \nabla p_\varepsilon\, v +\int_{\mathcal U} \frac{uv}{2} \Delta p_\varepsilon-\int_{\partial\mathcal U}\frac{\partial p_\varepsilon}{\partial \nu}\frac{u v}{2}.
\end{aligned}
\end{equation*}

\end{remark}

Similarly, we multiply Eq.~\eqref{eq:wremainderNd} by $w^r_\varepsilon q_\varepsilon$ and we integrate over $\omega$, we obtain
\begin{align}
    -\int_\omega \Delta{w^r_\varepsilon} {w^r_\varepsilon} q_\varepsilon+ \int_\omega {w^r_\varepsilon}^2 q_\varepsilon+\frac1\varepsilon \int_\omega \nabla{w^r_\varepsilon}\cdot n\, {w^r_\varepsilon} q_\varepsilon +\frac\alpha\varepsilon \int_\omega {w^r_\varepsilon}^2 q_\varepsilon  &=\int_\omega R^\varepsilon_\mathrm{obst} {w^r_\varepsilon} q_\varepsilon \notag\\
    \int_\omega \nabla{w^r_\varepsilon}\cdot\nabla ({w^r_\varepsilon} q_\varepsilon)-\int_{\partial\omega} \frac{\partial{w^r_\varepsilon}}{\partial\nu_\omega}\cdot {w^r_\varepsilon} q_\varepsilon+ \int_\omega {w^r_\varepsilon}^2 q_\varepsilon+\frac1\varepsilon \int_\omega \nabla \Bigl(\frac{{w^r_\varepsilon}^2 }2\Bigr)\cdot (q_\varepsilon n) +\frac\alpha\varepsilon \int_\omega {w^r_\varepsilon}^2 q_\varepsilon  &=\int_\omega R^\varepsilon_\mathrm{obst} {w^r_\varepsilon} q_\varepsilon\notag\\
    \int_\omega ({\nabla{w^r_\varepsilon}})^2 q_\varepsilon+\int_\omega \nabla{w^r_\varepsilon}\cdot \nabla q_\varepsilon {w^r_\varepsilon} -\int_{\partial\mathcal U} \frac{{\partial w^r_\varepsilon}}{\partial\nu_\omega}\cdot {w^r_\varepsilon} q_\varepsilon+ \int_\omega {w^r_\varepsilon}^2 q_\varepsilon \qquad \qquad \qquad \qquad \qquad&\notag\\
    -\frac1\varepsilon \int_\omega \frac{{w^r_\varepsilon}^2}2 \mathrm{div}(q_\varepsilon n) +\frac1\varepsilon \int_{\partial\mathcal U} \frac{{w^r_\varepsilon}^2}2 q_\varepsilon n\cdot \nu_\omega +\frac\alpha\varepsilon \int_\omega {w^r_\varepsilon}^2 q_\varepsilon&=\int_\omega R^\varepsilon_\mathrm{obst} {w^r_\varepsilon} q_\varepsilon\notag\\
    \int_\omega ({\nabla{w^r_\varepsilon}})^2 q_\varepsilon+\int_\omega \nabla\Bigl(\frac{{w^r_\varepsilon}^2}2\Bigr)\cdot \nabla q_\varepsilon -\int_{\partial\mathcal U} \frac{{\partial w^r_\varepsilon}}{\partial\nu_\omega} {w^r_\varepsilon} q_\varepsilon+ \int_\omega {w^r_\varepsilon}^2 q_\varepsilon \qquad \qquad \qquad \qquad \qquad&\notag\\
    -\frac1\varepsilon \int_\omega \frac{{w^r_\varepsilon}^2}2 \mathrm{div}(q_\varepsilon n) -\frac1\varepsilon \int_{\partial\mathcal U} \frac{{w^r_\varepsilon}^2}2 q_\varepsilon +\frac\alpha\varepsilon \int_\omega {w^r_\varepsilon}^2 q_\varepsilon&=\int_\omega R^\varepsilon_\mathrm{obst} {w^r_\varepsilon} q_\varepsilon\notag\\
    \int_\omega ({\nabla{w^r_\varepsilon}})^2 q_\varepsilon-\int_\omega  \Delta q_\varepsilon \frac{{w^r_\varepsilon}^2}2 +\int_{\partial\mathcal U}  \frac{{w^r_\varepsilon}^2}2 \frac{\partial q_\varepsilon}{\partial\nu_\omega}  -\int_{\partial\mathcal U} \frac{{\partial w^r_\varepsilon}}{\partial\nu_\omega} {w^r_\varepsilon} q_\varepsilon+ \int_\omega {w^r_\varepsilon}^2 q_\varepsilon \qquad \qquad \qquad \qquad \qquad&\notag\\
    -\frac1\varepsilon \int_\omega \frac{{w^r_\varepsilon}^2}2 \mathrm{div}(q_\varepsilon n) -\frac1\varepsilon \int_{\partial\mathcal U} \frac{{w^r_\varepsilon}^2}2 q_\varepsilon +\frac\alpha\varepsilon \int_\omega {w^r_\varepsilon}^2 q_\varepsilon&=\int_\omega R^\varepsilon_\mathrm{obst} {w^r_\varepsilon} q_\varepsilon\notag\\
    \int_\omega ({\nabla{w^r_\varepsilon}})^2 q_\varepsilon+\int_\omega  {w^r_\varepsilon}^2 \biggl(\Bigl(1+\frac\alpha\varepsilon\Bigr) q_\varepsilon-\frac{\Delta q_\varepsilon}2 - \frac{\mathrm{div}(q_\varepsilon n)}{2\varepsilon}\biggr)\qquad \qquad \qquad \qquad \qquad&\label{eq:E2Nd}\\
    +\int_{\partial\mathcal U}\frac{{w^r_\varepsilon}^2}2 \Bigl(\frac{\partial q_\varepsilon}{\partial\nu_\omega}-\frac{q_\varepsilon}\varepsilon\Bigr)  -\int_{\partial\mathcal U} \frac{{\partial w^r_\varepsilon}}{\partial\nu_\omega}{w^r_\varepsilon} q_\varepsilon  &=\int_\omega R^\varepsilon_\mathrm{obst} {w^r_\varepsilon} q_\varepsilon. \notag
    \end{align}

    \begin{remark}\label{rem:ipp2}
        As in Remark~\ref{rem:ipp1}, the previous integrations by parts can be made legitimate if $q_\varepsilon\in C^2(\overline{\omega})$, meaning the last equation holds. Indeed,  $w_\varepsilon^r\in H^1(\omega)$ thus $w_\varepsilon^r q_\varepsilon\in H^1(\omega)$, in addition $\Delta w_\varepsilon^r\in L^2(\omega)$, hence $\nabla w_\varepsilon^r\in H_\mathrm{div}(\omega)$ thus the first integration  by parts is again guaranteed by the Stokes formula (see Theorem~\ref{th:ippHdiv}). On the other hand, as in Remark~\ref{rem:ipp1}, we can show the following formulas
        $$\int_\omega \nabla{w^r_\varepsilon}\cdot n\, {w^r_\varepsilon} q_\varepsilon=- \int_\omega \frac{{w^r_\varepsilon}^2}2 \mathrm{div}(q_\varepsilon n) + \int_{\partial\mathcal U} \frac{{w^r_\varepsilon}^2}2 q_\varepsilon n\cdot \nu_\omega$$
        and
        $$\int_\omega \nabla{w^r_\varepsilon}\cdot \nabla q_\varepsilon {w^r_\varepsilon}=-\int_\omega  \Delta q_\varepsilon \frac{{w^r_\varepsilon}^2}2 +\int_{\partial\mathcal U}  \frac{{w^r_\varepsilon}^2}2 \frac{\partial q_\varepsilon}{\partial\nu_\omega}$$
        using the density of $C^1(\omega)\cap C(\overline{\omega})$ in $H^1(\omega)$ and the continuity of the bilinear forms:
        \begin{equation*}
\begin{aligned}
H^1(\mathcal U)\times H^1(\mathcal U) & \longrightarrow  \mathbb{R} \\
(u,v) & \longmapsto \int_\omega \nabla{u}\cdot n\, {v} q_\varepsilon+ \int_\omega \frac{u v}2 \mathrm{div}(q_\varepsilon n) - \int_{\partial\mathcal U} \frac{u v}2 q_\varepsilon n\cdot \nu_\omega,
\end{aligned}
\end{equation*}
and
\begin{equation*}
\begin{aligned}
H^1(\mathcal U)\times H^1(\mathcal U) & \longrightarrow  \mathbb{R} \\
(u,v) & \longmapsto \int_\mathcal{U} \nabla u\cdot \nabla q_\varepsilon\, v +\int_{\mathcal U} \frac{uv}{2} \Delta q_\varepsilon-\int_{\partial\mathcal U}\frac{\partial q_\varepsilon}{\partial \nu}\frac{u v}{2}.
\end{aligned}
\end{equation*}

    \end{remark}

By adding~\eqref{eq:E1Nd} and~\eqref{eq:E2Nd}, and by choosing $p_\varepsilon$ and $q_\varepsilon$ such that the interface terms vanish, i.e.
\begin{equation}\label{eq:transmissionpqNd}
    \begin{dcases}
    p_\varepsilon=q_\varepsilon & \text{on $\partial\mathcal U$}\\
    \frac{\partial p_\varepsilon}{\partial\nu}=\frac{\partial q_\varepsilon}{\partial\nu}+\frac{q_\varepsilon}\varepsilon & \text{on $\partial\mathcal U$},
    \end{dcases}
\end{equation}
where $\nu$ is the outward unit normal vector to $\mathcal U$ (notice that $\nu_\omega=-\nu$ on $\partial\mathcal{U}$), we obtain
\begin{equation}\label{eq:weightedestimateNd}
    \int_{\mathcal U} ({\nabla{v^r_\varepsilon}})^2 p_\varepsilon + \int_{\mathcal U} \Bigl(p_\varepsilon-\frac{\Delta p_\varepsilon}{2} \Bigr){v^r_\varepsilon}^2 +\int_\omega ({\nabla{w^r_\varepsilon}})^2 q_\varepsilon+\int_\omega {w^r_\varepsilon}^2 \Bigl(\Bigl(1+\frac\alpha\varepsilon\Bigr)q_\varepsilon-\frac{\Delta q_\varepsilon}2-\frac{\mathrm{div}(q_\varepsilon n)}{2\varepsilon}\Bigr)=\int_\omega R^\varepsilon_\mathrm{obst} {w^r_\varepsilon} q_\varepsilon.
\end{equation}
{\bf Assume there exists $p_\varepsilon$ and $q_\varepsilon$} such that for sufficiently small $\varepsilon>0$
\begin{equation}\label{eq:condpqNd}
    \begin{dcases}
    \text{\eqref{eq:transmissionpqNd} is satisfied}\\
    p_\varepsilon \ge \beta>0 & \text{in $\mathcal{U}$}\\
    q_\varepsilon\ge \beta>0 & \text{in $\omega$}\\
    p_\varepsilon-\frac{\Delta p_\varepsilon}{2}\ge 0 & \text{in $\mathcal{U}$}\\
    \Bigl(1+\frac\alpha\varepsilon\Bigr)q_\varepsilon-\frac{\Delta q_\varepsilon}2-\frac{\mathrm{div}(q_\varepsilon n)}{2\varepsilon}\ge 0 & \text{in $\omega$}\\
    \|q_\varepsilon\|_{L^\infty(\omega_1)}\le C \text{ and }\|q_\varepsilon\|_{L^\infty(\omega\setminus\omega_1)}\le \frac{C}\varepsilon,
    \end{dcases}
\end{equation}
then from~\eqref{eq:weightedestimateNd}, we deduce, using also~\eqref{eq:rhsestimatefiner1Nd} and~\eqref{eq:rhsestimatefiner2Nd}, that
\begin{align*}
    \beta \Bigl( \int_{\mathcal U} {(\nabla{v^r_\varepsilon})}^2 + \int_\omega {({\nabla w^r_\varepsilon})}^2 \Bigr) &\le \|R^\varepsilon_\mathrm{obst} q_\varepsilon\|_{L^2(\omega)} \|w^r_\varepsilon\|_{L^2(\omega)}\\
     &\le \Bigl( \|R^\varepsilon_\mathrm{obst} q_\varepsilon\|_{L^2(\omega_1)}+\|R^\varepsilon_\mathrm{obst} q_\varepsilon\|_{L^2(\omega\setminus\omega_1)}\Bigr) \|w^r_\varepsilon\|_{L^2(\omega)}\\
         &\le \Bigl( \|R^\varepsilon_\mathrm{obst}\|_{L^2(\omega_1)} \|q_\varepsilon\|_{L^\infty(\omega_1)}+\|R^\varepsilon_\mathrm{obst}\|_{L^2(\omega\setminus\omega_1)} \|q_\varepsilon\|_{L^\infty(\omega\setminus\omega_1)}\Bigr) \|w^r_\varepsilon\|_{L^2(\omega)}\\
         &\le C \|{\nabla w^r_\varepsilon}\|_{L^2(\omega)} \qquad\text{(by Poincaré inequality ($w^r_\varepsilon=0$ on $\partial\Omega$))}\\
    \|{\nabla v^r_\varepsilon}\|^2_{L^2(\mathcal U)} + \|{\nabla w^r_\varepsilon}\|^2_{L^2(\omega)} &\le C \|{\nabla w^r_\varepsilon}\|_{L^2(\omega)}\\
    &\le \frac{C^2}2+\frac12 \|{\nabla w^r_\varepsilon}\|^2_{L^2(\omega)}  \qquad\text{(by Young inequality)}\\
    \|{\nabla v^r_\varepsilon}\|^2_{L^2(\mathcal U)} + \frac12 \|{\nabla w^r_\varepsilon}\|^2_{L^2(\omega)} &\le C.
\end{align*}
    In conclusion, using again Poincaré inequality, we have established that $\|v^r_\varepsilon\|_{H^1(\mathcal U)}\le C$ and $\|w^r_\varepsilon\|_{H^1(\omega)}\le C$. Indeed, using Poincaré inequality for $w^r_\varepsilon\in H^1(\omega)$ with  $w^r_\varepsilon=0$ on $\partial\Omega$, we get
    \begin{align*}
    	\|w^r_\varepsilon\|_{L^2(\omega)}\le C \|\nabla w^r_\varepsilon\|_{L^2(\omega)} \le C,
    \end{align*}
    and, using Poincaré inequality for $u^r_\varepsilon\in H^1_0(\Omega)$ (defined by $v^r_\varepsilon$ in $\mathcal U$ and $w^r_\varepsilon$ in $\omega$), we get
        \begin{align*}
        	\|v^r_\varepsilon\|^2_{L^2(\mathcal{U})}&\le \|u^r_\varepsilon\|^2_{L^2(\Omega)}\\
        										  &\le C \|\nabla u^r_\varepsilon\|^2_{L^2(\Omega)}\\
        										  &\le C (\|\nabla v^r_\varepsilon\|^2_{L^2(\mathcal U)}+\|\nabla w^r_\varepsilon\|^2_{L^2(\omega)})\\
        										  & \le C.										 
        \end{align*}   
    The convergence of the asymptotic expansion follows as explained in Subsection~\ref{sec:convergenceNd}.

We thus have the following lemma.
\begin{lemma}
    If there exists $p_\varepsilon$, $q_\varepsilon$ satisfying~\eqref{eq:condpqNd}, then the penalized method converges, in the sense that equations~\eqref{eq:convergencefluidNd},~\eqref{eq:boundarylayerobstacleNd} and~\eqref{eq:convergeneobstacleNd} hold.
\end{lemma}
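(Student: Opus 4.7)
The plan is to derive a single weighted energy identity by testing the fluid and obstacle remainder equations against weighted versions of the remainders themselves, then to cancel the troublesome interface contributions on $\partial\mathcal{U}$ by exploiting the transmission conditions~\eqref{eq:transmissionpqNd} built into $p_\varepsilon,q_\varepsilon$. First I would multiply~\eqref{eq:vremainderNd} by $v_\varepsilon^r p_\varepsilon$ and integrate over $\mathcal{U}$, performing two integrations by parts (legitimate thanks to Remark~\ref{rem:ipp1}) to transform $-\int_{\mathcal U}\Delta v_\varepsilon^r\,v_\varepsilon^r p_\varepsilon$ into $\int_{\mathcal U}(\nabla v_\varepsilon^r)^2 p_\varepsilon$ plus a zero-order term in $v_\varepsilon^r$ involving $p_\varepsilon - \tfrac12\Delta p_\varepsilon$, while producing two boundary contributions on $\partial\mathcal U$: one of the form $\int_{\partial\mathcal U}\tfrac12 (v_\varepsilon^r)^2\tfrac{\partial p_\varepsilon}{\partial\nu}$ and one of the form $-\int_{\partial\mathcal U}\tfrac{\partial v_\varepsilon^r}{\partial\nu}v_\varepsilon^r p_\varepsilon$. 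Symmetrically, I would test~\eqref{eq:wremainderNd} by $w_\varepsilon^r q_\varepsilon$ and integrate over $\omega$ (Remark~\ref{rem:ipp2}), this time also moving the advection factor via $\int_\omega\nabla w_\varepsilon^r\cdot n\,w_\varepsilon^r q_\varepsilon = -\tfrac12\int_\omega (w_\varepsilon^r)^2\mathrm{div}(q_\varepsilon n) + \tfrac12\int_{\partial\mathcal U}(w_\varepsilon^r)^2 q_\varepsilon$ (using $\nu_\omega=-\nu$ on $\partial\mathcal U$ and $w_\varepsilon^r=0$ on $\partial\Omega$). These are precisely the identities~\eqref{eq:E1Nd} and~\eqref{eq:E2Nd}.

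Next I would add the two identities. The transmission conditions~\eqref{eq:transmission1remainderNd}--\eqref{eq:transmission2remainderNd} give $v_\varepsilon^r = w_\varepsilon^r$ and $\partial v_\varepsilon^r/\partial\nu = \partial w_\varepsilon^r/\partial\nu$ on $\partial\mathcal U$, and the flux boundary terms collapse into $\int_{\partial\mathcal U}v_\varepsilon^r\tfrac{\partial v_\varepsilon^r}{\partial\nu}(p_\varepsilon - q_\varepsilon)$ (this is zero by~\eqref{eq:transmissionpqNd}), while the quadratic boundary terms combine into $\tfrac12\int_{\partial\mathcal U}(v_\varepsilon^r)^2\bigl(\tfrac{\partial p_\varepsilon}{\partial\nu} - \tfrac{\partial q_\varepsilon}{\partial\nu} - \tfrac{q_\varepsilon}{\varepsilon}\bigr)$, which again vanishes by the second line of~\eqref{eq:transmissionpqNd}. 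What remains is exactly the identity~\eqref{eq:weightedestimateNd}.

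Given~\eqref{eq:condpqNd}, every term on the left-hand side of~\eqref{eq:weightedestimateNd} is non-negative, and the positivity lower bound $p_\varepsilon,q_\varepsilon\ge\beta$ allows me to drop the zero-order terms to keep only $\beta(\|\nabla v_\varepsilon^r\|_{L^2(\mathcal U)}^2 + \|\nabla w_\varepsilon^r\|_{L^2(\omega)}^2)$. To bound the right-hand side I would split the integral into $\omega_1$ and $\omega\setminus\omega_1$, apply Cauchy--Schwarz, and use the refined estimates~\eqref{eq:rhsestimatefiner1Nd}--\eqref{eq:rhsestimatefiner2Nd} together with the sharp $L^\infty$-bounds on $q_\varepsilon$ from the last line of~\eqref{eq:condpqNd}: on $\omega_1$ both $\|R^\varepsilon_\mathrm{obst}\|_{L^2}$ and $\|q_\varepsilon\|_{L^\infty}$ are $O(1)$, while on $\omega\setminus\omega_1$ the $O(\varepsilon)$ smallness of $R^\varepsilon_\mathrm{obst}$ compensates the $O(1/\varepsilon)$ growth of $q_\varepsilon$, so the product is $O(1)$ in total. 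This produces $\int_\omega R^\varepsilon_\mathrm{obst} w_\varepsilon^r q_\varepsilon \le C\|w_\varepsilon^r\|_{L^2(\omega)}$.

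Finally, Poincaré's inequality applied to $w_\varepsilon^r\in H^1(\omega)$ with $w_\varepsilon^r=0$ on $\partial\Omega$ yields $\|w_\varepsilon^r\|_{L^2(\omega)} \le C\|\nabla w_\varepsilon^r\|_{L^2(\omega)}$, and Young's inequality absorbs the resulting $\|\nabla w_\varepsilon^r\|_{L^2(\omega)}$ factor into the left-hand side, giving $\|\nabla v_\varepsilon^r\|_{L^2(\mathcal U)}^2 + \|\nabla w_\varepsilon^r\|_{L^2(\omega)}^2 \le C$ uniformly in $\varepsilon$. Reapplying Poincaré to $w_\varepsilon^r$ on $\omega$ and to the global field $u_\varepsilon^r\in H^1_0(\Omega)$ (glued from $v_\varepsilon^r$ and $w_\varepsilon^r$) gives $\|v_\varepsilon^r\|_{H^1(\mathcal U)} + \|w_\varepsilon^r\|_{H^1(\omega)} \le C$. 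Substituting into the ansatz~\eqref{eq:expansionNd} immediately gives~\eqref{eq:convergencefluidNd} and~\eqref{eq:boundarylayerobstacleNd}; for~\eqref{eq:convergeneobstacleNd} I would observe that on $\omega\setminus\mathcal V(\partial\Omega)$ the cutoff $\theta$ vanishes, so $W_\mathrm{app}$ reduces to $\overline{W}^0 + \varepsilon\overline{W}^1 + \varepsilon^2\overline{W}^2$, and the $H^1$ regularity of $\overline{W}^1,\overline{W}^2$ from Section~\ref{sec:wellposednessNd} allows me to absorb them into the $O(\varepsilon)$ remainder. The main obstacle in this plan is really upstream of the lemma — constructing $p_\varepsilon,q_\varepsilon$ satisfying~\eqref{eq:condpqNd}, which is assumed here; within the proof itself the only delicate step is matching the $\varepsilon$-scaling between $R^\varepsilon_\mathrm{obst}$ and $q_\varepsilon$ in the two regions $\omega_1$ and $\omega\setminus\omega_1$, which is why the refined estimates~\eqref{eq:rhsestimatefiner1Nd}--\eqref{eq:rhsestimatefiner2Nd} (not just~\eqref{eq:rhsestimateNd}) are needed.
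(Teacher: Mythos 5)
Your proposal is correct and follows essentially the same route as the paper: the weighted energy identities~\eqref{eq:E1Nd}--\eqref{eq:E2Nd}, cancellation of the interface terms via~\eqref{eq:transmissionpqNd} to reach~\eqref{eq:weightedestimateNd}, the split of $\omega$ into $\omega_1$ and $\omega\setminus\omega_1$ using~\eqref{eq:rhsestimatefiner1Nd}--\eqref{eq:rhsestimatefiner2Nd} against the $L^\infty$ bounds on $q_\varepsilon$, and Poincaré plus Young to conclude the uniform $H^1$ bound on the remainders and hence~\eqref{eq:convergencefluidNd},~\eqref{eq:boundarylayerobstacleNd},~\eqref{eq:convergeneobstacleNd}. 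No gaps; the only (harmless) deviation is a sign convention in the collapsed flux term on $\partial\mathcal U$, which does not affect its vanishing since $p_\varepsilon=q_\varepsilon$ there.
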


The construction of such suitable weight functions is the subject of the following subsection.

\subsubsection{Construction of suitable supersolutions}

In the previous subsection, we showed that the convergence of the penalization method reduces to finding suitable weight functions $p_\varepsilon$ and $q_\varepsilon$ satisfying~\eqref{eq:condpqNd}. In order to get rid of the coefficients $\frac12$, we will seek for weight functions $p_\varepsilon$, $q_\varepsilon$ that satisfy
\begin{equation}\label{eq:pqNd}
    \begin{dcases}
    -\Delta p_\varepsilon+p_\varepsilon=a_\varepsilon\ge 0 & \text{in $\mathcal{U}$}\\
    -\Delta q_\varepsilon-\frac{1}{\varepsilon}\mathrm{div}(q_\varepsilon n)+q_\varepsilon=b_\varepsilon\ge 0 & \text{in $\omega$}\\
    p_\varepsilon=q_\varepsilon & \text{on $\partial\mathcal U$}\\
    \frac{\partial p_\varepsilon}{\partial\nu}=\frac{\partial q_\varepsilon}{\partial\nu}+\frac{q_\varepsilon}\varepsilon & \text{on $\partial\mathcal U$}\\
    p_\varepsilon \ge \beta>0 & \text{in $\mathcal{U}$}\\
    q_\varepsilon\ge \beta>0 & \text{in $\omega$}\\
    \|q_\varepsilon\|_{L^\infty(\omega_1)}\le C \text{ and }\|q_\varepsilon\|_{L^\infty(\omega\setminus\omega_1)}\le \frac{C}\varepsilon,
    \end{dcases}
\end{equation}
with $a_\varepsilon$ and $b_\varepsilon$ sufficiently regular.

It is easy to show that if $p_\varepsilon$,  $q_\varepsilon$ satisfy~\eqref{eq:pqNd}, then they satisfy~\eqref{eq:condpqNd}. Indeed,~\eqref{eq:transmissionpqNd} is satisfied, and we have
\begin{align*}
    p_\varepsilon-\frac{\Delta p_\varepsilon}{2}=\frac{p_\varepsilon}{2}+\frac{-\Delta p_\varepsilon+p_\varepsilon}{2}\ge \frac{p_\varepsilon}2\ge 0\\
    \Bigl(1+\frac\alpha\varepsilon\Bigr)q_\varepsilon-\frac{\Delta q_\varepsilon}2-\frac{\mathrm{div}(q_\varepsilon n)}{2\varepsilon}=\frac\alpha\varepsilon q_\varepsilon + \frac{q_\varepsilon}{2}+\frac12\Bigl(-\Delta q_\varepsilon-\frac{1}{\varepsilon}\mathrm{div}(q_\varepsilon n)+q_\varepsilon\Bigr)\ge\Bigl(\frac\alpha\varepsilon+\frac12\Bigr)q_\varepsilon \ge 0.
\end{align*}

\paragraph{Link to the dual problem}

%

~\eqref{eq:pqNd} is related to a \textbf{dual problem} of the penalized problem. Indeed, let $s\in H^1_0(\Omega)$, where $\Omega=\mathcal U\cup \omega\cup\partial\mathcal U$, then multiplying the equations~\eqref{eq:pqNd} by $p_\varepsilon$ and $q_\varepsilon$ respectively and integrating by parts, we obtain:
\begin{equation}
    \int_{\mathcal U} \nabla{p_\varepsilon}\cdot\nabla s-\int_{\partial\mathcal U}\frac{\partial p_\varepsilon}{\partial\nu} s+\int_{\mathcal U} p_\varepsilon s=\int_{\mathcal U} a_\varepsilon s
\end{equation}
and
\begin{align}
    \int_\omega \nabla{q_\varepsilon}\cdot\nabla s-\int_{\partial \mathcal U}\frac{\partial q_\varepsilon}{\partial \nu_2} s+\frac1\varepsilon \int_\omega {q_\varepsilon}n\cdot\nabla s-\frac1\varepsilon \int_{\partial\mathcal U} q_\varepsilon s\, n\cdot \nu_2+\int_\omega q_\varepsilon s=\int_{\omega} b_\varepsilon s\notag\\
    \int_\omega \nabla{q_\varepsilon}\cdot\nabla s+\int_{\partial \mathcal U}\frac{\partial q_\varepsilon}{\partial \nu} s+\frac1\varepsilon \int_\omega {q_\varepsilon}n\cdot\nabla s+\frac1\varepsilon \int_{\partial\mathcal U} q_\varepsilon s+\int_\omega q_\varepsilon s=\int_{\omega} b_\varepsilon s,
\end{align}
where $\nu$ and $\nu_2$ are respectively the outward and inward normal vectors to $\mathcal U$.

Summing the two previous equations and using the transmission conditions, we obtain
\begin{equation}
    \int_{\mathcal U} \nabla{p_\varepsilon}\cdot\nabla s+\int_\omega \nabla{q_\varepsilon}\cdot\nabla s+\int_{\mathcal U} p_\varepsilon s+\int_\omega q_\varepsilon s+\frac1\varepsilon \int_\omega {q_\varepsilon}n\cdot\nabla s=\int_{\mathcal U} a_\varepsilon s+\int_{\omega} b_\varepsilon s.
\end{equation}
If we denote by $r_\varepsilon$ the function whose restriction to $\mathcal U$ is $p_\varepsilon$ and whose restriction to $\omega$ is $q_\varepsilon$, this means that $r_\varepsilon$ is solution of the following variational form (assuming $p_\varepsilon\in H^1(\mathcal U)$ and $q_\varepsilon\in H^1(\omega)$ yields $r_\varepsilon\in H^1(\Omega)$ using the transmission condition $p_\varepsilon=q_\varepsilon$ on $\partial\mathcal U$)
\begin{equation}\label{eq:dualproblemvarNd}
    \begin{dcases}
    \text{Find $r_\varepsilon\in V=\{v\in H^1(\Omega), r_\varepsilon={q^0_\varepsilon} \text{ on }\partial\Omega\}$ such that $\forall s\in H^1_0(\Omega)$}\\
        \int_{\Omega} \nabla{r_\varepsilon}\cdot\nabla s+\int_{\Omega} r_\varepsilon s+\frac1\varepsilon \int_\Omega \chi_\omega {r_\varepsilon}n\cdot\nabla s=\int_{\Omega} c_\varepsilon s,
    \end{dcases}
\end{equation}
which is of the form of a dual problem to the penalized problem~\eqref{eq:dualpenalizedproblemweak} (with $\alpha=0$), where the advection term is in a conservative form. The existence and uniqueness of the solution to the dual problem was also established using the approach of Droniou (it was used in the proof of the existence and uniqueness of the penalized problem), see Thoeorem~\ref{th:existenceuniquenessdroniou}.

Now, we construct supersolutions $p_\varepsilon$ and $q_\varepsilon$ that satisfy~\eqref{eq:pqNd}. In particular cases (1D case, spherical symmetric case in 2D), the construction can be explicit. We report theses cases in Appendix.~\ref{sec:annexspecialcases}.

\paragraph{Construction procedure}

We recall that we assumed, for simplicity, $\psi$ to be sufficiently regular and $n=\nabla\psi$ in $\omega$, we also assumed $\psi=0$ on $\partial\mathcal{U}$, $\psi>0$ in $\omega$ and $\nabla\psi\ne 0$ in $\omega$. $\psi$ can then be interpreted as a ``distance'' to $\partial\mathcal U$.

Based on a formal boundary layer approach close to $\partial\mathcal U$ performed on the dual problem~\eqref{eq:pqNd} (see Appendix~\ref{sec:annexBLdual}), we search for supersolutions of the form
\begin{align}\label{eq:supersoltionsapp}
    p_\varepsilon^{app}(x)&=\frac1\varepsilon P^{-1}(x)+P^0(x)& \text{in $\mathcal U$}\\
    q_\varepsilon^{app}(x)&=\frac1\varepsilon \widetilde{Q}^{-1}\Bigl(x,\frac{\psi(x)}\varepsilon\Bigr)+\overline{Q}^0(x)+\widetilde{Q}^0\Bigl(x,\frac{\psi(x)}\varepsilon\Bigr)& \text{in $\omega$}
\end{align}
where
\begin{align}
\widetilde{Q}^{-1}(x,z)&=\underline{P}^{-1}(x) e^{-z}\\
\widetilde{Q}^{0}(x,z)&=(\underline{P}^{0}(x)-\overline{Q}^0(x)) e^{-z}+F(x) z e^{-z}
\end{align}
where as in Appendix~\ref{sec:annexBLdual}, $\underline{P}^i$ realizes a simultaneous lifting in $\omega$ of the trace of $P^{i}$  and of the trace of $\frac{\partial P^{i}}{\partial\nu}$ on $\partial\mathcal U$ having the same regularity as $P^{i}$.
$\overline{Q}^0$ satisfies~\eqref{eq:solQ0Nd}
\begin{equation*}
    -\nabla\overline{Q}^0\cdot n -\Delta\psi \overline{Q^0}=b^{-1} \ge 0 \quad \text{in $\omega$},
\end{equation*}
$P^{-1}$ is solution to~\eqref{eq:solPm1Ndbis}
\begin{equation*}
    \begin{dcases}
        -\Delta P^{-1}+P^{-1}=a^{-1} \ge 0 & \text{in $\mathcal{U}$}\\
        \frac{\partial P^{-1}}{\partial\nu}=\overline{Q}^{0} & \text{on $\partial\mathcal{U}$},
    \end{dcases}
\end{equation*}
and $P^{0}$ is solution to~\eqref{eq:solP0Ndbis} (we fixed $a^0=0$)
\begin{equation*}
    \begin{dcases}
        -\Delta P^{0}+P^{0}=0 & \text{in $\mathcal{U}$}\\
        \frac{\partial P^{0}}{\partial\nu}= \Gamma& \text{on $\partial\mathcal{U}$}.
    \end{dcases}
\end{equation*}
In addition,
$$F(x)=-\frac{\nabla\underline{P}^{-1}\cdot \nabla\psi}{|\nabla\psi|^2} \quad\text{in $\omega$},$$
and
$$F(x)=-\overline{Q}^0(x)\quad \text{on $\partial\mathcal U$}.$$

There are various degrees of freedom in the previous equations, namely $\overline{Q}^0_{|\partial\mathcal U}$, $\Gamma$, $a^{-1}$ and $b^{-1}$. We will show in the following that we can tune these degrees freedom so that they yield suitable supersolutions, i.e. so that $p_\varepsilon^{app}$ and $q_\varepsilon^{app}$ satisfy~\eqref{eq:pqNd} (and hence also~\eqref{eq:condpqNd}) for sufficiently small $\varepsilon$.


\paragraph{Checking transmission conditions in~\eqref{eq:pqNd}}

We have
\begin{align*}
\widetilde{Q}^{-1}(x,0)&=\underline{P}^{-1}(x)=P^{-1}(x) & \text{on $\partial\mathcal U$}\\
\overline{Q}^0(x)+\widetilde{Q}^0(x,0)&=\overline{Q}^0(x)+\underline{P}^0(x)-\overline{Q}^0(x)=P^0(x)& \text{on $\partial\mathcal U$},
\end{align*}
therefore the first transmission condition is satisfied:
$$p_\varepsilon^{app}(x)=q_\varepsilon^{app}(x)\quad \text{on $\partial\mathcal{U}$}.$$

Regarding the transmission condition on the normal derivatives, we have
$$\nabla\biggl(\widetilde{Q}^{-1}\Bigl(x,\frac{\psi(x)}\varepsilon\Bigr)\biggr)=\nabla \widetilde{Q}^{-1} \Bigl(x,\frac{\psi(x)}\varepsilon\Bigr)+\frac1\varepsilon \widetilde{Q}^{-1}_z\Bigl(x,\frac{\psi(x)}\varepsilon\Bigr) \nabla\psi(x).$$
Thus
\begin{align*}
\nabla\biggl(\widetilde{Q}^{-1}\Bigl(x,\frac{\psi(x)}\varepsilon\Bigr)\biggr)\cdot\nu&=\nabla \widetilde{Q}^{-1}(x,0)\cdot\nu+\frac1\varepsilon \widetilde{Q}^{-1}_z(x,0) \nabla\psi(x)\cdot\nu & \text{on $\partial\mathcal U$}\\
&=\nabla \widetilde{Q}^{-1}(x,0)\cdot\nu+\frac1\varepsilon ( -\widetilde{Q}^{-1}(x,0))  &\text{on $\partial\mathcal U$}\\
&=\nabla \underline{P}^{-1}(x)\cdot\nu-\frac1\varepsilon \underline{P}^{-1}(x)& \text{on $\partial\mathcal U$}\\
&=\frac{\partial P^{-1}}{\partial\nu}(x)-\frac1\varepsilon {P}^{-1}(x)& \text{on $\partial\mathcal U$},
\end{align*}
and
\begin{align*}
\nabla\biggl(\widetilde{Q}^{0}\Bigl(x,\frac{\psi(x)}\varepsilon\Bigr)\biggr)\cdot\nu&=\nabla \widetilde{Q}^{0}(x,0)\cdot\nu+\frac1\varepsilon \widetilde{Q}^{0}_z(x,0) \nabla\psi(x)\cdot\nu & \text{on $\partial\mathcal U$}\\
&=\nabla (\underline{P}^0(x)-\overline{Q}^0(x))\cdot\nu+\frac1\varepsilon ( -(\underline{P}^0(x)-\overline{Q}^0(x))+F(x))  &\text{on $\partial\mathcal U$}\\
&=\nabla \underline{P}^0(x)\cdot\nu-\nabla \overline{Q}^0(x)\cdot\nu-\frac1\varepsilon \underline{P}^{0}(x)& \text{on $\partial\mathcal U$}\\
&=\frac{\partial P^{0}}{\partial\nu}(x)-\frac{\partial \overline{Q}^{0}}{\partial\nu}(x)-\frac1\varepsilon {P}^{0}(x)& \text{on $\partial\mathcal U$}.
\end{align*}
We then obtain
\begin{align*}
\frac{\partial q_\varepsilon^{app}}{\partial \nu}+\frac{q_\varepsilon^{app}}{\varepsilon}&=\frac1\varepsilon \Bigl(\frac{\partial P^{-1}}{\partial\nu}-\frac1\varepsilon {P}^{-1}\Bigr)+\frac{\partial \overline{Q}^0}{\partial \nu}+\frac{\partial P^{0}}{\partial\nu}-\frac{\partial \overline{Q}^{0}}{\partial\nu}-\frac1\varepsilon {P}^{0}+\frac1{\varepsilon^2} P^{-1}+\frac1\varepsilon P^0& \text{on $\partial\mathcal U$}\\
&=\frac1\varepsilon\frac{\partial P^{-1}}{\partial\nu}+\frac{\partial P^{0}}{\partial\nu}& \text{on $\partial\mathcal U$}\\
&=\frac{\partial p_\varepsilon^{app}}{\partial \nu}& \text{on $\partial\mathcal U$},
\end{align*}
and the second transmission condition on the normal derivatives is also satisfied.

\paragraph{Checking supersolutions in~\eqref{eq:pqNd}}

We are going to look for dominant terms in both $-\Delta p^{app}_\varepsilon+p^{app}_\varepsilon$ and $-\Delta q^{app}_\varepsilon-\frac{1}{\varepsilon}\mathrm{div}(q^{app}_\varepsilon n)+q^{app}_\varepsilon$ in order to show that $p_\varepsilon^{app}$ and $q_\varepsilon^{app}$ are indeed supersolutions for sufficiently small $\varepsilon$.

Since $P^{-1}$ is solution to~\eqref{eq:solPm1Ndbis}, the dominant term in $-\Delta p^{app}_\varepsilon+p^{app}_\varepsilon$ is 
$$\frac1\varepsilon a^{-1}(x) \ge 0.$$

Regarding $q_\varepsilon^{app}$, one has, using~\eqref{eq:dualderivativesNd},
\begin{align*}
-\Delta q^{app}_\varepsilon-\frac{1}{\varepsilon}\mathrm{div}(q^{app}_\varepsilon n)+q^{app}_\varepsilon=&-\Delta q^{app}_\varepsilon-\frac{1}{\varepsilon}\nabla q^{app}_\varepsilon \cdot\nabla\psi-\frac{1}\varepsilon \Delta\psi q^{app}_\varepsilon+q^{app}_\varepsilon\\
=&-\frac1\varepsilon \Delta\widetilde{Q}^{-1}-\frac2{\varepsilon^2}\nabla \widetilde{Q}^{-1}_z\cdot\nabla\psi-\frac1{\varepsilon^2} \Delta\psi \widetilde{Q}^{-1}_z-\frac1{\varepsilon^3}|\nabla\psi|^2 \widetilde{Q}^{-1}_{zz}\\
&\quad -\Delta \overline{Q}^0\\
&\quad - \Delta\widetilde{Q}^{0}-\frac2{\varepsilon}\nabla \widetilde{Q}^{0}_z\cdot\nabla\psi-\frac1{\varepsilon} \Delta\psi \widetilde{Q}^{0}_z-\frac1{\varepsilon^2}|\nabla\psi|^2 \widetilde{Q}^{0}_{zz}\\
&-\frac1{\varepsilon^2}\nabla \widetilde{Q}^{-1}_z\cdot\nabla\psi-\frac1{\varepsilon^3} \widetilde{Q}^{-1}_z |\nabla\psi|^2\\
&\quad -\frac1{\varepsilon}\nabla\overline{Q}^0\cdot\nabla\psi\\
&\quad -\frac1{\varepsilon}\nabla \widetilde{Q}^{0}_z\cdot\nabla\psi-\frac1{\varepsilon^2} \widetilde{Q}^{0}_z |\nabla\psi|^2\\
&-\frac1{\varepsilon^2}\Delta\psi \widetilde{Q}^{-1}-\frac1\varepsilon\Delta\psi \overline{Q}^0-\frac1\varepsilon \Delta\psi \widetilde{Q}^0\\
&+\frac1{\varepsilon} \widetilde{Q}^{-1}+\overline{Q}^0+\widetilde{Q}^0.
\end{align*}

Using the equations satisfied by $\overline{Q}^0$, $\widetilde{Q}^{-1}$ and $\widetilde{Q}^0$ (see Appendix~\ref{sec:annexBLdual}) that we recall in the following,
\begin{align*}
-\overline{Q}^0\Delta\psi-\nabla \overline{Q}^0\cdot\nabla\psi&=b^ {-1}\\
-\widetilde{Q}^{-1}_{zz}-\widetilde{Q}^{-1}_z&=0\\
\widetilde{Q}^{-1}_z&=-\widetilde{Q}^{-1}\\
-\widetilde{Q}^0_{zz}-\widetilde{Q}^0_z&=F(x) e^{-z}\\
\widetilde{Q}^0_z+\widetilde{Q}^0&=F(x)e^{-z},
\end{align*}
the equation on $q_\varepsilon^{app}$ reduces to
\begin{align*}
-\Delta q^{app}_\varepsilon-\frac{1}{\varepsilon}\mathrm{div}(q^{app}_\varepsilon n)+q^{app}_\varepsilon=&-\frac1\varepsilon \Delta \widetilde{Q}^{-1}+\frac2{\varepsilon^2}\nabla\widetilde{Q}^{-1}\cdot\nabla\psi\\
&\quad -\Delta \overline{Q}^0\\
&\quad -\Delta \widetilde{Q}^0- \frac2{\varepsilon}\nabla(F(x) e^{-z}-\widetilde{Q}^{0})\cdot\nabla\psi-\frac1\varepsilon \Delta\psi F(x) e^{-z}+\frac1{\varepsilon^2}|\nabla\psi|^2 F(x) e^{-z}\\
&-\frac1{\varepsilon^2}\nabla\widetilde{Q}^{-1}\cdot\nabla\psi\\
&\quad +\frac1\varepsilon b^{-1}\\
&\quad -\frac1\varepsilon \nabla \widetilde{Q}^0\cdot\nabla\psi\\
&+\frac1\varepsilon \widetilde{Q}^{-1}+\overline{Q}^0+\widetilde{Q}^0,
\end{align*}
evaluated at $z=\frac{\psi(x)}{\varepsilon}$.

The terms involving
\begin{itemize}
\item $\overline{Q}^0:$
\begin{align*}
\frac1\varepsilon b^{-1}-\Delta \overline{Q}^0+\overline{Q}^0.
\end{align*}

 \item $\widetilde{Q}^{-1}:$
\begin{align*}
&-\frac1\varepsilon \Delta \widetilde{Q}^{-1}+\frac2{\varepsilon^2}\nabla\widetilde{Q}^{-1}\cdot\nabla\psi-\frac1{\varepsilon^2}\nabla\widetilde{Q}^{-1}\cdot\nabla\psi+\frac1\varepsilon \widetilde{Q}^{-1}\\
&\qquad = -\frac1\varepsilon \Delta \widetilde{Q}^{-1}+\frac1{\varepsilon^2}\nabla\widetilde{Q}^{-1}\cdot\nabla\psi+\frac1\varepsilon \widetilde{Q}^{-1}\\
&\qquad = \Bigl(\frac1{\varepsilon^2}\nabla\underline{P}^{-1}\cdot\nabla\psi-\frac1\varepsilon \Delta \underline{P}^{-1}+\frac1\varepsilon \underline{P}^{-1}\Bigr) e^{-z}.
\end{align*}

\item $\widetilde{Q}^0:$
\begin{align*}
& -\Delta \widetilde{Q}^0+ \frac2{\varepsilon}\nabla \widetilde{Q}^{0}\cdot\nabla\psi-\frac1\varepsilon \nabla \widetilde{Q}^0\cdot\nabla\psi+\widetilde{Q}^0\\
&\qquad =-\Delta \widetilde{Q}^0+ \frac1{\varepsilon}\nabla \widetilde{Q}^{0}\cdot\nabla\psi+\widetilde{Q}^0\\
&\qquad = -(\Delta \underline{P}^0-\Delta\overline{Q}^0 ) e^{-z}-\Delta F(x) z e^{-z}+\frac1\varepsilon\Bigl((\nabla \underline{P}^0-\nabla\overline{Q}^0)\cdot\nabla\psi e^{-z}+\nabla F(x)\cdot\nabla\psi z e^{-z} \Bigr)\\
&\qquad\quad +(\underline{P}^0-\overline{Q}^0)e^{-z}+F(x) z e^{-z}\\
&\qquad = \Bigl(-\Delta \underline{P}^0+\Delta\overline{Q}^0+\frac1\varepsilon\nabla \underline{P}^0\cdot\nabla\psi-\frac1\varepsilon\nabla\overline{Q}^0\cdot\nabla\psi+\underline{P}^0-\overline{Q}^0\Bigr) e^{-z}\\
&\qquad\quad +\Bigl(-\Delta F(x)+\frac1\varepsilon\nabla F(x)\cdot\nabla\psi+F(x)\Bigr)z e^{-z}.
\end{align*}

\item remaining terms:
\begin{align*}
 &-\frac2{\varepsilon}\nabla F(x)\cdot\nabla\psi e^{-z}-\frac1\varepsilon \Delta\psi F(x) e^{-z}+\frac1{\varepsilon^2}|\nabla\psi|^2 F(x) e^{-z}\\
 &\quad =\Bigl(-\frac2{\varepsilon}\nabla F(x)\cdot\nabla\psi-\frac1\varepsilon \Delta\psi F(x) +\frac1{\varepsilon^2}|\nabla\psi|^2 F(x)\Bigr) e^{-z}.
\end{align*}

\end{itemize}

In conclusion, recalling $z=\frac{\psi(x)}{\varepsilon}$, the equation on $q_\varepsilon^{app}$ reads
\begin{align}\label{eq:Eqqapp}
&\frac1\varepsilon b^{-1}-\Delta \overline{Q}^0+\overline{Q}^0+\Bigl(\frac1{\varepsilon^2}\nabla F(x)\cdot\nabla\psi+\frac1{\varepsilon}\bigl(-\Delta F(x)+F(x)\bigr)\Bigr) \psi(x) e^{-\frac{\psi(x)}{\varepsilon}}\notag\\
&\quad +\frac1{\varepsilon^2}\bigl(\nabla\underline{P}^{-1}\cdot\nabla\psi+|\nabla\psi|^2 F(x)\bigr)e^{-\frac{\psi(x)}{\varepsilon}}\notag\\
&\quad+\frac1\varepsilon \Bigl(-\Delta \underline{P}^{-1}+ \underline{P}^{-1}+\nabla\underline{P}^0\cdot\nabla\psi-\nabla\overline{Q}^0\cdot\nabla\psi-2\nabla F(x)\cdot\nabla\psi-\Delta\psi F(x)\Bigr)e^{-\frac{\psi(x)}{\varepsilon}}\notag\\
&\quad +\big(-\Delta\underline{P}^0+\underline{P}^0+\Delta\overline{Q}^0-\overline{Q}^0\bigr)e^{-\frac{\psi(x)}{\varepsilon}}\notag\\
&=\frac1\varepsilon b^{-1}-\Delta \overline{Q}^0+\overline{Q}^0+\Bigl(\frac1{\varepsilon^2}\nabla F(x)\cdot\nabla\psi+\frac1{\varepsilon}\bigl(-\Delta F(x)+F(x)\bigr)\Bigr) \psi(x) e^{-\frac{\psi(x)}{\varepsilon}}\notag\\
&\quad+\frac1\varepsilon \Bigl(-\Delta \underline{P}^{-1}+ \underline{P}^{-1}+\nabla\underline{P}^0\cdot\nabla\psi-\nabla\overline{Q}^0\cdot\nabla\psi-2\nabla F(x)\cdot\nabla\psi-\Delta\psi F(x)\Bigr)e^{-\frac{\psi(x)}{\varepsilon}}\notag\notag\\
&\quad +\big(-\Delta\underline{P}^0+\underline{P}^0+\Delta\overline{Q}^0-\overline{Q}^0\bigr)e^{-\frac{\psi(x)}{\varepsilon}}.
\end{align}
Assuming enough regularity on the functions depending on $x$, we have, for $x$ such that $\psi(x)\ge \delta_0>0$ (not including the interface $\partial\mathcal U$), and $k\in\{0,1,2\}$,
\begin{equation}\label{eq:limit}
\Bigl\|\frac{G_k(x)}{\varepsilon^k} e^{-\frac{\psi(x)}{\varepsilon}}\Bigr\|_{L^\infty(\psi\ge\delta_0)}\le \frac{C}{\varepsilon^2} e^{-\frac{\delta_0}{\varepsilon}} \to 0 \text{ when $\varepsilon\to 0$},
\end{equation}
thus for $\delta>0$, we have for sufficiently small $\varepsilon$,
$$\frac{G_k(x)}{\varepsilon^k} e^{-\frac{\psi(x)}{\varepsilon}}\ge -\Bigl\|\frac{G_k(x)}{\varepsilon^k} e^{-\frac{\psi(x)}{\varepsilon}}\Bigr\|_{L^\infty(\psi\ge\delta_0)}\ge -\delta.$$ 
Therefore, for $q_\varepsilon^{app}$ to be a supersolution in the region $\psi\ge\delta_0$, it is sufficient that $b^{-1}\ge \eta >0 $ in this region, or that $b^{-1}=0$ and $-\Delta \overline{Q}^0+\overline{Q}^0\ge \eta >\delta >0$ in $\omega$, where $\eta$ is a constant.

On the interface $\partial\mathcal{U}$ ($\psi(x)=0$), the dominant term in the equation $q_\varepsilon^{app}$~\eqref{eq:Eqqapp} is 
\begin{align*}
\frac1\varepsilon \bigl(b^{-1}-\Delta \underline{P}^{-1}+ \underline{P}^{-1}+\frac{\partial P^0}{\partial\nu}-\frac{\partial\overline{Q}^0}{\partial \nu}-2\nabla F(x)\cdot\nu+\Delta\psi \overline{Q}^0\bigr)
\end{align*}
so that it is sufficient to impose the following equation on $P^0$
\begin{equation}
\begin{dcases}
-\Delta P^0+P^0=0 & \text{in $\mathcal U$}\\ 
\frac{\partial P^0}{\partial\nu}= \Gamma > -b^{-1}+\Delta \underline{P}^{-1}- \underline{P}^{-1}+\frac{\partial\overline{Q}^0}{\partial \nu}+2\nabla F(x)\cdot\nu-\Delta\psi \overline{Q}^0& \text{on $\partial\mathcal U$,}
\end{dcases}
\end{equation}
in order that $q_\varepsilon^{app}$ is a strict supersolution on the boundary $\partial\mathcal U$.

By continuity (assumption) hence uniform continuity, $q_\varepsilon^{app}$ is a supersolution in the region $\psi<\delta_0$ (near the boundary) since we can choose $\delta_0$ small enough so that the equation~\eqref{eq:Eqqapp} remains positive near the boundary.

\paragraph{Imposing positivity of supersolutions in~\eqref{eq:pqNd}}

We want $p_\varepsilon^{app}$ and $q_\varepsilon^{app}$ to be positive for $\varepsilon$ small enough: $p_\varepsilon^{app}\ge \beta>0$ and $q_\varepsilon^{app}\ge \beta>0$.

Given the expression of $p_\varepsilon^{app}$, it is sufficient that $P^{-1}>0$ in $\overline{\mathcal{U}}$ where $P^{-1}$ is solution to
\begin{equation}
\begin{dcases}
-\Delta P^{-1}+P^{-1}=a^{-1}\ge 0 & \text{in $\mathcal U$}\\
\frac{\partial P^{-1}}{\partial\nu}= \overline{Q}^0 & \text{on $\partial\mathcal U$}.
\end{dcases}
\end{equation}
This can be achieved with a suitable choice of $a^{-1}$ and $\overline{Q}^0$ on $\partial\mathcal{U}$  using the maximum principle. Indeed, using Corollary~\ref{cor:maximumprinciplebis}, if $a^{-1}\ge \beta>0$ in $\mathcal U$ and $\overline{Q}^0\ge 0$ on $\partial\mathcal U$, then $P^{-1}\ge \beta>0$.

Regarding $q_\varepsilon^{app}$, one has
\begin{align}\label{eq:Expqapp}
q_\varepsilon^{app}(x)&=\frac1\varepsilon \underline{P}^{-1}(x) e^{-\frac{\psi(x)}{\varepsilon}}+\overline{Q}^0(x)+(\underline{P}^0(x)-\overline{Q}^0(x))e^{-\frac{\psi(x)}{\varepsilon}}+F(x)\frac{\psi(x)}{\varepsilon}e^{-\frac{\psi(x)}{\varepsilon}}\notag\\
&=\overline{Q}^0(x)+(\underline{P}^0(x)-\overline{Q}^0(x))e^{-\frac{\psi(x)}{\varepsilon}}+\frac1\varepsilon(\underline{P}^{-1}(x)+F(x)\psi(x))e^{-\frac{\psi(x)}{\varepsilon}}.
\end{align}
Using the same reasoning as before (by distinguishing what happens far from the interface from what happens near the interface and assuming enough regularity), we obtain that in order to impose the positivity of $q_\varepsilon^{app}$, it is sufficient that $\overline{Q}^0>0$ in $\overline\omega$, where we recall $\overline{Q}^0$ satisfies
\begin{equation}
\begin{dcases}
-\mathrm{div}(\overline{Q}^0 n)=-\nabla\overline{Q}^0\cdot\nabla\psi- \overline{Q}^0 \Delta\psi=b^{-1}\ge 0 & \text{in $\omega$}\\
\text{with suitable BC.} &
\end{dcases}
\end{equation}

\paragraph{Checking estimates in~\eqref{eq:pqNd}}

The wanted estimates on $q_\varepsilon^{app}$ are satisfied. Indeed, at the interface $\partial\mathcal{U}$, $q_\varepsilon^{app}=p_\varepsilon^{app}$ therfeore
$$|q_\varepsilon^{app}(x_0)|\le \frac{C}{\varepsilon},\quad \text{on $\partial\mathcal{U}$}$$
and by regularity, there existe $\delta_0$ such that for $x$ such that $\psi(x)\le\delta_0$ (near the interface),
$$|q_\varepsilon^{app}(x)|\le |q_\varepsilon^{app}(x)-q_\varepsilon^{app}(x_0)|+|q_\varepsilon^{app}(x_0)|\le 1+\frac{C}{\varepsilon} \le \frac{C'}{\varepsilon}.$$

And using the expression of $q_\varepsilon^{app}$~\eqref{eq:Expqapp} and the limits~\eqref{eq:limit}, we obtain far from the interface, for sufficiently small $\varepsilon$,
$$\|q_\varepsilon^{app}\|_{L^\infty(\psi\ge\delta_0)}\le \|\overline{Q}^0\|_{L^\infty(\omega)}+1\le C,$$
as expected.

\paragraph{Summary}

To sum up, we reduced the proof of the existence of $p_\varepsilon$, $q_\varepsilon$ solutions of~\eqref{eq:pqNd} to the existence of a regular enough function $\overline{Q}^0$ in $\omega$ that satisfies
\begin{equation}\label{eq:constraintsQbar0}
\begin{dcases}
\overline{Q}^0>0 & \text{in $\overline{\omega}$}\\
-\mathrm{div}(\overline{Q}^0 n)=-\nabla\overline{Q}^0\cdot\nabla\psi- \overline{Q}^0 \Delta\psi=b^{-1}\ge \eta> 0 & \text{in $\omega$},
\end{dcases}
\end{equation}
or
\begin{equation}
\begin{dcases}
\overline{Q}^0>0 & \text{in $\overline{\omega}$}\\
-\mathrm{div}(\overline{Q}^0 n)=-\nabla\overline{Q}^0\cdot\nabla\psi- \overline{Q}^0 \Delta\psi=0 & \text{in $\omega$}\\
-\Delta \overline{Q}^0+\overline{Q}^0\ge \eta>0 & \text{in $\omega$},
\end{dcases}
\end{equation}
where $\eta$ is a constant. If $\Delta\psi=\mathrm{div}(n)= 0$, then it is sufficient to take $\overline{Q}^0=\eta>0$. This solves the one-dimensional case for example since $\psi(x)=1-x$ in $\omega={]0},1[$, $n=-1$ and $\Delta\psi=0$. In general, however, $\Delta\psi\ne  0$ (spherical symmetric case for instance, see Appendix~\ref{sec:annexspecialcases}) and we will use in the sequel the method of characteristics in order to show the existence of $\overline{Q}^0$ satisfying the constraints~\eqref{eq:constraintsQbar0}.

Using the method of characteristics (see Theorem~\ref{th:hyperbolicregularitybis}), we obtain by imposing $\overline{Q}_0$ on $\partial\mathcal U$ a unique solution $\overline{Q}^0$ to~\eqref{eq:constraintsQbar0} given by
\begin{equation}\label{eq:solQxit}
\overline{Q}^0(X(\xi,t))=\Bigl(\overline{Q}^0(\gamma(\xi))+\int_0^t -b^{-1}(X(\xi,s))e^{\int_0^s \Delta\psi(X(\xi,u))\, du}\, ds\Bigr)e^{-\int_0^t \Delta\psi(X(\xi,s))\, ds}.
\end{equation}

We want $\overline{Q}^0\ge \beta>0$ in $\omega$, and since the characteristics $t\to X(\xi,t)$ fiber all $\omega$ for $\xi\in[0,\Xi]$ and $t\in[0,T_\xi]$ with $T_\xi\le T_f<\infty$ (as explained in Appendix~\ref{sec-append-Charac}), it is sufficient that
\begin{equation}
\forall \xi\in[0,\Xi],\ \forall t\in[0,T_\xi],\quad \overline{Q}^0(X(\xi,t))\ge \beta >0.
\end{equation}
i.e.
\begin{equation}
\forall \xi\in[0,\Xi],\ \forall t\in[0,T_\xi],\quad \overline{Q}^0(\gamma(\xi))\ge \int_0^t b^{-1}(X(\xi,s))e^{\int_0^s \Delta\psi(X(\xi,u))\, du}\, ds +\beta e^{\int_0^t \Delta\psi(X(\xi,s))\, ds}.
\end{equation}

By choosing $b^{-1}=1>0$,
one has, $\forall \xi\in[0,\Xi], \ \forall t\in[0,T_\xi]$,
\begin{align}
\int_0^t e^{\int_0^s \Delta\psi(X(\xi,u))\, du}\, ds+\beta e^{\int_0^t \Delta\psi(X(\xi,s))\, ds}&\le \int_0^{T_\xi} e^{\int_0^s \Delta\psi(X(\xi,u))\,du}\,ds+\beta e^{\int_0^{T_\xi} \Delta\psi(X(\xi,s))\, ds}\\
&\le \int_0^{T_\xi} e^{s \|\Delta\psi\|_{L^\infty}}\,ds+\beta e^{T_\xi \|\Delta\psi\|_{L^\infty}} \\
& \le \Bigl(\frac1{\|\Delta\psi\|_{L^\infty}}+\beta\Bigr)e^{T_\xi\|\Delta\psi\|_{L^\infty} }\\
&\le \Bigl(\frac1{\|\Delta\psi\|_{L^\infty}}+\beta\Bigr)e^{T_f\|\Delta\psi\|_{L^\infty} },
\end{align}
where $\|\Delta\psi\|_{L^\infty}=\max_{(x,y)\in \omega} |\Delta\psi(x,y)|$.

Thus it is sufficient to impose $\overline{Q}^0=cte\ge\Bigl(\frac1{\|\Delta\psi\|_{L^\infty}}+\beta\Bigr)e^{T_f\|\Delta\psi\|_{L^\infty} }$ on $\partial\mathcal U$, in order to achieve $\overline{Q}^0\ge \beta>0$ in $\omega$.

As a summary, the following steps summarize the construction of suitable supersolutions~\eqref{eq:supersoltionsapp} that satisfy~\eqref{eq:pqNd}.
\begin{enumerate}
    \item We fix $b^{-1}=1$ and $T_f$ as in Appendix~\ref{sec-append-Charac}, and solve
    \begin{equation}\label{eq:solQ0Ndsummary}
    \begin{dcases}
        \nabla\overline{Q}^0\cdot n+\Delta\psi \overline{Q}^0=-b^{-1}=-1 & \text{in $\omega$}\\
        \overline{Q}^0(x)=cte\ge\Bigl(\frac1{\|\Delta\psi\|_{L^\infty}}+\beta\Bigr)e^{T_f\|\Delta\psi\|_{L^\infty} } & \text{on $\partial\mathcal{U}$}.
    \end{dcases}
    \end{equation}
    Using Theorem~\ref{th:hyperbolicregularitybis}, the solution $\overline{Q}^0\in C^{k+5}(\overline{\omega})$, and by construction $\overline{Q}^0\ge \beta>0$. Thus~\eqref{eq:constraintsQbar0} is satisfied.
    \item We fix $a^{-1}=1$, and solve
    \begin{equation}\label{eq:solPm1Ndsummary}
    \begin{dcases}
        -\Delta P^{-1}+P^{-1}=a^{-1}=1 & \text{in $\mathcal{U}$}\\
        \frac{\partial P^{-1}}{\partial\nu}=\overline{Q}^{0} & \text{on $\partial\mathcal{U}$}.
    \end{dcases}
\end{equation}
Using the elliptic regularity (Theorem~\ref{th:ellipticregularity}), the solution $P^{-1}\in H^{k+9}(\mathcal{U})\hookrightarrow C^{k+7}(\overline{\mathcal U})$, and by the maximum principle (Corollary~\ref{cor:maximumprinciplebis}), $P^{-1}\ge 1$ in $\overline{\mathcal U}$.
\item $-b^{-1}+\Delta \underline{P}^{-1}- \underline{P}^{-1}+\frac{\partial\overline{Q}^0}{\partial \nu}+2\nabla F(x)\cdot\nu-\Delta\psi \overline{Q}^0$ is continuous on $\partial\mathcal{U}$. We solve
\begin{equation}\label{eq:solP0Ndsummary}
\begin{dcases}
-\Delta P^0+P^0=0 & \text{in $\mathcal U$}\\
\frac{\partial P^0}{\partial\nu}= \Gamma \ge -b^{-1}+\Delta \underline{P}^{-1}- \underline{P}^{-1}+\frac{\partial\overline{Q}^0}{\partial \nu}+2\nabla F(x)\cdot\nu-\Delta\psi \overline{Q}^0& \text{on $\partial\mathcal U$.}
\end{dcases}
\end{equation}
Using the elliptic regularity (Theorem~\ref{th:ellipticregularity}), the solution $P^{0}\in H^{k+9}(\mathcal{U})\hookrightarrow C^{k+7}(\overline{\mathcal U})$.
\item Recalling all the obtained constraints, this allows to define suitable supersolutions~\eqref{eq:supersoltionsapp} $p_\varepsilon^{app}\in C^{k+7}(\overline{\mathcal U})\subset C^2(\overline{\mathcal U})$ and $q_\varepsilon^{app}\in C^{k+5}(\overline{\omega})\subset C^{2}(\overline{\omega})$, that satisfy~\eqref{eq:pqNd}.   
\end{enumerate}

\section{Numerical experiments}\label{sec:numerics}

In this section, we investigate numerically the convergence of the solution $u_\varepsilon$ of the penalized problem~\eqref{eq:reactiondiffusionpenalized} towards the solution $u$ of the initial problem~\eqref{eq:reactiondiffusion} in the initial domain $\mathcal U$ when $\varepsilon \to 0$. We consider $\Omega={]}0,1[^2$, because in practice we introduce the penalization in order to use a Cartesian mesh. We will consider two situations for the initial domain $\mathcal U$: $\mathcal U$ will be either a disk or a square included into $\Omega$, and we will approximate numerically the penalized solution $u_\varepsilon$ using bivariate finite differences. Note that $\Omega$ does not satisfy the regularity hypotheses of Theorem~\ref{th:convergence} since $\partial\Omega$ is only piecewise regular.

\subsection{Disk domain embedded in a square}

Here, we consider $\mathcal U$ to be a disk of radius $R< \frac12$ and of center $(x_0,y_0)=\bigl(\frac12,\frac12\bigr)$.

\subsubsection{Extensions of the data on the boundary and of the normal vector}

        We choose the following extensions of $\tilde{n}$ and $\tilde{g}$ in $\omega:=\Omega\setminus\overline{\mathcal U}$:
    \begin{itemize}
        \item $n_x(x,y)=\frac{x-x_0}{\sqrt{(x-x_0)^2+(y-y_0)^2}}$, $n_y(x,y)=\frac{y-y_0}{\sqrt{(x-x_0)^2+(y-y_0)^2}}$, 
        \item $g(x,y)=\tilde{g}(x_0+R \cos\theta,y_0+R\sin\theta)$, where $\theta=\arg(x-x_0+i(y-y_0))$.
    \end{itemize}

\subsubsection{Discretization and numerical approximation}\label{sec:discretization}

        The domain $\Omega$ is discretized using a uniform mesh of step $h=1/N$ ($N\in\mathbb{N}$), and whose nodes are the points $(ih,jh)$, $0\le i,j\le N$, see Figure~\ref{fig:scheme2d}. We denote by $U_{ij}$ the approximation of $u_\varepsilon(ih,jh)$. Following the same approach used in one dimension~\cite{Bensiali2014}, a centered scheme is used to approximate the Laplacian of $u_\varepsilon$, whereas the gradient $\nabla u_\varepsilon$ is discretized using a first-order upwind scheme depending on the sign of the velocity $n$.

            \begin{figure}[ht]
            	\centering
            	\includegraphics[scale=0.6]{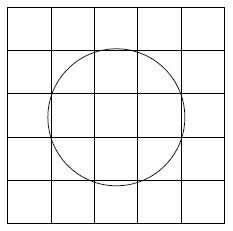}
            	\caption{Cartesian mesh of the fictitious domain $\Omega={]}0,1[^2$.}
            	\label{fig:scheme2d}
            \end{figure}

            After introducing a vector $X_h\in\mathbb{R}^{(N+1)^2}$ such that $U_{ij}=X_{j+(N+1)i}=X_n$~\cite{ciarlet1998introduction}, with $i_n=\bigl\lfloor\frac{n}{N+1}\bigr\rfloor$, $j_n=n-(N+1)i_n$, we obtain the following linear system \begin{equation}\label{eq:linearsystem}
        A_hX_h=B_h,
    \end{equation}
    where 
 \begin{itemize}
 \item if $i_n=0$ or $N$, or $j_n=0$ or $N$ then\\
 $A_{n,n}=1 \text{ and } A_{n,m}=0 \text{ for $m\ne n$}, \text{ and } B_n=0 \text{ (Dirichlet boundary conditions)}$
 \item else, the nonzero entries of the matrix $A_h$ are summarized in Table~\ref{tab:matrix1},
 \item if $i_n\notin \{0,N\}$ and $j_n\notin \{0,N\}$ then\\
 $B_n=(1-\chi_n)f(x_n, y_n)+\frac{\chi_n}{\varepsilon} g(x_n, y_n))$.
 \end{itemize}
  We used the following notations:
 \begin{itemize}
\item $x_n=i h$ and $y_n=j h$ where $n=(N+1)i+j$
\item $F_n=F(x_n,y_n)$ for a function $F\colon \RR^2\to\RR$.
 \end{itemize}

      \begin{table}[!h]
\centering
\begin{tabular}{c|cccc}  
\toprule
$i_n$  & $\le N/2$ & $> N/2$ & $\le N/2$ & $>N/2$ \\
$j_n$   & $\le N/2$ & $\le N/2$ & $> N/2$ & $>N/2$ \\
\midrule
$ \implies (n_x)_n$  & $\le 0$ & $\ge 0$ & $\le 0$ & $\ge 0$ \\
$ \implies (n_y)_n$   & $\le 0$ & $\le 0$ & $\ge 0$ & $\ge 0$ \\
\midrule
$A_{n,n-N-1}$     &    $-\dfrac{1}{h^2}$      &  $-\dfrac{1}{h^2}-\dfrac{\chi_n (n_x)_n}{\varepsilon h}$   & $-\dfrac{1}{h^2}$ & $-\dfrac{1}{h^2}-\dfrac{\chi_n (n_x)_n}{\varepsilon h}$   \\
$A_{n,n-1}$       &  $-\dfrac{1}{h^2}$    & $-\dfrac{1}{h^2}$  & $-\dfrac{1}{h^2}-\dfrac{\chi_n (n_y)_n}{\varepsilon h}$& $-\dfrac{1}{h^2}-\dfrac{\chi_n (n_y)_n}{\varepsilon h}$  \\
$A_{n,n}$       &    \multicolumn{4}{c}{$\dfrac4{h^2}+1+\dfrac{\chi_n}{\varepsilon h}(|(n_x)_n|+|(n_y)_n|)+\dfrac{\alpha\chi_n}{\varepsilon}$}  \\
$A_{n,n+1}$       &  $-\dfrac{1}{h^2}+\dfrac{\chi_n (n_y)_n}{\varepsilon h}$    & $-\dfrac{1}{h^2}+\dfrac{\chi_n (n_y)_n}{\varepsilon h}$  & $-\dfrac{1}{h^2}$& $-\dfrac{1}{h^2}$  \\
$A_{n,n+N+1}$       &  $-\dfrac{1}{h^2}+\dfrac{\chi_n (n_x)_n}{\varepsilon h}$    & $-\dfrac{1}{h^2}$  & $-\dfrac{1}{h^2}+\dfrac{\chi_n (n_x)_n}{\varepsilon h}$& $-\dfrac{1}{h^2}$  \\
\bottomrule
\end{tabular}
\caption{Nonzero entries of the matrix $A_h$ of the finite difference scheme~\eqref{eq:linearsystem}.}
\label{tab:matrix1}
\end{table}

 The study carried out for the numerical approximation in one dimension~\cite{Bensiali2014} can be adapted and we obtain the following
 
 \begin{lemma}[Existence and uniqueness]
 For all $\varepsilon,h>0$, the system~\eqref{eq:linearsystem} has a unique solution.
 \end{lemma}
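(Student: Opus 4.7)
The plan is to establish invertibility of the matrix $A_h$ by showing that it is strictly diagonally dominant. Since the system is square, invertibility immediately yields both existence and uniqueness of $X_h$ for any right-hand side $B_h$.

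First, I would dispatch the Dirichlet boundary rows: for indices $n$ with $i_n\in\{0,N\}$ or $j_n\in\{0,N\}$, the row has $A_{n,n}=1$ and all other entries equal to $0$, hence trivially $|A_{n,n}|>\sum_{m\ne n}|A_{n,m}|$. Then I would focus on the interior rows, reading off the five nonzero entries from Table~\ref{tab:matrix1}. The key observation is that in each of the four sign configurations of $((n_x)_n,(n_y)_n)$, the off-diagonal entries are all non-positive: the centered diffusion contributes $-1/h^2$ in every direction, and the upwind convection contribution, by construction of the scheme, is always added on the side toward which the characteristic comes from, i.e. it further decreases an off-diagonal entry that is already negative. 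In particular $|A_{n,m}|=-A_{n,m}$ for each $m\ne n$.

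Next I would sum the absolute values of the off-diagonal entries for an interior row. A direct term-by-term inspection of Table~\ref{tab:matrix1} gives, in every case,
\begin{equation*}
\sum_{m\ne n}|A_{n,m}|=\frac{4}{h^2}+\frac{\chi_n}{\varepsilon h}\bigl(|(n_x)_n|+|(n_y)_n|\bigr),
\end{equation*}
because the four $-1/h^2$ contributions always appear and the upwind terms pick up exactly one of $\pm(n_x)_n/(\varepsilon h)$ and one of $\pm(n_y)_n/(\varepsilon h)$, always with the sign that makes them add up to $|(n_x)_n|$ and $|(n_y)_n|$. Comparing with
\begin{equation*}
A_{n,n}=\frac{4}{h^2}+1+\frac{\chi_n}{\varepsilon h}\bigl(|(n_x)_n|+|(n_y)_n|\bigr)+\frac{\alpha\chi_n}{\varepsilon},
\end{equation*}
the reaction term contributes a clean surplus of at least $1$ on the diagonal, so $A_{n,n}-\sum_{m\ne n}|A_{n,m}|\ge 1>0$. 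This strict diagonal dominance holds for every row uniformly in $\varepsilon,h>0$.

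Finally, I would invoke the classical fact that a strictly diagonally dominant matrix is invertible (via Gershgorin's theorem, or directly by noting that $A_h X_h=0$ combined with the row where $|X_{n_0}|=\|X_h\|_\infty$ forces $X_{n_0}=0$, and hence $X_h=0$). This yields existence and uniqueness of the solution of~\eqref{eq:linearsystem}. The argument is essentially mechanical once the four sign cases are checked; the only mildly subtle point is confirming that the upwind construction systematically produces the sign required so that the convection part contributes $|(n_x)_n|+|(n_y)_n|$ to both $A_{n,n}$ and $\sum_{m\ne n}|A_{n,m}|$, which is precisely why the $+1$ from the reaction term suffices to give strict dominance.
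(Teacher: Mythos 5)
Your argument is correct and follows exactly the same route as the paper, which simply notes that $A_h$ is strictly diagonally dominant and therefore invertible (by what the paper calls the Hadamard lemma, equivalently Gershgorin or your direct maximum argument). You have merely spelled out the sign bookkeeping from Table~\ref{tab:matrix1} that the paper leaves to the reader.
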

 
 \begin{proof}
 One can check that $A_h$ is a strictly diagonally dominant matrix, hence invertible by the Hadamard lemma.
 \end{proof}
 
 \begin{lemma}[Stability]\label{lem:stability}
 For all $\varepsilon,h>0$, the finite difference scheme~\eqref{eq:linearsystem} is stable in the norm $\|\cdot\|_\infty$ with $\|A_h^{-1}\|_\infty\le 1$.
 \end{lemma}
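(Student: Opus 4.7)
The plan is to exploit the structure of $A_h$ as an M-matrix: positive diagonal, non-positive off-diagonals, and strictly diagonally dominant, and then use the well-known characterization $\|A_h^{-1}\|_\infty = \|A_h^{-1}\mathbf{1}\|_\infty$ valid for matrices with non-negative inverse, where $\mathbf{1}=(1,\ldots,1)^T$.

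First, I would inspect Table~\ref{tab:matrix1} case by case to verify that every off-diagonal coefficient is $\le 0$. The key observation is that the upwinding is performed precisely so that the convection coefficient appearing on each off-diagonal entry never changes the sign imposed by the $-1/h^2$ Laplacian term: in each of the four sign configurations of $(n_x,n_y)$, the two off-diagonal entries which would receive a convection contribution receive it with the \emph{same} sign as $-1/h^2$. Equivalently, the upwind scheme puts $+\chi_n(|n_x|_n+|n_y|_n)/(\varepsilon h)$ on the diagonal and distributes only non-positive increments off the diagonal. Summing the row, all four Laplacian off-diagonals cancel the $4/h^2$ from the diagonal, and the convection off-diagonal contributions cancel the convection diagonal contribution, leaving the interior row sum
\begin{equation*}
\sum_m A_{n,m}=1+\frac{\alpha\chi_n}{\varepsilon}\ge 1,
\end{equation*}
while for boundary nodes the row sum equals $A_{n,n}=1$. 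In particular $A_h\mathbf{1}\ge \mathbf{1}$ componentwise.

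Next, I would establish $A_h^{-1}\ge 0$ by a discrete maximum principle. Let $Y$ satisfy $A_h Y\ge 0$ and suppose $Y_{n_0}=\min_n Y_n<0$. If $n_0$ is a boundary node, $Y_{n_0}=(A_hY)_{n_0}\ge 0$, contradiction. If $n_0$ is interior, the equation at $n_0$ with $-A_{n_0,m}=|A_{n_0,m}|$ yields
\begin{equation*}
A_{n_0,n_0}Y_{n_0}\ge\sum_{m\ne n_0}|A_{n_0,m}|Y_m\ge\Bigl(\sum_{m\ne n_0}|A_{n_0,m}|\Bigr)Y_{n_0},
\end{equation*}
so $\bigl(A_{n_0,n_0}-\sum_{m\ne n_0}|A_{n_0,m}|\bigr)Y_{n_0}\ge 0$. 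The coefficient is $1+\alpha\chi_{n_0}/\varepsilon\ge 1>0$, which forces $Y_{n_0}\ge 0$, a contradiction. Hence $A_hY\ge 0\Longrightarrow Y\ge 0$, i.e.\ $A_h^{-1}\ge 0$.

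Finally, combining the two facts: from $A_h\mathbf{1}\ge \mathbf{1}=A_h(A_h^{-1}\mathbf{1})$ we get $A_h(\mathbf{1}-A_h^{-1}\mathbf{1})\ge 0$, hence $A_h^{-1}\mathbf{1}\le \mathbf{1}$. Since $A_h^{-1}\ge 0$, we have
\begin{equation*}
\|A_h^{-1}\|_\infty=\max_i\sum_j|(A_h^{-1})_{ij}|=\max_i(A_h^{-1}\mathbf{1})_i=\|A_h^{-1}\mathbf{1}\|_\infty\le 1,
\end{equation*}
which gives the stability estimate $\|X_h\|_\infty\le \|B_h\|_\infty$. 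The only real issue is the book-keeping in the sign analysis of the off-diagonal entries, i.e.\ checking that the upwinding has been set up consistently in all four quadrants of Table~\ref{tab:matrix1}; everything else is a short M-matrix argument.
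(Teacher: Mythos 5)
Your proof is correct, but it takes a different route from the paper. The paper's argument is a one-liner: it observes that $A_h$ is strictly diagonally dominant with margin $|A_{nn}|-\sum_{m\ne n}|A_{nm}|\ge 1$ and invokes the standard bound $\|A_h^{-1}\|_\infty\le \bigl(\min_n\bigl(|A_{nn}|-\sum_{m\ne n}|A_{nm}|\bigr)\bigr)^{-1}\le 1$, which can be proved directly by evaluating $(A_hx)_{n_0}$ at an index $n_0$ where $|x_{n_0}|=\|x\|_\infty$; no sign information on the off-diagonal entries is needed beyond what enters the dominance margin. You instead verify the M-matrix structure (non-positive off-diagonals, guaranteed by the upwinding), prove $A_h^{-1}\ge 0$ via a discrete maximum principle, note $A_h\mathbf{1}\ge\mathbf{1}$, and conclude through $\|A_h^{-1}\|_\infty=\|A_h^{-1}\mathbf{1}\|_\infty\le 1$. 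Both arguments rest on exactly the same book-keeping in Table~\ref{tab:matrix1} (your sign check and the paper's computation of the dominance margin are the same inspection), so neither saves work there; the paper's route is shorter and more economical, while yours establishes strictly more, namely the monotonicity $A_h^{-1}\ge 0$ (a discrete maximum principle for the scheme), which is not needed for the stated stability bound but would be useful for comparison-principle or positivity-preservation arguments. Your chain of implications is complete and sound: the row-sum computation, the contradiction argument at an interior minimum using $A_{n_0,n_0}-\sum_{m\ne n_0}|A_{n_0,m}|=1+\alpha\chi_{n_0}/\varepsilon\ge 1$, and the final step $A_h(\mathbf{1}-A_h^{-1}\mathbf{1})\ge 0\Rightarrow A_h^{-1}\mathbf{1}\le\mathbf{1}$ are all valid (invertibility of $A_h$ being supplied by the preceding existence lemma).
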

 
 \begin{proof}
 The result follows from the fact that $A_h$ is strictly diagonally dominant with ${|A_{nn}|-\sum_{m\ne n}|A_{nm}|\ge 1>0}$.
 \end{proof}

\begin{remark}
Given $\varepsilon>0$, if $u_\varepsilon$ is regular enough (which is not guaranteed), then we can recover classically the results of consistency and convergence for the finite difference scheme. We found a first-order convergence with respect to $h$ in the presented case.

\end{remark}

\subsubsection{Condition number}

 The penalization introduces terms of order $\varepsilon^{-1}$ in the matrix $A_h$, which leads to an ill-conditioned matrix problem for small values of the penalization parameter $\varepsilon$. Indeed, the condition number in the infinity norm is
     \begin{align}
         \kappa(A_h)=\|A_h\|_\infty\cdot\|A_h^{-1}\|_\infty\le \|A_h\|_\infty,
     \end{align}
     using the stability already established (Lemma~\ref{lem:stability}). On the other hand
     \begin{align}
          \sum_m |A_{nm}|&\le \frac{8}{h^2}+1+\frac{\chi_n}{\varepsilon}\alpha+2\frac{\chi_n}{\varepsilon h}(|(n_x)_n|+|(n_y)_n|)
          \le \frac8{h^2}+1+\frac\alpha\varepsilon+\frac{4}{R\varepsilon h},
     \end{align}
     thus
     \begin{align}
         \|A_h\|_\infty&=\max_{n} \Bigl(\sum_m |A_{nm}|\Bigr)
         \le \frac{1}{\varepsilon}\Bigl(\alpha+\frac{4}{Rh}\Bigr)+\Bigl(1+\frac8{h^2}\Bigr).
     \end{align}
     
     For a fixed space step $h$, one thus has
     \begin{equation}
         \kappa(A_h)\underset{\varepsilon\to0}{=}O(\varepsilon^{-1}),
     \end{equation}
     which holds in every matrix norm by the equivalence of norms in finite dimension (fixed space step $h$).

    Applying a diagonal preconditioner, $C=\mathrm{diag}(A_h)$, allows to obtain a condition number almost independent of $\varepsilon$ as shown in Figure~\ref{fig:conditionnumber}. 

        \begin{figure}[h!]
        	\centering
        	\includegraphics[scale=0.4]{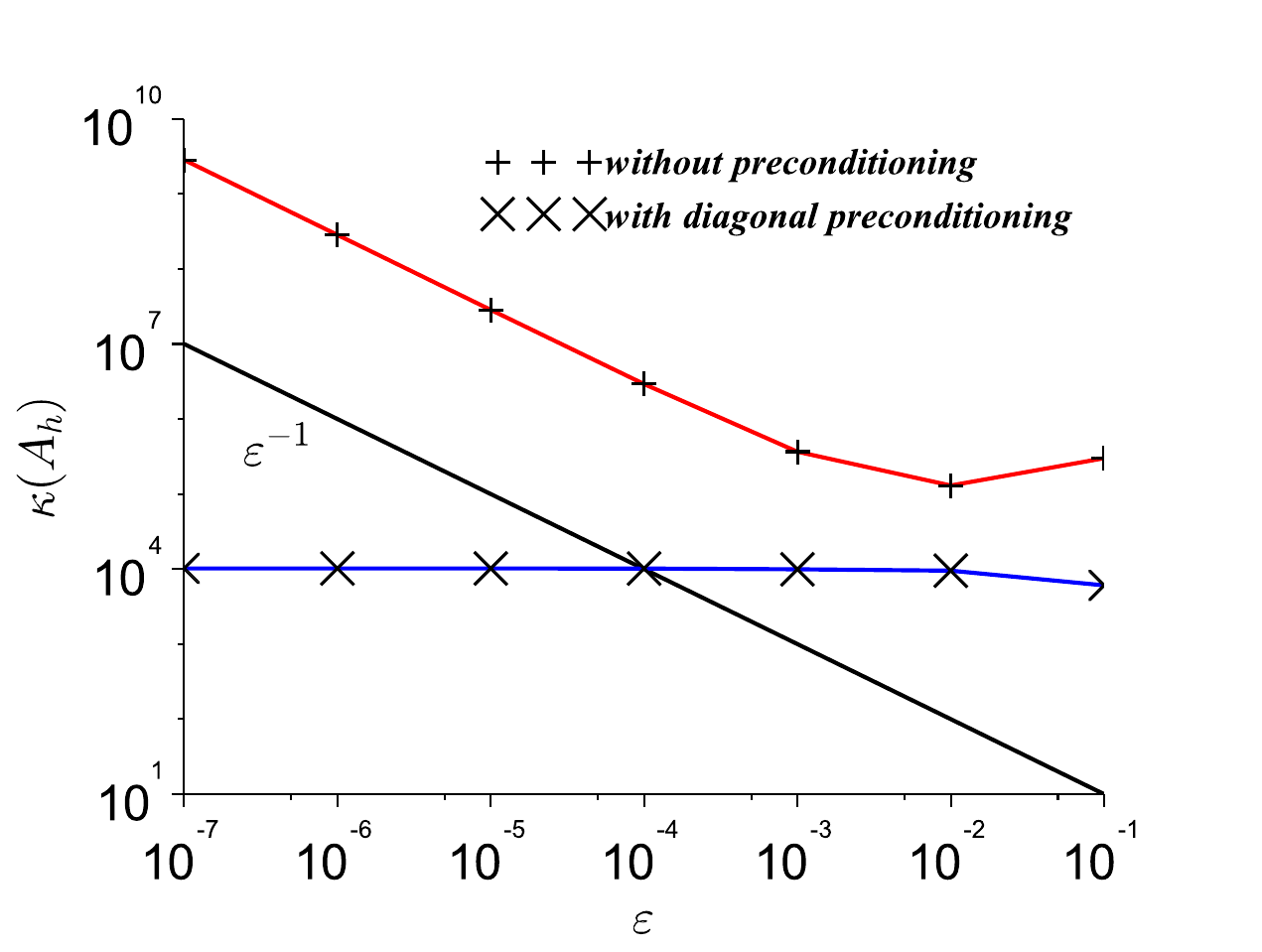}
        	\caption{Condition number $\kappa(A_h)$ in the 2-norm versus $\varepsilon$ for $\alpha = 2$, $R=0.3$ and $N = 100$.}
        	\label{fig:conditionnumber}
        \end{figure}

\subsubsection{Numerical tests}

  We implemented the presented numerical scheme using Scilab software.
  We consider the following test example:
    \begin{itemize}
        \item $u(x,y)=\sin(c(x+y-1))$ the exact solution of the initial problem~\eqref{eq:reactiondiffusion} with
        \item $f(x,y)=(2c^2+1)\sin(c(x+y-1))$ in $\mathcal U$,
        \item $\tilde{g}=c\cos(c(x+y-1))\frac{x+y-x_0-y_0}{\sqrt{(x-x_0)^2+(y-y_0)^2}}+\alpha \sin(c(x+y-1))$ on $\partial\mathcal U$.
    \end{itemize}
    The numerical solution of the penalized problem~\eqref{eq:reactiondiffusionpenalized} using the finite difference scheme~\eqref{eq:linearsystem} is compared to the exact solution $u$ of the initial problem~\eqref{eq:reactiondiffusion} inside $\mathcal U$ for $c=5$, $\alpha=2$ (Robin boundary conditions) and $R=0.3$. We consider the following errors:
    \begin{itemize}
        \item $L^2$-norm error: 
        $$\mathsf{Er_{L^2}}=\sum_{i=0}^N \sum_{j=0}^N h^2 (u(ih,jh)-U_{ij})^2 \times (1-\chi(ih,jh)),$$
        \item $L^\infty$-norm error: 
        $$\mathsf{Er_{L^\infty}}=\max\limits_{0\le i,j\le N} |u(ih,jh)-U_{ij}|\times (1-\chi(ih,jh)),$$    
        \item $H^1$-norm error: 
        $$\mathsf{Er_{H^1}}=\sum_{i=0}^N \sum_{j=0}^N h^2 \Bigl[(u(ih,jh)-U_{ij})^2+\Bigl(\frac{\partial u}{\partial x}(ih,jh)-U^x_{ij}\Bigr)^2+\Bigl(\frac{\partial u}{\partial y}(ih,jh)-U^y_{ij}\Bigr)^2\Bigr] \times (1-\chi(ih,jh)),$$    
        \end{itemize}
where $U^x_{ij}$ and $U^y_{ij}$ are second-order finite difference approximations of $u_\varepsilon (ih,jh)\approx U_{ij}$.

        Figures~\ref{fig:results2d} and~\ref{fig:results2detah}, and Table~\ref{tab:erroretah} show the results. We can make the following remarks:
        \begin{itemize}
            \item The numerical results suggest the convergence of $u_\varepsilon$ towards $u$ inside $\mathcal U$ when $\varepsilon$ goes to $0$. The order of convergence is close to $1$ as expected but one needs to increase the number of points $N$ or use a higher-order scheme in order to make it more clear (see Section~\ref{sec:squareinsquare}). A boundary layer is visible in the neighbourhood of $\partial\Omega$ as expected.
            \item An order of convergence of $1$ is observed with respect to $h=\frac1{N}$ for $\varepsilon$ small enough, which is consistent with the chosen discretization. 
        \end{itemize}

            \begin{figure}[h!]
            	\centering
            	\centering
            	\begin{subfigure}[b]{0.48\textwidth}
            		\centering
            		\includegraphics[width=\textwidth]{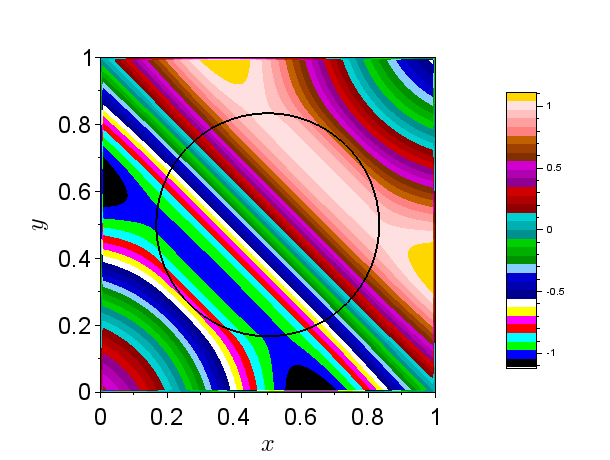}
            		\caption{Numerical solution}
            		\label{fig:numsoldisk}
            	\end{subfigure}
            	\hfill
            	\begin{subfigure}[b]{0.48\textwidth}
            		\centering
            		\includegraphics[width=\textwidth]{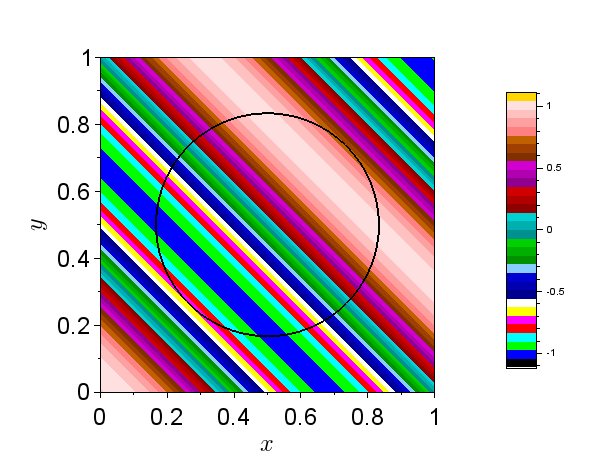}
            		\caption{Exact solution}
            		\label{fig:exactsoldisk}
            	\end{subfigure}
            	\caption{Comparison between the numerical solution of the penalized problem and the exact solution of the initial problem for $\alpha = 2$, $\varepsilon=10^{-10}$ and $N = 150$.}
            	\label{fig:results2d}
            \end{figure}
            
            \begin{figure}[!h]
            	\centering
            	\begin{subfigure}[b]{0.45\textwidth}
            		\centering
            		\includegraphics[width=\textwidth]{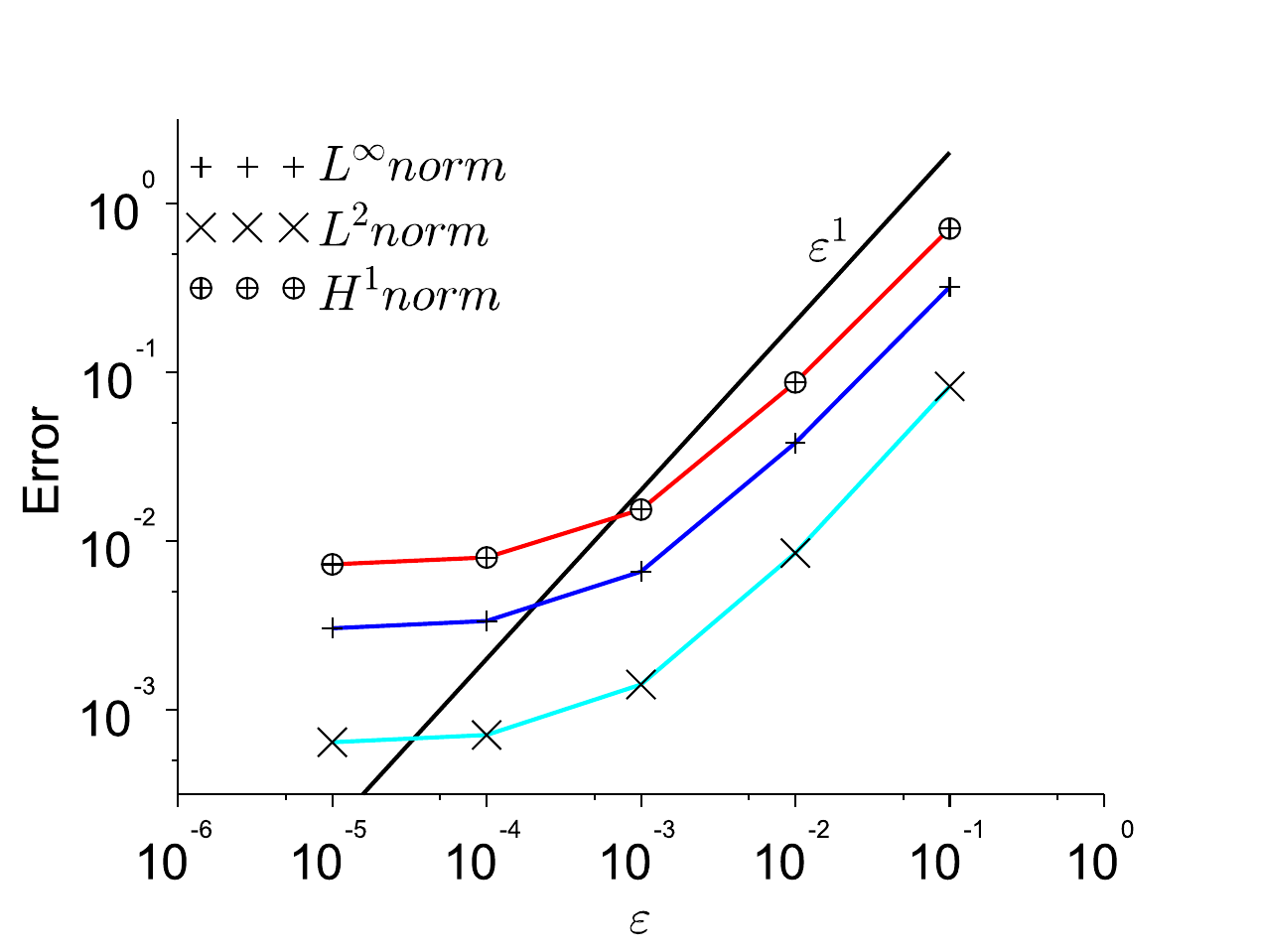}
            		\caption{}
            		\label{fig:erroretacercle}
            	\end{subfigure}
            	\hfill
            	\begin{subfigure}[b]{0.45\textwidth}
            		\centering
            		\includegraphics[width=\textwidth]{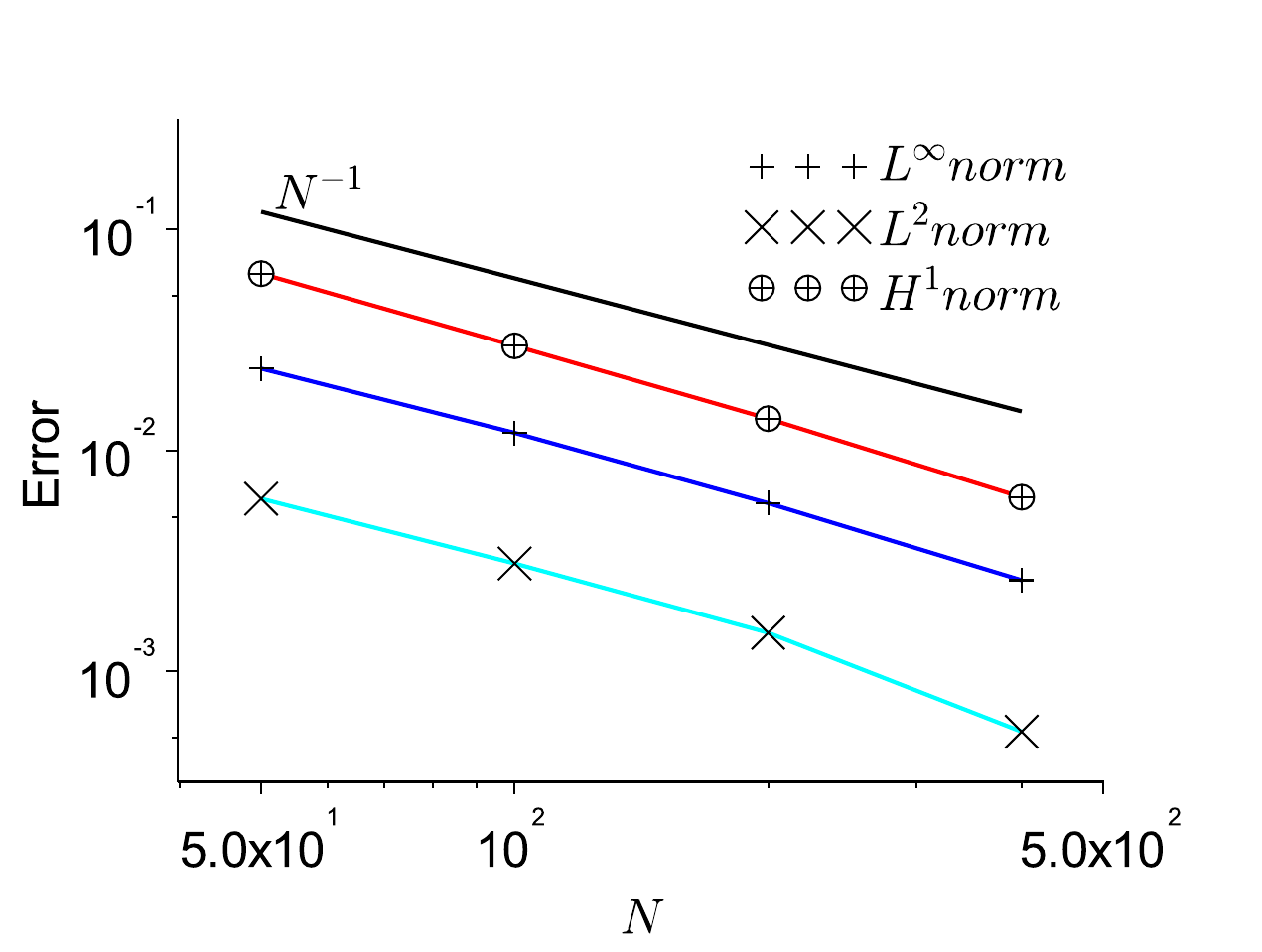}
            		\caption{}
            		\label{fig:errorNcercle}
            	\end{subfigure}
            	\caption{Disk embedded in a square. Error (inside the fluid domain) between the numerical solution of the penalized problem and the exact solution of the initial problem with respect to: (a)  $\varepsilon$ for $N=350$, (b) $N$ for $\varepsilon=10^{-10}$.}
            	\label{fig:results2detah}
            \end{figure}

\begin{table}[!h]
    \begin{subtable}[h]{0.48\textwidth}
        \centering
        \begin{tabular}{cccc}  
\toprule
$\varepsilon$   & $L^\infty$ & $L^2$ & $H^1$ \\
\midrule
$10^{-1}$      & --    & -- &  --    \\
$10^{-2}$    &    0.923      &  0.987   & 0.911   \\
$10^{-3}$       &  0.762    & 0.778 & 0.753   \\
\bottomrule
\end{tabular}
       \caption{}
       \label{tab:erroretacercle}
    \end{subtable}
    \hfill
    \begin{subtable}[h]{0.48\textwidth}
        \centering
        \begin{tabular}{cccc}  
\toprule
$N$   & $L^\infty$ & $L^2$ & $H^1$ \\
\midrule
50      & --    & -- &  --    \\
100     &    -0.966      &  -0.972   & -1.084   \\
200       &  -1.057    & -1.042 & -1.095   \\
400       &  -1.158    & -1.485  & -1.181 \\
\bottomrule
\end{tabular}
        \caption{}
        \label{tab:errorNcercle}
     \end{subtable}
     \caption{Disk embedded in a square. Convergence order with respect to: (a) the penalization parameter $\varepsilon$ for $N=350$, (b) the mesh size $h=\frac1N$ for $\varepsilon=10^{-10}$.}
     \label{tab:erroretah}
\end{table}

\subsection{Square domain embedded in a square}\label{sec:squareinsquare}

In order to investigate more precisely the order of convergence with respect to the penalization parameter, we consider here an initial domain which is a square $\mathcal{U}=\bigl{]}\frac12-R,\frac12+R\bigr[^2$ where $0<R<\frac12$, that we embed in a larger square $\Omega={]}0,1[^2$. In this way, we avoid the error related to the geometric representation of the boundary of the initial domain $ \mathcal U$ when using a Cartesian mesh with finite differences for the penalized problem.

\subsubsection{Extensions of the data on the boundary and of the normal vector}

In the case of a square $\mathcal U$ embedded in a larger square $\Omega$, natural extensions of the data on the boundary $\tilde{g}\in H^{1/2}(\partial\mathcal{U})$ and $\tilde{n}\in H^{1/2}(\partial\mathcal{U})$ to the obstacle domain $\omega=\Omega\setminus\overline{\mathcal{U}}$ are represented on Fig.~\ref{fig:extension}. We will consider in the tests continuous $\tilde{g}$ on each side of the square, so that these choices guarantee that the extensions $n\in H^1(\omega)$ and $g\in H^1(\omega)$: the only eventual points of discontinuity are the corners of the smaller square.


\begin{figure}[h]
	\centering
	\begin{subfigure}[b]{0.48\textwidth}
		\centering
		\includegraphics[width=\textwidth]{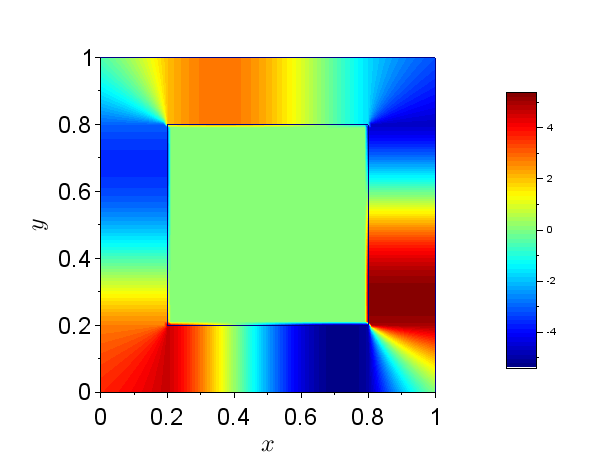}
		\caption{Extension of $\tilde{g}$}
		\label{fig:extensiong}
	\end{subfigure}
	\hfill
	\begin{subfigure}[b]{0.48\textwidth}
		\centering
		\includegraphics[width=\textwidth]{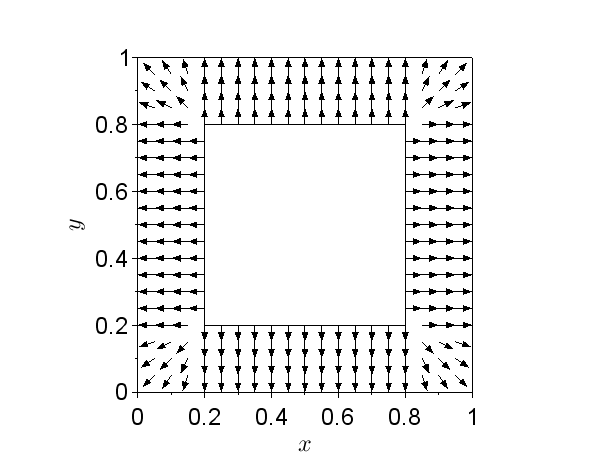}
		\caption{Extension of $\tilde{n}$}
		\label{fig:extensionn}
	\end{subfigure}
	\caption{Natural extensions of the data on the boundary and the normal vector to the obstacle domain.}
	\label{fig:extension}
\end{figure}

\subsubsection{Second-order upwind finite difference scheme}

As for the disk case, we use upwind finite differences adapted to advection-dominated problems on a uniform grid (see Section~\ref{sec:discretization}). We use here second-order centered finite differences for the Laplacian term and second-order upwind finite differences for the advection terms, namely:
\begin{equation}\label{eq:laplacian}
    \Delta u_\varepsilon(ih, jh)\approx \frac{U_{i+1,j}-2U_{i,j}+U_{i-1,j}}{h^2}+\frac{U_{i,j+1}-2U_{i,j}+U_{i,j-1}}{h^2},
\end{equation}
\begin{equation}\label{eq:dudx}
\frac{\partial u_\varepsilon}{\partial x}(ih, jh)\approx
\begin{dcases}
\frac{3U_{i,j}-4U_{i-1,j}+U_{i-2,j}}{2h} & \text{if $n_x(ih,jh) \ge 0$}\\
\frac{-U_{i+2,j}+4U_{i+1,j}-3U_{i,j}}{2h} &\text{if $n_x(ih,jh) < 0$},
\end{dcases}
\end{equation}
and
\begin{equation}\label{eq:dudy}
\frac{\partial u_\varepsilon}{\partial y}(ih, jh)\approx
\begin{dcases}
\frac{3U_{i,j}-4U_{i,j-1}+U_{i,j-2}}{2h} & \text{if $n_y(ih,jh) \ge 0$}\\
\frac{-U_{i,j+2}+4U_{i,j+1}-3U_{i,j}}{2h} &\text{if $n_y(ih,jh) < 0$}.
\end{dcases}
\end{equation}
            After introducing, as for the disk case, a vector $X_h\in\mathbb{R}^{(N+1)^2}$ such that $U_{ij}=X_{j+(N+1)i}=X_n$, we obtain the following linear system
    \begin{equation}\label{eq:linearsystemsquare}
        A_hX_h=B_h,
    \end{equation}
    where

 \begin{itemize}
 \item if $x_n=0$ or $1$, or $y_n=0$ or $1$ then\\
 $A_{n,n}=1 \text{ and } A_{n,m}=0 \text{ for $m\ne n$}, \text{ and } B_n=0 \text{ (Dirichlet boundary conditions)}$
 \item else the nonzero entries of the matrix $A_h$ are summarized in Table~\ref{tab:matrix2},
 \item if $x_n\notin \{0,1\}$ and $x_n\notin \{0,1\}$ then\\
 $B_n=(1-\chi_n)f(x_n, y_n)+\frac{\chi_n}{\varepsilon} g(x_n, y_n)$,
 \end{itemize}
  using the same notations as for the disk case:
 \begin{itemize}
\item $x_n=i h$ and $y_n=j h$ where $n=(N+1)i+j$
\item $F_n=F(x_n,y_n)$ for a function $F\colon \RR^2\to\RR$.
 \end{itemize}
    
\begin{table}[!h]
\centering
\begin{tabular}{c|cccc}  
\toprule
$x_n$  & $\ge0.5+R$ & $\le 0.5-R$ & $> 0.5-R$ & else \\
$y_n$   & $>0.5-R$ & $< 0.5+R$ & $\le 0.5-R$ & else \\
\midrule
$ \implies (n_x)_n$  & $\ge 0$ & $\le 0$ & $\ge 0$ & $\le 0$ \\
$ \implies (n_y)_n$   & $\ge 0$ & $\le 0$ & $\le 0$ & $\ge 0$ \\
\midrule
$A_{n,n-2N-2}$      & $\dfrac{\chi_n (n_x)_n}{2\varepsilon h}$    & $0$ &  $\dfrac{\chi_n (n_x)_n}{2\varepsilon h}$ & $0$  \\
$A_{n,n-N-1}$     &    $-\dfrac{1}{h^2}-\dfrac{2\chi_n (n_x)_n}{\varepsilon h}$      &  $-\dfrac{1}{h^2}$   & $-\dfrac{1}{h^2}-\dfrac{2\chi_n (n_x)_n}{\varepsilon h}$ & $-\dfrac{1}{h^2}$   \\
$A_{n,n-2}$       &  $\dfrac{\chi_n (n_y)_n}{2\varepsilon h}$    & $0$ & $0$ & $\dfrac{\chi_n (n_y)_n}{2\varepsilon h}$   \\
$A_{n,n-1}$       &  $-\dfrac{1}{h^2}-\dfrac{2\chi_n (n_y)_n}{\varepsilon h}$    & $-\dfrac{1}{h^2}$  & $-\dfrac{1}{h^2}$& $-\frac{1}{h^2}-\dfrac{2\chi_n (n_y)_n}{\varepsilon h}$  \\
$A_{n,n}$       &    \multicolumn{4}{c}{$\dfrac4{h^2}+1+\dfrac{3\chi_n}{2\varepsilon h}(|(n_x)_n|+|(n_y)_n|)+\dfrac{\alpha\chi_n}{\varepsilon}$}  \\
$A_{n,n+1}$       &  $-\dfrac{1}{h^2}$    & $-\dfrac{1}{h^2}+\dfrac{2\chi_n (n_y)_n}{\varepsilon h}$  & $-\dfrac{1}{h^2}+\dfrac{2\chi_n (n_y)_n}{\varepsilon h}$& $-\frac{1}{h^2}$  \\
$A_{n,n+2}$       &  $0$    & $-\dfrac{\chi_n (n_y)_n}{2\varepsilon h}$  & $-\dfrac{\chi_n (n_y)_n}{2\varepsilon h}$& $0$  \\
$A_{n,n+N+1}$       &  $-\dfrac{1}{h^2}$    & $-\dfrac{1}{h^2}+\dfrac{2\chi_n (n_x)_n}{\varepsilon h}$  & $-\dfrac{1}{h^2}$& $-\dfrac{1}{h^2}+\frac{2\chi_n (n_x)_n}{\varepsilon h}$  \\
$A_{n,n+2N+2}$       &  $0$    & $-\dfrac{\chi_n (n_x)_n}{2\varepsilon h}$  & $0$& $-\dfrac{\chi_n (n_x)_n}{2\varepsilon h}$  \\
\bottomrule
\end{tabular}
\caption{Nonzero entries of the matrix $A_h$ of the finite difference scheme~\eqref{eq:linearsystemsquare}.}
\label{tab:matrix2}
\end{table}

 \paragraph*{Special treatment of discontinuities:}
 We recall that
 \begin{equation}
 \chi(x,y)=\chi_{\mathcal{\omega}}(x,y)
 \end{equation}
is the characteristic function of $\omega=\Omega\in\overline{\mathcal U}$ and hence is a discontinuous function, and that $n$ and $g$ are extensions to $\omega$ of the data on the boundary of the initial domain, and have also discontinuities. Since we are using finite differences, we have to handle these discontinuities by making suitable choices for the values at discontinuity points. Following the numerical tests carried out in the one-dimensional case~\cite{Bensiali2014} where second order convergence was recovered when we impose $\chi$ to be $1$ on the boundary of the initial domain, we do the same in the two-dimensional case by forcing $\chi$ to be $1$ on the boundary of $\mathcal{U}$. Also, special treatment has to be done for the corners of the initial square since the extensions $n$ and $g$ have discontinuities at these points.  

Consider for example the upper left corner $(x_c,y_c)=(0.5-R,0.5+R)$. At this point, the continuous equation reads
\begin{align*}
    -\Delta u_\varepsilon(x_c,y_c) + u_\varepsilon (x_c,y_c)+ \frac{\chi(x_c,y_c)}\varepsilon \Bigl(\frac{\partial u_\varepsilon}{\partial x}(x_c,y_c) n_x (x_c,y_c)+\frac{\partial u_\varepsilon}{\partial y}(x_c,y_c) n_y (x_c,y_c)+\alpha u_\varepsilon (x_c,y_c)\Bigr)\\
    \quad  =(1-\chi(x_c,y_c)) f(x_c,y_c)+ \frac{\chi(x_c,y_c)}\varepsilon g(x_c,y_c)
\end{align*}
thus, if we impose $\chi(x_c,y_c)=1$,
\begin{align*}
    -\Delta u_\varepsilon(x_c,y_c) + u_\varepsilon (x_c,y_c)+ \frac{1}\varepsilon \Bigl(\frac{\partial u_\varepsilon}{\partial x}(x_c,y_c) n_x (x_c,y_c)+\frac{\partial u_\varepsilon}{\partial y}(x_c,y_c) n_y (x_c,y_c)+\alpha u_\varepsilon (x_c,y_c)\Bigr)\\
    \quad  =\frac{1}\varepsilon g(x_c,y_c).
\end{align*}
Since the aim of penalization is to recover the boundary conditions of the initial problem for $\varepsilon$ small enough, we propose to choose $n_x$, $n_y$ and $g$ at the corner so that the equality
\begin{equation}\label{eq:condcorners}
    \frac{\partial u}{\partial x}(x_c,y_c) n_x (x_c,y_c)+\frac{\partial u}{\partial y}(x_c,y_c) n_y (x_c,y_c)+\alpha u (x_c,y_c)=g(x_c,y_c)
\end{equation}
is satisfied for the exact solution $u$. For sufficiently smooth $u\in C^2([0.5-R,0.5+R]^2)$ we have the following equations satisfied pointwise on the sides of the square
\begin{equation}
    \begin{dcases}
    \frac{\partial u}{\partial x}(x,y_c) n_x (x,y_c)+\frac{\partial u}{\partial y}(x,y_c) n_y (x,y_c)+\alpha u (x,y_c)=g(x,y_c) & x\in {]}x_c,x_c+2R[\\
    \frac{\partial u}{\partial x}(x_c,y) n_x (x_c,y)+\frac{\partial u}{\partial y}(x_c,y) n_y (x_c,y)+\alpha u (x_c,y)=g(x_c,y) & y\in {]}y_c-2R,y_c[.
    \end{dcases}
\end{equation}
Thus, it is sufficient for example to take
\begin{equation}
\begin{dcases}
n_x(x_c,y_c)=\lim\limits_{x\to x_c^+} n_x(x,y_c)\\
n_y(x_c,y_c)=\lim\limits_{x\to x_c^+} n_y(x,y_c)\\
g(x_c,y_c)=\lim\limits_{x\to x_c^+} g(x,y_c)
\end{dcases}
\end{equation}
in order to satisfy~\eqref{eq:condcorners}. Other choices are also possible by taking instead $\lim\limits_{y\to y_c^-}$, or even the sum of the two equations and then the mean:
\begin{equation}
\begin{dcases}
n_x(x_c,y_c)=\frac12 \Bigl(\lim\limits_{x\to x_c^+} n_x(x,y_c)+\lim\limits_{y\to y_c^-} n_x(x_c,y)\Bigr)\\
n_y(x_c,y_c)=\frac12 \Bigl(\lim\limits_{x\to x_c^+} n_y(x,y_c)+\lim\limits_{y\to y_c^-} n_y(x_c,y)\Bigr)\\
g(x_c,y_c)=\frac12 \Bigl(\lim\limits_{x\to x_c^+} g(x,y_c)+\lim\limits_{y\to y_c^-} g(x_c,y)\Bigr).
\end{dcases}
\end{equation}

\subsubsection{Numerical tests}

We consider the following test example, from a manufactured solution:
\begin{itemize}
    \item $u(x,y)=\sin(5(x+y-1))$ the exact solution of the initial problem
    \item $f$ and $\tilde{g}$ accordingly: $f=-\Delta u +u$ in $\mathcal U$, $\tilde{g}=\nabla u\cdot \tilde{n}+\alpha u$ on $\partial\mathcal U$.
\end{itemize}
In the following tests, we will consider $\alpha=2$ and $R=0.3$.

\paragraph{Error inside the fluid domain}

We compare the numerical solution of the penalized problem using finite differences to the exact solution of the initial problem inside the fluid domain. The results are reported on Figures~\ref{fig:solution} and \ref{fig:error}, and Tables~\ref{tab:errorN} and \ref{tab:erroreta}. 

We consider here the following errors:
    \begin{itemize}
        \item $L^2$-norm error: 
        $$\mathsf{Er_{L^2}}=\sum_{i=0}^N \sum_{j=0}^N h^2 (u(ih,jh)-U_{ij})^2 \times \mathds{1}_C(ih,jh),$$
        \item $L^\infty$-norm error: 
        $$\mathsf{Er_{L^\infty}}=\max\limits_{0\le i,j\le N} |u(ih,jh)-U_{ij}|\times \mathds{1}_C(ih,jh),$$    
                \item $H^1$-norm error: 
        $$\mathsf{Er_{H^1}}=\sum_{i=0}^N \sum_{j=0}^N h^2 \Bigl[(u(ih,jh)-U_{ij})^2+\Bigl(\frac{\partial u}{\partial x}(ih,jh)-U^x_{ij}\Bigr)^2+\Bigl(\frac{\partial u}{\partial y}(ih,jh)-U^y_{ij}\Bigr)^2\Bigr] \times \mathds{1}_C(ih,jh),$$    
        \end{itemize}
where $U^x_{ij}$ and $U^y_{ij}$ are second-order finite difference approximations of $u_\varepsilon (ih,jh)\approx U_{ij}$ (decentered at the boundary of $\mathcal{U}$). In the tables, the errors correspond to $C=[0.5-R,0.5+R]^2$, errors (no boundary) correspond to $C=]0.5-R,0.5+R[^2$, while the interior errors correspond to $C=[0.5-R+S,0.5+R-S]^2$ where $0<S<R$. Fox example, for $R=0.3$, we took $S=0.1$.

\begin{figure}[h]
	\centering
	\begin{subfigure}[b]{0.48\textwidth}
		\centering
		\includegraphics[width=\textwidth]{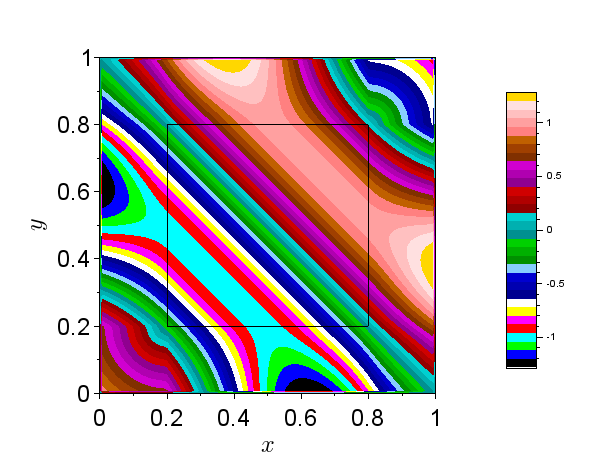}
		\caption{Numerical solution}
		\label{fig:numsol}
	\end{subfigure}
	\hfill
	\begin{subfigure}[b]{0.48\textwidth}
		\centering
		\includegraphics[width=\textwidth]{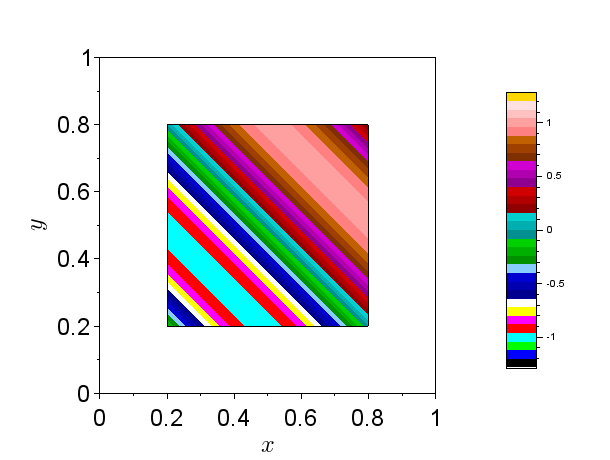}
		\caption{Exact solution}
		\label{fig:exactsol}
	\end{subfigure}
	\caption{Comparison between the numerical solution of the penalized problem and thee exact solution of the initial problem for $\varepsilon=10^{-10}$ and $N=150$.}
	\label{fig:solution}
\end{figure}

\begin{figure}[!h]
	\centering
	\begin{subfigure}[b]{0.45\textwidth}
		\centering
		\includegraphics[width=\textwidth]{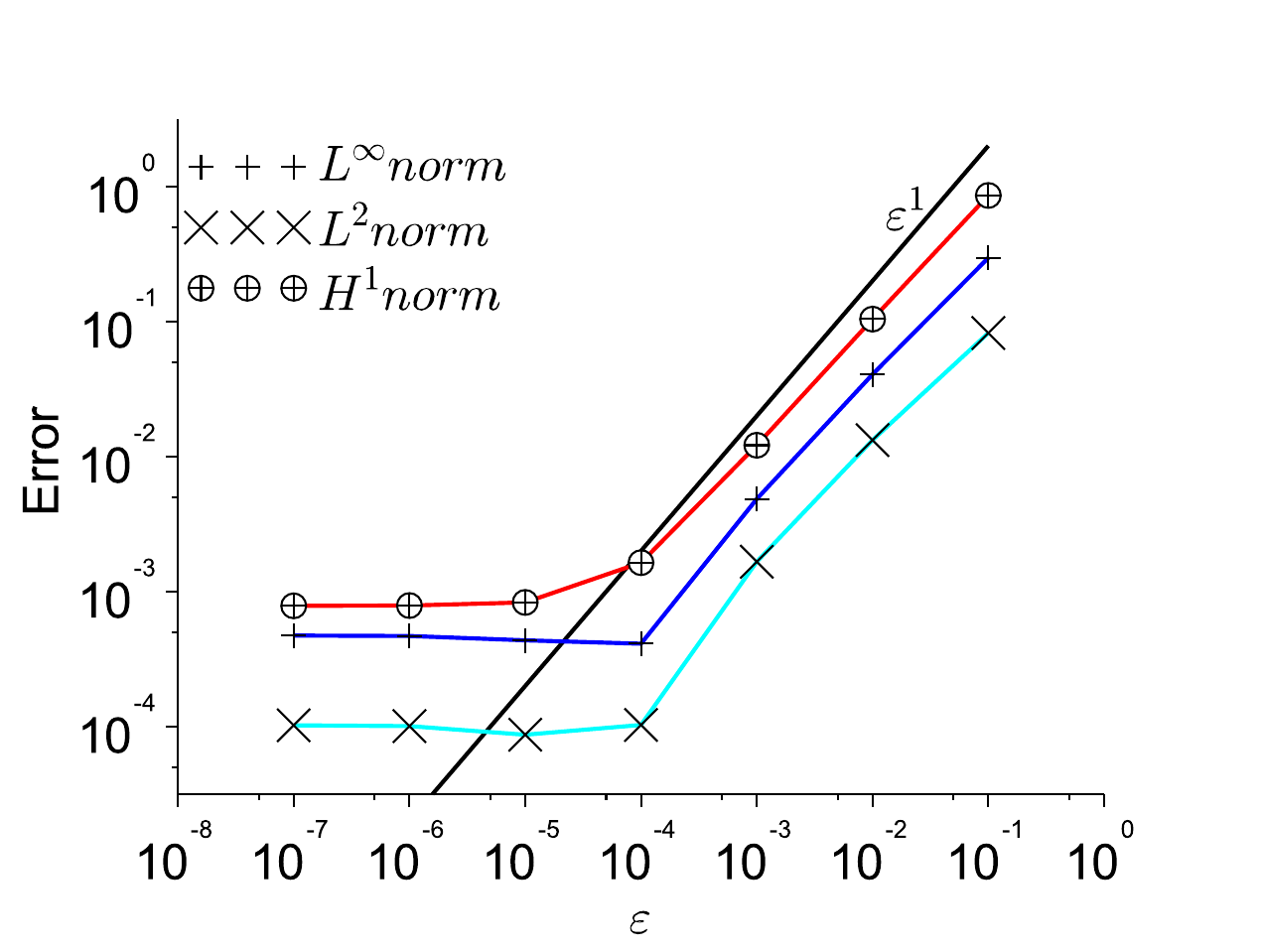}
		\caption{}
		\label{fig:erroreta}
	\end{subfigure}
	\hfill
	\begin{subfigure}[b]{0.45\textwidth}
		\centering
		\includegraphics[width=\textwidth]{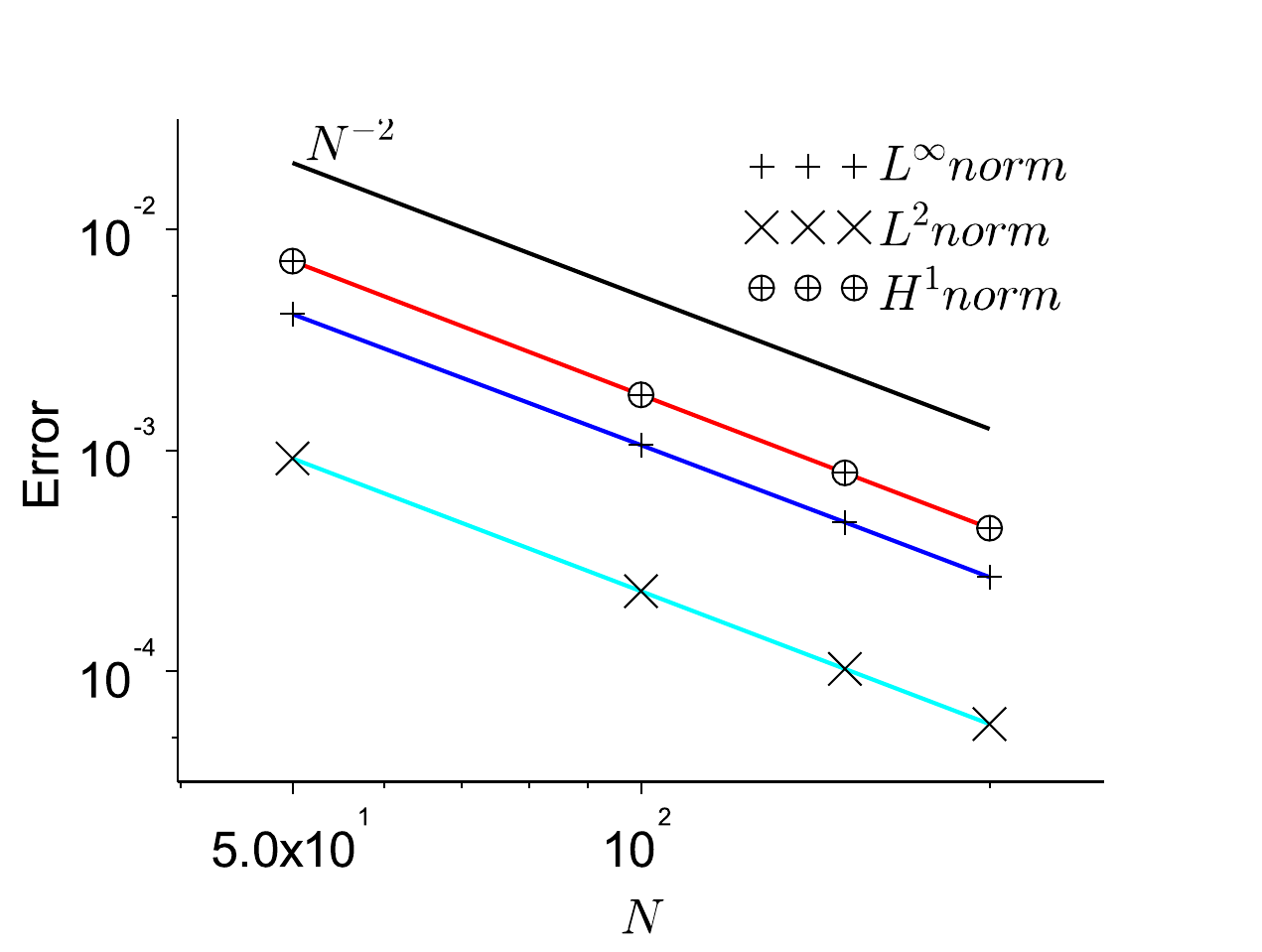}
		\caption{}
		\label{fig:errorN}
	\end{subfigure}
	\caption{Square embedded in a square. Error (inside the fluid domain) between the numerical solution of the penalized problem and the exact solution of the initial problem with respect to: (a)  $\varepsilon$ for $N=150$, (b) $N$ for $\varepsilon=10^{-10}$.}
	\label{fig:error}
\end{figure}

\begin{table}[!h]
\hspace{-1cm}
\centering
\begin{tabular}{cccccccccc}  
\toprule
$N$   & $L^\infty$ & $L^2$ & $H^1$ & $L^\infty$ (no & $L^2$ (no  & $H^1$ (no  & $L^\infty $ & $L^2$  & $H^1$\\
 & & & & boundary) & boundary) & boundary) & (interior) & (interior) & (interior) \\
\midrule
50      & --    & -- &  -- & & & & -- & -- & --   \\
100     &    -1.969      &  -1.995   & -2.010 & -1.908 & -1.899 & -1.939  & -1.872 & -1.873 & -1.919  \\
150       &  -1.982    & -1.997 & -2.005 & -1.949 & -1.943 & -1.965 & -1.843 & -1.849 & -1.833  \\
200       &  -1.987    & -1.998  & -2.003& -1.965 & -1.960 & -1.975 & -2.072 & -2.063 & -2.132 \\
\bottomrule
\end{tabular}
\caption{Square embedded in a square. Convergence order with respect to the mesh size $h=\frac1N$ for $\varepsilon=10^{-10}$.}
\label{tab:errorN}
\end{table}

\begin{table}[h]
\centering
\begin{tabular}{cccccccccc}   
\toprule
$\varepsilon$   & $L^\infty$ & $L^2$ & $H^1$ & $L^\infty$ (no & $L^2$ (no  & $H^1$ (no  & $L^\infty $ & $L^2$  & $H^1$\\
 & & & & boundary) & boundary) & boundary) & (interior) & (interior) & (interior) \\
\midrule
$10^{-1}$      & --    & -- &  --  & -- & -- & -- & -- & -- & --  \\
$10^{-2}$     &    0.862      &  0.793   & 0.915 & 0.857 & 0.789 & 0.912 & 0.749 & 0.766 & 0.825  \\
$10^{-3}$        &  0.922    & 0.900 & 0.934 & 0.921 & 0.900 & 0.927 & 0.863 & 0.905 & 0.895  \\
$10^{-4}$       &  1.071    & 1.208  & 0.869 & 1.082 & 1.216 & 0.868 & 1.325 & 1.304 & 0.938 \\
\bottomrule
\end{tabular}
\caption{Square embedded in a square. Convergence order with respect to the penalization parameter $\varepsilon$ for $N=150$.}
\label{tab:erroreta}
\end{table}

The numerical results suggest as expected the convergence of $u_\varepsilon$  towards $u$ inside the fluid domain, with an order of convergence of $1$ with respect to the penalization parameter as in the one-dimensional case~\cite{Bensiali2014}, and as obtained in the theoretical study assuming more regularity (Section~\ref{sec:convergenceBL}). We recover a second-order convergence with respect to the mesh size, if special care is taken in the choice of the values at discontinuity points as explained earlier (in addition to $N$ chosen so that the Cartesian grid contains the points on the boundary of the initial square).

\paragraph{Error inside the obstacle domain and investigation of the boundary layer}

In this section, we propose to study also the error inside the obstacle domain as well as the presence of a boundary layer, that is expected theoretically and observed numerically at the boundary $\partial\Omega$. 

Indeed, a cross-section of the numerical solution of the penalized problem for $y=0.5$ shows that $u_\varepsilon(\cdot,0.5)$ converges to $u_{\lim}(\cdot,0.5)$ far from the boundary, with a boundary layer near the boundary as the solution $u_\varepsilon$ has to satisfy homogeneous Dirichlet boundary conditions on the boundary, see Figure~\ref{fig:boundarylayer}.

Theoretically (Theorem~\ref{th:convergence}), the expression of $u_{\lim}$ is the following in $[0,1]\times]0.2,0.8[$
\begin{equation}
    u_{\lim}(x,y)=
    \begin{dcases}
        V^0(x,y)=u(x,y) & \text{if $x\in [0.2,0.8]$}\\
        \overline{W}^0(x,y) &\text{if $x\in [0,0.2]\cup [0.8,1]$},
    \end{dcases}
\end{equation}
and we obtain here explicitly (by solving~\eqref{eq:solWbar0Nd}), again for $y\in{]}0.2,0.8[$,
\begin{equation}
    \overline{W}^0(x,y)=
    \begin{dcases}
        \Bigl(u(0.2,y)-\frac{\tilde{g}(0.2,y)}\alpha\Bigr) e^{-\alpha(0.2-x)}+\frac{\tilde{g}(0.2,y)}\alpha & \text{if $x\in [0,0.2]$}\\
        \Bigl(u(0.8,y)-\frac{\tilde{g}(0.8,y)}\alpha\Bigr) e^{-\alpha(x-0.8)}+\frac{\tilde{g}(0.8,y)}\alpha &\text{if $x\in [0.8,1]$}.
    \end{dcases}
\end{equation}

\begin{figure}[!h]
	\centering
	\begin{subfigure}[b]{0.49\textwidth}
		\centering
		\includegraphics[width=\textwidth]{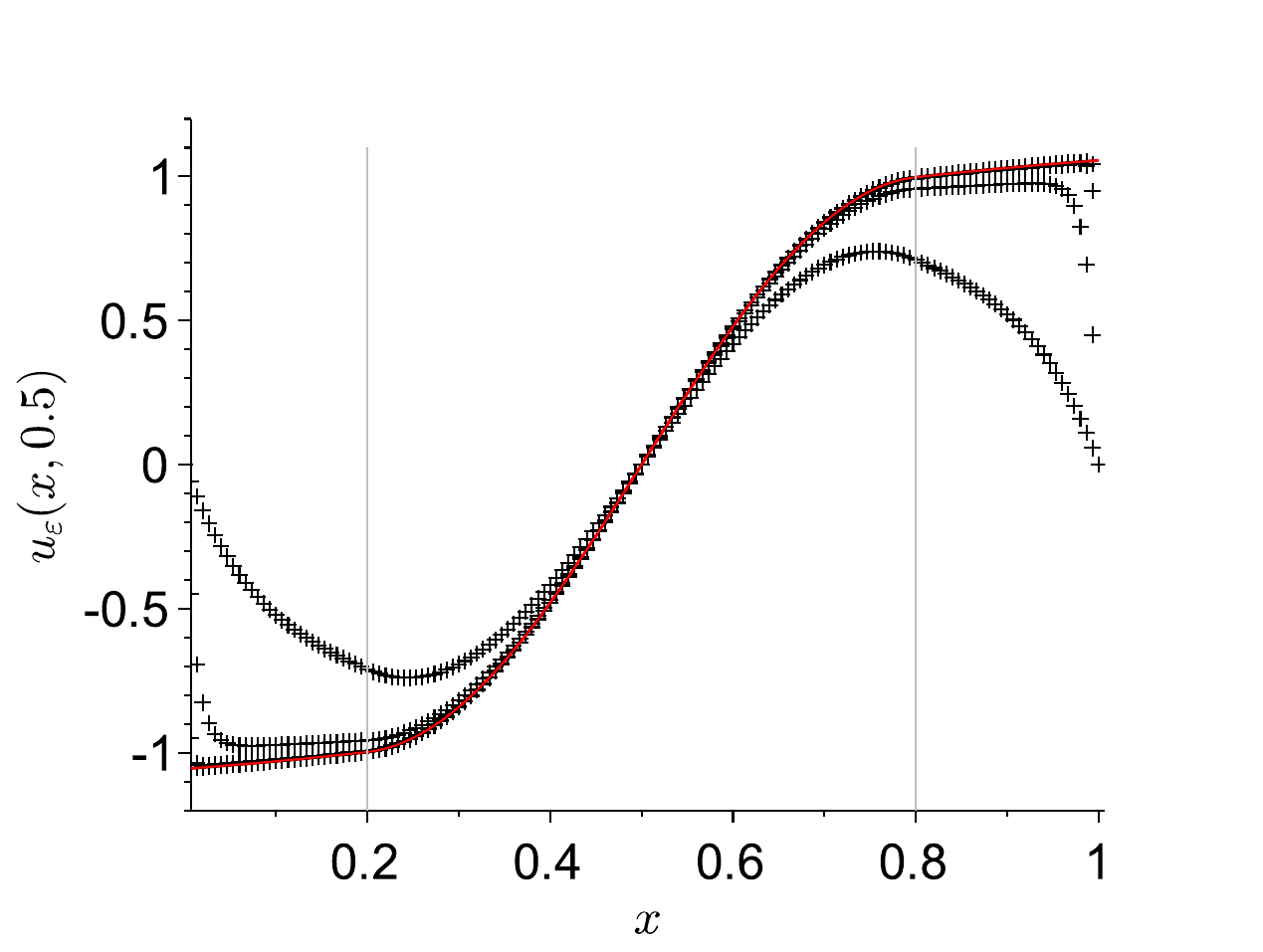}
		\caption{Plots of $u_\varepsilon$ and $u_{\lim}$}
		\label{fig:boundarylayer1}
	\end{subfigure}
	\begin{subfigure}[b]{0.49\textwidth}
		\centering
		\includegraphics[width=\textwidth]{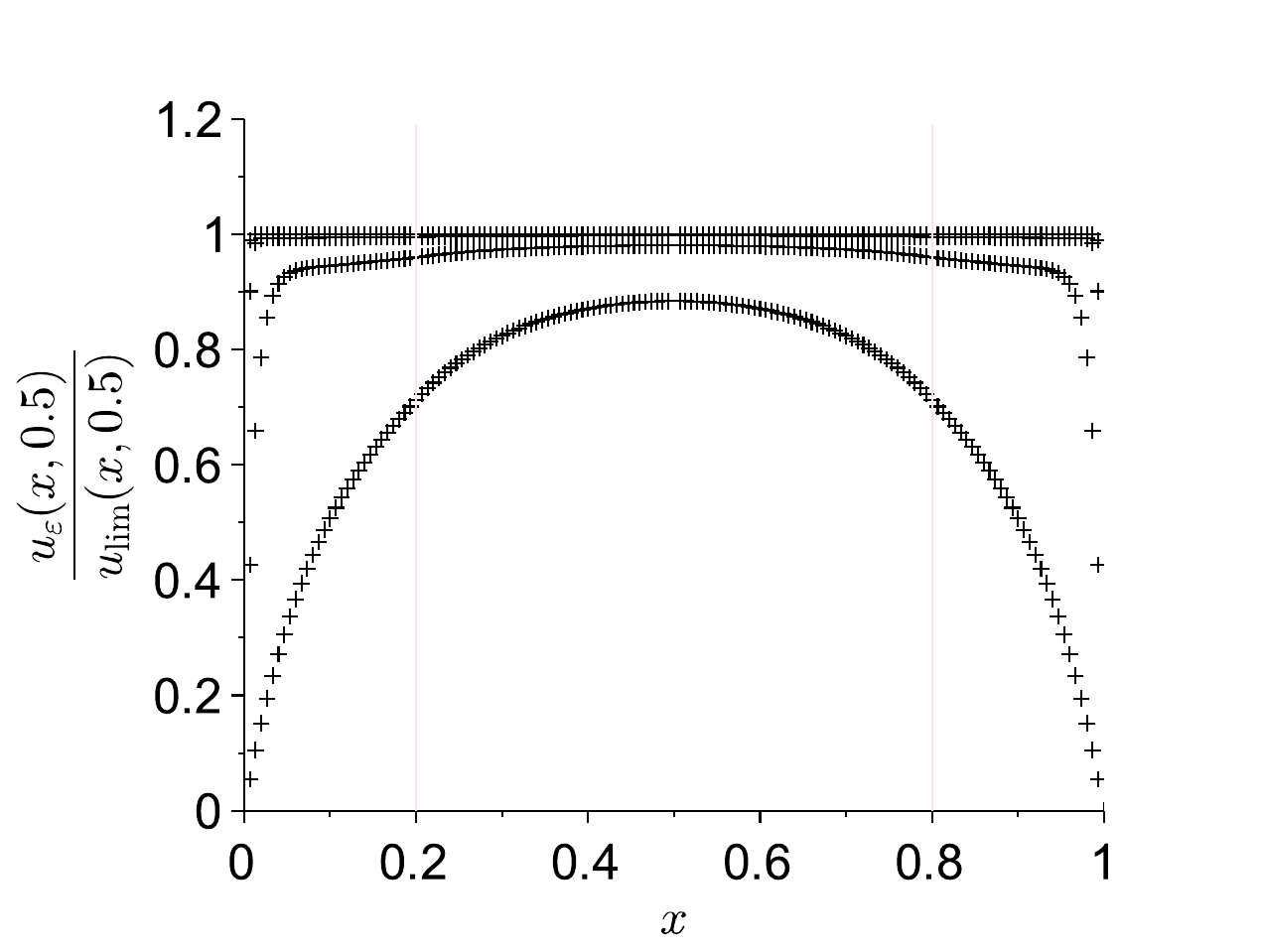}
		\caption{Plots of the ratio $\frac{u_\varepsilon}{u_{\lim}}$}
		\label{fig:boundarylayer2}
	\end{subfigure}
	\caption{Comparison between the numerical solution of the penalized problem $u_\varepsilon$ for $\varepsilon\in\{10^{-1},10^{-2},10^{-3},10^{-4}\}$ and (red curve) the limit solution $u_{\lim}$ ($V^0=u$ in the fluid domain, $\overline{W}^0$ in the obstacle domain) for $y=0.5$. Here, $N=150$.}
	\label{fig:boundarylayer}
\end{figure}

In addition, Table~\ref{tab:erroretaobstacle} shows the convergence order of the error between the numerical solution of the penalized problem and the limit solution inside the obstacle domain far from the boundary: more precisely, we consider the error inside the strip $[0.1,0.2]\times [0.5,0.5+h]$. An order of convergence of 1 is recovered in the $H^1$-norm (as expected theoretically (cf. Theorem~\ref{th:convergence})) if the mesh size is chosen small enough (we took $N=1000$, hence $h=10^{-3}$) in order to witness this order for small values of $\varepsilon$.

\begin{table}[!h]
\centering
\begin{tabular}{cccc}   
\toprule
$\varepsilon$   & $L^\infty$ & $L^2$ & $H^1$ \\
\midrule
$10^{-1}$      & --    & -- &  --    \\
$10^{-2}$     &    0.991     &  0.949   & 1.142   \\
$10^{-3}$        &  0.929    & 0.915 & 1.012 \\
$10^{-4}$       &  0.974    & 0.968  & 1.011  \\
\bottomrule
\end{tabular}
\caption{Convergence order of $\|u_\varepsilon-\overline{W}^0\|$ in the strip $[0.1,0.2]\times[0.5,0.5+h]$ with respect to the penalization parameter $\varepsilon$ for $N=1000$.} 
\label{tab:erroretaobstacle}
\end{table}

In order to check the thickness of the boundary layer, which theoretically should be of order $\varepsilon$, we recall that we expect, using~\eqref{eq:expansionNd} and~\eqref{eq:boundarylayerobstacleNd},
\begin{equation}
u(x,0.5)=\overline{W}^0(x,0.5)+\theta(x) \widetilde{W}^0\Bigl(x,0.5,\frac{\varphi(x,0.5)}\varepsilon\Bigr)+O(\varepsilon),
\end{equation}
and we obtain here explicitly (by solving~\eqref{eq:solWtilde0Nd})
\begin{equation}
    \widetilde{W}^0\Bigl(x,0.5,\frac{\varphi(x,0.5)}\varepsilon\Bigr)=
    \begin{dcases}
        -\overline{W}^0(x,0.5)e^{-\frac{x}\varepsilon} & \text{if $x\in [0,0.2]$}\\
        -\overline{W}^0(x,0.5)e^{-\frac{1-x}\varepsilon} &\text{if $x\in [0.8,1]$}.
    \end{dcases}
\end{equation}
where $\theta$ is $C^\infty$, localized near the boundary $x=0$ or $x=1$ and $\theta\equiv 1$ near this same boundary. Thus, near the boundary $x=0$ for instance, we should have, for $\varepsilon$ small enough,
\begin{equation}
    u_\varepsilon(x,0.5)\approx u_{\lim}(x,0.5)(1-e^{-\frac{x}\varepsilon}).
\end{equation}

The thickness of the boundary layer near the boundary $x=0$ can be defined (conventionally) by $E_\varepsilon$ such that
\begin{equation}
    \frac{u_\varepsilon(E_\varepsilon,0.5)}{u_{\lim}(E_\varepsilon,0.5)}=0.95 \quad \implies \quad 1-e^{-\frac{E_\varepsilon}\varepsilon}\approx 0.95 \quad \implies \quad -E_\varepsilon \approx \log(0.05)\, \varepsilon.
\end{equation}

As estimating $E_\varepsilon$ numerically using its initial definition is not easy given that the approximated values of $u_\varepsilon(x,0.5)$ are discrete, and since we are more interested in the order of convergence of the thickness of the boundary layer rather than the value of the thickness itself, we suggest to estimate the thickness $BL_\varepsilon=\varepsilon$ instead (this corresponds to the definition $E_\varepsilon$ such that $\frac{u_\varepsilon(E_\varepsilon,0.5)}{u_{\lim}(E_\varepsilon,0.5)}\approx 1-e^{-1}\approx0.63$). This can be obtained, if we denote $RU_\varepsilon(x)\approx\frac{u_\varepsilon(x,0.5)}{u_{\lim}(x,0.5)}$, by
\begin{equation}\label{eq:lengthBL1}
    BL^1_\varepsilon\approx\frac{1}{\frac{\partial RU_\varepsilon}{\partial x}(0)}\approx \frac{h}{RU_\varepsilon(h)}
\end{equation}
or
\begin{equation}\label{eq:lengthBL2}
    BL^2_\varepsilon\approx -\frac{h}{\log(1-RU_\varepsilon(h))}
\end{equation}
where $h=\frac1{N}$ is the meshsize. The results are reported on Table~\ref{tab:convlengthBL}. We recover the first-order convergence of the length of the boundary layer with respect to $\epsilon$, at least for values of $\varepsilon\ge 10^{-3}$. For smaller values of $\varepsilon$ ($10^{-4}$ for instance), we are limited by the discretization step $h=10^{-3}$ which does not allow to capture properly the length of the boundary layer, and one needs to refine the mesh further for this purpose. In addition, it seems that the estimate $BL^2_\varepsilon$ is more accurate, which makes sense since the estimate $BL_\varepsilon^1$ contains an additional error with respect to $h$ related to the approximation of the derivative.

\begin{table}[!h]
\centering
\begin{tabular}{ccccc}   
\toprule
$\varepsilon$   & Value of $BL^1_\varepsilon$ & Order of convergence & Value of $BL^2_\varepsilon$ & Order of convergence \\
\midrule
$10^{-1}$      & $0.116$    & -- &  $0.116$  & --  \\
$10^{-2}$     &    $1.10\times 10^{-2}$   &  1.024   & $1.05\times 10^{-2}$   &  1.043\\
$10^{-3}$     &  $1.79\times 10^{-3}$     & 0.789 & $1.22\times 10^{-3}$ & 0.933\\
$10^{-4}$     &  $1.06\times 10^{-3}$    & 0.224  & $3.64\times 10^{-4}$ & 0.525  \\
\bottomrule
\end{tabular}
\caption{Estimate and convergence order of the thickness of the boundary layer of $u_\varepsilon(x,0.5)$ near the boundary $x=0$ with respect to the penalization parameter $\varepsilon$ for $N=1000$.} 
\label{tab:convlengthBL}
\end{table}

\section{Generalization}\label{sec:Generalisation}

Using the same approach as above, the main result of this article stating the convergence of the penalization method holds in higher dimensions $d\ge 2$, assuming enough regularity on the domains and the data. More precisely, Theorems~\ref{th:convergence} and~\ref{th:convergenceBL} hold assuming the following regularity hypotheses:

\clearpage
\vskip.5cm

{\bf Regularity hypotheses in dimension $d$:}

\begin{itemize}
    \item $\Omega$ and $\mathcal U$ are supposed to be  of class $C^{k+3\lfloor\frac{d}2\rfloor+5,1}$ in dimension $d$ for some $k\ge 2$
    \item $f\in H^{k+3\lfloor\frac{d}2\rfloor+4}(\mathcal{U})$
    \item $\tilde{g}\in H^{k+3\lfloor\frac{d}2\rfloor+4+\frac12}(\partial\mathcal U)$
    \item $n=\nabla\psi$ with
    \begin{itemize}
        \item $\psi\in C^{k+2\lfloor \frac{d}2\rfloor+5}(\overline{\omega})$
        \item $\psi=0$ on $\partial\mathcal{U}$
        \item $\psi>0$ in $\omega$
        \item $|\nabla\psi|\ge \lambda>0$ in $\omega$
        \item $n \cdot \nu_\Omega\ge \lambda_0>0$ on $\partial\Omega$, where $\nu_\Omega$ is the outward normal vector to $\Omega$
    \end{itemize}
\end{itemize}

\section{Conclusion}

In this paper, we have investigated the extension to higher dimensions of a previously suggested penalization method for Neumann or Robin boundary conditions~\cite{Bensiali2014}. Our results include the existence and uniqueness of the solution of the penalized problem and the convergence with respect to the penalization parameter towards the solution. The existence and uniqueness have been established using the method of Droniou for non-coercive linear elliptic problems~\cite{dronioupota2002}. To prove the convergence, we developed a boundary layer approach, assuming enough regularity, for Neumann or Robin boundary conditions, adapted from the Dirichlet approach~\cite{BookBoyerFabrie}.  The primary challenge in applying the boundary layer approach to Neumann or Robin boundary conditions is that, in contrast to Dirichlet boundary conditions, the penalized problem is not coercive, making it more challenging to derive essential estimates on the remainders. To this aim, in order to derive such estimates, a thorough examination of the dual problem turned out to be pertinent: the convergence result was reduced to finding suitable supersolutions to the dual problem. While the construction of these supersolutions might be explicit in special cases: one-dimensional case, spherical symmetric case, it is more tricky in general settings. 
We proposed to conduct an additional formal boundary layer approach on the dual problem in order to construct theses supersolutions as approximated solutions of the dual problem. Finally, we have derived numerical experiments in two dimensions, using adequate finite difference schemes suitable for advection-dominated problems.  Even if finite differences are not the best choice of discretization given the discontinuity of the coefficients introduced by the penalization, these numerical simulations allowed to illustrate the theoretical results and also suggest that the convergence result may be valid in less regular settings. It would thus be interesting to weaken the regularity assumptions (ours are not necessarily sharp) in the context of the boundary layer approach or using weak convergence approaches.

Perspectives of this work include different directions:
\begin{itemize}
    \item extension of the proposed method to other model problems of interest in fluid mechanics: Stokes, Navier-Stokes equations, \ldots as it has been performed for penalized Dirichlet boundary conditions~\cite{BookBoyerFabrie},
    \item practical extension of the various data on the boundary to the complementary domain,
    \item improvement of numerical approximations using stabilized finite elements suitable for singular perturbation problems~\cite{gerdes2001hp,melenk2002hp}, more suitable for our penalized problem,
    \item application to problems raised on moving domains such as those associated with the simulation of population dynamics under climate change~\cite{berestycki2009can,roques2008population}.
\end{itemize}

\section*{Acknowledgments}

B. Bensiali thanks Centrale Marseille/I2M for the hospitality and assistance during her research stays. She also thanks Prof. Stefan Sauter for useful discussions on this work at the 11th Zurich Summer School 2021 and for the invitation to the ``Institut für Mathematik'' at the University of Zurich, which indirectly allowed her to treat the spherical symmetric case using Bessel functions (Appendix~\ref{sec:annexspecialcasessph}).

\medskip

This research was supported through computational resources of HPC-MARWAN (\href{hpc.marwan.ma}{hpc.marwan.ma}) provided by the National Center for Scientific and Technical Research (CNRST), Rabat, Morocco.

\bibliographystyle{plain}
\bibliography{references}

\appendix
\section{Regularity for advection-reaction equation using the method of characteristics}\label{sec-append-Charac}

We give here the proof of Theorem~\ref{th:hyperbolicregularitybis}.

\begin{proof}

We use the method of characteristics to solve~\eqref{eq:advectionreaction}, starting by considering the constant case $\beta(x)=\alpha$, and follow the following steps.

\paragraph*{Step 1. Method of characteristics}

For simplicity, we assume $\psi$ defined in all $\RR^2\setminus \mathcal U$ and embed the domain $\omega$ inside the domain delimited by $\psi=0$ ($\partial\mathcal U$) and $\psi=c_2>\max_{\overline{\omega}} \psi$. 
The previous set is not empty (see below). In addition, the assumption $\psi\in C^{k+1}(\RR^2\setminus \mathcal U)$ and $|\nabla\psi|\ge \lambda>0$ imply $\frac{\partial\psi}{\partial x}(x,y)\ne 0$ or $\frac{\partial\psi}{\partial y}(x,y)\ne 0$, thus using the theorem of implicit functions, we deduce that $\psi=c$ is locally a curve of class $C^{k+1}$. For simplisity, we will also assume that the curves $\psi=c$ are closed.. For $(x,y)\in\omega$, we introduce
\begin{equation}\label{eq:edocharacteristics}
\begin{dcases}
\dot{X}(s)=n(X(s))=\nabla\psi(X(s))\\
X(t)=(x,y).
\end{dcases}
\end{equation}
The conditions on $\nabla\psi$ (continuous and globally Lipschitzian) enable to use the global Cauchy-Lipschitz theorem~\cite{rouviere2009petit} that ensures the existence and uniqueness in $C^1$ of $s\mapsto X(s)$ for all $s$, then denoting $\Psi(s)=\psi(X(s))$, one has
\begin{align*}
\frac{d\Psi}{ds}(s)&=\frac{\partial\psi}{\partial x}(X(s))\,\dot{X_1}(s)+\frac{\partial\psi}{\partial y}(X(s))\,\dot{X_2}(s)\\
&=|\nabla\psi(X(s))|^2.
\end{align*}
Using  the hypothesis $|\nabla\psi|\ge \lambda>0$ and integrating we get for $t\le T$, $\Psi(T)\ge \Psi(t)+(T-t) \lambda^2$  and, for $t>T$, $\Psi(T)\le \Psi(t)-(t-T) \lambda^2.$

Thus for all $(x,y)\in \omega$ (embeded between $\psi=0$ and $\psi=c_2$), there exists a unique $C^1$ curve $s\mapsto X(s)=(X_1(s),X_2(s))$ solution to~\eqref{eq:edocharacteristics} that passes through $(x,y)$ and cuts the curve $\psi=0$ ($\partial\mathcal U$) and the curve $\psi=c_2$. We deduce that for all $(x,y)\in\omega$ there exists a unique $C^1$ curve $s\mapsto X(s)=(X_1(s),X_2(s))$ solution to~\eqref{eq:edocharacteristics}, which cuts the curve $\psi=0$ ($\partial\mathcal U$) and $\partial\Omega$.

We now introduce 
\begin{equation}\label{eq:edoxit}
\begin{dcases}
\frac{\partial X}{\partial t}(\xi,t)=n(X(\xi,t))=\nabla\psi(X(\xi,t)) \\
X(\xi,0)=\gamma(\xi)
\in\partial\mathcal{U}.
\end{dcases}
\end{equation}
where $t\ge 0$ and $\xi\in[0,\Xi]$ is a parameterization of $\partial\mathcal{U}$ ($\psi=0$), i.e. $\partial\mathcal U=\{\gamma(\xi),\ \xi\in[0,\Xi]\}$. Given the assumption on $\partial\mathcal U$, $\gamma\in C^k([0,\Xi])$.
We know, from the previous reasoning, that for all $\xi\in[0,\Xi]$ the curve $t\mapsto X(\xi,t)$ will cut the curve $\psi=c_2>0$ at a time $T_{2,\xi}$ such that
\begin{align*}
c_2=\Psi(T_{2,\xi})&\ge \Psi(0)+ T_{2,\xi}\, \lambda^2\\
T_{2,\xi}&\le \frac{c_2}{\lambda^2}.
\end{align*}
Thus there exists a time $T_f<\infty$ (since $T_f\le\frac{c_2}{\lambda^2}$) such that all the characteristics starting from $\partial\mathcal{U}$ (at $t=0$) arrive at the curve $(\psi=c_2)$. We deduce that for all $\xi\in [0,\Xi]$, there exists a time $T_\xi<\infty$ such that the curve $t\mapsto X(\xi,t)$ arrives at $\partial\Omega$. Since we assume $n\cdot \nu_\Omega>0$, the curve $t\mapsto X(\xi,t)$ cannot come back inside of $\omega$ after reaching it for the first time. We thus know that by solving~\eqref{eq:edoxit} for $\xi\in[0,\Xi]$ and $t\in[0,T_\xi]$, the union of all the solutions will fill exactly $\overline{\omega}$.

Then, by denoting
\begin{align*}
\mathcal{G}(\xi,t)&=g(X(\xi,t))\\
\mathcal{W}(\xi,t)&=W(X(\xi,t))\\
\mathcal{V}(\xi)&=V(X(\xi,0))=V(\gamma(\xi)),
\end{align*}
we obtain from the equation on $W$~\eqref{eq:advectionreaction} that $\mathcal{W}(\xi,t)$ satisfies
\begin{align}\label{eq:equationWt}
\dot{\mathcal{W}}(\xi,t)&=\frac{\partial W}{\partial x}(X(\xi,t))\,\dot{X_1}(\xi,t)+\frac{\partial W}{\partial y}(X(\xi,t))\,\dot{X_2}(\xi,t)\notag\\
&=\frac{\partial W}{\partial x}(X(\xi,t))\,n_1(X(\xi,t))+\frac{\partial W}{\partial y}(X(\xi,t))\,n_2(X(\xi,t))\notag\\
&=\nabla W(X(\xi,t))\cdot n(X(\xi,t))\notag\\
&=g(X(\xi,t))-\alpha W(X(\xi,t))\notag\\
&=\mathcal{G}(\xi,t)-\alpha \mathcal{W}(\xi,t)\notag\\
\dot{\mathcal{W}}(\xi,t)+\alpha \mathcal{W}(\xi,t)&=\mathcal{G}(\xi,t),
\end{align}
whose solution is given by
\begin{equation}
\mathcal{W}(\xi,t)=k(\xi) e^{-\alpha t}+\mathcal{W}_\mathrm{part}(\xi,t),
\end{equation}
where $\mathcal{W}_\mathrm{part}(\xi,t)$ is a particular solution that we can obtain using the method of variation of constants: we look for a particular solution of the form
\begin{equation}
\mathcal{W}_\mathrm{part}(\xi,t)=k(\xi,t) e^{-\alpha t}
\end{equation}
then
\begin{align}
\dot{\mathcal{W}}_\mathrm{part}(\xi,t)&=\dot{k}(\xi,t)e^{-\alpha t}-k(\xi,t)\alpha e^{-\alpha t}\\
&=\dot{k}(\xi,t)e^{-\alpha t}-\alpha \mathcal{W}_\mathrm{part}(\xi,t).
\end{align}
We want $\mathcal{W}_\mathrm{part}(\xi,t)$ to be a solution to~\eqref{eq:equationWt}, i.e.
\begin{align}
\dot{k}(\xi,t)=\mathcal{G}(\xi,t)e^{\alpha t}.
\end{align}
We take
\begin{equation}
k(\xi,t)=\int_0^t \mathcal{G}(\xi,s)e^{\alpha s}\, ds.
\end{equation}
Finally,
\begin{equation}
\mathcal{W}(\xi,t)=(k(\xi)+k(\xi,t))e^{-\alpha t},
\end{equation}
and using the boundary condition in~\eqref{eq:advectionreaction}, $\mathcal{W}(\xi,0)=W(X(\xi,0))=W(\gamma(\xi))=V(\gamma(\xi))=\mathcal{V}(\xi)$ and we obtain
\begin{equation}
\mathcal{W}(\xi,t)=\Bigl(\mathcal{V}(\xi)+\int_0^t \mathcal{G}(\xi,s)e^{\alpha s}\, ds\Bigr)e^{-\alpha t},
\end{equation}
thus
\begin{equation}\label{eq:solWxit}
W(X(\xi,t))=\Bigl(V(\gamma(\xi))+\int_0^t g(X(\xi,s))e^{\alpha s}\, ds\Bigr)e^{-\alpha t}.
\end{equation}

Because the characteristics fill all $\overline{\omega}$, the previous formula allows formally to define $W(x,y)$ at each point $(x,y)\in\overline{\omega}$. Since we are interested in the regularity of the solution $W$ as a function of the space variables, we need to express $W(x,y)$. 

\paragraph*{Step 2. Regularity of $X(\xi,t)$ with respect to $\xi$ and $t$}

We denote by $G$ the function
\begin{equation}\label{eq:functionG}
    G\colon (\xi,t)\mapsto X(\xi,t)
\end{equation}
where for every $\xi\in[0,\Xi]$, $t\mapsto X(\xi,t)$ is the unique solution to the Cauchy problem~\eqref{eq:edoxit}.

The regularity of $G$ with respect to $\xi$ is related to the differentiable dependence on initial conditions for a Cauchy problem. Indeed, let's introduce
\begin{equation}\label{eq:edoY}
\begin{dcases}
\frac{\partial Z}{\partial t}(z_0,t)=n(Z(z_0,t))=\nabla\psi(Z(z_0,t)) \\
Z(z_0,0)=z_0
\in \RR^2\setminus\mathcal U,
\end{dcases}
\end{equation}
then since $\psi\in C^{k+1}(\overline{\omega})$, hence $n \in C^{k}(\overline{\omega})$, we deduce that the solution $(z_0,t)\mapsto Z(z_0,t)$ is $C^k$ on its domain of existence (\cite{hartman2002ordinary}, Corollary 4.1).

Using the fact that $G(\xi,t)=X(\xi,t)=Z(\gamma(\xi),t)$, we deduce, since $\gamma\in C^k([0,\Xi])$, that $G\in C^k([0,\Xi]\times\RR^+)$.

\paragraph*{Step 3. From local to global inverse function theorem}

Here we show that $G$ is invertible, and that $H=G^{-1}$ is $C^k (\overline{\omega})$.
\begin{equation}\label{eq:functionH}
    H\colon (x,y) \mapsto (\xi,t) \text{ such that } G(\xi,t)=(x,y).
\end{equation}

We make use of the inverse function theorem.
\begin{theorem}[Inverse function theorem]
Let $f\colon U\to V$ be a map between open subsets of $\RR^n$. Assume $f$ is $C^k$. If $f$ is injective on a closed subset $A\subset U$ and if the jacobian matrix of $f$ is invertible at each point of $A$, then $f$ is injective in a neighborhood $A'$ of $A$ and $f^{-1}\colon f(A')\to A'$ is $C^k$.
\end{theorem}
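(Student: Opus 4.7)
The plan is to bootstrap the classical pointwise inverse function theorem to the stated global version, using compactness of $A$ to upgrade local injectivity to injectivity on a full open neighborhood. I would split the argument into three steps: (i) local diffeomorphism at each point of $A$ by the standard pointwise theorem, (ii) global injectivity on some open neighborhood $A'$ of $A$, and (iii) gluing the local inverses into a single $C^k$ inverse on $f(A')$.

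For step (i), by the classical pointwise inverse function theorem, for every $x \in A$ there exist open neighborhoods $U_x \ni x$ in $U$ and $V_x \ni f(x)$ in $V$ such that the restriction $f|_{U_x}\colon U_x \to V_x$ is a $C^k$ diffeomorphism; this uses only the $C^k$ regularity of $f$ and invertibility of the Jacobian at $x$. For step (ii), I would argue by contradiction: if no neighborhood $A'$ of $A$ existed on which $f$ is injective, then for every $n\in\NN$ one could choose distinct $x_n,y_n \in U$ with $\mathrm{dist}(x_n,A)\le 1/n$, $\mathrm{dist}(y_n,A)\le 1/n$ and $f(x_n)=f(y_n)$. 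Assuming $A$ compact (which is the situation in the intended application, where $A$ corresponds to the closed bounded rectangle $\{(\xi,t): \xi\in[0,\Xi],\,t\in[0,T_\xi]\}$ with $T_\xi \le T_f < \infty$), extract convergent subsequences $x_n\to x\in A$, $y_n\to y\in A$; continuity of $f$ yields $f(x)=f(y)$, and injectivity of $f$ on $A$ forces $x=y$. But then $x_n,y_n$ lie eventually in the diffeomorphism neighborhood $U_x$ from step (i), contradicting $f(x_n)=f(y_n)$ with $x_n\ne y_n$. Hence some open $A'\supset A$ exists on which $f$ is globally injective.

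For step (iii), on $A'$ the map $f$ is simultaneously a local $C^k$ diffeomorphism at each point (apply step (i) pointwise, shrinking $A'$ if necessary so that the invertibility of $Df$ persists by continuity in a slightly thickened tube around $A$) and globally injective, so the set-theoretic inverse $f^{-1}\colon f(A')\to A'$ is well defined. On a neighborhood of any $y=f(x)\in f(A')$ it coincides with the local $C^k$ inverse provided by the pointwise theorem at $x$, hence $f^{-1}$ is $C^k$ on $f(A')$.

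The main obstacle is step (ii): the contradiction argument genuinely needs $A$ to be compact, since otherwise the sequences $x_n,y_n$ need not have limit points in $A$ and global injectivity on a tube around $A$ can fail (think of an immersion that wraps around to meet itself at infinity). In our setting this is harmless because the bound $T_\xi\le T_f$ obtained from $|\nabla\psi|\ge\lambda>0$ makes the parameter domain a closed bounded rectangle; outside this context one would either keep $A$ compact as an explicit hypothesis or add a properness condition on $f$. A secondary mild nuisance is also in step (iii), where one must verify that the invertibility of $Df$ propagates from $A$ to a small open thickening so that the pointwise theorem can be invoked on all of $A'$, but this follows immediately from continuity of $\det(Df)$.
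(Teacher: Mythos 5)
The paper does not actually prove this statement: it records it, inside Step~3 of the proof of Theorem~\ref{th:hyperbolicregularitybis}, as a known tool and then applies it. So there is no ``paper's own proof'' to compare with; your job here was effectively to supply the missing argument.

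Your three-step argument (pointwise inverse function theorem, contradiction plus compactness to get global injectivity on a tube, gluing local inverses) is the standard one and is correct as written, including the remark that one must shrink $A'$ into the region where $\det Df\ne 0$ (or, equivalently, into the union of the local diffeomorphism neighborhoods $U_x$), and the implicit fact that a locally diffeomorphic, globally injective map on an open set is a diffeomorphism onto its (open) image.

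The point you raise about compactness is not a mere ``nuisance'': the theorem as stated in the paper, with \emph{closed} rather than \emph{compact} $A$, is in fact false. Take $U=V=\RR^2$, $A=\{(n,0):n\ge1\}$, pick distinct points $p_n$ dense in the unit disk, and let $f$ be any $C^\infty$ map that agrees with the translation $z\mapsto p_n+(z-(n,0))$ on $B((n,0),1/4)$ for every $n$. Then $f$ is injective on the closed set $A$ with $Df=\mathrm{Id}$ there, yet on \emph{any} neighborhood $A'$ of $A$ one can pick $\rho>0$ with $B((1,0),\rho)\subset A'$; by density some $p_m$ ($m\ne1$) lies in $B(p_1,\rho)=f(B((1,0),\rho))$, and its two preimages (one near $(1,0)$, the other being $(m,0)\in A$) are distinct points of $A'$ with equal image. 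So compactness is genuinely needed, and your contradiction argument is exactly where it enters: without it the sequences $x_n,y_n$ can escape to infinity. You also correctly observe that this does not affect the paper, since the set to which the theorem is actually applied is the compact segment $[0,\Xi]\times\{0\}$ (and the subsequent curves $\Gamma_{II},\Gamma_*,\dots$ are compact as well). It would nonetheless be cleaner if the paper replaced ``closed'' by ``compact'' in the statement.
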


We fix $\xi_0\in[0,\Xi]$. By definition $G(\xi,t)=X(\xi,t)$. Therefore, the Jacobian of $G$ is given by
\begin{align*}
    JG(\xi,t)&=
    \begin{vmatrix}
    \frac{\partial {X_1}}{\partial\xi}(\xi,t) & \frac{\partial {X_1}}{\partial t}(\xi,t)\\
    \frac{\partial {X_2}}{\partial\xi}(\xi,t) & \frac{\partial {X_2}}{\partial t}(\xi,t)
    \end{vmatrix}\\
    &=\frac{\partial {X_1}}{\partial\xi}  \frac{\partial {X_2}}{\partial t} - \frac{\partial {X_1}}{\partial t}  \frac{\partial {X_2}}{\partial \xi}.
\end{align*}
At the point $(\xi_0,0)$,
\begin{align*}
JG(\xi_0,0)&=\frac{\partial {X_1}}{\partial\xi} (\xi_0,0) \frac{\partial {X_2}}{\partial t} (\xi_0,0)- \frac{\partial {X_1}}{\partial t}  (\xi_0,0)\frac{\partial {X_2}}{\partial \xi} (\xi_0,0)\\
&=\gamma_1'(\xi_0) n_2(\gamma(\xi_0))-n_1(\gamma(\xi_0)) \gamma_2'(\xi_0)\\
&=n(\gamma(\xi_0)) \cdot (\gamma_1'(\xi_0),-\gamma_2'(\xi_0)),
\end{align*}
since, by definition, $X(\xi,t)$ satisfies the characteristic ODE~\eqref{eq:edoxit}. Forall $\xi_0\in [0,\Xi]$, $JG(\xi_0,0)\ne0$ since $n$ is not tangent to $\partial\mathcal U$ (we say that $\partial\mathcal U$ is noncharacteristic). We already know that $G$ is injective on $[0,\Xi]\times\RR^+$ (by the global Cauchy-Lipschitz theorem). By the inverse function theorem, $G^{-1}\colon G(A')\to A'$ is $C^k$ where $A'$ is a neighborhood of $[0,\Xi]\times \{0\}$. We thus obtain that $G^{-1}$ is $C^k$ on a neighborhood $B$ of $\partial\mathcal U$.

Our goal is to establish a global differentiability, that is $G^{-1}\in C^k(\overline{\omega})$. To this end, we will use a step by sted advancing procedure. Let us explain the first step. We denote by $G_I^{-1}\colon B_I\mapsto [0,\Xi]\times\RR^+$ the restriction of $G^{-1}$ to $B_I$ a neighborhood of $\partial\mathcal U$. We denote by $c=\min_{\partial B_I\setminus \partial\mathcal U} \psi$, then we take $\Gamma_{II}$ to be defined by $\psi=c_{II}$ where $0>c_{II}<c$. We recall that we assumed that the curves $\psi=c$ are closed. $\Gamma_{II}\subset B_I$ and we consider $\Gamma_{II}=\{\gamma_{II}(\xi_{II}),\ \xi_{II}\in[0, \Xi_{II}]\}$ with $\gamma_{II}\in C^k$ (this follows from the regularity of $\psi$, the assumption on $\nabla\psi$ and using the theorem of implicit functions, as stated previously). We introduce $G_{II}(\xi_{II},t)=Y(\xi_{II},t)$, where $Y(\xi_{II},t)$ is the unique solution to
\begin{equation}\label{eq:edoxitII}
\begin{dcases}
\frac{\partial Y}{\partial t}(\xi_{II},t)=n(Y(\xi_{II},t))=\nabla\psi(Y(\xi_{II},t)) \\
Y(\xi_{II},0)=\gamma_{II}(\xi_{II})
\in \Gamma_{II}.
\end{dcases}
\end{equation}
Since $\Gamma_{II}$ is noncharacteristic ($n=\nabla\psi$ is normal to $\Gamma_{II}$), we obtain as previously using the inverse function theorem that $G_{II}^{-1}$ is $C^k$ on a neighborhood $B_{II}$ of $\Gamma_{II}$. For $(x,y)\in B_{II}\setminus B_I$, since
\begin{align*}
    G_{II}^{-1}(x,y)=(\xi_{II},t_{II}) \text{ such that $G_{II}(\xi_{II},t_{II})=(x,y)$}
\end{align*}
and
\begin{align*}
    G_{I}^{-1}(\gamma_{II}(\xi_{II}))=(\xi,t) \text{ such that $G_{I}(\xi,t)=\gamma_{II}(\xi_{II})$},
\end{align*}
we obtain
\begin{align}
    G^{-1}(x,y)=\Bigl([G^{-1}_{I}\bigl(\gamma_{II}([G_{II}^{-1}(x,y)]_1)\bigr)]_1,[G^{-1}_{II}(x,y)]_2+[G^{-1}_{I}\bigl(\gamma_{II}([G_{II}^{-1}(x,y)]_1)\bigr)]_2\Bigr)
\end{align}
which allows to show by composition, that $G^{-1}$ is $C^k$ on $B_I\cup B_{II}$. 

It remains to show that this process allows to visit all $\overline{\omega}$. Assume, by contradiction, that there exist unreachable curves $\Gamma_{\hat{\beta}}=\{x,\ \psi(x)=\hat{\beta}\}$ and let $\beta_*$ be the infimum of the $\hat{\beta}$ and note $\gamma_*$ the parameterization of $\Gamma_{\beta_*}$. If $\Gamma_*$ can be reached, the procedure above provides and extension and then a contradiction. If $\Gamma_*$ can not be reached, we introduce $\tilde{G}_{*}(\xi_{*},t)=Z(\xi_{II},t)$, where $Z(\xi_{II},t)$ is the unique solution to the backward Cauchy problem
\begin{equation}\label{eq:edoxit*}
\begin{dcases}
\frac{\partial Z}{\partial t}(\xi_{*},t)=-n(Z(\xi_{*},t))=-\nabla\psi(Z(\xi_{*},t)) \\
Z(\xi_{*},0)=\gamma_{*}(\xi_{*})
\in \Gamma_{*},
\end{dcases}
\end{equation}
then $\tilde{G}_{*}$ is $C^k$ on its domain of existence.

Let us fix $t_2>0$ small enough such that, for all $\xi_*\in[0,\Xi_*]$, $\tilde{G}_*(\xi_*,t_2)\in B^*$, where $B_*$ is a neighborhood of $\Gamma_*$, then $\tilde{G}_*(\xi_*,t_2)=\gamma_{\beta'}(\xi_{\beta'})$ ($\tilde{G}_*(\xi_*,t_*)$ belongs to a curve $(\psi=\beta')$ included in $B_*$ with $\beta'<\beta_*$). Since we know that $G^{-1}$ is $C^k$ on $\{x,\ \psi(x)=\beta'\}$, and
\begin{align*}
G^{-1}(\gamma_{*}(\xi_{*}))&=(\xi,t) \text{ such that $G(\xi,t)=\gamma_{*}(\xi_{*})$}\\
\tilde{G}_*(\xi_*,t_2)&=\gamma_{\beta'}(\xi_{\beta'})\\
G^{-1}(\gamma_{\beta'}(\xi_{\beta'}))&=(\xi,t_1) \text{ such that $G(\xi,t_1)=\gamma_{\beta'}(\xi_{\beta'})$},
\end{align*}
we can extend $G^{-1}$ up to $\Gamma_*$ forming 
\begin{align}
    G^{-1}(\gamma_*(\xi_*))=\Bigl([G^{-1}\bigl(\tilde{G}_*(\xi_*,t_2)\bigr)]_1,[G^{-1}\bigl(\tilde{G}_*(\xi_*,t_2)\bigr)]_2+t_2\Bigr)
\end{align}
which is $C^k$ ($\gamma_*^{-1}$ is $C^k$) and provides the contradiction.

\paragraph*{Step 4. Regularity of $W$}

We are  now able to discuss the regularity of $W$. By inversion, \eqref{eq:solWxit} writes
\begin{equation}\label{eq:solWxy}
    W(x,y)=\Bigl(V(\gamma(H_1(x,y)))+\int_0^{H_2(x,y)} g(G(H_1(x,y),s)) e^{\alpha s}\, ds \Bigr) e^{-\alpha H_2(x,y)},
\end{equation}
where $G$ and $H$ are given by~\eqref{eq:functionG} and~\eqref{eq:functionH} respectively.

Since $V$ is $C^k(\partial\mathcal U)$, $\gamma\in C^k([0,\Xi])$ and $H_1\in C^k(\overline{\omega})$, we deduce $(x,y)\mapsto V(\gamma(H_1(x,y)))$ is $C^k(\overline{\omega})$. Similarly $(x,y)\mapsto e^{-\alpha H_2(x,y)}$ is $C^k(\overline{\omega})$ by composition.

It then remains to study the regularity of the function
\begin{equation}\label{functionL}
(x,y)\mapsto L(x,y) =\int_0^{H_2(x,y)} g(G(H_1(x,y),s)) e^{\alpha s}\, ds.
\end{equation}
Let us introduce
\begin{equation}\label{eq:functionK}
(x,y,t)\mapsto K(x,y,t) =\int_0^{t} g(G(H_1(x,y),s)) e^{\alpha s}\, ds=\int_0^t h(x,y,s)\, ds.
\end{equation}
then
\begin{equation}\label{eq:functionL}
    L(x,y)=K(x,y,H_2(x,y)).
\end{equation}
Since $g\in C^k(\overline{\omega})$, $G\in C^k([0,\Xi]\times\RR^+)$ and $H\in C^k(\overline{\omega})$ then $(x,y,s)\mapsto h(x,y,s)$ is $C^k(\overline{\omega}\times\RR^+)$.

We prove by induction on $k\ge 1$ that if $h$ is $C^k(\overline{\omega}\times\RR^+)$ then $K$ is $C^k(\overline{\omega}\times\RR^+)$.

\begin{lemma}
Let $\omega$ be a bounded domain of $\RR^2$. For all $k\ge 1$, if $h\in C^k(\overline{\omega}\times\RR^+)$ then $(x,y,t)\mapsto K(x,y,t)=\int_0^t h(x,y,s)\,ds$ is also in $C^k(\overline{\omega}\times\RR^+)$ and
    \begin{align*}
        \frac{\partial^k K}{\partial x^k}(x,y,t)&=\int_0^t \frac{\partial^k h}{\partial x^k}(x,y,s)\,ds,\\
        \frac{\partial^k K}{\partial y^k}(x,y,t)&=\int_0^t \frac{\partial^k h}{\partial y^k}(x,y,s)\,ds,\\
        \frac{\partial^k K}{\partial t^k}(x,y,t)&=\frac{\partial ^{k-1}h}{\partial t^{k-1}}(x,y,t).
    \end{align*}
\end{lemma}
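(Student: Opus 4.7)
The plan is to proceed by induction on $k \ge 1$, with the base case relying on the standard Leibniz rule for differentiation under the integral sign together with the fundamental theorem of calculus, and the inductive step obtained by applying the base case to the derivatives.

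For the base case $k=1$, assume $h \in C^1(\overline{\omega}\times\RR^+)$. First I would establish $\partial_t K(x,y,t) = h(x,y,t)$ directly from the fundamental theorem of calculus, which is continuous by hypothesis. For the spatial derivatives, fix a bounded time interval $[0,T]$ and note that on the compact set $\overline{\omega}\times[0,T]$, $\partial_x h$ is continuous and hence bounded and uniformly continuous. Standard differentiation under the integral (Leibniz rule) then yields $\partial_x K(x,y,t) = \int_0^t \partial_x h(x,y,s)\,ds$, and continuity of this function in $(x,y,t)$ follows from a straightforward $\varepsilon$-argument using uniform continuity of $\partial_x h$ plus continuity of the upper limit. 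The argument for $\partial_y K$ is identical. This gives $K \in C^1$.

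For the inductive step, suppose the statement holds at order $k$, and let $h \in C^{k+1}(\overline{\omega}\times\RR^+)$. Since $h \in C^k$, the induction hypothesis gives $K \in C^k$ with the stated formulas for pure partial derivatives. To upgrade to $C^{k+1}$, I need to check that every partial derivative of $K$ of order $k$ is itself $C^1$. The pure spatial derivatives $\partial_x^k K = \int_0^t \partial_x^k h\, ds$ and $\partial_y^k K = \int_0^t \partial_y^k h\,ds$ are $C^1$ by applying the base case to $\partial_x^k h$ and $\partial_y^k h$ (which lie in $C^1$ since $h \in C^{k+1}$); this yields the order-$(k+1)$ pure partials. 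The time derivative $\partial_t^k K = \partial_t^{k-1} h$ is $C^1$ directly from $h \in C^{k+1}$, and differentiation in $t$ gives $\partial_t^{k+1} K = \partial_t^k h$, as claimed. Mixed partial derivatives of order $k$ can be reduced to one of these cases using Schwarz's theorem on the equality of mixed partials, which applies because $K$ is already $C^k$ with continuous partials obtainable by interchanging $\partial_t$ (via FTC) and $\partial_x, \partial_y$ (via Leibniz); each swap is justified by the base case applied to $h$ or to one of its partial derivatives of order $\le k$.

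The main obstacle, though mild, is bookkeeping of the mixed partial derivatives: the formulas in the statement only list the pure partials, but $C^k$ regularity requires controlling all mixed partials of orders $\le k$. I would handle this by explicitly verifying that $\partial_t$ commutes with each spatial derivative on $K$, i.e., $\partial_t \partial_x K = \partial_x h = \partial_x \partial_t K$, with both expressions continuous; once this commutation is established at each order, the regularity of arbitrary mixed partials reduces to the three cases listed. No new analytical input is needed beyond the Leibniz rule and the fundamental theorem of calculus, both valid here because $\overline{\omega}$ is compact and $h$ and its required derivatives are continuous on $\overline{\omega}\times[0,T]$ for every finite $T$.
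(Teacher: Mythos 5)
Your argument is correct, and your base case is essentially the paper's: the time derivative comes from the fundamental theorem of calculus, the spatial derivatives from differentiation under the integral sign on the compact sets $\overline{\omega}\times[0,T]$, and joint continuity from a domination/uniform-continuity argument together with continuity of the upper limit. Where you diverge is the inductive step. The paper avoids your mixed-partial bookkeeping entirely: assuming the lemma at order $k$ and taking $h\in C^{k+1}$, it applies the induction hypothesis not to the order-$k$ partials of $K$ but to the functions $\partial_x h,\partial_y h\in C^k$, concluding that $\partial_x K=\int_0^t\partial_x h\,ds$ and $\partial_y K=\int_0^t\partial_y h\,ds$ are $C^k$, while $\partial_t K=h\in C^k$; since all three first-order partials of $K$ are $C^k$, $K\in C^{k+1}$ follows at once, with no appeal to Schwarz and no need for formulas for mixed derivatives. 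Your route — upgrading each order-$k$ partial of $K$ to $C^1$, using the base case on $\partial_x^k h$, $\partial_y^k h$ and reducing mixed partials via Schwarz and the commutation $\partial_t\partial_x K=\partial_x h=\partial_x\partial_t K$ — does close, because $K\in C^k$ already justifies reordering and each commutation is covered by the base case applied to a derivative of $h$ of order at most $k$; it just costs you the extra verification that every mixed partial can be rewritten in one of the three listed forms, which the paper's ``first partials in $C^k$'' criterion renders unnecessary.
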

\begin{proof}
We use a proof by induction on $k\ge1$.

\begin{itemize}
    \item We first prove the result for $k=1$:
    Since $h\in C^1(\overline{\omega}\times\RR^+)\subset C^0(\overline{\omega}\times\RR^+)$, we obtain
    \begin{equation*}
        \frac{\partial K}{\partial t}(x,y,t)=h(x,y,t),
    \end{equation*}
    and $\frac{\partial K}{\partial t}\in C(\overline\omega\times\RR^+)$.
    Now we fix $t\in\RR^+$ and consider the function $\widetilde{K}_t$ defined on $\overline{\omega}$ by
    \begin{equation*}
        \widetilde{K}_t(x,y)=K(x,y,t).
    \end{equation*}
    We use the theorem of differentiability of a function defined by in integral to show that $\widetilde{K}_t\in C^1(\overline{\omega})$. Indeed,
    \begin{itemize}
        \item For all $s\in\RR^+$, $h$ is of class $C^1$ in the variables $(x,y)\in\overline{\omega}$.
        \item Since $\frac{\partial h}{\partial x}$ and $\frac{\partial h}{\partial y}$ are continuous on $\overline{\omega}\times\RR^+$, there exists a constant $M$ such that
        \begin{align*}
            \forall (x,y)\in\overline{\omega},\ \forall s\in [0,t], \quad &\biggl|\frac{\partial h}{\partial x}(x,y,s)\biggr|\le M,\\
            &\biggl|\frac{\partial h}{\partial y}(x,y,s)\biggr|\le M,
        \end{align*}
        and the function $s\mapsto M$ is integrable on $[0,t]$.
    \end{itemize}
    In conclusion, for all $t\in\RR^+$, $\widetilde{K}_t\in C^1(\overline{\omega})$ and
    \begin{align*}
        \frac{\partial K}{\partial x}(x,y,t)=\frac{\partial \widetilde{K}_t}{\partial x}(x,y)=\int_0^t \frac{\partial h}{\partial x}(x,y,s)\,ds,\\
        \frac{\partial K}{\partial y}(x,y,t)=\frac{\partial \widetilde{K}_t}{\partial y}(x,y)=\int_0^t \frac{\partial h}{\partial y}(x,y,s)\,ds.
    \end{align*}
    We now use the theorem of continuity of a function defined by an integral to show that $\frac{\partial K}{\partial x}$ and  $\frac{\partial K}{\partial y}$ are in $C(\overline{\omega}\times\RR^+)$. Indeed, let's detail the proof for $\frac{\partial K}{\partial x}$. We first write
    \begin{equation}
        \frac{\partial K}{\partial x}(x,y,t)=\int_0^{T_f} \frac{\partial h}{\partial x}(x,y,s) \mathds{1}_{[0,t]}(s)\,ds=\int_0^{T_f} i(x,y,t,s)\,ds.
    \end{equation}
    Since $\frac{\partial h}{\partial x}\in C(\overline{\omega}\times\RR^+)$, we obtain
    \begin{itemize}
        \item $i$ is continuous in  the variables $(x,y,t)$ for almost every $s\in [0,T_f]$, i.e.
        \begin{equation*}
            i(x,y,t,s) \underset{(x,y,t)\to(x_0,y_0,t_0)}{\longrightarrow} i(x_0,y_0,t_0,s) \quad \text{for almost every $s\in [0,T_f]$}.
        \end{equation*}
        \item There exists a constant $M$ such that
        \begin{align*}
            \forall (x,y,t)\in\overline{\omega}, \quad &|i(x,y,t,s)|\le M, \quad \text{for almost every $s\in[0,T_f]$,}
        \end{align*}
        and the function $s\mapsto M$ is integrable on $[0,T_f]$.
    \end{itemize}
    In conclusion, $K\in C^1(\overline{\omega}\times\RR^+)$ since the partial derivatives exist and are continuous on $\overline{\omega}\times\RR^+$.

    \item We assume the proposition true for a given $k\ge 1$ and prove it holds for $k+1$: If $h\in C^{k+1}(\overline\omega\times\RR^+)$ then $h\in C^k(\overline{\omega}\times\RR^+)$ and using the induction hypothesis, it follows $K\in C^k(\overline{\omega}\times\RR^+)$ and
    \begin{align*}
        \frac{\partial K}{\partial x}(x,y,t)&=\int_0^t \frac{\partial h}{\partial x}(x,y,s)\,ds,\\
        \frac{\partial K}{\partial y}(x,y,t)&=\int_0^t \frac{\partial h}{\partial y}(x,y,s)\,ds,\\
        \frac{\partial K}{\partial t}(x,y,t)&=h(x,y,t).
    \end{align*}

Since $\frac{\partial h}{\partial x}\in C^k(\overline{\omega}\times\RR^+)$ and $\frac{\partial h}{\partial y}\in C^k(\overline{\omega}\times\RR^+)$, then using again the induction hypothesis, we deduce $\frac{\partial K}{\partial x}\in C^{k}(\overline{\omega}\times\RR^+)$ and $\frac{\partial K}{\partial y}\in C^{k}(\overline{\omega}\times\RR^+)$. In addition $\frac{\partial K}{\partial t}=h\in C^k(\overline{\omega}\times\RR^+)$. In conclusion, $K\in C^{k+1}(\overline{\omega}\times\RR^+)$.
\item The proposition thus holds for all $k\ge 1$, by induction.
\end{itemize}\end{proof}

By composition, we deduce from its expression~\eqref{eq:functionL} that $L\in C^k(\overline{\omega})$. In conclusion $W\in C^k(\overline{\omega})$.

\paragraph*{Step 5. Genral case ($\beta\in C^k(\overline{\omega})$)}

Using the same approach and notations as in the proof of the constant case $\beta(x)=\alpha$ (see Appendix~\ref{sec-append-Charac}), we find by the method of characteristics that the solution of~\eqref{eq:advectionreactionbis} is given along the characteristics by
\begin{equation}\label{eq:solWxitbis}
W(X(\xi,t))=\Bigl(V(\gamma(\xi))+\int_0^t g(X(\xi,s))e^{\int_0^s \beta(X(\xi,u))\, du}\, ds\Bigr)e^{-\int_0^t \beta(X(\xi,s))\, ds},
\end{equation}
and by inversion
\begin{equation}\label{eq:solWbis}
W(x,y)=\Bigl(V\bigl(\gamma(H_2(x,y))\bigr)+\int_0^{H_2(x,y)} g(G(H_1(x,y),s))e^{\int_0^{s} \beta(G(H_1(x,y),u))\, du}\, ds\Bigr)e^{-\int_0^{H_2(x,y)} \beta(G(H_1(x,y),s))\, ds}.
\end{equation}
The proof of the regularity follows the same ideas as in the previous case ($\beta(x)=\alpha$).

\end{proof}

\section{Hypotheses checking in Droniou's article~\cite{dronioupota2002}}\label{sec:annexDroniouChecking}

\begin{proof}
We first recall the notation and hypotheses used in \cite{dronioupota2002}. Note that these notations are local to this proof.

\paragraph*{Notation and hypotheses in \cite{dronioupota2002}}\mbox{}\\

Let $\Omega$ be a bounded domain in $\mathbb{R}^{N}(N \geqslant 2)$ with a Lipschitz continuous boundary. We denote by $\mathbf{n}$ the unit normal to $\partial \Omega$ outward to $\Omega$ and by $\sigma$ the measure on $\partial \Omega$.

If $\Gamma$ is a measurable subset of $\partial \Omega, W_{\Gamma}^{1, q}(\Omega)$ is the space of all functions in $W^{1, q}(\Omega)$ (the usual Sobolev space) the trace of which is null on $\Gamma ;$ it is endowed with the same norm as $W^{1, q}(\Omega)$, that is to say $\|v\|_{W^{1, q}(\Omega)}=\|v\|_{L^{q}(\Omega)}+\||\nabla v|\|_{L^{q}(\Omega)}$. When $q=2$, we denote as usual $W^{1,2}=H^{1}$. 

We take, $N_{*}=N$ when $N \geqslant 3$, and, $N_{*} \in{]} 2, \infty[$ when $N=2$.

We make the following hypotheses on the data.
\begin{itemize}
\item $\Gamma_{d}$ and $\Gamma_{f}$ are measurable subsets of $\partial \Omega$ such that $\sigma\left(\Gamma_{d} \cap \Gamma_{f}\right)=0$ and $\partial \Omega=\Gamma_{d} \cup \Gamma_{f}$, \hfill \textbf{[Hyp (8)]}
\item $A\colon \Omega \rightarrow M_{N}(\mathbb{R})$ is a measurable matrix-valued function which satisfies:\\
$\exists \alpha_{A}>0$ s.t. $A(x) \xi \cdot \xi \geqslant \alpha_{A}|\xi|^{2} \quad$ for a.e. $x \in \Omega$, for all $\xi \in \mathbb{R}^{N}$,\\
$\exists \Lambda_{A}>0$ s.t. $\|A(x)\| \leqslant \Lambda_{A} \quad$ for a.e. $x \in \Omega$\\
(where, for $\left.M \in M_{N}(\mathbb{R}),\|M\|:=\sup \left\{|M \xi|, \xi \in \mathbb{R}^{N},|\xi|=1\right\}\right)$,
 \hfill \textbf{[Hyp (9)]}
\item $b \in L^{N_{*} / 2}(\Omega), \quad b \geqslant 0 \text { a.e. on } \Omega$, \hfill \textbf{[Hyp (10)]} 
\item $\lambda \in L^{N_{*}-1}(\partial \Omega), \quad \lambda \geqslant 0 \ \sigma \text {-a.e. on } \partial \Omega$, \hfill \textbf{[Hyp (11)]} 
\item $\mathbf{v} \in\left(L^{N_{*}}(\Omega)\right)^{N}$, \hfill \textbf{[Hyp (12)]}
\item $L \in\left(H_{\Gamma_{d}}^{1}(\Omega)\right)^{\prime}$ \hfill \textbf{[Hyp (13)]}\\
(recall that $N_{*}=N$ when $N \geqslant 3$ and that $N_{*} \in{]} 2, \infty[$ when $N=2$ ). 
\item The non-convection parts of equation~\eqref{eq:droniouproblemweak} (Eq. (4) in Droniou's article) are supposed to be coercive, that is to say:\\
$\exists b_{0}>0, \exists E \subset \Omega$ such that $b \geqslant b_{0}$ on $E$,\\
$\exists \lambda_{0}>0, \exists S \subset \Gamma_{f}$ such that $\lambda \geqslant \lambda_{0}$ on $S$ and either\\
$\sigma\left(\Gamma_{d}\right)>0$ or $|E|>0$ or $\sigma(S)>0$. \hfill \textbf{[Hyp (14)]}
\end{itemize}

\paragraph*{Hypotheses checking}\mbox{}\\

The penalized problem~\eqref{eq:penalizedproblemweak} is of the form~\eqref{eq:droniouproblemweak} and the dual problem~\eqref{eq:dualpenalizedproblemweak} of the form~\eqref{eq:droniouproblemweakdual} by taking $N=d$ and
\begin{itemize}
    \item $\Gamma_d=\partial\Omega$, $\Gamma_f=\emptyset$. Thus $\Gamma_d\cap\Gamma_f=\emptyset$ and $\Gamma_d\cup\Gamma_f=\partial\Omega$. \hfill \textbf{[Hyp (8) OK]}
    
    \item $A=Id$. \hfill \textbf{[Hyp (9) OK]}
    
    \item $b=1+\frac{\chi}\varepsilon\alpha$. Thus $b\in L^\infty(\Omega) \subset L^{N_*/2}(\Omega)$, and $b\ge0$. \hfill \textbf{[Hyp (10) OK]}
    
    \item $\lambda=0$. Thus $\lambda\in L^{N_*-1}(\partial\Omega)$, and $\lambda\ge0$ on $\partial\Omega$.  \hfill \textbf{[Hyp (11) OK]}
    
    \item For the penalized problem, $L=(1-\chi)f+\frac\chi\varepsilon g$. Thus $L\in L^2(\Omega)\hookrightarrow H^{-1}(\Omega)=(H^1_0(\Omega))'$.\\
    For the dual problem, $L$ is assumed in $H^{-1}(\Omega)$. \hfill \textbf{[Hyp (13) OK]}
    
    \item ${\bf v}=\frac\chi\varepsilon n$
\end{itemize}
where $N_*=N$ (dimension) when $N\ge 3$ and $N_*\in{]}2,\infty[$ when $N=2$.

Also, the non-convection parts of the problem are coercive ($E=\Omega$, $b_0=1$ and $S=\emptyset$). \hfill \textbf{[Hyp (14) OK]}

Finally, the remaining hypothesis to be checked is ${\bf v}\in (L^{N_*}(\Omega))^N$. We have ${\bf v}=0$ in $\mathcal U$ and ${\bf v}=\frac{n}\varepsilon$ in $\omega$, where $n$ is an extension of the normal unit vector $\tilde{n}$ defined on $\partial\mathcal U$. $n$ is assumed to be in $H^1(\omega)$. 

In practice, we are interested in dimension $N=2$ or $3$. Because $\omega$ is a bounded Lipschiz domain, Sobolev embeddings (see Subsection~\ref{sec:sobolevembeddings}) give the following:

\begin{itemize}
    \item If $N=2$, then $n\in H^1(\omega)\hookrightarrow L^3(\omega)$ \hfill \textbf{[Hyp (12) OK]}

    \item If $N=3$, then $n\in H^1(\omega)\hookrightarrow L^{p*}(\omega)$ where $\frac1{p^*}=\frac12-\frac13=\frac16$. Thus $n\in L^6(\omega)\subset L^3(\omega)$ \hfill \textbf{[Hyp (12) OK]}
\end{itemize}

In conclusion, there exists a unique solution $u_\varepsilon$ to the penalized problem~\eqref{eq:penalizedproblemweak} and a unique solution $w_\varepsilon$ to the dual problem~\eqref{eq:dualpenalizedproblemweak} in dimension $2$ or $3$ if $n\in H^1(\omega)$. If we can choose the lifting $n\in L^d(\omega)$, then the hypothesis (12) holds also in any dimension $N=d\ge 4$.

The proof in~\cite{dronioupota2002} starts by proving an existence result for~\eqref{eq:droniouproblemweakdual}, using an approximated solution and fixed point arguments (Leray-Schauder topological degree~\cite{deimlingnonlinear,le2018fixed}). This existence result is then used to prove an a priori estimate on the solution of~\eqref{eq:droniouproblemweak} that leads to an existence result for~\eqref{eq:droniouproblemweak}. Finally, the uniqueness results for both problems are obtained using the linearity of these equations and a duality argument. A simplified proof of the existence of the solution of the dual problem when $n\in L^\infty(\omega)$ can be found in~\cite{droniou2002}.

\end{proof}

\section{Explicit construction of supersolutions in special cases}\label{sec:annexspecialcases}

\subsection{One-dimensional case}

The one-dimensional case was studied in~\cite{Bensiali2014}. Here, we present an alternative proof for the convergence of the penalization method based on the boundary layer approach presented in this paper for the general case. The advantage of the boundary layer approach is that it is generalizable in higher dimension, unlike the approach used in~\cite{Bensiali2014} which was based on the explicit computation of the solution.

We thus consider the following one-dimensional problem with Neumann or Robin boundary conditions at $x=1$
\begin{equation}\label{eq:1d}
    \begin{dcases}
    -u''+u=f \quad \text{in $\mathcal{U}={]}1,2[$}\\
    -u'(1)+\alpha u(1)=g(1), \quad u(2)=0,
    \end{dcases}
\end{equation}
where $f\in L^2(]1,2[)$, $g(1)\in\RR$ and $\alpha\ge0$ are given. The corresponding penalized problem reads
\begin{equation}\label{eq:penalized1d}
    \begin{dcases}
    -u_\varepsilon''+u_\varepsilon+\frac\chi\varepsilon (-u_\varepsilon'+\alpha u_\varepsilon -g(1))=(1-\chi)f \quad \text{in $\Omega={]}0,2[$}\\
    u_\varepsilon(0)=0, \quad u_\varepsilon(2)=0,
    \end{dcases}
\end{equation}
where $\varepsilon>0$ is a small parameter (the penalization parameter), and $\chi$ is the characteristic function of the obstacle $\omega={]}0,1[$. Using the notations of this paper, here $\varphi(x)=x$ and $n(x)=-1=\nabla\psi(x)$ in $\omega$ where $\psi(x)=1-x$. Following the approach used in Section~\ref{sec:convergenceBL}, the question of convergence of the asymptotic expansion reduces to finding suitable weight functions $p_\varepsilon$ and $q_\varepsilon$ satisfying~\eqref{eq:pqNd} which reduces in the considered one-dimensional case to
\begin{equation}\label{eq:pq}
    \begin{dcases}
    -q_\varepsilon''+\frac{q_\varepsilon'}{\varepsilon}+q_\varepsilon=b_\varepsilon\ge 0 & \text{in $\omega=(0,1)$}\\
    -p_\varepsilon''+p_\varepsilon=a_\varepsilon\ge 0 & \text{in $\mathcal{U}=(1,2)$}\\
    p_\varepsilon(1)=q_\varepsilon(1)\\
    p_\varepsilon'(1)=q_\varepsilon'(1)-\frac{q_\varepsilon(1)}\varepsilon\\
    p_\varepsilon \ge \beta>0 & \text{in $\mathcal{U}=(1,2)$}\\
    q_\varepsilon\ge \beta>0 & \text{in $\omega=(0,1)$}\\
    \|q_\varepsilon\|_{L^\infty(0,\delta)}\le C \text{ and }\|q_\varepsilon\|_{L^\infty(\delta,1)}\le \frac{C}\varepsilon,
    \end{dcases}
\end{equation}
where $0<\delta<1$.

In this one-dimensional case, we can exhibit suitable supersolutions satisfying~\eqref{eq:pq}: we take
\begin{equation}
q_\varepsilon(x)=1+\frac1\varepsilon e^{\frac{x-1}\varepsilon}-\frac1\varepsilon e^{-\frac1\varepsilon} \quad \text{in $\omega=(0,1)$},
\end{equation}
so that
\begin{equation}\label{eq:q}
    q_\varepsilon(x)\ge 1-\frac1\varepsilon e^{-\frac1\varepsilon} \to 1 \ \text{when $\varepsilon \to 0$}
\end{equation}
and
\begin{align}
    -q_\varepsilon''+\frac{q_\varepsilon'}{\varepsilon}+q_\varepsilon=q_\varepsilon \ge 0.
\end{align}

To satisfy the transmission conditions, we choose
\begin{equation}\label{eq:p}
p_\varepsilon(x)=q_\varepsilon(1)+\Bigl(q_\varepsilon'(1)-\frac{q_\varepsilon(1)}\varepsilon\Bigr) (x-1) \quad \text{in $\mathcal U=(1,2)$},
\end{equation}
so that
\begin{equation}
    p_\varepsilon(2)=q_\varepsilon(1)+q_\varepsilon'(1)-\frac{q_\varepsilon(1)}\varepsilon=\Bigl(1-\frac1\varepsilon\Bigr)\Bigl(1+\frac1\varepsilon-\frac1\varepsilon e^{-\frac1\varepsilon}\Bigr)+\frac1{\varepsilon ^2} =1+\frac1{\varepsilon^2}e^{-\frac1\varepsilon}\to 1 \ \text{when $\varepsilon\to0$}
\end{equation}
and
\begin{equation}
    -p_\varepsilon''+p_\varepsilon=p_\varepsilon \ge \min (p_\varepsilon(1),p_\varepsilon(2))= \min (q_\varepsilon(1),p_\varepsilon(2))\ge 0 \quad \text{for $\varepsilon$ small enough}.
\end{equation}

Thus the strict positivity of $p_\varepsilon$ and $q_\varepsilon$ for sufficiently small $\varepsilon>0$. Finally, for all $x\in (0,\delta)$ with $\delta<1$,
\begin{align*}
    |q_\varepsilon(x)| \le 1+\frac1\varepsilon e^{\frac{\delta-1}\varepsilon}-\frac1\varepsilon e^{-\frac1\varepsilon} \le C
\end{align*}
for sufficiently small $\varepsilon>0$, and, for $x\in(\delta,1)$,
\begin{align*}
    |q_\varepsilon(x)| \le 1+\frac1\varepsilon -\frac1\varepsilon e^{-\frac1\varepsilon} \le \frac{C}{\varepsilon}
\end{align*}
for sufficiently small $\varepsilon$, then the desired estimates on $\|q_\varepsilon\|_{L^\infty}$.  This completes the existence of suitable weight functions $p_\varepsilon$, $q_\varepsilon$ in the one-dimensional case.

\begin{remark}
    We thus recover the results obtained in~\cite{Bensiali2014} using a boundary layer approach. In fact, we obtain a finer description of what happens in the obstacle domain, which was not carried out in~\cite{Bensiali2014}.
\end{remark}

\begin{figure}[h!]
	\centering
	\includegraphics[scale=0.4]{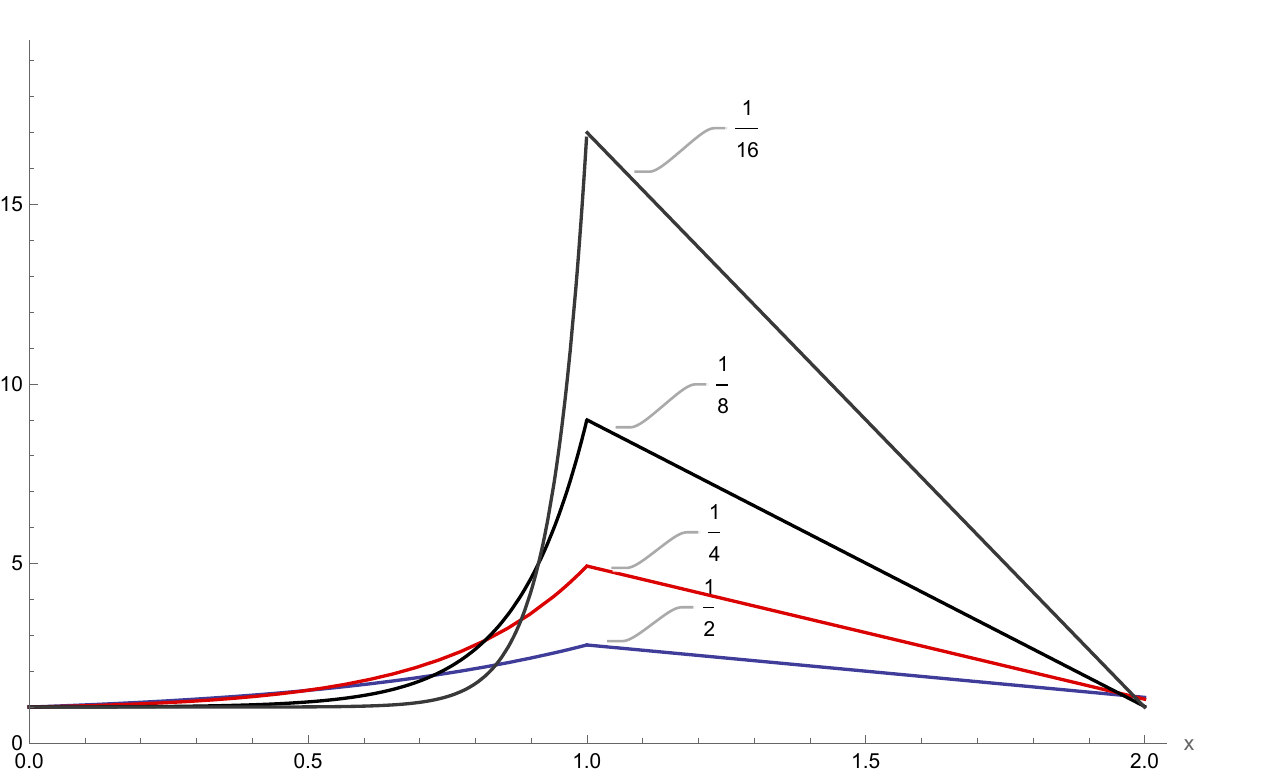}
	\caption{Plot of suitable supersolutions $q_\varepsilon$~\eqref{eq:q} in $(0,1)$ and $p_\varepsilon$~\eqref{eq:p} in $(1,2)$ satisfying~\eqref{eq:pq} in the one-dimensional case, for different values of $\varepsilon$.}
	\label{fig:annex_p_q_1d}
\end{figure}

\subsection{Spherical symmetric case (in 2D)}\label{sec:annexspecialcasessph}

We consider here $\mathcal{U}=B(0,1)$ and $\Omega=B(0,2)$ in 2D. In this case, we can choose $n=\vec{e}_r=\nabla\psi$ where $\psi(x,y)=\sqrt{x^2+y^2}-1=r-1$ and look for weight functions~$p_\varepsilon$, $q_\varepsilon$ satisfying~\eqref{eq:pqNd} that are radial
\begin{align*}
p_\varepsilon(x,y)=p_\varepsilon(r) \quad \text{and} \quad
q_\varepsilon(x,y)=q_\varepsilon(r).
\end{align*}
Using polar coordinates,
\begin{equation}
\begin{dcases}
\Delta f=\frac1{r} \frac{\partial}{\partial r} \Bigl(r\frac{\partial f}{\partial r}\Bigr)+\frac1{r^2}\frac{\partial^2 f}{\partial\theta^2}=\frac{\partial^2 f}{\partial r^2}+\frac1{r} \frac{\partial f}{\partial r}+\frac1{r^2}\frac{\partial^2 f}{\partial\theta^2}\\
\mathrm{div}(A)=\frac1{r} \frac{\partial}{\partial r}\Bigl(r A_r\Bigr)+\frac1{r} \frac{\partial A_\theta}{\partial\theta}=\frac{\partial A_r}{\partial r}+\frac{A_r}{r}+\frac1{r} \frac{\partial A_\theta}{\partial\theta}.
\end{dcases}
\end{equation}
Thus, \eqref{eq:pqNd} rewrites in the radial symmetric case (see for instance~\cite{sauter2021heterogeneous})
\begin{equation}\label{eq:pqNdspherical}
    \begin{dcases}
    - p_\varepsilon''-\frac1{r} p_\varepsilon'+p_\varepsilon=a_\varepsilon(r)\ge 0 & \text{in $]0,1[$}\\
    - q_\varepsilon''-\Bigl(\frac{1}r+\frac1\varepsilon\Bigr)q_\varepsilon'+\Bigl(1-\frac1{\varepsilon r}\Bigr)q_\varepsilon=b_\varepsilon(r)\ge 0 & \text{in $]1,2[$}\\
    p_\varepsilon'(0)=0&\\
    p_\varepsilon(1)=q_\varepsilon(1) & \\
    p_\varepsilon'(1)=q_\varepsilon'(1)+\frac{q_\varepsilon(1)}\varepsilon & \\
    p_\varepsilon \ge \beta>0 & \text{in $]0,1[$}\\
    q_\varepsilon\ge \beta>0 & \text{in $]0,2[$}\\
    \|q_\varepsilon\|_{L^\infty(1+\delta,2)}\le C \text{ and }\|q_\varepsilon\|_{L^\infty(1,1+\delta)}\le \frac{C}\varepsilon.
    \end{dcases}
\end{equation}

We construct by hand suitable weight functions satisfying the previous conditions~\eqref{eq:pqNdspherical}: we take
\begin{equation}\label{eq:solpradial}
p_\varepsilon(r)=\frac{1}{\varepsilon} J_0(ir)
\end{equation}
where $J_0$ is the Bessel function of the first kind,
then $p_\varepsilon$ satisfies
\begin{equation}
\begin{dcases}
- p_\varepsilon''-\frac1{r} p_\varepsilon'+p_\varepsilon=0 & \text{in $]0,1[$}\\
p_\varepsilon'(0)=0\\
p_\varepsilon(r)\ge \frac{1}{\varepsilon} J_0(0)=\frac{1}{\varepsilon}>0 &\text{in $]0,1[$}
\end{dcases}
\end{equation}
using the properties of the Bessel functions~\cite{abramovitz1972handbook}. And we take $q_\varepsilon$ of the form 
\begin{equation}\label{eq:solqradial}
q_\varepsilon(r)=\frac{d_\varepsilon}r + \frac{e_\varepsilon}{\varepsilon} e^{-\frac{r-1}\varepsilon}.
\end{equation}
The transmission conditions at $r=1$ give, since $\frac{d}{dr}\bigl(J_0(i r)\bigr)=-i J_1(ir)$,
\begin{equation}\label{eq:coeffsolqradial}
\begin{dcases}
d_\varepsilon=\frac{i J_1(i)}{\varepsilon-1}\to -i J_1(i)>0\\
e_\varepsilon=J_0(i)+\frac{\varepsilon i J_1(i)}{\varepsilon-1}\to J_0(i)>0,
\end{dcases}
\end{equation}
when $\varepsilon\to 0^+$. Therefore, for sufficiently small $\varepsilon$,
\begin{equation}
q_\varepsilon(r)\ge \frac{d_\varepsilon}r \ge \frac{C}{2}>0 \qquad \text{in $]1,2[$}.
\end{equation}
One then has
\begin{align}
    - q_\varepsilon''-\Bigl(\frac{1}r+\frac1\varepsilon\Bigr)q_\varepsilon'+\Bigl(1-\frac1{\varepsilon r}\Bigr)q_\varepsilon&=q_\varepsilon+d_\varepsilon \Bigl(-\Bigl(\frac1r\Bigr)''-\frac1r\Bigl(\frac1r\Bigr)'\Bigr) & \\
    &=q_\varepsilon+d_\varepsilon \Bigl(\frac{-1}{r^3}\Bigr) & \\
    &=\frac{e_\varepsilon}\varepsilon e^{-\frac{r-1}\varepsilon}+d_\varepsilon \frac{r^2-1}{r^3} \ge0 & \text {in $]1,2[$}
\end{align}
for sufficiently small $\varepsilon$. The previous calculations follow from the fact that $r\mapsto \frac1r$ solves
$$-u'-\frac1r u=0$$
and $r\mapsto e^{-\frac{r-1}\varepsilon}$ solves
$$- u''-\Bigl(\frac{1}r+\frac1\varepsilon\Bigr)u'-\frac1{\varepsilon r}u=0.$$

Finally, the estimates on $q_\varepsilon$ in~\eqref{eq:pqNdspherical} are also satisfied since, for sufficiently small $\varepsilon$,
\begin{equation}
|q_\varepsilon(r)|\le C+C' \frac{1}\varepsilon e^{-\frac{\delta}\varepsilon} \to 0, \quad \text{for $r\in{]1+\delta},2[$}
\end{equation}
and 
\begin{equation}
|q_\varepsilon(r)|\le C+C' \frac{1}\varepsilon \le \frac{C}\varepsilon, \quad \text{for $r\in{]1},1+\delta[$}.
\end{equation}

\begin{remark}
    The approach used here could also be used in the one-dimensional case. The approach could also be adapted to the spherical symmetric case in $n$ dimensions. 
\end{remark}

\begin{remark}
    The choices made for $q_\varepsilon$ in the special cases (1D or spherical symmetric case) will be understood in the following appendix where we use a formal boundary layer approach for the dual problem.
\end{remark}


\begin{figure}[!h]
	\centering
	\begin{subfigure}[b]{0.49\textwidth}
		\centering
		\includegraphics[width=\textwidth]{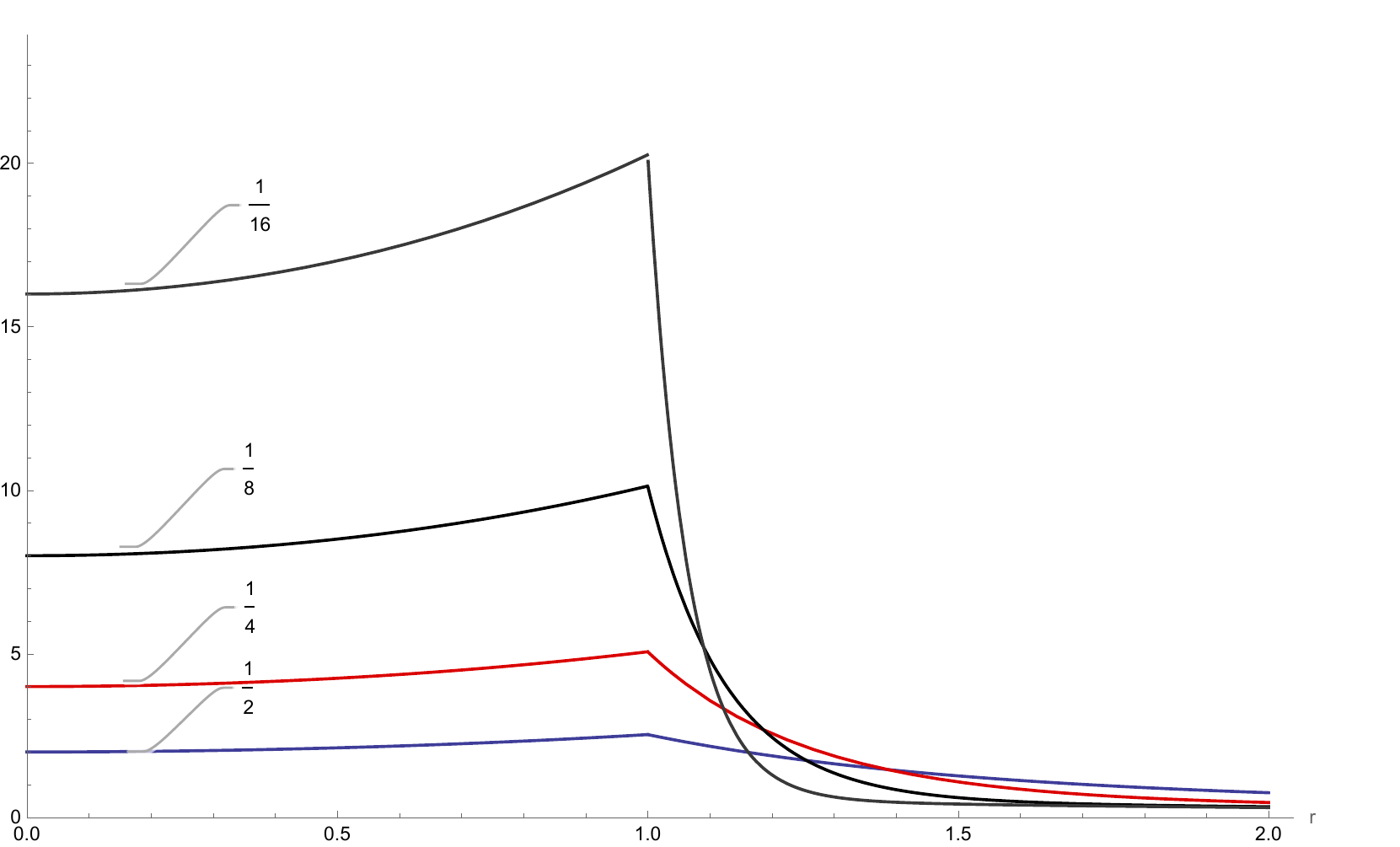}
		\caption{Plots of $p_\varepsilon(r)$ and $q_\varepsilon(r)$}
		\label{fig:p_q_sph_2d}
	\end{subfigure}
	\begin{subfigure}[b]{0.49\textwidth}
		\centering
		\includegraphics[width=\textwidth]{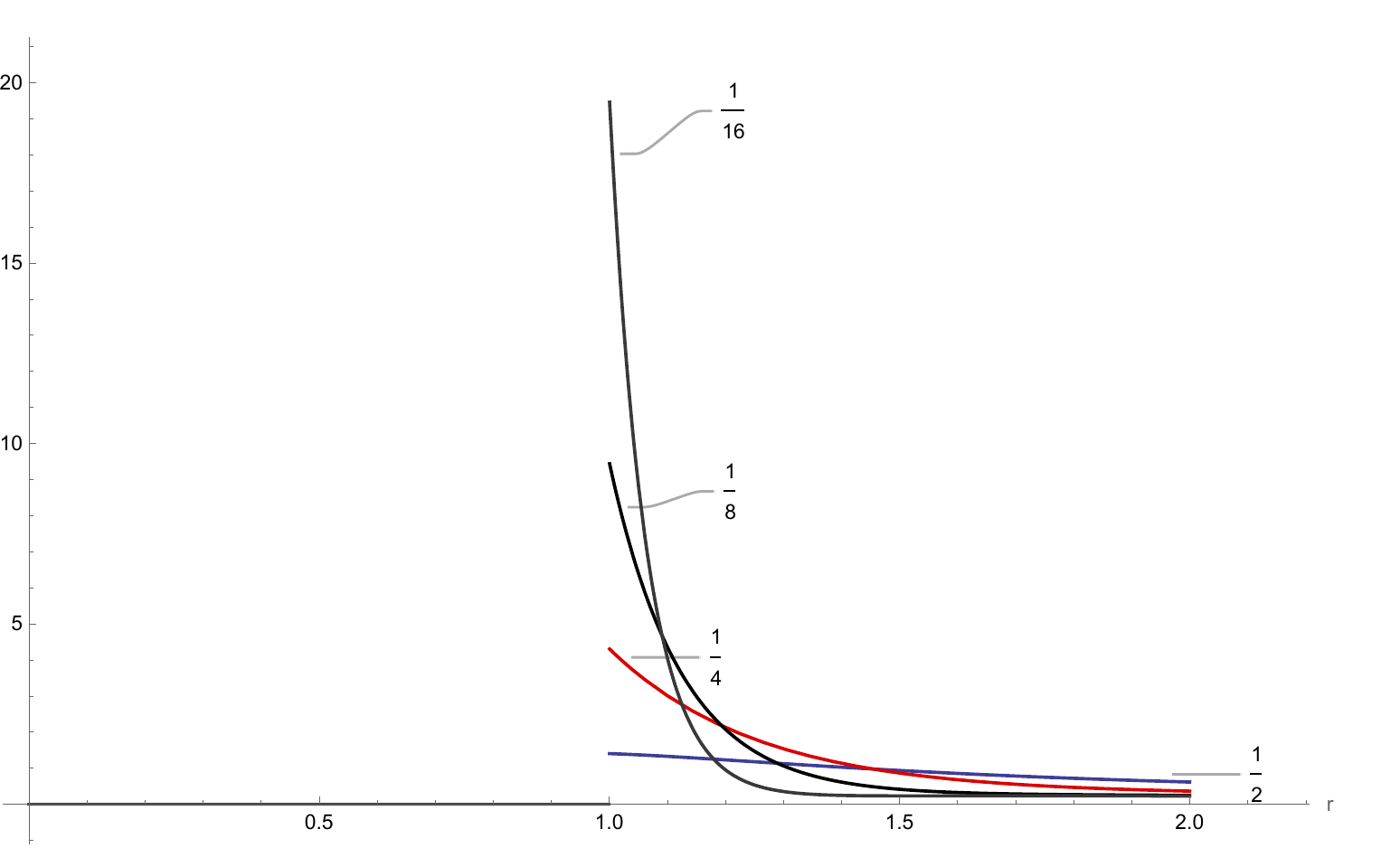}
		\caption{Plots of $a_\varepsilon(r)$ and $b_\varepsilon(r)$}
		\label{fig:Eq_p_q_sph_2d}
	\end{subfigure}
	\caption{Plot of suitable supersolutions $p_\varepsilon(r)$~\eqref{eq:solpradial} in $(0,1)$ and $q_\varepsilon(r)$~\eqref{eq:solqradial} in $(1,2)$ satisfying~\eqref{eq:pqNdspherical} in the spherical symmetric case (2D), for different values of $\varepsilon$.}
	\label{fig:p_q_sph_2d_Eq}
\end{figure}

\section{Formal boundary layer approach for the dual problem}\label{sec:annexBLdual}

We are going to use again a boundary layer approach for the dual problem
    \begin{subnumcases}{}
    -\Delta p_\varepsilon+p_\varepsilon=a_\varepsilon=\frac{a^{-1}(x)}\varepsilon+a^0(x)+\ldots\ge 0 & \text{in $\mathcal{U}$}\label{eq:pNd}\\ 
    -\Delta q_\varepsilon-\frac{1}{\varepsilon}\mathrm{div}(q_\varepsilon n)+q_\varepsilon=b_\varepsilon=\frac{b^{-1}(x)}\varepsilon+b^0(x)+\ldots\ge 0 & \text{in $\omega$}\label{eq:qNd}\\
    p_\varepsilon=q_\varepsilon & \text{on $\partial\mathcal U$}\label{eq:transmission1pqNd}\\
    \frac{\partial p_\varepsilon}{\partial\nu}=\frac{\partial q_\varepsilon}{\partial\nu}+\frac{q_\varepsilon}\varepsilon & \text{on $\partial\mathcal U$}\label{eq:transmission2pqNd}\\    q_\varepsilon=q_0(x)+\varepsilon q_1(x)+\ldots& \text{ on $\partial\Omega$}\label{eq:dirichletqNd}
    \end{subnumcases}
The reader might think that we are going around in circles, but the goal here is not to estabish a convergence for the dual problem, but simply to use the boundary layer approach in order to construct an approximate solution which we wish to be a supersolution and satisfy the expected properties in~\eqref{eq:pqNd}. 

Based on numerical experiments, we consider the following ansatz for the solution (boundary layer near $\partial\mathcal U$):
\begin{equation}\label{eq:dualNdansatz}
    \begin{dcases}
    p_\varepsilon(x)=\frac1\varepsilon P^{-1}(x)+P^0(x)+\varepsilon P^1(x)+\ldots,\\
    q_\varepsilon(x)=\frac1\varepsilon Q^{-1}\Bigl(x,\frac{\psi(x)}\varepsilon\Bigr)+Q^0\Bigl(x,\frac{\psi(x)}\varepsilon\Bigr)+\varepsilon Q^1\Bigl(x,\frac{\psi(x)}\varepsilon\Bigr)+\ldots
    \end{dcases}
\end{equation}
where $\psi$ is somehow the distance to $\partial\mathcal U$, and the profile terms inside $\omega$ have the form 
\begin{equation}\label{eq:dualNdboundarylayer}
    \begin{dcases}
    Q^i(x,z)=\overline{Q}^i(x)+\widetilde{Q}^i(x,z)\\
    \text{where} \ \forall i,k\ge0, \ \partial_z^k \widetilde{Q}^i \xrightarrow[z \to +\infty]{} 0, \quad \forall x\in\omega,
    \end{dcases}
\end{equation}

Using the same notations as before, for a function $(x,z)\mapsto Q(x,z)$, the function $x\mapsto q(x)=Q\bigl(x,\frac{\psi(x)}\varepsilon\bigr)$ satisfies
\begin{equation}\label{eq:dualderivativesNd}
    \begin{dcases}
    \nabla q=\nabla Q+\frac{1}\varepsilon Q_z\nabla\psi\\ 
    \Delta q=\Delta Q+\frac{2}\varepsilon  \nabla Q_z\cdot\nabla\psi+\frac1\varepsilon \Delta\psi Q_z+\frac{1}{\varepsilon^2} |\nabla\psi|^2Q_{zz},
    \end{dcases}
\end{equation}
where the derivatives of $q$ are evaluated at $x$ while the derivatives of $Q$ are evaluated at $\bigl(x,\frac{\psi(x)}{\varepsilon}\bigr)$.

For simplicity, we assume $\psi$ to be sufficiently regular and $n=\nabla\psi$ in $\omega$, we also assume $\nabla\psi\ne 0$ in $\omega$. In this cas, using
\begin{align*}
\mathrm{div}(q_\varepsilon n)&=q_\varepsilon \mathrm{div}(n)+n\cdot\nabla q_\varepsilon\\
&=q_\varepsilon \Delta\psi +\nabla q_\varepsilon\cdot\nabla \psi,
\end{align*}
Eq.~\eqref{eq:qNd} reduces to
\begin{equation}\label{eq:dualEqqNd}
-\Delta q_\varepsilon-\frac{1}{\varepsilon}\nabla q_\varepsilon\cdot\nabla \psi-\frac{1}{\varepsilon}\Delta\psi q_\varepsilon+q_\varepsilon=b_\varepsilon=\frac{b^{-1}(x)}\varepsilon+b^0(x)+\ldots\ge 0 \quad \text{in $\omega$}.
\end{equation}

\subsection{Determination of profiles}

\subsubsection{Asymptotic expansion of Eq.~\eqref{eq:dualEqqNd} inside the obstacle $\omega$}

\begin{itemize}
    \item Order $\varepsilon^{-3}$: identifying the terms corresponding to the power $-3$ of $\varepsilon$ leads to 
    $$-|\nabla\psi|^2 Q^{-1}_{zz}-|\nabla\psi|^2Q^{-1}_z =0,$$
    i.e.
    $$-Q^{-1}_{zz}-Q^{-1}_z =0.$$
    Using the decomposition~\eqref{eq:dualNdboundarylayer} one obtains
    \begin{equation}\label{eq:Qtildem1Nd}
        -\widetilde{Q}_{zz}^{-1}-\widetilde{Q}_z^{-1}={0} \quad \text{in $\omega\times\RR^+$}.
    \end{equation}
    
    \item Order $\varepsilon^{-2}$:
    $$-|\nabla\psi|^2 Q^0_{zz}-\Delta\psi Q^{-1}_z-2 \nabla Q^{-1}_z\cdot\nabla\psi-Q^0_z|\nabla\psi|^2-\nabla Q^{-1}\cdot\nabla\psi- \Delta\psi Q^{-1}=0,$$
    which, by using hypothesis~\eqref{eq:dualNdboundarylayer} and taking the limit when $z\to +\infty$, leads to
    $$
        -\nabla \overline{Q}^{-1}\cdot \nabla\psi-\Delta\psi\overline{Q}^{-1} =0 \quad \text{in $\omega$},
    $$
    i.e.
    \begin{equation}\label{eq:Qbarm1Nd}
    \mathrm{div}(\overline{Q}^{-1} n)=0 \quad \text{in $\omega$},
    \end{equation}
    and by difference, we obtain
        \begin{equation}\label{eq:Qtilde0Nd}
        -|\nabla\psi|^2\widetilde{Q}_{zz}^0-\Delta\psi\widetilde{Q}_{z}^{-1} -2 \nabla \widetilde{Q}^{-1}_z\cdot\nabla\psi-\widetilde{Q}^0_z|\nabla\psi|^2-\nabla\widetilde{Q}_z^{-1}\cdot \nabla\psi-\Delta\psi \widetilde{Q}^{-1}=0 \quad \text{in $\omega\times\RR^+$}.
    \end{equation}
    \item Order $\varepsilon^{-1}$:
    $$-|\nabla\psi|^2 Q^1_{zz}-\Delta\psi Q^0_{z} -2 \nabla Q^0_z\cdot \nabla\psi-\Delta Q^{-1}-Q^1_z |\nabla\psi|^2-\nabla Q^0\cdot \nabla\psi-\Delta\psi Q^0+Q^{-1}=b^{-1},$$
    and again using the same reasoning as above, by using hypothesis~\eqref{eq:dualNdboundarylayer} and $z\to +\infty$, we obtain
    $$
        -\Delta\overline{Q}^{-1}-\nabla\overline{Q}^0\cdot\nabla\psi-\Delta\psi \overline{Q}^0+\overline{Q}^{-1}=b^{-1} \quad \text{in $\omega$},
    $$
    i.e.
    \begin{equation}\label{eq:Qbar0Nd}
    \nabla\overline{Q}^0\cdot\nabla\psi+\Delta\psi \overline{Q}^0=\mathrm{div}(\overline{Q}^0 n)=-\Delta\overline{Q}^{-1}+\overline{Q}^{-1}-b^{-1} \quad \text{in $\omega$},
    \end{equation}
    and by difference,
        \begin{equation}\label{eq:Qtilde1Nd}
        -|\nabla\psi|^2\widetilde{Q}_{zz}^1-\Delta\psi \widetilde{Q}_{z}^0-2 \nabla\widetilde{Q}^0_z\cdot\nabla\psi-\Delta\widetilde{Q}^{-1}-\widetilde{Q}^1_z |\nabla\psi|^2-\nabla\widetilde{Q}^0\cdot\nabla\psi-\Delta\psi \widetilde{Q}^0+\widetilde{Q}^{-1}=0 \quad \text{in $\omega\times\RR^+$}.
    \end{equation}
    \item Order $\varepsilon^0$: from
    $$-|\nabla\psi|^2Q^2_{zz}-\Delta\psi Q^1_{z}-2 \nabla Q^1_z\cdot\nabla\psi-\Delta Q^0-Q^2_z |\nabla\psi|^2-\nabla Q^1\cdot\nabla\psi -\Delta\psi Q^1+Q^0=b^0,$$
    we deduce once again as before,
    $$
        -\Delta\overline{Q}^0-\nabla\overline{Q}^1\cdot\nabla\psi-\Delta\psi \overline {Q}^1+\overline{Q}^0=b^0 \quad \text{in $\omega$},
    $$
    i.e.
    \begin{equation}\label{eq:Qbar1Nd}
    \nabla\overline{Q}^1\cdot\nabla\psi+\Delta\psi \overline{Q}^1=\mathrm{div}(\overline{Q}^1 n)=-\Delta\overline{Q}^{0}+\overline{Q}^{0}-b^{0} \quad \text{in $\omega$},
    \end{equation}
    and by difference,
        \begin{equation}\label{eq:Qtilde2Nd}
        -|\nabla\psi|^2\widetilde{Q}_{zz}^2-\Delta\psi \widetilde{Q}_{z}^1-2 \nabla\widetilde{Q}^1_z\cdot\nabla\psi-\Delta\widetilde{Q}^0-\widetilde{Q}^2_z|\nabla\psi|^2-\nabla\widetilde{Q}^1\cdot \nabla\psi-\Delta\psi\widetilde{Q}^1+\widetilde{Q}^0=0 \quad \text{in $\omega\times\RR^+$}.
    \end{equation}
    \item Order $\varepsilon^1$: from
    $$-|\nabla\psi|^2Q^3_{zz}-\Delta\psi Q^2_{z}-2 \nabla Q^2_z\cdot\nabla\psi-\Delta Q^1-Q^3_z |\nabla\psi|^2-\nabla Q^2\cdot\nabla\psi -\Delta\psi Q^2+Q^1=b^1,$$
    we deduce once again as before,
    $$
        -\Delta\overline{Q}^1-\nabla\overline{Q}^2\cdot\nabla\psi-\Delta\psi \overline {Q}^2+\overline{Q}^2=b^1 \quad \text{in $\omega$},
    $$
    i.e.
    \begin{equation}\label{eq:Qbar2Nd}
    \nabla\overline{Q}^2\cdot\nabla\psi+\Delta\psi \overline{Q}^2=\mathrm{div}(\overline{Q}^2 n)=-\Delta\overline{Q}^{1}+\overline{Q}^{1}-b^{1} \quad \text{in $\omega$},
    \end{equation}
    and by difference,
        \begin{equation}\label{eq:Qtilde3Nd}
        -|\nabla\psi|^2\widetilde{Q}_{zz}^3-\Delta\psi \widetilde{Q}_{z}^2-2 \nabla\widetilde{Q}^2_z\cdot\nabla\psi-\Delta\widetilde{Q}^1-\widetilde{Q}^3_z|\nabla\psi|^2-\nabla\widetilde{Q}^2\cdot \nabla\psi-\Delta\psi\widetilde{Q}^2+\widetilde{Q}^1=0 \quad \text{in $\omega\times\RR^+$}.
    \end{equation}
    
\end{itemize}

\subsubsection{Asymptotic expansion of Eq.~\eqref{eq:qNd} inside the fluid domain $\mathcal{U}$}

\begin{itemize}
    \item Order $\varepsilon^{-1}$:
    \begin{equation}\label{eq:Pm1Nd}
        -\Delta P^{-1}+P^{-1}=a^{-1}(x) \quad \text{in $\mathcal{U}$}.
    \end{equation}
    \item Order $\varepsilon^{0}$:
    \begin{equation}\label{eq:P0Nd}
        -\Delta P^0+P^0=a^0(x) \quad \text{in $\mathcal{U}$}.
    \end{equation}
    \item Order $\varepsilon^{1}$:
    \begin{equation}\label{eq:P1Nd}
        -\Delta P^1+P^1=a^1(x) \quad \text{in $\mathcal{U}$}.
    \end{equation}
\end{itemize}

\subsubsection{Asymptotic expansion of Eq.~\eqref{eq:transmission1pqNd} (transmission condition on $\partial\mathcal{U}$)}

We obtain simply
\begin{align*}
Q^{-1}(x,0)&=P^{-1}(x)\\
Q^{0}(x,0)&=P^{0}(x)  \qquad \text{on $\partial\mathcal U$}\\
Q^{1}(x,0)&=P^{1}(x)
\end{align*}
i.e.
\begin{align}\label{eq:expansiontransmission1pqNd}
\overline{Q}^{-1}(x)+\widetilde{Q}^{-1}(x,0)&=P^{-1}(x)\\
\overline{Q}^{0}(x,0)+\widetilde{Q}^{0}(x,0)&=P^{0}(x)  \qquad\text{on $\partial\mathcal U$}.\\
\overline{Q}^{1}(x,0)+\widetilde{Q}^{1}(x,0)&=P^{1}(x)
\end{align}

\subsubsection{Asymptotic expansion of Eq.~\eqref{eq:transmission2pqNd} (transmission condition on $\partial\mathcal{U}$)} We have $$\nabla p_\varepsilon\cdot\nu=\nabla q_\varepsilon\cdot\nu+\frac{q_\varepsilon}\varepsilon=\nabla q_\varepsilon\cdot\nu+\frac{p_\varepsilon}\varepsilon \quad \text{on $\partial\mathcal U$}.$$

\begin{itemize}
    \item Order $\varepsilon^{-2}$
    $$0=Q^{-1}_z \nabla\psi\cdot \nu+P^{-1}=Q^{-1}_z+P^{-1}$$ 
    
    \item Order $\varepsilon^{-1}$
    $$\nabla P^{-1}\cdot\nu=Q^{0}_z \nabla\psi\cdot \nu+\nabla Q^{-1}\cdot\nu+P^{0}=Q^{0}_z +\nabla Q^{-1}\cdot\nu+P^{0}$$ 

    \item Order $\varepsilon^{0}$
    $$\nabla P^{0}\cdot\nu=Q^{1}_z \nabla\psi\cdot \nu+\nabla Q^{0}\cdot\nu+P^{1}=Q^{1}_z +\nabla Q^{0}\cdot\nu+P^{1}.$$
\end{itemize}
Thus
\begin{align}\label{eq:expansiontransmission2pqNd}
\widetilde{Q}^{-1}_z+\overline{Q}^{-1}+\widetilde{Q}^{-1}=0&\\
\nabla P^{-1}\cdot\nu=\widetilde{Q}^{0}_z +\nabla \overline{Q}^{-1}\cdot\nu+\nabla \widetilde{Q}^{-1}\cdot\nu+\overline{Q}^{0}+\widetilde{Q}^{0} & \qquad\text{on $\partial\mathcal U\times\{z=0\}$.}\\
\nabla P^{0}\cdot\nu=\widetilde{Q}^{1}_z +\nabla \overline{Q}^{0}\cdot\nu+\nabla \widetilde{Q}^{0}\cdot\nu+\overline{Q}^{1}+\widetilde{Q}^{1}&
\end{align}

\subsubsection{Asymptotic expansion of Eq.~\eqref{eq:dirichletqNd} (Dirichlet boundary conditions on $\partial\Omega$)}

By neglecting $\widetilde{Q}^i\bigl(x,\frac{\psi(x)}\varepsilon\bigr)\to 0$ when $\varepsilon\to0$ (we will find later an exponential decrease), we obtain

\begin{equation}\label{eq:expansiondirichletqNd}
\begin{dcases}
    \overline{Q}^{-1}(x)=0 &\\
    \overline{Q}^0(x)=q_0 &  \text{on $\partial\Omega$}.\\
    \overline{Q}^1(x)=q_1 &
    \end{dcases}
\end{equation}

\subsection{Formal resolution of the profile equations}

\subsubsection{Determination of $Q^{-1}$ and $P^{-1}$}
From~\eqref{eq:Qbarm1Nd} and~\eqref{eq:expansiondirichletqNd}, we obtain
\begin{equation}\label{eq:solQbarm1Nd}
    \overline{Q}^{-1}=0 \quad \text{in $\omega$}.
\end{equation}

From~\eqref{eq:Qtildem1Nd} and the fact that $\widetilde{Q}^{-1}$ tends to $0$ as $z\to+\infty$ , we obtain
\begin{equation*}
\widetilde{Q}^{-1}(x,z)=A(x) e^{-z} \quad \text{in $\omega\times \RR^+$}.
\end{equation*}
The condition~\eqref{eq:expansiontransmission2pqNd} rewrites
$$\widetilde{Q}^{-1}_z(x,0)+\widetilde{Q}^{-1}(x,0)=0 \quad \text{on $\partial\mathcal U$}$$
and is satisfied since $\widetilde{Q}^{-1}$ satisfy everywhere
\begin{equation}\label{eq:Qm1tildeEq}
\widetilde{Q}^{-1}_z(x,z)=-\widetilde{Q}^{-1}(x,z) \quad \text{in $\omega\times\RR^+$}.
\end{equation}

From~\eqref{eq:Pm1Nd} and~\eqref{eq:expansiontransmission1pqNd}, we obtain that $P^{-1}$ is solution to
\begin{equation}\label{eq:solPm1Nd}
    \begin{dcases}
        -\Delta P^{-1}+P^{-1}=a^{-1} & \text{in $\mathcal{U}$}\\
        P^{-1}(x)=\widetilde{Q}^{-1}(x,0)=A(x) & \text{on $\partial\mathcal{U}$},
    \end{dcases}
\end{equation}
and a possible choice for $\widetilde{Q}^{-1}$ is given by
\begin{equation}\label{eq:solQtildem1Nd}
\widetilde{Q}^{-1}(x,z)=\underline{P}^{-1}(x) e^{-z}\quad \text{in $\omega\times\RR^+$}, 
\end{equation}
where $\underline{P}^{-1}$ is a simultaneous lifting in $\omega$ of the trace of $P^{-1}$  and of that of $\frac{\partial P^{-1}}{\partial\nu}$ on $\partial\mathcal U$ and having the same regularity as $P^{-1}$.

Notice that $A(x)$ on $\partial\mathcal{U}$ is not yet determined at this stage. It will be using the equations on $P^0,Q^0$.

\subsubsection{Determination of $Q^0$ and $P^0$}
Similarly, from~\eqref{eq:Qbar0Nd} and~\eqref{eq:expansiondirichletqNd}, we obtain that $\overline{Q}^0$ is solution to
\begin{equation}\label{eq:solQ0Nd}
    \begin{dcases}
        \mathrm{div}(\overline{Q}^0 n)=-\Delta \overline{Q}^{-1}+\overline{Q}^{-1}-b^{-1}=-b^{-1} & \text{in $\omega$}\\
        \overline{Q}^0(x)=q_0(x) & \text{on $\partial\Omega$}.
    \end{dcases}
\end{equation}

From~\eqref{eq:Qtilde0Nd}, we have
\begin{align*}
        -|\nabla\psi|^2\widetilde{Q}_{zz}^0-|\nabla\psi|^2\widetilde{Q}^0_z&=\Delta\psi\widetilde{Q}_{z}^{-1} +2 \nabla \widetilde{Q}^{-1}_z\cdot\nabla\psi+\nabla\widetilde{Q}_z^{-1}\cdot \nabla\psi+\Delta\psi \widetilde{Q}^{-1}\\
        &=-\nabla\widetilde{Q}_z^{-1}\cdot \nabla\psi\\
        &=-\nabla\underline{P}^{-1}\cdot \nabla\psi\, e^{-z}\\
        -\widetilde{Q}_{zz}^0-\widetilde{Q}^0_z&=-\frac{\nabla\underline{P}^{-1}\cdot \nabla\psi}{|\nabla\psi|^2}\, e^{-z}=F(x)\, e^{-z},
\end{align*}
using~\eqref{eq:Qm1tildeEq} and the obtained expression of $\widetilde{Q}^{-1}$~\eqref{eq:solQtildem1Nd}. The solution which tends to $0$ as $z\to+\infty$ is given by
\begin{equation}\label{eq:solQ0tildeNd}
    \widetilde{Q}^0(x,z)=B(x) e^{-z}+F(x) z e^{-z} \quad \text{in $\omega\times\RR^+$},
\end{equation}
where $B(x)$ is to be determined.

Using~\eqref{eq:expansiontransmission2pqNd}, we have
\begin{align*}
\nabla P^{-1}\cdot\nu=\widetilde{Q}^{0}_z(x,0) +\nabla \overline{Q}^{-1}\cdot\nu+\nabla \widetilde{Q}^{-1}\cdot\nu+\overline{Q}^{0}(x)+\widetilde{Q}^{0}(x,0) & \qquad\text{on $\partial\mathcal U$.}
\end{align*}
i.e.
\begin{align*}
\widetilde{Q}^{0}_z(x,0)+\widetilde{Q}^{0}(x,0)&=\nabla P^{-1}\cdot\nu-\nabla \overline{Q}^{-1}\cdot\nu-\overline{Q}^{0}(x) & \text{on $\partial\mathcal U$}\\
&=(\nabla P^{-1}\cdot\nu)_{|\mathcal U}-(\nabla \underline{P}^{-1}\cdot\nu)_{|\omega}-\overline{Q}^{0}(x)& \text{on $\partial\mathcal U$}\\
&=-\overline{Q}^{0}(x)& \text{on $\partial\mathcal U$}.
\end{align*}
Using the expression found for $\widetilde{Q}^0$, we have
\begin{align*}
\widetilde{Q}^{0}_z(x,z)+\widetilde{Q}^{0}(x,z)&=-B(x) e^{-z}+F(x) e^{-z}-F(x) z e^{-z}+B(x) e^{-z}+F(x) z e^{-z}\\
&=F(x)\, e^{-z}.
\end{align*}
Thus, 
\begin{align*}
F(x)&=-\overline{Q}^{0}(x)& \text{on $\partial\mathcal U$}\\
-\frac{\nabla\underline{P}^{-1}\cdot \nabla\psi}{|\nabla\psi|^2}&=-\overline{Q}^{0}(x)& \text{on $\partial\mathcal U$}\\
\frac{\partial P^{-1}}{\partial\nu}&=\overline{Q}^{0}& \text{on $\partial\mathcal U$}.
\end{align*}
since $\nabla\psi=\nu$ on $\partial\mathcal U$. This fixes $A(x)$ on $\partial\mathcal U$, since we obtain that $P^{-1}$ is solution to
\begin{equation}\label{eq:solPm1Ndbis}
    \begin{dcases}
        -\Delta P^{-1}+P^{-1}=a^{-1} & \text{in $\mathcal{U}$}\\
        \frac{\partial P^{-1}}{\partial\nu}=\overline{Q}^{0} & \text{on $\partial\mathcal{U}$},
    \end{dcases}
\end{equation}
and $A(x)$ is then the trace of $P^{-1}$ on $\partial\mathcal U$.

From~\eqref{eq:P0Nd} and~\eqref{eq:expansiontransmission1pqNd}, we obtain that $P^{0}$ is solution to
\begin{equation}\label{eq:solP0Nd}
    \begin{dcases}
        -\Delta P^{0}+P^{0}=a^{0} & \text{in $\mathcal{U}$}\\
        P^{0}(x)=\widetilde{Q}^{0}(x,0)+\overline{Q}^0(x)=B(x)+\overline{Q}^0(x) & \text{on $\partial\mathcal{U}$}.
    \end{dcases}
\end{equation}
Thus we can take $B(x)=\underline{P}^0-\overline{Q}^0$ in $\omega$, where $\underline{P}^{0}$ is a simultaneous lifting in $\omega$ of the trace of $P^{0}$  and of that of $\frac{\partial P^{0}}{\partial\nu}$ on $\partial\mathcal U$ and having the same regularity as $P^{0}$. The equation satisfied by $P^0$ will be determined using the equations on $P^1,Q^1$.

\subsubsection{Determination of $Q^1$ and $P^1$}
From~\eqref{eq:Qbar1Nd} and~\eqref{eq:expansiondirichletqNd}, we obtain that $\overline{Q}^1$ is solution to
\begin{equation}\label{eq:solQ1Nd}
    \begin{dcases}
        \mathrm{div}(\overline{Q}^1 n)=-\Delta \overline{Q}^{0}+\overline{Q}^{0}-b^0 & \text{in $\omega$}\\
        \overline{Q}^1(x)=q_1(x) & \text{on $\partial\Omega$}.
    \end{dcases}
\end{equation}

From~\eqref{eq:Qtilde1Nd} and the obtained expression of $\widetilde{Q}^{0}$~\eqref{eq:solQ0tildeNd}, we have
\begin{align*}
        -|\nabla\psi|^2\widetilde{Q}_{zz}^1-|\nabla\psi|^2\widetilde{Q}^1_z &=\Delta\psi \widetilde{Q}_{z}^0+2 \nabla\widetilde{Q}^0_z\cdot\nabla\psi+\Delta\widetilde{Q}^{-1}+\nabla\widetilde{Q}^0\cdot\nabla\psi+\Delta\psi \widetilde{Q}^0-\widetilde{Q}^{-1}\\
        &=\Delta\psi F(x)\, e^{-z}+(\Delta \underline{P}^{-1}-\underline{P}^{-1}) e^{-z}+2 \nabla(F(x) e^{-z}-\widetilde{Q}^0)\cdot\nabla\psi+\nabla\widetilde{Q}^0\cdot\nabla\psi\\
        &=\Delta\psi F(x)\, e^{-z}+(\Delta \underline{P}^{-1}-\underline{P}^{-1}) e^{-z}-\nabla\widetilde{Q}^0\cdot\nabla\psi+2\nabla F(x) \cdot\nabla\psi e^{-z}\\
        &=\Delta\psi F\, e^{-z}+(\Delta \underline{P}^{-1}-\underline{P}^{-1}) e^{-z}-\nabla B\cdot\nabla\psi\, e^{-z}-\nabla F\cdot\nabla\psi\, ze^{-z}+2\nabla F \cdot\nabla\psi\, e^{-z}\\
        -\widetilde{Q}_{zz}^1-\widetilde{Q}^1_z&=\frac{\Delta\psi F+\Delta \underline{P}^{-1}-\underline{P}^{-1}-\nabla B\cdot\nabla\psi+2\nabla F \cdot\nabla\psi}{|\nabla\psi|^2}\, e^{-z} - \frac{\nabla F\cdot\nabla\psi}{|\nabla\psi|^2}\, z e^{-z}\\
        &=G(x)\, e^{-z} +H(x)\, z e^{-z}.
\end{align*}
 The solution which tends to $0$ as $z\to+\infty$ is given by
\begin{equation*}
    \widetilde{Q}^1(x,z)=C(x) e^{-z}+(G(x)+H(x)) z e^{-z}+\frac{H(x)}2 z^2 e^{-z} \quad \text{in $\omega\times\RR^+$},
\end{equation*}
where $C(x)$ is to be determined.

Using~\eqref{eq:expansiontransmission2pqNd}, we have
\begin{align*}
\nabla P^{0}\cdot\nu=\widetilde{Q}^{1}_z +\nabla \overline{Q}^{0}\cdot\nu+\nabla \widetilde{Q}^{0}\cdot\nu+\overline{Q}^{1}+\widetilde{Q}^{1} & \qquad\text{on $\partial\mathcal U$.}
\end{align*}
i.e.
\begin{align*}
\widetilde{Q}^{1}_z(x,0)+\widetilde{Q}^{1}(x,0)&=\nabla P^{0}\cdot\nu-\nabla \overline{Q}^{0}\cdot\nu -\nabla \widetilde{Q}^{0}(x,0)\cdot\nu-\overline{Q}^{1}& \text{on $\partial\mathcal U$}\\
&=\nabla P^{0}\cdot\nu-\nabla \overline{Q}^{0}\cdot\nu -\nabla B\cdot\nu-\overline{Q}^{1}& \text{on $\partial\mathcal U$}.
\end{align*}
Using the expression found for $\widetilde{Q}^1$, we have
\begin{align*}
[\widetilde{Q}^{1}_z(x,z)+\widetilde{Q}^{1}(x,z)]_{z=0}&=-C(x)+(G(x)+H(x))+C(x)\\
&=G(x)+H(x).
\end{align*}
Thus
\begin{align*}
G(x)+H(x)&=\frac{\partial P^{0}}{\partial \nu}-\frac{\partial \overline{Q}^{0}}{\partial \nu} -\nabla B\cdot\nu-\overline{Q}^{1}& \text{on $\partial\mathcal U$}\\
\Delta\psi F+\Delta \underline{P}^{-1}-\underline{P}^{-1}-\nabla B\cdot \nu+\nabla F \cdot \nu&=\frac{\partial P^{0}}{\partial \nu}-\frac{\partial \overline{Q}^{0}}{\partial \nu} -\nabla B\cdot\nu-\overline{Q}^{1}& \text{on $\partial\mathcal U$}
\end{align*}
and it follows
\begin{align*}
\frac{\partial P^{0}}{\partial\nu}&=\Delta\psi (-\overline{Q}^0)+\Delta \underline{P}^{-1}-\underline{P}^{-1}+\nabla F \cdot \nu+\frac{\partial \overline{Q}^{0}}{\partial \nu} +\overline{Q}^{1}& \text{on $\partial\mathcal U$}\\
&=2\frac{\partial \overline{Q}^{0}}{\partial \nu}+b^{-1}+\Delta \underline{P}^{-1}-\underline{P}^{-1}+\nabla F \cdot \nu +\overline{Q}^{1}& \text{on $\partial\mathcal U$,}
\end{align*}
since $\overline{Q}^0$ satisfies $\overline{Q}^0\Delta\psi+\nabla\overline{Q}^0\cdot\nabla \psi=-b^{-1}$, where we recall that
$$F(x)=-\frac{\nabla\underline{P}^{-1}\cdot \nabla\psi}{|\nabla\psi|^2} \quad\text{in $\omega$}.$$

Therefore, we obtain that $P^{0}$ is solution to
\begin{equation}\label{eq:solP0Ndbis}
    \begin{dcases}
        -\Delta P^{0}+P^{0}=a^{0} & \text{in $\mathcal{U}$}\\
        \frac{\partial P^{0}}{\partial\nu}= 2\frac{\partial \overline{Q}^{0}}{\partial \nu}+b^{-1}+\Delta \underline{P}^{-1}-\underline{P}^{-1}+\nabla F \cdot \nu +\overline{Q}^{1}& \text{on $\partial\mathcal{U}$},
    \end{dcases}
\end{equation}
and $B(x)$ is then well defined by $\underline{P}^0-\overline{Q}^0$.

\end{document}